\begin{document}
\title{On the Averaged Colmez Conjecture}
\author{Xinyi Yuan and  Shou-Wu Zhang}
\date{July 23, 2015}

\maketitle

\theoremstyle{plain}

\newtheorem{thm}{Theorem}[section]
\newtheorem{theorem}[thm]{Theorem}
\newtheorem{cor}[thm]{Corollary}
\newtheorem{corollary}[thm]{Corollary}
\newtheorem{lem}[thm]{Lemma}
\newtheorem{lemma}[thm]{Lemma}
\newtheorem{pro}[thm]{Proposition}
\newtheorem{proposition}[thm]{Proposition}
\newtheorem{prop}[thm]{Proposition}
\newtheorem{definition}[thm]{Definition}
\newtheorem{assumption}[thm]{Assumption}

\newtheorem*{thmm}{Theorem}
\newtheorem*{conj}{Conjecture}
\newtheorem*{notation}{Notation}
\newtheorem*{corr}{Corollary}

\theoremstyle{remark} 
\newtheorem{remark}[thm]{Remark}
\newtheorem{example}[thm]{Example}
\newtheorem{remarks}[thm]{Remarks}
\newtheorem{problem}[thm]{Problem}
\newtheorem{exercise}[thm]{Exercise}
\newtheorem{situation}[thm]{Situation}

\numberwithin{equation}{subsection}

\newcommand{\ZZ}{\mathbb{Z}}
\newcommand{\QQ}{\mathbb{Q}}
\newcommand{\RR}{\mathbb{R}}
\newcommand{\HH}{\mathcal{H}}     

\newcommand{\inverse}{^{-1}}          

\newcommand{\gl}{\mathrm{GL}_2}             
\newcommand{\sll}{\mathrm{SL}_2}            
\newcommand{\adele}{\mathbb{A}}             
\newcommand{\finiteadele}{\mathbb{A}_f}     
\newcommand{\af}{\mathbb{A}_f}              
\newcommand{\across}{\mathbb{A}^{\times}}      
\newcommand{\afcross}{\mathbb{A}_f^{\times}}   

\newcommand{\gla}{\mathrm{GL}_2(\mathbb{A})}       
\newcommand{\glaf}{\mathrm{GL}_2(\mathbb{A}_f)}    
\newcommand{\glf}{\mathrm{GL}_2(F)}                
\newcommand{\glfv}{\mathrm{GL}_2(F_v)}             
\newcommand{\glofv}{\mathrm{GL}_2(O_{F_v})}        

\newcommand{\ofv}{O_{F_v}}                         
\newcommand{\oev}{O_{E_v}}                         
\newcommand{\evcross}{E_v^{\times}}                
\newcommand{\fvcross}{F_v^{\times}}                

\newcommand{\adelef}{\mathbb{A}_F}             
\newcommand{\adelee}{\mathbb{A}_E}             
\newcommand{\aecross}{\mathbb{A}_E^{\times}}   

\newcommand{\OCS}{\overline{\mathcal{S}}}
\renewcommand{\Pr}{\mathcal{P}r}

\newcommand{\jv}{\mathfrak{j}_v}
\newcommand{\fg}{\mathfrak{g}}

\newcommand{\vv}{\mathbb{V}}                      
\newcommand{\bb}{\mathbb{B}}                      
\newcommand{\bbf}{\mathbb{B}_f}                   
\newcommand{\bfcross}{\mathbb{B}_f^\times}        
\newcommand{\ba}{\mathbb{B}_{\mathbb{A}}}          
\newcommand{\baf}{\mathbb{B}_{\mathbb{A}_f}}       
\newcommand{\bv}{{\mathbb{B}_v}}                   
\newcommand{\bacross}{\mathbb{B}_{\mathbb{A}}^{\times}}      
\newcommand{\bafcross}{\mathbb{B}_{\mathbb{A}_f}^{\times}}   
\newcommand{\bvcross}{\mathbb{B}_v^{\times}}               

\newcommand{\ad}{_\mathrm{ad}}            
\newcommand{\NT}{\mathrm{NT}}         
\newcommand{\nonsplit}{\mathrm{nonsplit}}         
\newcommand{\Pet}{\mathrm{Pet}}         
\newcommand{\Norm}{\mathrm{Norm}}         

\newcommand{\lb}{\mathcal{L}}   
\newcommand{\DD}{\mathcal{D}}   

\newcommand{\quasilim}{\widetilde\lim_{s\rightarrow 0}}   
\newcommand{\pr}{\mathcal{P}r}   

\newcommand{\CMU}{\mathrm{CM}_U}             
\newcommand{\eend}{\mathrm{End}}             
\newcommand{\eendd}{\mathrm{End}^{\circ}}    

\newcommand{\sumu}{\sum_{u \in \mu_U^2 \bs F\cross}}

\newcommand{\supp}{\mathrm{supp}}
\newcommand{\cross}{^{\times}}
\newcommand{\der}{\frac{d}{ds}|_{s=0}}   

\newcommand{\pair}[1]{\langle {#1} \rangle}
\newcommand{\wpair}[1]{\left\{{#1}\right\}}
\newcommand\wh{\widehat}
\newcommand\Spf{\mathrm{Spf}}
\newcommand{\lra}{{\longrightarrow}}

\newcommand{\Ei}{\mathrm{Ei}} 

\newcommand{\sumyu}{\sum_{(y,u)}}

\newcommand{\matrixx}[4]
{\left( \begin{array}{cc}
  #1 &  #2  \\
  #3 &  #4  \\
 \end{array}\right)}        

\newcommand{\barint}{\mbox{$ave \int$}}  
\def\barint_#1{\mathchoice
            {\mathop{\vrule width 6pt
height 3 pt depth -2.5pt
                    \kern -8.8pt
\intop}\nolimits_{#1}}%
            {\mathop{\vrule width 5pt height
3 pt depth -2.6pt
                    \kern -6.5pt
\intop}\nolimits_{#1}}%
            {\mathop{\vrule width 5pt height
3 pt depth -2.6pt
                    \kern -6pt
\intop}\nolimits_{#1}}%
            {\mathop{\vrule width 5pt height
3 pt depth -2.6pt
          \kern -6pt \intop}\nolimits_{#1}}}

\newcommand{\BA}{{\mathbb {A}}}
\newcommand{\BB}{{\mathbb {B}}}
\newcommand{\BC}{{\mathbb {C}}}
\newcommand{\BD}{{\mathbb {D}}}
\newcommand{\BE}{{\mathbb {E}}}
\newcommand{\BF}{{\mathbb {F}}}
\newcommand{\BG}{{\mathbb {G}}}
\newcommand{\BH}{{\mathbb {H}}}
\newcommand{\BI}{{\mathbb {I}}}
\newcommand{\BJ}{{\mathbb {J}}}
\newcommand{\BK}{{\mathbb {K}}}
\newcommand{\BL}{{\mathbb {L}}}
\newcommand{\BM}{{\mathbb {M}}}
\newcommand{\BN}{{\mathbb {N}}}
\newcommand{\BO}{{\mathbb {O}}}
\newcommand{\BP}{{\mathbb {P}}}
\newcommand{\BQ}{{\mathbb {Q}}}
\newcommand{\BR}{{\mathbb {R}}}
\newcommand{\BS}{{\mathbb {S}}}
\newcommand{\BT}{{\mathbb {T}}}
\newcommand{\BU}{{\mathbb {U}}}
\newcommand{\BV}{{\mathbb {V}}}
\newcommand{\BW}{{\mathbb {W}}}
\newcommand{\BX}{{\mathbb {X}}}
\newcommand{\BY}{{\mathbb {Y}}}
\newcommand{\BZ}{{\mathbb {Z}}}

\newcommand{\CA}{{\mathcal {A}}}
\newcommand{\CB}{{\mathcal {B}}}
\renewcommand{\CD}{{\mathrm{D}}}
\newcommand{\CE}{{\mathcal {E}}}
\newcommand{\CF}{{\mathcal {F}}}
\newcommand{\CG}{{\mathcal {G}}}
\newcommand{\CH}{{\mathcal {H}}}
\newcommand{\CI}{{\mathcal {I}}}
\newcommand{\CJ}{{\mathcal {J}}}
\newcommand{\CK}{{\mathcal {K}}}
\newcommand{\CL}{{\mathcal {L}}}
\newcommand{\CM}{{\mathcal {M}}}
\newcommand{\CN}{{\mathcal {N}}}
\newcommand{\CO}{{\mathcal {O}}}
\newcommand{\CP}{{\mathcal {P}}}
\newcommand{\CQ}{{\mathcal {Q}}}
\newcommand{\CR }{{\mathcal {R}}}
\newcommand{\CS}{{\mathcal {S}}}
\newcommand{\CT}{{\mathcal {T}}}
\newcommand{\CU}{{\mathcal {U}}}
\newcommand{\CV}{{\mathcal {V}}}
\newcommand{\CW}{{\mathcal {W}}}
\newcommand{\CX}{{\mathcal {X}}}
\newcommand{\CY}{{\mathcal {Y}}}
\newcommand{\CZ}{{\mathcal {Z}}}

\newcommand{\RA}{{\mathrm {A}}}
\newcommand{\RB}{{\mathrm {B}}}
\newcommand{\RC}{{\mathrm {C}}}
\newcommand{\RD}{{\mathrm {D}}}
\newcommand{\RE}{{\mathrm {E}}}
\newcommand{\RF}{{\mathrm {F}}}
\newcommand{\RG}{{\mathrm {G}}}
\newcommand{\RH}{{\mathrm {H}}}
\newcommand{\RI}{{\mathrm {I}}}
\newcommand{\RJ}{{\mathrm {J}}}
\newcommand{\RK}{{\mathrm {K}}}
\newcommand{\RL}{{\mathrm {L}}}
\newcommand{\RM}{{\mathrm {M}}}
\newcommand{\RN}{{\mathrm {N}}}
\newcommand{\RO}{{\mathrm {O}}}
\newcommand{\RP}{{\mathrm {P}}}
\newcommand{\RQ}{{\mathrm {Q}}}
\newcommand{\RS}{{\mathrm {S}}}
\newcommand{\RT}{{\mathrm {T}}}
\newcommand{\RU}{{\mathrm {U}}}
\newcommand{\RV}{{\mathrm {V}}}
\newcommand{\RW}{{\mathrm {W}}}
\newcommand{\RX}{{\mathrm {X}}}
\newcommand{\RY}{{\mathrm {Y}}}
\newcommand{\RZ}{{\mathrm {Z}}}

\newcommand{\ga}{{\frak a}}
\newcommand{\gb}{{\frak b}}
\newcommand{\gc}{{\frak c}}
\newcommand{\gd}{{\frak d}}
\newcommand{\gf}{{\frak f}}
\newcommand{\gh}{{\frak h}}
\newcommand{\gi}{{\frak i}}
\newcommand{\gj}{{\frak j}}
\newcommand{\gk}{{\frak k}}
\newcommand{\gm}{{\frak m}}
\newcommand{\gn}{{\frak n}}
\newcommand{\go}{{\frak o}}
\newcommand{\gp}{{\frak p}}
\newcommand{\gq}{{\frak q}}
\newcommand{\gr}{{\frak r}}
\newcommand{\gs}{{\frak s}}
\newcommand{\gt}{{\frak t}}
\newcommand{\gu}{{\frak u}}
\newcommand{\gv}{{\frak v}}
\newcommand{\gw}{{\frak w}}
\newcommand{\gx}{{\frak x}}
\newcommand{\gy}{{\frak y}}
\newcommand{\gz}{{\frak z}}

\newcommand{\ab}{{\mathrm{ab}}}
\newcommand{\Ad}{{\mathrm{Ad}}}
\newcommand{\an}{{\mathrm{an}}}
\newcommand{\Aut}{{\mathrm{Aut}}}

\newcommand{\Br}{{\mathrm{Br}}}
\newcommand{\bs}{\backslash}
\newcommand{\bbs}{\|\cdot\|}

\newcommand{\Ch}{{\mathrm{Ch}}}
\newcommand{\cod}{{\mathrm{cod}}}
\newcommand{\cont}{{\mathrm{cont}}}
\newcommand{\cl}{{\mathrm{cl}}}
\newcommand{\criso}{{\mathrm{criso}}}

\newcommand{\dR}{{\mathrm{dR}}}
\newcommand{\disc}{{\mathrm{disc}}}
\newcommand{\Div}{{\mathrm{Div}}}
\renewcommand{\div}{{\mathrm{div}}}

\newcommand{\Eis}{{\mathrm{Eis}}}
\newcommand{\End}{{\mathrm{End}}}

\newcommand{\Frob}{{\mathrm{Frob}}}

\newcommand{\Gal}{{\mathrm{Gal}}}
\newcommand{\GL}{{\mathrm{GL}}}
\newcommand{\GO}{{\mathrm{GO}}}
\newcommand{\GSO}{{\mathrm{GSO}}}
\newcommand{\GSp}{{\mathrm{GSp}}}
\newcommand{\GSpin}{{\mathrm{GSpin}}}
\newcommand{\GU}{{\mathrm{GU}}}
\newcommand{\BGU}{{\mathbb{GU}}}

\newcommand{\Hom}{{\mathrm{Hom}}}
\newcommand{\Hol}{{\mathrm{Hol}}}
\newcommand{\HC}{{\mathrm{HC}}}

\renewcommand{\Im}{{\mathrm{Im}}}
\newcommand{\Ind}{{\mathrm{Ind}}}
\newcommand{\inv}{{\mathrm{inv}}}
\newcommand{\Isom}{{\mathrm{Isom}}}

\newcommand{\Jac}{{\mathrm{Jac}}}
\newcommand{\JL}{{\mathrm{JL}}}

\newcommand{\Ker}{{\mathrm{Ker}}}
\newcommand{\KS}{{\mathrm{KS}}}

\newcommand{\Lie}{{\mathrm{Lie}}}

\newcommand{\new}{{\mathrm{new}}}
\newcommand{\NS}{{\mathrm{NS}}}

\newcommand{\ord}{{\mathrm{ord}}}
\newcommand{\ol}{\overline}

\newcommand{\rank}{{\mathrm{rank}}}

\newcommand{\PGL}{{\mathrm{PGL}}}
\newcommand{\PSL}{{\mathrm{PSL}}}
\newcommand{\Pic}{\mathrm{Pic}}
\newcommand{\Prep}{\mathrm{Prep}}
\newcommand{\Proj}{\mathrm{Proj}}

\renewcommand{\Re}{{\mathrm{Re}}}
\newcommand{\red}{{\mathrm{red}}}
\newcommand{\reg}{{\mathrm{reg}}}
\newcommand{\Rep}{{\mathrm{Rep}}}
\newcommand{\Res}{{\mathrm{Res}}}

\newcommand{\Sel}{{\mathrm{Sel}}}
\font\cyr=wncyr10  \newcommand{\Sha}{\hbox{\cyr X}}
\newcommand{\SL}{{\mathrm{SL}}}
\newcommand{\SO}{{\mathrm{SO}}}
\newcommand{\Sp}{\mathrm{Sp}}
\newcommand{\Spec}{{\mathrm{Spec}}}
\newcommand{\Sym}{{\mathrm{Sym}}}
\newcommand{\sgn}{{\mathrm{sgn}}}
\newcommand{\Supp}{{\mathrm{Supp}}}

\newcommand{\tor}{{\mathrm{tor}}}
\newcommand{\tr}{{\mathrm{tr}}}

\newcommand{\ur}{{\mathrm{ur}}}

\newcommand{\vol}{{\mathrm{vol}}}

\newcommand{\wt}{\widetilde}
\newcommand{\pp}{\frac{\partial\bar\partial}{\pi i}}
\newcommand{\intn}[1]{\left( {#1} \right)}
\newcommand{\norm}[1]{\|{#1}\|}
\newcommand{\sfrac}[2]{\left( \frac {#1}{#2}\right)}
\newcommand{\ds}{\displaystyle}
\newcommand{\ov}{\overline}
\newcommand{\incl}{\hookrightarrow}
\newcommand{\imp}{\Longrightarrow}
\newcommand{\lto}{\longmapsto}
\newcommand{\iso}{\overset \sim \lra}

\tableofcontents

\section{Introduction}
The Colmez conjecture is a formula expressing the Faltings height of an abelian variety with complex multiplication in terms of some linear combination of logarithmic derivatives of Artin L-functions.
The aim of this paper to prove an averaged version of the conjecture, which was also proposed by Colmez.

\subsection{Statements}

First let us recall the definition of Faltings heights introduced by Faltings \cite{Fa}.
Let $A$ be an abelian variety of dimension $g$ over a number field $K$, and $\CA$  the  relative indentity component of the N\'eron model of $A$ over $O_K$.
Assume that $\CA$ is semi-abelian.  Denote by $\Omega (\CA)=\Lie (\CA)^\vee$ the sheaf of invariant differential $1$-forms on $\CA$.
Let $\bar \omega (\CA)$ be a metrized line bundle over $\Spec\, O_K$, whose finite part is defined as
 $$\omega (\CA):=\det\Omega  (\CA),$$
and whose metric $\|\cdot\|_v$ 
at each  archimedean place $v$ of $K$ is given by
$$\|\alpha \|_v^2:=\frac{1}{(2\pi)^{g}}\int _{A_v (\BC)}|\alpha\wedge \bar \alpha|, \qquad \alpha \in \omega (A_v)=\Gamma (A_v, \Omega^g_{A_v}).$$ 
 Then Faltings \cite[\S3]{Fa}  defines a moduli-theoretic height $h(A)$  by
 $$h(A):=\frac 1{[K:\BQ]}\wh\deg\ \ol\omega (\CA).$$
 Since $\CA$ is semi-abelian, this height is invariant under base change.
 
 Now let us state our main result as conjectured by Colmez.
 Let $E$ be a CM field of degree $[E:\BQ]=2g$,  with the maximal  totally real subfield $F$ and a complex conjugation $c:E\to E$.
 Let $\Phi\subset \Hom (E, \BC)$ be a CM-type, i.e., a subset such that 
 $\Phi\cap \Phi^c =\emptyset$ and
 $\Phi\cup \Phi^c =\Hom (E, \BC)$. Let $A_\Phi$ be a CM abelian variety over $\BC$ of CM type $(O_E,\Phi)$. 
By the theory of complex multiplication, there is a number field $K$ in $\BC$ such that $A_\Phi$ is defined over $K$ and has a smooth and projective integral model $\CA$ over $O_K$. 
Colmez proved that the  height $h(A_\Phi)$  depends only 
on the CM-type $\Phi$. Thus we may denote  this height by $h(\Phi)$.

Colmez gave a conjectural formula expressing the precise value of $h(A_\Phi)$ in terms of linear combinations of logarithmic derivatives of Artin L-functions determined by $\Phi$. See \cite[Thm. 0.3, Conj. 0.4]{Co}.
When $E/\BQ$ is abelian,
the conjecture was proved up to rational multiples of $\log 2$ in the same paper, and later the rational multiples were eliminated by Obus \cite{Ob}.
When $[E:\BQ]=4$, the conjecture was essentially proved by Yang \cite{Ya1, Ya2}.

The goal of this paper is to prove the following averaged formula for general CM fields using techniques in the proof of the  Gross--Zagier formula (\cite{GZ}) and its generalization
(\cite{YZZ}).

\begin{thm}\label{mainthm} 
Let $E/F$ be a CM extension,  $\eta=\eta _{E/F}$ be the corresponding quadratic character of $\BA_F^\times $,
and $d_F$ (resp.  $d_{E/F}$) be the absolute discriminant  of $F$ (resp. the norm of the relative  discriminant of $E/F$). Then 
$$\frac 1{2^g}\sum _\Phi h(\Phi)
=-\frac 12\frac {L'_f(0,\eta)}{L_f(0,\eta)} -\frac 14\log (d_{E/F}d_F),
$$
where $\Phi$ runs through the set of all CM types of $E$, and $L_f(s,\eta)$ is the finite part of the completed L-function $L(s,\eta)$. 
\end{thm}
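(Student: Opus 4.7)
The plan is to reinterpret the average $\frac{1}{2^g}\sum_\Phi h(\Phi)$ as an arithmetic intersection on a Shimura variety carrying a universal abelian scheme, and then to evaluate that intersection via the first derivative at $s=0$ of an incoherent Eisenstein series whose standard $L$-factor is $L(s,\eta)$. This converts the identity into a statement about derivatives of Whittaker integrals, which then must be matched, place by place, with the local pieces of the Faltings height.

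First I would set up a suitable Shimura variety $M$, most naturally the quaternionic Shimura variety attached to a quaternion algebra $\BB/F$ containing $E$, together with its universal abelian scheme and the metrized Hodge bundle $\hat\omega$. The embedding $E\hookrightarrow \BB$ produces a CM zero-cycle $Z\subset M$ whose geometric points are (up to a torsor over a class group) indexed by the CM types $\Phi$ of $E$. A Kodaira--Spencer style identity on an integral model would then relate $\widehat{\deg}(\hat\omega|_Z)$ to $\frac{1}{2^g}\sum_\Phi h(\Phi)$, up to a discriminant correction that accounts for the term $-\frac{1}{4}\log(d_{E/F}d_F)$ in the final formula. This step replaces the abelian-variety side of Colmez's conjecture by a purely Shimura-theoretic arithmetic degree.

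Second, to compute $\widehat{\deg}(\hat\omega|_Z)$ I would invoke the arithmetic Siegel--Weil philosophy: the arithmetic first Chern class of $\hat\omega$ is identified, up to explicit correction, with the central derivative of a Kudla--Rapoport style generating series of special arithmetic divisors, and pairing this against $Z$ yields the Fourier expansion of $E'(0,g,\phi)$ for an incoherent Eisenstein series $E(s,g,\phi)$ on $\SL_2(\BA_F)$. Incoherence of the input data (forced by the fact that $E/F$ is CM) guarantees $E(0,g,\phi)=0$, so the pairing picks out the first derivative; extracting the $L$-factor against a carefully chosen test section produces exactly $-\frac{1}{2}L'_f(0,\eta)/L_f(0,\eta)$, with archimedean $\Gamma$-factors and Tamagawa constants absorbing the remaining discriminant terms.

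The main obstacle will be the place-by-place matching of local contributions. At archimedean places, the derivative of a confluent hypergeometric Whittaker integral must be matched against the archimedean Faltings metric, which is essentially a Chowla--Selberg-type identity. At non-archimedean places where $E/F$ or $\BB$ ramifies, the deformation theory of $p$-divisible groups with $O_E$-action (Grothendieck--Messing, Ahsendorf--Rapoport--Zink) is needed to convert derivatives of local representation densities into bad-reduction contributions to the Faltings height. Choosing $\BB$ so that the incoherence condition holds globally while keeping these local matchings tractable, and keeping track of the level-structure conventions so that the averaging over $\Phi$ appears cleanly on the geometric side, is where the delicate bookkeeping sits.
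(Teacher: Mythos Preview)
Your overall architecture---replace the averaged Faltings height by an arithmetic degree on a Shimura variety via Kodaira--Spencer, then evaluate that degree via the derivative of an incoherent Eisenstein series---is in the right spirit, and indeed the paper follows a two-step scheme of exactly this shape (Theorem~\ref{U2O} plus Theorem~\ref{quaternion main}). But two points of your plan diverge substantially from the paper's argument, and one of them hides a genuine gap.

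\medskip
\textbf{The gap: CM points on a quaternionic Shimura curve are not indexed by CM types.} You write that the embedding $E\hookrightarrow\BB$ produces a CM zero-cycle whose geometric points are indexed by the CM types $\Phi$ of $E$. This is false for the quaternionic Shimura \emph{curve} $X_U$: fixing an archimedean place $\tau$ of $F$ to split $\BB$ singles out one pair of complex conjugate embeddings, and the CM points in $X_U^{E^\times}$ form a torsor under $\widehat E^\times/\widehat O_E^\times$, not a set parametrized by $\Phi$. The paper spends all of Part~I bridging exactly this mismatch. The key device is a \emph{decomposition of the Faltings height} $h(\Phi)=\sum_{\tau\in\Phi}h(\Phi,\tau)+\text{(discriminant term)}$ via a hermitian pairing $\Omega(A)\times\Omega(A^t)\to\BC$ (Theorem~\ref{PhitauPsi}), followed by an averaging identity (Corollary~\ref{Phi2Psi}) that rewrites $2^{-g}\sum_\Phi h(\Phi)$ as an average over \emph{nearby pairs} $(\Phi_1,\Phi_2)$ of CM types. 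Each such pair is then realized as a single abelian variety on a PEL Shimura curve $X'$ (Theorem~\ref{specialA}), and a further comparison via an auxiliary curve $X''$ and Breuil--Kisin theory (\S5) transports the height to the quaternionic curve $X$. Without this decomposition-and-regrouping step, there is no mechanism in your outline by which a single CM point height $h_{\bar\CL_U}(P)$ could encode the full average over $\Phi$.

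\medskip
\textbf{A different route for the analytic side.} For step two you invoke arithmetic Siegel--Weil and Kudla--Rapoport generating series. That is the strategy of Andreatta--Goren--Howard--Madapusi-Pera (cf.\ the remark after Theorem~\ref{mainthm}), which uses higher-dimensional orthogonal/GSpin Shimura varieties and Borcherds products. The present paper explicitly avoids both: it works entirely on curves and instead extends the Gross--Zagier method of \cite{YZZ}. Concretely, one forms the difference $\mathcal D(g,\phi)=\Pr I'(0,g,\phi)-2Z(g,(1,1),\phi)$ of an analytic kernel and a height-pairing kernel, observes that it is automorphic, and then applies a structural lemma on \emph{pseudo-theta series} (Lemma~\ref{pseudo}) to extract from the degenerate terms the identity
\[
\frac{1}{[O_E^\times:O_F^\times]}\, i_0(P,P)=\frac{L_f'(0,\eta)}{L_f(0,\eta)}+\tfrac12\log(d_{E/F}/d_\BB),
\]
which combined with an arithmetic adjunction formula (Theorem~\ref{adjunction all places}) gives $h_{\bar\CL_U}(P)$. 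So the $L'/L$ term arises not from a Siegel--Weil identity but from the constant term of an Eisenstein series of weight one hidden inside $I'(0,g,\phi)$, isolated by the pseudo-theta machinery. Your proposed local matching of Whittaker derivatives against deformation-theoretic intersection numbers does occur (\S\ref{explicit derivative} and \S\ref{local height}), but it feeds into this Gross--Zagier comparison rather than into a Siegel--Weil formula.
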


The averaged formula was explicitly stated in \cite[p. 634]{Co} with some typo. Note that we use a different normalization of the Faltings height.

\begin{remark}
Note that the above theorem can be reformulated as  an arithmetic expression  for $L'(0, \eta)$.
This expression is 
 analogous to the class number formula 
$$L(0, \eta)=2^a\frac Hw$$
where $2^a$, $H$, and $w$ are respectively the ratios of regulators, class numbers, and the number of roots of unity of the fields $E$ and $F$. 
\end{remark}

\begin{remark} 
 When $E$ is imaginary quadratic, the Colmez  conjecture can be 
deduced from the   Chowla--Selberg formula in \cite{CS} in 1967. Our method (and also the method of Yang \cite{Ya1, Ya2})
 thus give a different proof of the Chow--Selberg formula.
 Another very interesting geometric proof of the Chowla--Selberg formula was discovered by Gross
 \cite{Gr2}. He also made a conjecture with Deligne for the periods of motives with CM 
by an abelian field. 
Anderson \cite{An} reformulated the conjecture of  Deligne and Gross  in terms
of the logarithmic derivatives of odd Dirichlet L-functions at $s=0$.
All these  predictions were only up to algebraic numbers. Colmez used the Faltings 
height instead of just the archimedean periods, to make the conjectures precise. 
 \end{remark}

\begin{remark} 
Shortly after we posted  our paper on arXiv, a different  proof of the averaged formula modulo a rational multiple of $\log 2$ has been 
posted on the arXiv by Andreatta, Goren, Howard and Madapusi-Pera in \cite{AGHM}. In a more recent version, they have removed the ambiguity of $\log 2$, and thus their final result is the same as ours. Their proof uses integral models of  high-dimensional Shimura varieties and is based on the method of Yang \cite{Ya1, Ya2}.
 \end{remark}

\begin{remark}
By the recent work of Jacob Tsimerman \cite{Ts}, the theorem implies the Andre--Oort conjecture for Siegel abelian varieties:
{\em Let $X$ be a Shimura variety of abelian type over $\BC$.
Let $Y\subset X$ be a closed subvariety which contains a Zariski dense subset of special points of $X$. 
Then $Y$ is a special subvariety.}
\end{remark}

Theorem \ref{mainthm} is a direct consequence of Theorem \ref{U2O} and Theorem \ref{quaternion main} below. The proof of each of the latter two theorems forms a part of this paper, so this paper is naturally divided into two parts. Theorem \ref{U2O} is proved in Part I;  
Theorem \ref{quaternion main} is proved in Part II.

\subsection{Faltings heights}

Part I (\S2-\S5) of this paper is devoted to reducing Theorem \ref{mainthm} to a Gross--Zagier type  formula  on quaternionic Shimura curves. In the following, for quaternionic Shimura curves, Hodge bundles and CM points, we will use the terminology of \cite[\S1.2, \S1.3, \S3.1]{YZZ}

Fix a CM extension $E/F$ as above. 
Let $\BB$ be a totally definite incoherent quaternion algebra over $\BA:=\BA_F$. 
Assume that there is an embedding $\BA_E\hookrightarrow \bb$ over $\BA$ and fix one throughout this paper.
For each   open compact subgroup $U$ of $\BB_f^\times$,  we have a Shimura curve $X_U$, which is a projective and smooth curve over $F$. 
Let $X$ be the projective limit of $X_U$. Then $X$ has a right action by $\BB_f^\times$ with quotients $X/U=X_U$.

The Shimura curves $X_U$    do not parametrize abelian varieties but   can be embedded into  Shimura curves of PEL types over $\bar F$.
We will  construct   integral  models $\CX_U$ following the work of  Carayol \cite{Ca} and \v Cerednik--Drinfeld \cite{BC}  and define the Hodge bundle $\CL_U$
(Theorem \ref{hodge bundles}).

Assume that $U=\prod U_v$ is  a maximal compact subgroup of $\BB_f^\times$ containing $\wh O_E^\times$.
Then $X_U$ has a canonical integral model $\CX_U$ over $O_F$.
Let $\bar\CL_U$ be the arithmetic Hodge bundle of $\CX_U$, whose hermitian metric at an archimedean place $v$ is given by 
$$\|dz\|_v=2\, \Im(z)$$
with respect to the usual complex uniformizations by coherent quaternion algebras. 
See \S \ref{section integral models} for the constructions of $\CX_U$ and $\bar\CL_U$. 

Let $P_U\in X_U(E^\ab)$ be the image of a point  $P\in X^{E^\times}$.
It has a  height defined by
$$h_{\bar\CL_U}(P_U):=\frac{1}{[F(P_U):F]}\wh\deg(\bar\CL_U|_{\bar P_U}),$$
where $\bar P_U$ denotes the Zariski closure of the image of $P_U$ in $\CX_U$. 
The first part of our paper is to relate this height to 
the average of the Faltings heights of CM abelian varieties.

\begin{thm}\label{U2O} 
Let $d_\BB$ be the norm of the product of finite primes of $O_F$ over which $\BB$ is ramified.
 Assume that there is no finite place of $F$ ramified in both $E$ and $\BB$. Then 
$$\frac 1{2^g}\sum _\Phi h(\Phi)= \frac 12 h_{\ol \CL_U}(P_U)-\frac 14 \log (d_\BB d_F ).$$
\end{thm}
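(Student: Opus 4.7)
The plan is to identify the CM points of $X_U$ with CM abelian varieties of type $(O_E,\Phi)$, to express the arithmetic Hodge bundle $\bar\CL_U$ restricted to such a point in terms of the Faltings Hodge bundle $\bar\omega(\CA)$, and then to use the Galois orbit (which sweeps out all CM types) together with a careful local bookkeeping to produce the $\log(d_\BB d_F)$ correction.

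First I would analyze the CM points. A point $P\in X^{E^\times}$ is determined, up to the $\BA_{E,f}^\times$-action, by the embedding $\BA_E\hookrightarrow\BB$ together with an archimedean uniformization. At each archimedean place $v$ of $F$ the indefinite coherent partner $\BB(v)$ acts on the upper half-plane and the $E^\times$-fixed point singles out an embedding $E\hookrightarrow\BC$; the collection of these choices is precisely a CM type $\Phi$. Passing to an auxiliary PEL moduli problem via the isomorphism $\BB\otimes_F E\iso M_2(E)$ coming from the fixed embedding, I would associate to $P$ a CM abelian variety $A_\Phi$ with $O_E$-action. Under this identification the fiber $\bar\CL_U|_{\bar P}$ corresponds, in the arithmetic Picard group, to a positive tensor power of $\bar\omega(\CA_\Phi)$; tracking the quaternionic doubling precisely, one finds $\bar\CL_U|_{\bar P} \cong \bar\omega(\CA_\Phi)^{\otimes 2}$ up to a piece supported on ramified primes.

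Second, I would unwind the Galois action. By the Shimura reciprocity law, the $\Gal(\ol F/F)$-orbit of $P$ is a torsor under an extension of $\Gal(E/F)\cong (\BZ/2\BZ)^g$ by a quotient of $\Gal(E^{\ab}/E)$, with the $(\BZ/2\BZ)^g$-factor permuting the $2^g$ CM types transitively. Consequently
\[
h_{\bar\CL_U}(P)\;=\;\frac{1}{[F(P):F]}\sum_{\sigma}\wh\deg\bigl(\bar\CL_U|_{\bar P^{\sigma}}\bigr)
\]
splits naturally into $2^g$ equal contributions, and the leading term reads $\tfrac{1}{2^{g-1}}\sum_\Phi h(\Phi)$: the extra factor of $2$ is exactly the quaternionic doubling from the previous step, and upon solving for $\tfrac{1}{2^g}\sum_\Phi h(\Phi)$ produces the coefficient $\tfrac12$ in the theorem.

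Third is the local comparison, which supplies the $-\tfrac14\log(d_\BB d_F)$ term. At each archimedean place I would compare the metric $\|dz\|_v = 2\,\Im(z)$ on $\CL_U$ with the Faltings metric $\|\alpha\|_v^2 = (2\pi)^{-g}\int|\alpha\wedge\bar\alpha|$ via the period map on the universal abelian variety, extracting a correction independent of $P$ that feeds into the $\log d_F$ part. At each non-archimedean place $v$ I would compare the canonical integral model $\CX_U$ with the N\'eron model of $A_\Phi$: away from the ramification loci of $E/F$ and $\BB$ the two models agree, so discrepancies arise only at primes dividing $d_\BB$ (where the Cerednik--Drinfeld-type local uniformization of $\CX_U$ differs from the standard PEL integral structure) and at primes ramified in $E/F$ (where the $O_E$-action forces an adjustment of the cotangent sheaf, contributing to $d_F$ via the different). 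The hypothesis that no finite place of $F$ is ramified in both $E$ and $\BB$ ensures these two sources of discrepancy live at disjoint primes and therefore add without interaction.

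The main obstacle will be this final non-archimedean bookkeeping: making precise how the various local integral models relate to one another, computing the induced metric on $\CL_U$ at each archimedean place (so that all factors of $2\pi$ collapse correctly), and verifying that the combined local correction sums exactly to $\tfrac14\log(d_\BB d_F)$ with no residual $\log 2$. Once this is controlled, dividing the identity of Step 2 by $2$ yields the theorem.
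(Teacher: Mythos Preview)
Your proposal has a structural gap at the very first step. The quaternionic Shimura curve $X_U$ over $F$ does \emph{not} parametrize abelian varieties when $F\neq\BQ$; there is no direct moduli interpretation that assigns to the CM point $P$ a CM abelian variety $A_\Phi$ of dimension $g$ and type $(O_E,\Phi)$. The auxiliary PEL curve $X'$ that you allude to (coming from $\BB\otimes_F E\simeq M_2(E)$) parametrizes abelian varieties of dimension $2g$ with $O_E$-action of type $\Phi_1+\Phi_2$ for a \emph{nearby pair} $(\Phi_1,\Phi_2)$ of CM types, not a single $\Phi$. So the relation $\bar\CL_U|_{\bar P}\cong\bar\omega(\CA_\Phi)^{\otimes 2}$ that you want simply does not make sense as written.

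Consequently your Step~2 also fails: the Galois orbit of $P$ over $F$ is controlled by Shimura reciprocity through $E^\times\bs\BA_{E,f}^\times$, and there is no $(\BZ/2\BZ)^g$-factor permuting the $2^g$ CM types transitively. The averaging over $\Phi$ cannot be produced this way.

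The paper's route is genuinely different and passes through several intermediate constructions you have not anticipated. First, it decomposes $h(\Phi)=\sum_{\tau\in\Phi}h(\Phi,\tau)+\text{(discriminant term)}$ via a hermitian pairing between $\Omega(A_\Phi)$ and $\Omega(A_\Phi^t)$ (Theorem~\ref{PhitauPsi}); this is where the $-\tfrac14\log d_F$ arises (Corollary~\ref{Phi2Psi}). Second, it defines $h(\Phi_1,\Phi_2)$ for nearby pairs and shows it equals $\tfrac12 h(A,\tau)$ for any $A$ isogenous to $A_{\Phi_1}\times A_{\Phi_2}$ (Theorem~\ref{specialA}). Third, it constructs a larger Shimura curve $X''\simeq (X\times_F Y)/\Delta(\wh F^\times)$ containing both $X$ and $X'$, and compares the $p$-divisible groups $\CH$ on $\CX$ and $\CH''$ on $\CX''$ pointwise via Breuil--Kisin theory (Theorem~\ref{n2n}); the $-\tfrac14\log d_\BB$ comes from a $p$-adic Kodaira--Spencer isomorphism $\CN_\wp\simeq\omega_{\CX_\wp}^{\otimes 2}(-\gd_{\BB,\wp})$ (Theorem~\ref{N2Lfin}). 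None of this machinery is optional: the comparison of integral models at non-archimedean places that you describe as ``bookkeeping'' is the deepest part of the argument and cannot be done by elementary means.
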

We prove this theorem by several manipulations of heights, which are sketched in the following. 

\subsubsection*{Decomposition of Faltings heights}
Let $K\subset \BC$ be a number field containing the normal closure of $E$ over $\BQ$ such that any CM abelian variety 
by $O_E$ has a smooth model over $O_K$. 
Let $A/K$ be a CM abelian variety of type $(O_E,\Phi)$ and $\CA/O_K$ be the smooth projective integral model.
Then we will   decompose the height $h(\Phi)$ into a sum of 
$g$ terms indexed by  $\tau \in \Phi$, 
$$h(\Phi, \tau)=\frac 12 \wh{\deg}\, \bar \CN (A, \tau)$$ 
where each $\bar \CN (A, \tau)$ is a hermitian line bundle over $\Spec\, O_K$.
We will show that this height depends only on the pair  $(\Phi, \tau)$ in Theorem \ref{hAtau}, and then denote it as 
$h(\Phi, \tau)$.
In Theorem \ref{PhitauPsi},  we obtain
$$h(\Phi)-\sum _{\tau\in \Phi} h(\Phi, \tau)
=-\frac 1{4[E_\Phi: \BQ]}\log (d_\Phi d_{\Phi^c}).$$
Here $E_\Phi$ is the reflex field of $(E, \Phi)$ and $d_\Phi, d_{\Phi^c}$ are certain absolute discriminants of $\Phi, \Phi ^c$. 

Let  $(\Phi_1, \Phi_2)$ be a nearby pair of CM types of $E$ in the sense that  $|\Phi_1\cap \Phi_2|=g-1$.
Let $\tau_i$ be the complement of $\Phi_1\cap \Phi_2$ in $\Phi_i$ for $i=1,2$. 
Define
$$h(\Phi_1, \Phi_2)=\frac 12(h(\Phi_1, \tau_1)+h(\Phi_2, \tau_2))$$
We will show that $h(\Phi_1, \Phi_2)$ does not depend on the choice of $(\Phi_1, \Phi_2)$ and 
that $h(\Phi_1, \Phi_2)$ is equal to $\frac 12 h(A_0, \tau)$ for any abelian variety $A_0$ with an action by $O_E$
and isogenous to $A_{\Phi_1}\times A_{\Phi_2}$, where $\tau=\tau_i|_F$.
See Theorem \ref{specialA}. Thus Theorem \ref{U2O} is reduced to the following equality:
$$
gh(A_0, \tau)=h_{\overline \CL_U}(P_U)-\frac 12 \log (d_\BB).$$
Assume that $A$ is defined over the number field $K$ containing $F(P_U)$ and has good reduction over $O_K$. 
We will prove the above identity by constructing an isomorphism of hermitian line bundles over $\Spec\,O_K$ (cf. Proposition \ref{A-P}):
\begin{equation}\label{equation-AP}\ol \CN(A_0, \tau)\iso 
(\ol \CN _U|_{P_U})\otimes_{O_{F(P_U)} }O_{K},\end{equation}
where $\overline \CN_U:=\overline \CL_U^2(-\gd _\BB)$ is a $\BQ$-bundle over $\CX_U$.

\subsubsection*{Kodaira--Spencer isomorphisms}
We will construct the isomorphism \ref{equation-AP} by applying Kodaira--Spencer maps for families  of abelian varieties, Hodge structures, and $p$-divisible groups parametrized by various Shimura curves. These maps give relations ``$N=\omega ^{\otimes 2}$" between invariant differentials of these objects and differentials of the base curves. 

First of all, let $(\Phi_1, \Phi_2)$ be a nearby pair of  CM types of $E$.
Let $F' $ be the reflex field of $\Phi_1+\Phi_2$. 
Then there is a PEL-type Shimura curve $X'_{U'}$ with minimal level defined over $F'$ parametrizing the quadruples $(A, i, \theta, \kappa)$
of an abelian variety $A$,  an action $i$ of $O_E$ on $A$ of type $\Phi_1+\Phi_2$, a polarization $\theta: A\lra A^t$ inducing complex conjugation on $E$,
and a level structure $\kappa: O_\BB\iso \wh T(A)$. 
On $X'_{U'}$ there is a point $P'_{U'}$ representing  an abelian variety $A_0$ which is isogenous to $A_{\Phi_1}\times A_{\Phi_2}$.
By the Kodaira--Spencer map, there is an isomorphism
$$N(A_0, \tau)\simeq \omega _{X'_{U'}, P_{U'}'}^{\otimes 2}.$$
We will prove an archiemdean Kodaira--Spencer isomorphism (Theorem \ref{N2Linf}) in terms of  hermitian structures  using complex uniformization of $X'$.

There is no natural maps between the Shimura curves $X_U$ and $X'_{U'}$ over the reflex fields, even though they have isomorphic  connected components over $\bar F$.
We will construct another  Shimura curve $X''_{U''}$ with morphisms $X_U\lra X''_{U''}$ and $X'_{U'}\lra X''_{U''}$ so that both point $P_U$ and $P'_{U'}$ have the same 
image $P''_{U''}$.  This gives an isomorphism  over $K$ required in 
(\ref{equation-AP}): 
\begin{equation}\label{K-AP}N(A_0, \tau)\iso N _{P_U}\otimes_{F(P_U)}K.\end{equation}
This isomorphism is in fact an isometry at all archimdean places. 

It remains to show that the isomorphism (\ref{K-AP}) extends to the isomorphism (\ref{equation-AP}). We need only do this by working on every place of $K$.
  For each prime $\wp'$ of $F$, there is a $p$-divisible group $H''$
on certain infinite cover  $X_{1, \wp'}''$ of $X''_{U''}$ defined over $K':=F_{\wp'}^\ur$, the completion of the maximal unramified extension of $F_{\wp'}$. 
This group restricts to  the $p$-divisible group $H':=A[p^\infty]$ on $X'_{1, \wp'}$, an infinite cover of $X'_{U'}$. 
On the other hand, on an infinite cover $X_{1, \wp}$ of $X_U$ over $K:=F_\wp^\ur$, where $\wp:=\wp'|_F$,
 there is a $p$-divisible group $H$ independent of the choice of $E$. The groups $H$ and $H''$  are related   by the Tate module of a $p$-divisible group $I$ on $Y$. See
Proposition \ref{TTT}.

We will  give a description for $\CN _{1, \wp}$ in terms of the deformation of $\CH$  via a  Kodaira--Spencer isomorphism (Theorem \ref{N2Lfin}). 
By Proposition \ref{TTT}, this also gives a description of $\CN_{1, \wp}\otimes O_{F_{\wp'}'^\ur}$ in term of the deformation of $\CH''$ (Corollary \ref{n2n})
which is the required extension of the isomorphism (\ref{equation-AP}) at places over $\wp'$.

\subsection{Quaternionic heights}

Part II (\S6-\S9) of this paper is devoted to the proof the following height formula on quaternionic Shimura curves. 
Let  $U=\prod_v U_v$ be  a maximal open compact subgroup of $\BB_f^\times$ containing the image of $\wh O_{E}^\times=\prod_v O_{E_v}^\times$.

\begin{thm}\label{quaternion main}
Assume that at least two places of $F$ are ramified in $\BB$, and that  
there is no non-archimedean place of $F$ ramified in both $E$ and $\BB$. Then
$$ h_{\bar\CL_U}(P_U)
=-\frac{L_f'(0,\eta)}{L_f(0,\eta)} +\frac 12 \log  \frac{d_\BB}{d_{E/F}}.
$$
Here $d_\BB=\RN (\gd_\BB)$ is the absolute discriminant of $\BB$.
\end{thm}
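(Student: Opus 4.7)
The strategy is to compute $h_{\bar\CL_U}(P)$ directly via arithmetic intersection theory on the integral model $\CX_U$, decompose it into a sum of local heights indexed by places of $F$, and then match each local contribution against the terms on the right-hand side. The approach follows the framework of \cite{YZZ} on Gross--Zagier formulas for quaternionic Shimura curves, adapted here to the Hodge bundle in place of Heegner divisor pairings. First I would write
$$h_{\bar\CL_U}(P)=\frac 1{[F(P):F]}\,\wh\deg(\bar\CL_U|_{\bar P})=\sum_v h_v(P)$$
after choosing a convenient rational section of some power of $\bar\CL_U$ whose divisor is supported on a computable union of CM-type cycles (an auxiliary Eisenstein/theta section is the natural candidate). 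The archimedean and $p$-adic Kodaira--Spencer isomorphisms established in Part I will then translate each $h_v(P)$ into explicit data about deformations of the universal family of false elliptic curves near $\bar P$.

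At each archimedean place $v$, use the complex uniformization of $X_U$ via the coherent quaternion algebra obtained by switching invariants at $v$ and the infinite place of the incoherent $\BB$; under this uniformization $P$ corresponds to an elliptic fixed point $\tau$ in the upper half-plane, and the metric $\|dz\|_v=2\,\Im(z)$ produces an explicit Green's function. A Kronecker--Weil type limit formula adapted to the quaternionic setting identifies the sum over archimedean places with the archimedean component of $-L'(0,\eta)/L(0,\eta)$, together with gamma-factor contributions that will ultimately mediate the passage from the completed $L$-function to its finite part $L_f$.

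At non-archimedean $v$, the hypothesis $\wh O_E^\times\subset U$ ensures that $P$ extends uniquely to an integral point $\bar P\in\CX_U(O_{F(P)})$. The local height $h_v(P)$ is zero at places where $\BB_v$ is split and $E_v/F_v$ is unramified. The standing assumption that no finite place is ramified in both $\BB$ and $E$ then cleanly separates the surviving cases into two disjoint families: at places where $\BB_v$ is a division algebra, the \v Cerednik--Drinfeld $p$-adic uniformization together with the Breuil--Kisin description of the universal $p$-divisible group (set up in Part I) produces the contribution $\tfrac12\log d_\BB$; at places where $E_v/F_v$ is ramified, the local structure of the CM formal module on $\CX_U$ contributes $-\tfrac12\log d_{E/F}$.

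The main obstacle is the archimedean analysis: proving that the Green's function pairing of $P$ against the chosen divisor representing $c_1(\bar\CL_U)$, under the specific metric $\|dz\|_v=2\,\Im(z)$, assembles across all archimedean $v$ into precisely $-L_f'(0,\eta)/L_f(0,\eta)$. This will require a Kronecker-type limit formula tailored to the quaternionic setting, and then careful bookkeeping with the functional equation of $L(s,\eta)$ to transfer the archimedean gamma factors out of the completed $L$-function and into the finite-part logarithmic derivative. The non-archimedean side, while combinatorially delicate, is controlled by the deformation-theoretic inputs from Part I and the no-common-ramification hypothesis.
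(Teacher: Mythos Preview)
Your proposal has a genuine gap in its overall architecture. You are trying to compute $h_{\bar\CL_U}(P)$ \emph{directly} by decomposing it as $\sum_v h_v(P)$ and evaluating each local term, with the expectation that the archimedean places will assemble into $-L_f'(0,\eta)/L_f(0,\eta)$ via a Kronecker-type limit formula. This does not work, for two reasons. First, the quantity $L_f'(0,\eta)/L_f(0,\eta)$ is a global logarithmic derivative, a sum over all non-archimedean places; there is no mechanism by which archimedean Green's functions alone produce it, and no Kronecker limit formula is available on a compact quaternionic Shimura curve (there are no cusps, hence no Eisenstein series on $X_U$ to take a limit of). Second, your place-by-place bookkeeping for the finite part is not how the answer arises: the $L$-derivative is not concentrated at ramified places of $E$ or $\BB$, and there is no canonical rational section of $\bar\CL_U$ whose divisor is a recognisable CM cycle.

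The paper's proof is fundamentally indirect. One forms the automorphic difference $\mathcal D(g,\phi)=\Pr I'(0,g,\phi)-2Z(g,(1,1),\phi)$ on $\GL_2(\BA)$, where the first term is the holomorphic projection of the derivative of a mixed Eisenstein--theta series built from $\phi\in\overline\CS(\BB\times\BA^\times)$, and the second is the Neron--Tate height pairing $\langle Z(g,\phi)_U P^\circ,P^\circ\rangle$ of the CM point against the generating series of Hecke correspondences. Both sides are expanded as finite sums of \emph{pseudo-theta series}; the key Lemma on pseudo-theta series says that if such a sum is automorphic, then the associated genuine theta series satisfy weight-graded identities. Applying this to $\mathcal D(g,\phi)$, the weight-one piece forces
\[
\frac{1}{[O_E^\times:O_F^\times]}\,i_0(P,P)=\frac{L_f'(0,\eta)}{L_f(0,\eta)}+\frac12\log\frac{d_{E/F}}{d_\BB},
\]
where $i_0(P,P)$ is a suitably regularised arithmetic self-intersection of $\bar P$ and the $L$-term comes from the derivative of the \emph{constant term} of the Eisenstein series (not from any archimedean Green's function). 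The theorem then follows from an arithmetic adjunction formula $\frac{1}{[O_E^\times:O_F^\times]}i_0(P,P)=-h_{\bar\CL_U}(P)$, proved by explicit local analysis comparing the Hodge bundle to $\CO(\bar P)$ via residues. In short: the $L'/L$ appears on the \emph{analytic} side of a kernel comparison, not from a direct height decomposition, and it is the automorphy of $\mathcal D$ together with the pseudo-theta machinery that transports it to the geometric side.
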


We prove this theorem by extending our method of proving the Gross--Zagier formula in \cite{YZZ}.
Recall that the Gross--Zagier formula is an identity between the derivative of $L$-series of 
a Hilbert modular form  and the height of a CM point on a modular abelian variety. This formula is proved 
by a comparison  of the analytic  kernel  $\Pr I'(0, g, \phi)$ and a geometric kernel $2Z(g, (1,1), \phi)$ parametrized by certain modified 
Schwartz function $\phi\in \ol\CS (\BB\times \BA^\times )$. More precisely, we have proved that  
the difference 
$$\mathcal D (g, \phi)=\Pr I'(0, g, \phi)-2Z(g, (1,1), \phi), \qquad g\in \GL_2(\BA_F)$$
is perpendicular to the relevant cusp forms.

The cancellation for the ``main terms'' of $\mathcal D (g, \phi)$ eventually imply the Gross--Zagier formula; however, the cancellation of the ``degenerate terms'' imply Theorem \ref{quaternion main}.
To retrieve information of these degenerate terms,  we need to compute this difference for a wider class of Schwartz functions $\phi$ than those considered in \cite{YZZ}. In fact, \cite{YZZ} makes some assumptions on $\phi$ so that the degenerate terms vanish automatically. 
In the following, we sketch some new ingredients of the proof here.

\subsubsection*{Derivative series}
By the reduced norm $q$, the incoherent quaternion algebra $\BB$ is viewed as a quadratic space over $\BA=\BA_F$. 
Then we have a modified space $\ol\CS (\BB\times \BA^\times)$ of Schwartz functions with a Weil representation $r$ by $\GL_2(\BA)\times\BB^\times\times \BB^\times$. 
For each  $\phi\in \ol\CS (\BB\times \BA^\times )$ invariant under an open compact subgroup $U\times U$ of $\BB_f^\times\times \BB_f^\times$, 
we have a finite sum  of products of theta series and  Eisenstein series
$$I(s, g, \phi)_U=\sum _{u\in \mu _U^2\bs F^\times} \sum _{\gamma \in P^1(F)\bs \SL_2(F)}
\delta (\gamma g)^s \sum _{x_1\in E} r(\gamma g)\phi (x_1, u),$$
where $\mu _U=F^\times \cap U$, and $P^1$ is the upper triangular  subgroup of $\SL_2$.

For the decomposition $\BB=E_\BA+E_\BA \mathfrak j$, 
this function is a linear combination of the products $\theta (g, \phi_1)\cdot E (s, g, \phi_2)$ of the theta series $\theta (g, \phi_1)$
for some coherent Schwartz functions  $\phi_1\in \CS(E_\BA)$,
and the Eisenstein series $E(s, g, \phi_2)$ for some incoherent Schwartz functions $\phi_2\in \CS (E_\BA j)$.  This implies that $I(0, g, \phi)=0$.
Let $\Pr I'(0, g, \phi)$ be the holomorphic projection of the derivative at $s=0$ of $I(s, g, \phi)$. 

In Theorem \ref{derivative series}, we give a precise formula for  $\Pr I'(0, g, \phi)$ under some assumptions of Schwartz functions,
which particularly includes the following term:
\begin{equation}\label{1111}
\left(2\frac {L_f'(0, \eta)}{L_f (0, \eta)}+\log |d_{E/F}d_F|\right)\sum _{\mu _U^2\bs F^\times}\sum _{y\in E^\times}\phi (y, u).
\end{equation}
Notice that this term was killed in \cite{YZZ} by some stronger assumption on Schwartz functions.

\subsubsection*{Height series}
For any $\phi\in \ol\CS (\BB\times \BA^\times)$ invariant under $U\times U$, we have a generating series of Hecke operators on the Shimura curve $X_U$:
$$Z(g, \phi)_U=Z_0(g, \phi)+w_U\sum _{a\in F^\times}\sum _{x\in U\bs \BB_f^\times /U}
r(g)\phi (x, aq(x)^{-1})Z(x)_U,$$
where $w_U=|\mu _2\cap U|$, the constant term $Z_0(g, \phi)$ is a linear combination of Hodge classes on $X_U\times X_U$, which can be neglected in this paper,  and every $Z(x)_U$ is a divisor of $X_U\times X_U$ associated to the Hecke operator corresponding to the double coset $UxU$. 
By \cite[Theorem 3.17]{YZZ}, this series is absolutely convergent and defines an automorphic form on $g\in \GL_2(\BA)$ with coefficients in $\Pic (X_U\times X_U)_\BC$.

Let $P=P_U$ be the CM point of $X_U$ as above, and $P_U^\circ\in \Jac (X_U)$ 
be the divisor of degree zero modified by the Hodge classes.
Then we can form a height series 
$$Z(g, \phi)_U=\pair{Z(g, \phi)_U P_U^\circ, \ P_U^\circ}_\NT,$$
where the right-hand side is the Neron--Tate height pairing.

In Theorem \ref{height series},  we give a precise formula for $Z(g, \phi)_U$ under some assumption of Schwartz functions, which particularly includes the following term:
\begin{equation}\label{2222}
-\frac {i_0(P, P)}{[O_E^\times: O_F^\times]}
\sum _{u\in \mu _U^2\bs F^\times}\sum _{y\in E^\times} r(g)\phi (y, u),
\end{equation}
where $i_0(P, P)$ is a modified arithmetic self-intersection number of the Zariski closure $\bar P$ on the integral model $\CX_U$. 
Notice that this terms was killed in \cite{YZZ} by some stronger assumption on Schwartz functions. 

Finally, Theorem \ref{quaternion main} essentially follows from an identity between  (\ref{1111}) and (\ref{2222}). To get this identity, the idea is to use the theory of pseudo-theta series in \S \ref{section pseudo}.
There is already a basic concept of pseudo-theta series in \cite{YZZ}, but here we develop a more general theory to cover the degenerate terms.

\subsubsection*{Pseudo-theta series}

From the explicit formulas in Theorem \ref{derivative series} and Theorem \ref{height series}, the difference $\mathcal D (g, \phi)$ is a finite sum of the so-called {\em pseudo-theta series}:
$$A_{\phi'}^{(S)}(g)=\sum _{u\in \mu^2\bs F^\times}
\sum _{x\in V_1\setminus V_0}\phi_S'(g, x, u)r_V (g)\phi^S (x, u), \qquad g\in \GL_2(\BA),$$
where 
\begin{itemize}
\item $S$ is a finite set of places of $F$ including all archimedean places,  
\item $\mu\subset O_F^\times$ is a subgroup of finite index,
 \item $V_0\subset V_1\subset V$ is a filtration of totally positive definite quadratic spaces of $F$, 
\item $\phi^S\in \CS (V(\BA^S)\times \BA^{S, \times})$ is a Schwartz function outside $S$, and 
\item $\phi'_S$ is
 a locally constant function on  
 $$\prod _{v\in S}  \left(\GL_2(F_v)\times (V_1-V_0)(F_v)\times F_v\right)$$
 with some extra smoothness or boundedness conditions.
 \end{itemize}
 
 Notice that a pseudo-theta series usually is not automorphic. But our key 
 Lemma \ref{pseudo} shows that if a sum of pseudo-theta series is 
 automorphic, then we can replace them by the difference $\theta _{A, 1}-\theta _{A, 0}$ of associated theta series:
 $$\theta_{A, 1}(g)=\sum _{u\in \mu^2\bs F^\times}\sum _{x\in V_1}r_{V_1}(g)
 \phi_S'(1, x, u)r_{V_1}(g)\phi^S(x, u),$$
 $$\theta_{A, 0}(g)=\sum _{u\in \mu^2\bs F^\times}\sum _{x\in V_0}r_{V_0}(g)
 \phi_S'(1, x, u)r_{V_0}(g)\phi^S(x, u).$$
Since the weights of these theta series depend only on the dimensions of $V_i$, there is a vanishing of some sums of theta series grouped in terms of $\dim V_i$.
 
Combining  Lemma \ref{pseudo} for  $\mathcal D(g, \phi)$ with some local computation gives the following identity for the self-intersection of CM points $P$
(Theorem \ref{result of comparison}):
$$\frac 1{[O_E^\times: O_F^\times]}i_0(P, P)
=\frac {L_f'(0, \eta)}{L_f (0, \eta)}+\frac 12 \log (d_{E/F}/d_\BB).$$
This is essentially the desired identity between (\ref{1111}) and (\ref{2222}).
Now Theorem \ref{quaternion main} follows the following arithmetic adjunction formula (Theorem \ref{adjunction all places}):
  $$\frac 1{[O_E^\times: O_F^\times]} i_0(P, P)
  =-h_{\ol \CL_U}(P),$$
which will be  proved  by explicit local computations.

\subsubsection*{Acknowlegment}

The authors are indebted to Wei Zhang in their joint work with him of proving a general Gross--Zagier formula on Shimura curves from 2007 to 2012. Inspired by this joint work, the analytic formula for the quaternionic height was obtained by XY in 2008. Motivated by the averaged Colmez conjecture and the work of Jacob Tsimerman in 2015, the relation between the quaternionic height and the averaged Faltings height was proved by SZ in 2015. The authors would also like to express their deep thanks to Pierre Colmez and Jacob Tsimerman. 

The authors are indebted to Benedict Gross and Don Zagier as the current paper would not exist without the original Gross--Zagier formula. The authors are grateful to Benedict Gross for his encouragement,  support, and his explanation on the very interesting history from the Chowla--Selberg formula to the Colmez conjecture. 

The authors would like to thank Pierre Colmez, Yichao Tian, Tonghai Yang, and Wei Zhang for helpful discussions. 
XY would like to thank the hospitality of AMSS of Chinese Academy of Sciences, and acknowledge the support of the National Science Foundation
under the award DMS-1330987. SZ would like to thank the  AMSS of Chinese Academy of Sciences, and the IAS of Tsinghua University for their hospitality during the preparation of this work, and to the National Science Foundation for its support via awards DMS-1415502 and  DMS-1404369.

\newpage
\part{Faltings heights}

The goal of this part is to prove Theorem \ref{U2O}.
Throughout this part, we fix a quadratic CM extension $E/F$. 

\section{Decomposition of Faltings heights}

In this section, we will first decompose $h(\Phi)$ into a sum of 
 components $h(\Phi, \tau)$ for each  $\tau\in \Phi$.
See Theorem \ref{PhitauPsi}.
This is done by using a hermitian pairing between $\Omega (A_\Phi)$ and $\Omega (A_\Phi^t)$. Then we define the height $h(\Phi_1, \Phi_2)$ 
for a nearby pair $(\Phi_1, \Phi_2)$ of CM types of $E$ (in the sense that $\Phi_1\cap \Phi_2$ has  $g-1$ elements)
as the  average of two heights $h(\Phi_i, \tau_i)$, where $\tau_i$ is the complements of $\Phi_1\cap \Phi_2$ in $\Phi_i$.
We will end this section by showing that $h(\Phi_1, \Phi_2)$ can be computed by any abelian variety isogenous to the product of two CM abelian varieties 
with CM types $\Phi_1$ and $\Phi_2$.

\subsection{Hermitian pairings} \label{section hermitian}
Let $A$ be a complex abelian variety with space $\Omega (A)$ of holomorphic $1$-forms.
Then we  define a metric on the complex line  $\omega(A)=\det \Omega (A)$  by
$$\|\alpha\|^2=\frac{1}{(2\pi)^g}\int _{A(\BC)}|\alpha\wedge \bar\alpha|.$$
In terms of Hodge theory,
this norm is given by the following pairing between $\det H^1(A, \BC)$ and $\det H_1(A, \BZ)$:
$$\|\alpha\|^2=\frac{1}{(2\pi)^g} |\pair{\alpha\wedge \bar\alpha, e_{A}}|,$$
where  $e_{A}$ is a basis of  $\det H_1(A, \BZ)=H_{2g}(A, \BZ)$.

Let  $A^t$ be the   dual abelian variety of $A$.
Then we have  a uniformization
$$A^t(\BC)=H^1(A, \CO_A)/H^1(A, 2\pi i\BZ).$$
This induces the following canonical isomorphisms
$$\Omega (A^t)^\vee=\Lie (A^t)\simeq H^1(A, \CO_A)\simeq H^{0, 1}(A)=\bar\Omega (A).$$ 
 Thus we have  a perfect hermitian pairing:
 $$\Omega (A)\times \Omega (A^t)\lra \BC.$$
 The hermitian pairing is functorial in the sense that if $\phi: B\lra A$ is a morphism of 
abelian varieties, then 
we have
$$(\phi^* \alpha, \beta)=(\alpha, (\phi ^t)^*\beta), \qquad \alpha \in \Omega (A), \quad \beta \in \Omega (B^t).$$
Here $\phi^t: A^t\lra B^t$ denotes the dual morphism. 

Taking determinants, this gives a  hermitian norm  $\|\cdot \|$ on $\omega (A)\otimes \omega(A^t)$.
Using this norm, we obtain the following product formula.

\begin{lem}
For any 
$\alpha \in \det \Omega (A)$ and $\beta \in \det \Omega(A^t)$, 
$$
\|\alpha\|^2\cdot \|\beta\|^2
=\|\alpha \otimes \beta\|^2.$$\end{lem}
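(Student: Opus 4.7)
The plan is to verify the identity by explicit computation after fixing a complex uniformization $A(\BC) = V/\Lambda$ of $A$, where $V$ is a $g$-dimensional complex vector space.

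First, I set up coordinates. Fix a $\BC$-basis $e_1, \ldots, e_g$ of $V$. The uniformization identifies $\Omega(A) = V^\vee$ with dual basis $e_i^*$, and via the isomorphisms $\Omega(A^t)^\vee = \Lie(A^t) = H^{0,1}(A) = \overline{V^\vee}$ it further identifies $\Omega(A^t) = (\overline{V^\vee})^\vee = \overline{V}$ with conjugate dual basis $\overline{e_1}, \ldots, \overline{e_g}$. The paper's hermitian pairing is then computed directly to be $h(e_i^*, \overline{e_j}) = \overline{e_i^*}(\overline{e_j}) = \delta_{ij}$.

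Second, I reduce to one test case. Both $\|\alpha\|^2 \|\beta\|^2$ and $\|\alpha \otimes \beta\|^2 = |h(\alpha, \beta)|^2$ scale as $|c_1|^2 |c_2|^2$ under $(\alpha, \beta) \mapsto (c_1 \alpha, c_2 \beta)$, so it suffices to prove the identity for the single choice
$$\alpha_0 = e_1^* \wedge \cdots \wedge e_g^*, \qquad \beta_0 = \overline{e_1} \wedge \cdots \wedge \overline{e_g}.$$
For these, taking determinants gives $h(\alpha_0, \beta_0) = \det(\delta_{ij}) = 1$, hence $\|\alpha_0 \otimes \beta_0\|^2 = 1$.

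Third, I compute the two Faltings norms. Writing $z_i = e_i^*$ as holomorphic coordinates on $V$ and using $dz_j \wedge d\bar z_j = -2i\, dx_j \wedge dy_j$, a direct integration yields $\|\alpha_0\|^2 = \vol(V/\Lambda)/\pi^g$. The analogous computation on $A^t(\BC) = \overline{V^\vee}/\Lambda^*$ gives $\|\beta_0\|^2 = \vol(\overline{V^\vee}/\Lambda^*)/\pi^g$, where $\Lambda^* = 2\pi i \cdot p(H^1(A, \BZ))$ and $p\colon H^1(A, \BR) \to H^{0,1}(A) = \overline{V^\vee}$ is the real-linear isomorphism given by projection along the Hodge decomposition. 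The main remaining step is to establish
$$\vol(\overline{V^\vee}/\Lambda^*) = \pi^{2g}/\vol(V/\Lambda),$$
which I do by composing three volume scalings: (i) by real Pontryagin duality, $H^1(A, \BZ) \subset \Hom_\BR(V, \BR)$ has covolume $\vol(V/\Lambda)^{-1}$; (ii) the projection $p$ sends $x_k \mapsto \tfrac{1}{2}\overline{e_k^*}$ and $y_k \mapsto \tfrac{i}{2}\overline{e_k^*}$, giving real Jacobian $2^{-2g}$; (iii) scalar multiplication by $2\pi i$ on the $g$-dimensional complex space $\overline{V^\vee}$ has real Jacobian $(2\pi)^{2g}$. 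Multiplying the three factors produces the claimed volume, and combining with the previous formulas yields $\|\alpha_0\|^2 \|\beta_0\|^2 = 1 = \|\alpha_0 \otimes \beta_0\|^2$.

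The main obstacle is the scaling computation in (ii)--(iii) for $\vol(\overline{V^\vee}/\Lambda^*)$, which requires carefully tracking both the Hodge-theoretic projection and the $2\pi i$ twist arising from the uniformization $A^t(\BC) = H^1(A, \CO_A)/H^1(A, 2\pi i \BZ)$ used in the paper; once these normalizations are correctly accounted for, everything collapses to the formula above.
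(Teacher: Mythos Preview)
Your argument is correct, but the paper takes a different and more conceptual route. Instead of fixing coordinates and computing volumes explicitly, the paper observes that the hermitian pairing $\Omega(A)\times\Omega(A^t)\to\BC$ together with its complex conjugate assembles into a perfect pairing $H^1(A,\BC)\otimes H^1(A^t,\BC)\to\BC$, and that this pairing is dual to the integral Poincar\'e-type pairing $H_1(A,\BZ)\otimes H_1(A^t,\BZ)\to 2\pi i\,\BZ$ coming from the uniformization $A^t(\BC)=H^1(A,\CO_A)/H^1(A,2\pi i\,\BZ)$. Taking determinants, the relation $(e_A,e_{A^t})=(2\pi i)^{2g}$ exactly cancels the $(2\pi)^{-g}$ factors in the definition of the Faltings norm on each side, and the identity drops out in two lines.

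Your approach unpacks the same content explicitly: your volume-duality step (i) encodes the integral pairing between $\Lambda$ and its dual, and your steps (ii) and (iii) trace the Hodge projection and the $2\pi i$ twist by hand. This has the virtue of making every normalization visible and verifying the constants concretely, at the cost of being longer and coordinate-dependent. The paper's argument is shorter and coordinate-free, which makes the compatibility with functoriality (used immediately afterward in Theorem~\ref{hAtau}) more transparent.
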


\begin{proof} The direct sum of the pairing  $\Omega (A)\otimes \Omega (A^t)\lra \BC$ and its complex conjugate give a perfect hermitian pairing
$$H^1(A, \BC)\otimes H^1(A^t, \BC)\lra \BC.$$
This pairing is dual to the canonical perfect pairing 
$$H_1(A, \BZ)\otimes H_1(A^t, \BZ)\lra 2\pi i\BZ$$
by the above uniformization of $A^t$. Taking determinants and using the Hodge decomposition, 
we obtain isomorphism of lines:
$$\omega (A)\otimes \ol\omega (A)\otimes \omega (A^t)\otimes\ol\omega (A^t)\simeq \BC.$$
This isomorphism is dual to the isomorphism 
$$\det H_{2g}(A, \BZ)\otimes \det H_{2g}(A^t, \BZ)\lra (2\pi i)^{2g}\BZ.$$
Then we have 
\begin{align*}
\|\alpha\|^2\cdot \|\beta\|^2&=(2\pi)^{-2g}|\pair{\alpha\wedge \bar\alpha, e_{A}}|\cdot |\pair{\beta\wedge \bar\beta, e_{A^t}}|\\
=&(2\pi)^{-2g}|\pair{\alpha\otimes \beta\cdot \ol{\alpha\otimes\beta}, \quad e_{A}\otimes  e_{A^t}}|
=\|\alpha \otimes\beta\|^2.\end{align*}
Here in the last step, we use the pairing $(e_A, e_{A^t})=(2\pi i)^{2g}$. 
\end{proof}

Now we assume that $A$ has a multiplication by an order of a number field $E$.
 Then $E$ is either totally real or CM. Let $c$ be the CM involution on $E$ (which is trivial if $E$ is totally real).
Then for each embedding $\tau: E\lra \BC$, we have a projection $E\otimes\BC \lra \BC$,
and a  $\tau$-eigen quotient space
$$W(A, \tau):=\Omega (A)\otimes _{E\otimes \BC, \tau}\BC.$$ 

The action of $E$ on $A$ induces an action of $E$ on $A^t$. More precisely, for any $\gamma\in E$ corresponding to $\gamma:A\to A$, let $\gamma$ act on $A^t$ via $\gamma^t:A^t\to A^t$, where the latter is just the morphism compatible with the pull-back map $\gamma^*:\Pic^0(A)\to \Pic^0(A)$.
Now we define $W(A^t,\tau)$ analogously.
Then there are  decompositions
$$\Omega (A)=\bigoplus_{\tau: E\lra \BC} W(A, \tau), \qquad \Omega (A^t)=\bigoplus_{\tau: E\lra \BC} W(A^t, \tau).$$
The above hermitian pairing between $\Omega (A)$ and $\Omega (A^t)$ is an orthogonal sum of 
hermitian parings between  $W(A, \tau) $ and $W(A^t, \tau c)$ for each complex embedding $\tau$ of $E$.

\subsection{Decomposition of heights}
Now we assume that  $A$ is defined over a number field $K\subset \BC$ with a semi-abelian relative identity component of the  Neron model $\CA$ over $O_K$,
  that $\CA$ has actions  by the ring of integers $O_E$ of a field $E$,
 and that $K$ contains the normal closure of $E$ in $\bar \BQ$.
 Then for each embedding $\tau: E\lra K$, we can define the $\tau$-quotient $O_K$-module
$$\CW(\CA, \tau):=\Omega (\CA)\otimes _{O_K\otimes O_E, \tau}O_K.$$ 
The action of $E$ on $A$ induces an action on $A^t$ as above, so we define $\CW (\CA^t, \tau)$ analogously.
Define a line bundle over $\Spec\, O_K$ by
$$\CN(A, \tau):=\det \CW(\CA, \tau)\otimes \det \CW(\CA^t, \tau c).$$
At each archimedean place $v$ of $K$, there is a norm $\|\cdot\|_v$ on  $\CN (A, \tau)$ defined as above. Thus we have 
a metrized line bundle 
 $\ol\CN(A, \tau):=(\CN, \|\cdot\|)$.
We  define the $\tau$-part of the Faltings height:
$$h(A, \tau)=\frac1{2[K:\BQ]}\wh\deg(\ol \CN (A, \tau) ).$$ 

\begin{thm}\label{hAtau}
Assume that  $A$ has CM by $O_E$ with type $\Phi\subset \Hom (E, K)$. Then $h(A, \tau)$ depends only on the pair $(\Phi, \tau)$.
\end{thm}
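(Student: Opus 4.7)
My plan is to establish two invariance properties of $h(A,\tau)$: (i) invariance under extension of the base field $K$ inside $\BC$, and (ii) invariance under any $O_E$-equivariant isogeny $\phi\colon A\to A'$ over $K$. Since any two CM abelian varieties of type $(O_E,\Phi)$ become $O_E$-equivariantly isogenous after a suitable enlargement of $K$, these together will yield the theorem. Property (i) is routine: the semi-abelian hypothesis makes $\CA\otimes_{O_K}O_L$ the identity component of the N\'eron model of $A_L$, so $\CW(\CA_L,\tau)=\CW(\CA,\tau)\otimes_{O_K}O_L$ and similarly for $A^t$, while the archimedean norms are unchanged; the normalized degree (carrying the factor $1/[K:\BQ]$) is therefore base-change invariant. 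The substance of the proof is thus (ii).

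For (ii), fix an $O_E$-equivariant isogeny $\phi\colon A\to A'$. It extends to a morphism of N\'eron models, and its dual $\phi^t\colon A'^t\to A^t$ is again $O_E$-equivariant. Passing to cotangent spaces and decomposing by the $O_E$-action yields two short exact sequences of coherent $O_K$-modules
\begin{equation*}
0\to \CW(\CA',\tau)\xrightarrow{\phi^*}\CW(\CA,\tau)\to \omega(\ker\phi)_\tau\to 0,
\end{equation*}
\begin{equation*}
0\to \CW(\CA^t,\tau c)\xrightarrow{(\phi^t)^*}\CW(\CA'^t,\tau c)\to \omega(\ker\phi^t)_{\tau c}\to 0,
\end{equation*}
where $(\cdot)_\chi$ denotes the $\chi$-eigencomponent under the $O_E$-action. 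Taking determinants, the finite contribution to $\wh\deg\ol\CN(A,\tau)-\wh\deg\ol\CN(A',\tau)$ equals $\log|\omega(\ker\phi)_\tau|-\log|\omega(\ker\phi^t)_{\tau c}|$. For the archimedean contribution at each infinite place $v$, the lemma of \S\ref{section hermitian} identifies the complex line $\ol\CN(A,\tau)_v$ isometrically with $\BC$ via $\alpha\otimes\beta\mapsto(\alpha,\beta)$, and analogously for $\ol\CN(A',\tau)_v$; the functoriality identity $(\phi^*\alpha',\beta)=(\alpha',(\phi^t)^*\beta)$ then forces the sections $\phi^*\alpha'\otimes\beta$ and $\alpha'\otimes(\phi^t)^*\beta$ to have equal archimedean norms.

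The heart of the argument, and the step I expect to be the main obstacle, is a Cartier-duality statement for finite flat $O_E$-module schemes over $O_K$: the $O_K$-lengths of the eigencomponents satisfy $\ell(\omega(\ker\phi)_\tau)=\ell(\omega(\ker\phi^t)_{\tau c})$ for every $\tau\in\Hom(E,K)$. I would prove this by exploiting the Weil pairing $\ker\phi\times\ker\phi^t\to\mu_n$, which is $O_E$-compatible with the Cartier-dual action on $\ker\phi^t$; in the CM setting, the interaction of this pairing with the complex conjugation $c$ on $E$ interchanges weights, producing a perfect pairing between the $\tau$-eigencomponent of $\omega(\ker\phi)$ and the $\tau c$-eigencomponent of $\omega(\ker\phi^t)$ and hence the equality of their $O_K$-lengths. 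Granting this, both the finite and archimedean contributions to $\wh\deg\ol\CN(A,\tau)-\wh\deg\ol\CN(A',\tau)$ vanish, establishing (ii) and hence the theorem.
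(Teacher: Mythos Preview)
Your overall framework matches the paper's: reduce to $O_E$-isogeny invariance, dispatch the archimedean places via the functoriality $(\phi^*\alpha',\beta)=(\alpha',(\phi^t)^*\beta)$ of the hermitian pairing, and then argue at finite places. The archimedean step is correct and identical to the paper's.

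The gap is in your Cartier-duality step. The Weil pairing $\ker\phi\times\ker\phi^t\to\mu_n$ is a pairing of group schemes, and Cartier duality simply does \emph{not} induce a perfect pairing between their cotangent modules: over $\BZ_p$ one has $\omega_{\mu_p}\cong\BZ/p$ while $\omega_{(\mu_p)^D}=\omega_{\BZ/p}=0$. So no general duality argument of the kind you sketch can yield $\ell(\omega(\ker\phi)_\tau)=\ell(\omega(\ker\phi^t)_{\tau c})$; the $O_E$-module structure alone does not rule out such pieces appearing in $\ker\phi$. This identity \emph{is} true in the CM setting, but its proof requires structural input you have not invoked. The paper obtains it by passing to $p$-divisible groups: since $\Hom_{O_{E,p}}(\CA[p^\infty],\CB[p^\infty])$ is an invertible $O_{E,p}$-module, the two $p$-divisible groups are isomorphic as $O_{E,p}$-modules; identifying them, $f$ becomes multiplication by some $\alpha\in O_{E,p}$ and $f^t$ becomes multiplication by $\bar\alpha$. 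The induced map on $\CN(\cdot,\tau)$ at $\sigma:K\to\bar\BQ_p$ then has norm $|\alpha_{\sigma\tau}|/|\alpha_{\sigma\tau c}|$, and the product over all $\sigma$ is $1$ because $\sigma\mapsto c_p\sigma$ permutes the embeddings. If you prefer your formulation, the same reduction gives $\ell_\sigma(\omega(\ker\phi)_\tau)=v(\sigma\tau(\alpha))=v(\sigma(\tau c)(\bar\alpha))=\ell_\sigma(\omega(\ker\phi^t)_{\tau c})$ directly---but the essential ingredient is the isomorphism of CM $p$-divisible groups, not the Weil pairing on $\ker\phi$.
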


\begin{proof}
Let $B$ be another abelian variety with CM  by $O_E$ of type $\Phi$.
After a base change, we can assume that $A$ and $B$ are defined over $K$ and have everywhere good reduction over $O_K$. 
We can also assume that there is a dual pair of $O_E$-isogenies over $K$:
$$f: A\lra B, \qquad f^t: B^t\lra A^t.$$
These isogenies extend to integral models over $O_K$:
 $$f: \CA\lra \CB, \qquad f^t: \CB^t\lra \CA^t.$$
They further induce nonzero morphisms of line bundles:
$$f^*: \CW (\CB, \tau) \lra \CW (\CA, \tau), \qquad f^{t *}: \CW(\CA^t, \tau c)\lra \CW(\CB^t, \tau c).$$
Thus we have a rational map of metrized line bundles:
$$\varphi: \CN (B, \tau)\lra  \CN(A, \tau).$$
Computing the norm of this map gives 
$$h(A, \tau)-h(B, \tau)=
-\frac 1{2[K:\BQ]}\sum _{p\le \infty} \sum_{\sigma: K\to \bar \BQ_p}\log \|\varphi _\sigma\|_p.$$
Theorem \ref{hAtau}  will follow from the identity
$$\prod_{\sigma: K\to \bar \BQ_p}\|\varphi _\sigma\|_p=1$$
for each place $p$ of $\BQ$.
Notice that this identity is compatible with base changes.  
 If $p=\infty$,  by the above functoriality of the hermitian pairing of invariant forms, it is easy to see that $\varphi_\sigma$ is an isometry. 
  
  It remains to study the product  when $p<\infty$. We will use the $p$-divisible groups $\CA[p^\infty]$ and $\CB[p^\infty]$ over $O_K$,
  and analogous space of differential forms. 
    For a place $\sigma$ of $K$ over a prime $p$, and an abelian variety $\CX$ from $\CA, \CA^t, \CB, \CB^t$, we have identities 
    $$\Omega (\CX)_\sigma =\Omega (\CX[p^\infty])_\sigma , \qquad \CW(\CX, \tau)_\sigma =\CW(\CX[p^\infty], \tau)_\sigma.$$
    Thus we may view $\varphi_\sigma$ as a morphism of line bundles induced from $p$-divisible groups:
 $$\varphi_\sigma : \CN (B[p^\infty], \tau)\lra  \CN(A[p^\infty], \tau).$$

Notice that $\Hom_{O_{E, p}} (\CA[p^\infty], \CB[p^\infty])$ is a free  module of rank $1$ over $O_{E, p}$.
   Thus we have an isomorphism 
  of $p$-divisible $\BZ_p\otimes O_E$-modules  over $O_K$: 
  $$\iota: \CA[p^\infty]\lra \CB[p^\infty].$$
  We can use this morphism to identify $\CB[p^\infty]$ with $\CA[p^\infty]$, and $\CB^t[p^\infty]$ with $\CA^t[p^\infty]$. 
  In this way, $f$ is an $O_{E, p}$-endomorphism of $\CA[p^\infty]$. Since the Tate module of this group at the generic fiber is a free $O_{E, p}$-module
  of rank $1$, $f$ is given by multiplication by an element $\alpha \in O_{E, p}$ on $\CA[p^\infty]$. Taking the dual,  $f^t$ is given by $\bar\alpha \in O _{E, p}$ on
  $\CA^t[p^\infty]$.
 Thus $\varphi_\sigma $ is given by the multiplication by $(\alpha/\bar\alpha)_\sigma $ on the group $\CN(A[p^\infty], \tau)$. 
It follows that 
  $$\prod _{\sigma: K\to \bar\BQ_p}\|\varphi_\sigma\|_p=\prod _{\sigma : K\to \bar \BQ_p}\frac {|\alpha _{\sigma\tau}|}{|\alpha _{\sigma \tau c}|}=
  \prod _{\sigma : K\to \bar \BQ_p}\frac {|\alpha _{\sigma\tau}|}{|\alpha _{c_p\sigma \tau }|}=1.$$
  Here $c_p$ is an element $\Gal (\bar \BQ_p/\BQ_p)$ which induces the complex conjugation on $E$ via every embedding $E\lra \bar \BQ_p$.
  \end{proof}

By Theorem \ref{hAtau}, we can denote $h(A, \tau)$  by $h(\Phi, \tau)$ if $A$ has  CM type $(O_E,\Phi)$.
In the following, we want to compute the difference:
$$h(\Phi)-\sum _{\tau\in \Phi} h (\Phi, \tau).$$
Let $E_\Phi$ be the reflex field of $\Phi$ generated by all $\Phi$-traces and $t: E\lra E_\Phi$ be the induced trace map.
Then the action $E$ on the $E_\Phi$-vector space  $E_\Phi\otimes _\BQ E$ gives a decomposition into a direct sum of  $E\otimes E_\Phi$-subspaces:
$$E_\Phi\otimes _\BQ E=\wt E_\Phi\oplus \wt E_{\Phi^c}$$
so that the traces of the actions of  $E$ are  $t$ and $t^c$ respectively.
 In particular  $\wt E_\Phi$ and $\wt E_{\Phi^c}$ are two quotient algebras of $E_\Phi \otimes _\BQ E$.
Let $R_\Phi$ denote the image  of $O_{E_\Phi}\otimes O_E$ in $\wt E_\Phi$.
Denote by $\gd_\Phi$  the relative discriminant of the extension $R_\Phi/O_{E_\Phi}$,
and by $d_\Phi$ the norm of $\gd_\Phi$.

\begin{thm}\label{PhitauPsi}
$$
h(\Phi)-\sum _{\tau\in \Phi} h (\Phi, \tau)=-\frac 1{4[E_\Phi:\BQ]}\log (d_\Phi d_{\Phi ^c}).$$
\end{thm}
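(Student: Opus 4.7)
The plan is to average $h(\Phi)$ with $h(\Phi^c)$ using Faltings' duality identity $h(A)=h(A^t)$ \cite{Fal}, which turns the desired formula into a comparison of two metrized invertible $O_K$-modules. Since $A^t$ has CM type $\Phi^c$, this gives $h(\Phi)=h(\Phi^c)$, and so
$$2h(\Phi)=\tfrac{1}{[K:\BQ]}\wh\deg(\bar\omega(\CA)\otimes\bar\omega(\CA^t)).$$
On the other hand, using the bijection $\tau\mapsto\tau c$ from $\Phi$ onto $\Phi^c$,
$$2\sum_{\tau\in\Phi}h(\Phi,\tau)=\tfrac{1}{[K:\BQ]}\wh\deg\Bigl(\bigotimes_{\tau\in\Phi}\det\bar\CW(\CA,\tau)\otimes\bigotimes_{\tau'\in\Phi^c}\det\bar\CW(\CA^t,\tau')\Bigr).$$

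To compare these, I build a canonical injection. For each $\tau\in\Phi$ the quotient map $\Omega(\CA)\twoheadrightarrow\CW(\CA,\tau)$ assembles with the others into a map $\Omega(\CA)\to\bigoplus_{\tau\in\Phi}\CW(\CA,\tau)$ which, by the CM hypothesis, is a generic isomorphism of $E\otimes K$-modules; integrally it is an injection of rank-$g$ projective $O_K$-modules, and taking determinants gives $\omega(\CA)\hookrightarrow\bigotimes_{\tau\in\Phi}\det\CW(\CA,\tau)$. Tensoring with the analogous injection for $A^t$ yields
$$\omega(\CA)\otimes\omega(\CA^t)\hookrightarrow\bigotimes_{\tau\in\Phi}\det\CW(\CA,\tau)\otimes\bigotimes_{\tau'\in\Phi^c}\det\CW(\CA^t,\tau'),$$
an inclusion of invertible $O_K$-modules with torsion cokernel. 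At each archimedean place $v$, the hermitian pairing $\Omega(A_v)\times\Omega(A_v^t)\to\BC$ of \S 2.1 decomposes orthogonally into pairings $W(A_v,\tau)\times W(A_v^t,\tau c)\to\BC$; together with the product formula (the lemma in \S 2.1), this inclusion is an isometry at $v$. Hence the entire arithmetic-degree defect is concentrated at finite primes.

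The finite defect equals $-\log|Q_\Phi|-\log|Q_{\Phi^c}|$, where $Q_\Phi=\mathrm{coker}\bigl(\Omega(\CA)\to\bigoplus_{\tau\in\Phi}\CW(\CA,\tau)\bigr)$ and $Q_{\Phi^c}$ is the analogue for $A^t$. Since $A$ has CM type $\Phi$, the $O_E\otimes O_K$-action on $\Omega(\CA)$ factors through its image $R_\Phi\otimes_{O_{E_\Phi}}O_K\subset\prod_{\tau\in\Phi}O_K$. By Theorem \ref{hAtau} I may replace $A$ by a convenient isogenous CM abelian variety, so I may assume $\Omega(\CA)$ is locally free of rank one over $R_\Phi\otimes_{O_{E_\Phi}}O_K$; then $Q_\Phi\cong\prod_{\tau\in\Phi}O_K/(R_\Phi\otimes_{O_{E_\Phi}}O_K)$ as $O_K$-modules. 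Using the discriminant formula $\mathfrak{d}(R_\Phi/O_{E_\Phi})=\mathfrak{d}(S_\Phi/O_{E_\Phi})\cdot[S_\Phi:R_\Phi]^2$ for $S_\Phi$ the maximal order in $\tilde E_\Phi$, the base-change identity $|M\otimes_{O_{E_\Phi}}O_K|=|M|^{[K:E_\Phi]}$ for finite $M$, and the relation $[\prod_\tau O_K:S_\Phi\otimes_{O_{E_\Phi}}O_K]^2=N_{K/\BQ}(\mathfrak{d}(S_\Phi/O_{E_\Phi})O_K)$, a direct calculation gives $|Q_\Phi|=d_\Phi^{[K:E_\Phi]/2}$, and likewise $|Q_{\Phi^c}|=d_{\Phi^c}^{[K:E_\Phi]/2}$. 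Dividing by $2[K:\BQ]=2[K:E_\Phi]\,[E_\Phi:\BQ]$ then produces the advertised right-hand side.

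The main obstacle is the integral structure input: arranging that $\Omega(\CA)$ be locally free of rank one over $R_\Phi\otimes_{O_{E_\Phi}}O_K$. Over the generic fibre this is automatic from the CM type, but integrally it requires a careful choice of representative in the isogeny class of $A$, for instance via a Serre-type tensor construction $A_L$ attached to an invertible $R_\Phi\otimes O_K$-module $L$ with the correct archimedean signature. Once this is in place the cokernel identification $Q_\Phi\cong\prod_\tau O_K/R_\Phi\otimes_{O_{E_\Phi}}O_K$ is clean, and the discriminant bookkeeping is routine; the $1/2$ in the final formula reflects the quadratic nature of discriminants, while the factor $1/[E_\Phi:\BQ]$ comes from the base change from $O_{E_\Phi}$ up to $O_K$.
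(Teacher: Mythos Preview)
Your architecture is the paper's own: form the section $\det\phi\otimes\det\phi^t$ of $\bigl(\bigotimes_{\tau\in\Phi}N(A,\tau)\bigr)\otimes(\omega(A)\otimes\omega(A^t))^{-1}$, check it has norm one at each archimedean place via the product lemma of \S\ref{section hermitian}, and identify the finite defect with a discriminant. Your endgame computation $|Q_\Phi|=d_\Phi^{[K:E_\Phi]/2}$ is correct once the integral structure is in hand. The genuine gap is exactly the step you flag as ``the main obstacle,'' and your proposed fix does not work. You want $\Omega(\CA)$ locally free of rank one over $R_\Phi\otimes_{O_{E_\Phi}}O_K$ and suggest arranging this by a Serre-type tensor construction. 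But Serre's construction tensors $A$ by an invertible $O_E$-module, which only shifts the ideal class of $\Omega(\CA)$ and does nothing to its local $O_E\otimes O_K$-module type; an invertible $R_\Phi\otimes O_K$-module has no interpretation as an abelian variety in the $O_E$-isogeny class. Theorem~\ref{hAtau} gives isogeny-invariance of both sides, not existence of an $A$ with the structural property you need.

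The paper fills this gap by showing the desired structure holds for \emph{every} $A$ with CM by $O_E$, one $p$-adic place $\sigma$ at a time. The external input is Colmez's lemma \cite[Lem.~II.1.2]{Col} that $H^1_{\dR}(\CA_\sigma)$ is free of rank one over $O_\sigma\otimes O_E$. Writing $O_{E,p}=\BZ_p[x]$ with minimal polynomial $P(t)=P_\Phi(t)P_{\Phi^c}(t)$, the paper identifies the Hodge filtration on $H^1_{\dR}(\CA_\sigma)$ with
\[0\longrightarrow O_\sigma[t]/P_\Phi(t)\xrightarrow{\ \cdot P_{\Phi^c}(t)\ }O_\sigma[t]/P(t)\longrightarrow O_\sigma[t]/P_{\Phi^c}(t)\longrightarrow 0,\]
whose middle exactness is Gauss's lemma. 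Hence $\Omega(\CA_\sigma)\simeq O_\sigma[t]/P_\Phi(t)=R_\Phi\otimes_{O_{E_\Phi}}O_\sigma$ automatically, and the cokernel of $\phi_\sigma$ is computed by the Vandermonde determinant $\prod_{i<j}(x^{\tau_i}-x^{\tau_j})$, giving $\|\det\phi_\sigma\|_p=|\Delta(\Phi)_p|^{1/2}$ and hence the theorem. This Colmez input plus the Gauss-lemma identification of the Hodge filtration is precisely the missing piece in your argument.
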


\begin{proof} 
By definition,  we have morphisms
$$\phi:\quad \Omega (\CA)\lra \bigoplus _{\tau \in \Phi}\CW(\CA, \tau), \qquad 
\phi^t :\quad \Omega (\CA^t )\lra \bigoplus _{\tau \in \Phi}\CW(\CA^t , \tau  c)$$
Thus we have elements 
$$\det \phi\in \left(\bigotimes _{\tau\in \Phi}\CW (\CA, \tau)\right)\otimes\det  \Omega (\CA)^{-1},\qquad 
\det \phi^t \in \left(\bigotimes _{\tau\in \Phi}\CW (\CA^t, \tau c) \right)\otimes\det  \Omega (\CA^t )^{-1}.$$
This gives a section of the line bundle:
$$\ell \in \left(\bigotimes_{\tau \in \Phi} N (A, \tau )\right)\otimes (\omega(A)\otimes \omega(A^t))^{-1}.$$
With metrics defined on these line bundles, we have an adelic metric on $\ell$.
Now we have an identity:
$$h(\Phi)-\sum _{\tau\in \Phi} h (\Phi, \tau)=\frac 1{2[K:\BQ]}\sum _{p\le \infty}\sum _{\sigma: K\to \bar\BQ_p}
\log \|\ell _\sigma\|_p,$$
where 
$$\|\ell_\sigma\|_p =\|\det \phi_\sigma\|_p\cdot  \| \det \phi^t_\sigma\|_p.$$
By the above discussion, it is clear that $\ell$ has norm $1$ at all archimedean places. So we need only consider $p<\infty$.

As a $\BZ_p$-algebra, $O_{E, p}$ is generated by one element $x\in O_{E,p}$, which has a minimal equation
$$P(t)=\prod _{\sigma \in \Hom(E,K)}(t-x^\sigma)\in \BZ_p[t], \qquad x^\sigma \in K_p^\times. $$
Write 
$$P_\Phi (t)=\prod _{\tau \in \Phi}(t-x^\tau)\in E_{\Phi, p}[t], \quad P_{\Phi^c} (t)=\prod _{\tau \in \Phi^c}(t-x^\tau)\in E_{\Phi, p}[t].$$
It is clear that $R_{\Phi, p}=O _{E_\Phi, p}[t]/P_\Phi(t)$. Thus the ideal $\gd_{\Phi, p}$ of $O_{E_\Phi, p}$ is generated by 
 $\Delta (\Phi)_{p}=\prod_{i<j}(x^{\tau _i}-x^{\tau_j})^2$.  
 
To study $\ell _\sigma$, let us write $K_\sigma $ for the completion of $\sigma (K)$,  $O_\sigma$ for the ring of $p$-adic integers in $K_\sigma$,
and $\CA_\sigma$ for the model of $A$ over $O_\sigma$. Consider the Hodge--de Rham filtration
\begin{equation}\label{HdR1}
0\lra \Omega  (\CA_\sigma)\lra H^1_{\dR}(\CA_\sigma)\lra H^1(\CA_\sigma, O_{\CA_\sigma})\lra 0.
\end{equation}
With respect to the action of $O_E$, one has that $H^1_{dR}(\CA_\sigma)$ is free of rank $1$ over  $O_\sigma\otimes O_E$.
See \cite[Lem. II. 1.2]{Co}. The other two terms are free $O_\sigma$-modules under which $O_E$ acts with type $\Phi$ and $\Phi^c$ respectively.

\begin{lem}The above exact sequence of $O_\sigma \otimes O_E$-modules is isomorphic to the following sequence:
\begin{equation}\label{HdR2}
0\lra \frac {O_\sigma [t]}{P_\Phi(t) }\overset{P_{\Phi^c}(t)}\lra 
\frac {O_\sigma [t]}{P(t)}\lra \frac {O_\sigma [t]}{P_{\Phi^c}(t)}\lra 0.
\end{equation}
\end{lem}
\begin{proof}
First we want to show that  \ref{HdR2} is an exact sequence. It  is clear that the sequence is exact at the  first and the third term, and that it is exact at 
the middle term  after base change to $K_\sigma$. Thus the exactness at the middle term is equivalent to the following statement: 
{\em an element $\alpha \in O_\sigma[t]$  divisible by $P_{\Phi^c}(t)$ in $K_\sigma[t]$  is divisible by $P_{\Phi^c}(t)$ in $O_\sigma [t]$.}
This follows from the classical Gauss's lemma.  

It remains to construct an isomorphism from \ref{HdR1} to \ref{HdR2}. 
By the above discussion, we can fix an isomorphism of $O_\sigma\otimes O_E$-module
$$\varphi:\quad  H^1_\dR (\CA_\sigma)\lra \frac {O_\sigma[t]}{P(t)}.$$
We want to extend this isomorphism to an isomorphism from exact sequence \ref{HdR1} to \ref{HdR2}.
It is clear that under  the actions by $O_E$,
 all terms in the exact sequence \ref{HdR1}   are torsion-free with the same CM types as corresponding terms in \ref{HdR2}.
 It follows that  $\varphi$  induces an isomorphism from \ref{HdR1} to \ref{HdR2}.
 \end{proof}

\begin{cor} There is an isomorphism of $(O _\sigma\otimes O_E)$-modules 
$$\Omega  (\CA)_\sigma \simeq O_\sigma [t]/P_\Phi (t)$$
under which $x$ acts as $t$.
\end{cor}

By this corollary, the evaluation $t\mapsto x^\tau$ gives an isomorphism  $\Omega ^\tau\simeq O_\sigma$. Thus we have the following model of $\phi_\sigma$:
$$\phi_\sigma:  O_\sigma [t]/\Phi (t)\lra \bigoplus _{\tau\in \Phi}O_\sigma, \qquad t\longmapsto (x^\tau: \tau \in \Phi).$$
Notice that $O_\sigma [t]/\Phi$ has the the basis
$(1, t, \cdots, t^{g-1})$, and $\bigoplus _{\Phi}O_\sigma$ has a usual basis 
$e_1,\cdots, e_g$ by choosing an ordering $(\tau_1, \cdots, \tau _g)$. 
We have
$$(\det \phi _\sigma) (1\wedge t\wedge t^2\wedge \cdots\wedge t^{g-1})
=\pm \det((t^{\tau _j})^i)\cdot e_1\wedge\cdots \wedge e_g
=\sqrt {\Delta (\Phi)_p}\cdot e_1\wedge\cdots \wedge e_g.$$
Thus finally, we have shown
$$\|\det\phi_\sigma \|_p =|\Delta (\Phi)_p |^{1/2}.$$

Put everything together to obtain
\begin{align*}
h(\Phi)-\sum _{\tau\in \Phi} h (\Phi, \tau)=&\frac 1{4[K:\BQ]}\sum _{p< \infty}\sum _{\sigma: K\to \bar\BQ_p}
\log |\Delta (\Phi)_p \cdot \Delta (\Phi^c)_p |\\
=&-\frac 1{4[E_\Phi:\BQ]}\log (d_\Phi\cdot d_{\Phi^c}).
\end{align*}

\end{proof}

By a {\em nearby pair of CM types} of $E$, we mean a pair $(\Phi_1, \Phi_2)$ of CM types of $E$ such that $\Phi_1\cap \Phi_2$ has order $g-1$.
Let $\tau_i$ be the complement of $\Phi_1\cap \Phi_2$ in $\Phi_i$ for $i=1,2$.
Define 
$$h(\Phi_1, \Phi_2):=\frac 12 \left(h ( \Phi_1, \tau _1)+h(\Phi_2, \tau_2)\right).$$

\begin{cor}\label{Phi2Psi}
$$\frac{1}{2^g}\sum _\Phi h(\Phi)=\frac{1}{2^{g-1}}\sum _{(\Phi_1, \Phi_2) }h (\Phi_1, \Phi_2)
-\frac 14 \log d_F,$$
where the second sum is over non-ordered pairs of nearby CM types of $E$.
\end{cor}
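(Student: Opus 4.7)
The plan is to substitute the identity of Theorem~\ref{PhitauPsi} into the left-hand side, reindex the resulting double sum via a natural bijection between pairs $(\Phi,\tau\in\Phi)$ and ordered nearby CM-type pairs, and finally identify the residual discriminant average with $\log d_F$.

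Summing the identity $h(\Phi) = \sum_{\tau\in\Phi}h(\Phi,\tau) - \frac{1}{4[E_\Phi:\BQ]}\log(d_\Phi d_{\Phi^c})$ of Theorem~\ref{PhitauPsi} over all $2^g$ CM types $\Phi$ and dividing by $2^g$, the key combinatorial step is that the map $(\Phi,\tau)\mapsto\bigl(\Phi,\,(\Phi\setminus\{\tau\})\cup\{\tau c\}\bigr)$ is a bijection between pairs with $\tau\in\Phi$ and ordered nearby pairs. Grouping ordered pairs into unordered ones and applying the definition $h(\Phi_1,\Phi_2) = \tfrac12(h(\Phi_1,\tau_1)+h(\Phi_2,\tau_2))$ gives
$$\sum_\Phi\sum_{\tau\in\Phi}h(\Phi,\tau) \;=\; \sum_{\{\Phi_1,\Phi_2\}}\bigl(h(\Phi_1,\tau_1)+h(\Phi_2,\tau_2)\bigr) \;=\; 2\sum_{\{\Phi_1,\Phi_2\}}h(\Phi_1,\Phi_2).$$

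The remaining, and main, task is the discriminant identity
$$\frac{1}{2^g}\sum_\Phi\frac{\log(d_\Phi d_{\Phi^c})}{[E_\Phi:\BQ]} \;=\; \log d_F.$$
I argue this prime by prime. Localize at a rational prime $p$ and choose, via the Chinese remainder theorem, an element $x\in O_{E,p}$ generating $O_{E,p}$ as a $\BZ_p$-algebra, as in the proof of Theorem~\ref{PhitauPsi}. Then $\gd_\Phi$ is locally generated by $\Delta(\Phi)=\prod_{\tau\neq\tau'\in\Phi}(x^\tau-x^{\tau'})$, and the minimal polynomial $P$ of $x$ satisfies
$$\disc(P) \;=\; \Delta(\Phi)\,\Delta(\Phi^c)\,M(\Phi)^2,\qquad M(\Phi):=\prod_{\tau\in\Phi,\,\tau'\in\Phi^c}(x^\tau-x^{\tau'}),$$
so that $d_\Phi d_{\Phi^c} = d_E^{[E_\Phi:\BQ]}/\mathrm{Nm}_{E_\Phi/\BQ}(M(\Phi))^2$. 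Invoking $d_E = d_F^2\,d_{E/F}$, the displayed identity reduces to
$$\frac{2}{2^g}\sum_\Phi\frac{\log\mathrm{Nm}_{E_\Phi/\BQ}(M(\Phi))}{[E_\Phi:\BQ]} \;=\; \log(d_F\,d_{E/F}).$$
Decompose $M(\Phi) = M^{\mathrm d}(\Phi)\cdot M^{\mathrm o}(\Phi)$ according to whether $\tau'=\tau c$ (diagonal) or $\tau'\neq\tau c$ (off-diagonal). Since $x-\bar x$ locally generates the different of $E/F$, a direct computation shows $\sum_\Phi\log\mathrm{Nm}(M^{\mathrm d}(\Phi))/[E_\Phi:\BQ] = 2^{g-1}\log d_{E/F}$; the off-diagonal factors, paired under $\Phi\leftrightarrow\Phi^c$, give $\sum_\Phi\log\mathrm{Nm}(M^{\mathrm o}(\Phi))/[E_\Phi:\BQ]=2^{g-1}\log d_F$, and adding the two pieces yields the stated identity.

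The main obstacle is the off-diagonal portion: while the diagonal analysis follows cleanly from the classical formula for the different of $E/F$, extracting the $\log d_F$ contribution from the average over $\Phi$ requires careful bookkeeping of the $c$-orbit structure on $\Hom(E,\bar\BQ_p)$ and its interaction with the Galois action on each reflex field $E_\Phi$ at each prime $p$.
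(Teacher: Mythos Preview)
Your reduction---summing Theorem~\ref{PhitauPsi} over all $\Phi$, reindexing $\sum_\Phi\sum_{\tau\in\Phi}h(\Phi,\tau)$ via the bijection $(\Phi,\tau)\leftrightarrow$ (ordered nearby pair), and reducing to the discriminant identity $\frac{1}{2^g}\sum_\Phi\frac{\log(d_\Phi d_{\Phi^c})}{[E_\Phi:\BQ]}=\log d_F$---is exactly what the paper does. The divergence is entirely in how you handle this discriminant identity.

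Your factorization $\disc(P)=\Delta(\Phi)\Delta(\Phi^c)M(\Phi)^2$ and the further split $M(\Phi)=M^{\mathrm d}(\Phi)M^{\mathrm o}(\Phi)$ are correct, and your diagonal and off-diagonal claims are both true. But the route is a detour, and the ``careful bookkeeping'' you flag as an obstacle for the off-diagonal piece is exactly what the paper's argument avoids. The paper works at a fixed embedding $\sigma:K\to\bar\BQ_p$ and observes directly: for any unordered pair $\{\tau,\tau'\}\subset\Hom(E,\bar\BQ)$ with $\tau'\notin\{\tau,\tau c\}$, the factor $(x^\tau-x^{\tau'})^2$ appears in $\Delta(\Phi)\Delta(\Phi^c)$ precisely when $\tau,\tau'$ lie on the same side of $\Phi$, which happens for $2^{g-1}$ of the $2^g$ CM types. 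Hence
\[
\frac{1}{2^g}\sum_\Phi\log\bigl|\Delta(\Phi)\Delta(\Phi^c)\bigr|_p
=\frac{1}{2}\log\Bigl|\frac{\prod_{i<j}(x_i-x_j)^2}{\prod_{i\le g}(x_i-x_{i+g})^2}\Bigr|_p
=\frac{1}{2}\log\Bigl|\frac{d_E}{d_{E/F}}\Bigr|_p=\log|d_F|_p,
\]
and summing over $p$ finishes. This single combinatorial count replaces both your diagonal and off-diagonal analyses at once; in fact, if you try to prove your off-diagonal claim $\sum_\Phi\frac{\log\mathrm{Nm}(M^{\mathrm o}(\Phi))}{[E_\Phi:\BQ]}=2^{g-1}\log d_F$ directly, the cleanest way is precisely this same count (each non-conjugate pair contributes to $M^{\mathrm o}(\Phi)$ for $2^{g-1}$ choices of $\Phi$), so your detour through $M(\Phi)$ and the norm from $E_\Phi$ ultimately collapses back to the paper's argument anyway.
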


\begin{proof}
Take the average over all types $\Phi$ in Theorem \ref{PhitauPsi} to obtain
$$\frac{1}{2^g}\sum _\Phi h(\Phi)- \frac{1}{2^g}\sum _{\Phi, \tau} h (\Phi, \tau)
=\frac 1{4[K:\BQ]}\sum _{p<\infty}\sum _{\sigma: K\to \bar\BQ_p}\frac 1{2^g}\sum _\Phi
\log |\Delta (\Phi)_p \cdot \Delta (\Phi^c)_p|$$
where the second sum is over pairs of CM type $\Phi\subset \Hom (E, \bar \BQ)$ and $\tau\in \Phi$.

For  a fixed $\sigma: K\lra \BQ_p$, the last sum on the right-hand side is a sum of $\log |x_1-x_2|_p^2$ over pairs  $x_1, x_2$ of roots of $\Phi$ with $x_2\neq x_1$ and $x_2\neq x_1^c$. 
Let $x_1, x_2, \cdots, x_{2g}$ be all roots of $P(t)$ such that $x_i^c=x_{i+g}$. Then the last sum on the right-hand side is a multiple of 
$$\log \left|\frac {\prod _{i<j}(x_i-x_j)^2}{\prod _{i\leq g}(x_i-x_{i+g})^2}\right|=\log \left|\frac {d_E}{d_{E/F}} \right|=\log |d_F|^2.$$
Since there are $2^{g-1}$ such terms, we have
$$
\frac 1{[K:\BQ]} \sum _{\sigma: K\to \bar\BQ_p}
\frac 1{2^g}\sum _\Phi
\log |\Delta (\Phi)_p \cdot \Delta (\Phi^c)_p |=\log |d_F|_p.$$
Thus we have 
$$\frac{1}{2^g}\sum _\Phi h(\Phi)- \frac{1}{2^g}\sum _{\Phi, \tau} h (\Phi, \tau)
=-\frac 14\log |d_F|.$$
Then it is easy to obtain the result. 
\end{proof}

\subsection{Some special abelian varieties}
In this subsection, we fix a nearby pair $(\Phi_1, \Phi_2)$ of  CM types  of $E$.
We want to compute the height $h(\Phi_1, \Phi_2)$ by a single abelian variety.

\begin{thm}\label{specialA}
Let $A, A_1, A_2$ be  abelian varieties over a number field $K$ with endomorphisms  by $O_E$ such that the following conditions hold:
\begin{enumerate}[(1)]
\item $A_1, A_2$ are CM-abelian varieties of type $\Phi_1$ and $\Phi_2$ respectively;
\item $A$ is $O_E$-isogenous to $A_1\times A_2$.
\end{enumerate}
Then 
$$ h(\Phi_1, \Phi_2)=\frac 12 \left(h(A_1, \tau_1)+h(A_2, \tau_2)\right)=\frac 12 h(A, \tau),$$
where $\tau_i$ is the complement of $\Phi_1\cap \Phi_2$ in $\Phi_i$,
and $\tau$ is the place $F$ under $\tau_i$. Here in the last equality, $A$ is considered to have a multiplication by $O_F$.
\end{thm}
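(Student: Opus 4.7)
The first equality is immediate: Theorem~\ref{hAtau} gives $h(A_i, \tau_i) = h(\Phi_i, \tau_i)$, and by definition $h(\Phi_1, \Phi_2) = \frac{1}{2}(h(\Phi_1, \tau_1) + h(\Phi_2, \tau_2))$. For the second equality, my plan is to first verify the identity $h(A_1, \tau_1) + h(A_2, \tau_2) = h(A_1 \times A_2, \tau)$ (where the right-hand side uses the $O_F$-action) by a direct comparison of the relevant metrized line bundles, and then to extend this to arbitrary $A$ by proving an $O_E$-isogeny invariance of $h(A, \tau)$ in the style of Theorem~\ref{hAtau}.

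For the product case, the key observation is that $\tau_1$ is the unique embedding of $E$ above $\tau$ lying in $\Phi_1$ (and $\tau_2 = \tau_1 c$ the unique one in $\Phi_1^c$), with the roles interchanged for $A_2$. Consequently, under the $O_F$-action one has canonical identifications of integral line bundles $\CW(\CA_1, \tau) = \CW(\CA_1, \tau_1)$ and $\CW(\CA_1^t, \tau) = \CW(\CA_1^t, \tau_2)$ (and similarly for $A_2$ with indices swapped). Taking the direct sum over factors and then determinants,
$$\ol\CN(A_1 \times A_2, \tau) \;\cong\; \ol\CN(A_1, \tau_1) \otimes \ol\CN(A_2, \tau_2)$$
as metrized line bundles over $O_K$; the Hermitian metrics agree because the pairing from Section~\ref{section hermitian} decomposes orthogonally along $E$-eigenspaces. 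Taking $\wh\deg$ gives the desired additivity in this special case.

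For general $A$, fix an $O_E$-isogeny $f : A_1 \times A_2 \to A$. The pullbacks $f^*$ on $\Omega$ and $f^{t*}$ on the dual induce a rational isomorphism $\varphi = \det f^* \otimes (\det f^{t*})^{-1} : \ol\CN(A, \tau) \to \ol\CN(A_1 \times A_2, \tau)$. As in the proof of Theorem~\ref{hAtau}, the equality $h(A, \tau) = h(A_1 \times A_2, \tau)$ reduces to showing
$$\prod_{\sigma : K \to \ol\BQ_v} \|\varphi_\sigma\|_v = 1$$
at every place $v$ of $\BQ$. At archimedean $v$, the functoriality of the Hermitian pairing, $(f^*\alpha, \beta) = (\alpha, f^{t*}\beta)$, forces the scaling factors of $f^*$ and $(f^{t*})^{-1}$ in each one-dimensional $E$-eigendirection to cancel, making $\varphi_\sigma$ an isometry. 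At finite $v$ above $p$, I adapt the $p$-divisible group argument of Theorem~\ref{hAtau}: decompose $A[p^\infty] = \bigoplus_{w \mid p} A[w^\infty]$ over primes $w$ of $O_E$, reduce the local isogeny at each $w$ to scaling by an element $\alpha \in O_{E_w}$, use Cartier duality to obtain the scaling $\bar\alpha$ on the dual side, and collapse the product to $1$ via the complex-conjugation action on $\ol\BQ_p$-embeddings.

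The main obstacle is the finite-place step. Unlike the CM situation of Theorem~\ref{hAtau}, the module $\Hom_{O_{E,p}}((A_1 \times A_2)[p^\infty], A[p^\infty])$ is not invertible over $O_{E,p}$ --- rationally it is the $2 \times 2$ matrix algebra over $E_p$ --- so one cannot simply identify the two $p$-divisible groups via a single element of $O_{E,p}$. I expect to overcome this by choosing a basis of $V_p(A)$ adapted to the decomposition $V_p(A_1) \oplus V_p(A_2)$ that diagonalizes $f$ in Smith normal form over $O_{E_w}$, splitting the computation into two independent scalar contributions which then combine exactly as in the CM case.
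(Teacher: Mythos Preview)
Your first equality and the product case $A=A_1\times A_2$ are correct. The paper, however, never attempts your general isogeny-invariance step; it takes a shorter route. Using Theorem~\ref{hAtau} it replaces $A_1$ by the image of the composite $A_1\hookrightarrow A_1\times A_2\to A$ (an $O_E$-isogenous CM abelian variety of the same type, hence with the same $h(\cdot,\tau_1)$), and then replaces $A_2$ by $A/A_1$. This produces an exact sequence $0\to A_1\to A\to A_2\to 0$ of $O_E$-abelian varieties; after base change to good reduction the induced exact sequences on $\Omega$ and on their $\tau$-pieces give directly a canonical integral isomorphism $\CN(A,\tau)\cong\CN(A_1,\tau_1)\otimes\CN(A_2,\tau_2)$, compatible with the hermitian metrics. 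So the second equality falls out without ever comparing $A$ to the product $A_1\times A_2$ or proving a rank-$2$ analogue of Theorem~\ref{hAtau}.

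Your Smith normal form plan, by contrast, has a genuine gap. Diagonalizing $f$ on Tate modules over $O_{E_w}$ does not decompose the $p$-divisible groups themselves over $O_{K_\sigma}$: a basis vector of $T_w(A)$ need not generate a sub-$p$-divisible group integrally. At a prime $w$ of $E$ where $A_1[w^\infty]$ and $A_2[w^\infty]$ have different connected/\'etale type, the group $A[w^\infty]$ can be a non-split extension of one by the other (the obstruction is a Serre--Tate-type parameter), so no direct-sum splitting of $A[w^\infty]$ is available. Since the lattice $\CW(\CA,\tau)_\sigma$ depends on the integral $p$-divisible group and not merely on its generic Tate module, the diagonalized Tate-module matrix does not compute $\|\det f^*_\sigma\|_p$, and the promised ``two independent scalar contributions'' do not materialize. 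The exact-sequence trick in the paper is precisely what circumvents this obstruction: it furnishes the needed filtration of $A[p^\infty]$ (and of $\Omega(\CA)$) by rank-one $O_E$-pieces without requiring it to split.
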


\begin{proof}
From an $O_E$-isogeny $A_1\times A_2\lra A$, we obtain an $O_E$-morphism $i: A_1\lra A$ with a finite kernel.
By Theorem \ref{hAtau}, we may 
replace $A_1$ by the image of $i$ to  assume that $i$ is an embedding. Now we have an isogeny $A_2\lra A/A_1$.
Similarly, we may assume that $A_2=A/A_1$. Thus we have a dual pair of  exact sequences of $O_E$-abelian varieties:
$$0\lra A_1\lra A\lra A_2\lra 0, \qquad 0\lra A_2^t\lra A^t \lra A_1^t\lra 0.$$

After a base change, we may assume that $A_1$ and $A_2$ have good reductions over $O_K$. This implies that $A$ also has good reduction over $O_K$.
Thus we have a dual pair of  exact sequences of their Neron models:
$$0\lra \CA_1\lra \CA\lra \CA_2\lra 0, \qquad 0\lra \CA_2^t\lra \CA^t \lra \CA_1^t\lra 0.$$
These  exact sequences induce a dual pair of  exact sequences of their invariant differentials:
$$0\lra \Omega (\CA_2)\lra \Omega (\CA)\lra \Omega (\CA_1)\lra 0, \qquad 0\lra \Omega (\CA_1^t )\lra \Omega (\CA^t )\lra \Omega (\CA_2^t)\lra 0.$$
Then we have exact sequences:
$$0\lra \CW (\CA_2, \tau_2)\lra \CW (\CA, \tau)\lra \CW(\CA_1, \tau_1)\lra 0, $$
$$0\lra \CW(\CA_1^t, \tau_2)\lra \CW (\CA^t, \tau)\lra \CW (\CA_2^t, \tau_1)\lra 0.$$
Taking determinants, we obtain
$$\det \CW (A, \tau) =\CW(A_1, \tau_1)\otimes \CW(A_2, \tau _2),
\qquad \det \CW(A^t, \tau )=\CW (A_1^t,  \tau _2)\otimes \CW (A_1^t, \tau _1).$$
It follows that we have a canonical isomorphism
$$\CN(A, \tau)\simeq \CN (A_1, \tau_1)\otimes\CN (A_2, \tau_2).$$
It is easy to show that this isomorphism is compatible with the metric defined by Hodge theory at infinite places.
Thus we have 
$$h(A, \tau)=h(A_1, \tau_1)+h(A_2, \tau_2).$$
\end{proof}

\section{Shimura curve $X'$}

In this section, we study a Shimura curve of PEL type  
following Deligne \cite{De}, Carayol \cite{Ca}, and \v Cerednik--Drinfeld \cite{BC, Ce}. 
After reviewing the basic facts about the moduli problems, we will study in special cases of 
 the integral models over the ring of integers of the reflex field,  
and the Kodaira--Spencer map over complex numbers.

\subsection{Moduli interpretations}
Recall that we have a totally real number field $F$, a quadratic CM extension $E/F$, and 
a totally definite incoherent quaternion algebra $\BB$ over $\BA=\BA_F$. 
We will consider one of the following special cases later:
\begin{enumerate}[(1)]
\item  $E=F(\sqrt \lambda)$ with a $\lambda \in \BQ$ as in Carayol \cite{Ca};
\item  $\BA_E$ is embedded into $\BB$ over $\BA$
as in the introduction.
\end{enumerate}

Let $(\Phi_1, \Phi_2)$ be a nearby pair of CM types of $E$. Let $\tau$ be the place of $F$ missing in $\Phi_1\cap \Phi_2$, and $B$ the quaternion 
algebra over $F$ with ramification set $\Sigma (\BB)\setminus \{\tau\}$.
We form a reductive group $G'':=B^\times \times _{F^\times} E^\times$, the quotient of $B^\times \times  E^\times$ by $F^\times$ via the action $a\circ(b,e)=(ba^{-1},ae)$.
Let $B^1$ and $E^1$ denote respectively the subgroups of $B$ and $E$ with norm $1$. 
Then $G''$  has the same derived subgroup $G_1: =B^1$ as $G=B^\times$ with quotient  isomorphic to $F^\times \times E^1$ via the 
following map:
$$\nu=(\nu_1, \nu_2):G''/G_1\lra F^\times \times E^1, \quad \ (b, e)\longmapsto (q(b) e\bar e, e/\bar e).$$
Here $q(b)$ denotes the reduced norm of $b$. 

Define an algebraic group $G'$ over $\BQ$ as a subgroup of $G''$ by 
$$G'(\BQ)=\left\{g\in G''(\BQ) : \nu_1(g)\in \BQ^\times\right\}.$$
Let $h': \BC^\times \lra G'(\BR)$ be the complex structure which has a lifting to a morphism $h\times h_E$ to $(B\otimes \BR)^\times \times (E\otimes \BR)$ as follows:
the component to $(B\otimes \BR)^\times =G(\BR)$ is the same as $h$ for defining quaternion Shimura curve  as in Carayol \cite{Ca}, see also \S \ref{section 4.1}; the component to 
$(E\otimes \BR)^\times\overset {\Phi_1}\iso  (\BC^\times)^g$
is given by 
$$h_E: z\longmapsto (1, z^{-1}, \cdots, z^{-1})$$
where the first component corresponds to the place over $\tau$.
The class of $G'(\BR)$-conjugacy class of $h'$ is identified with $\gh^\pm=\BC\setminus \BR$ by  
$$ghg^{-1}\longmapsto g(i), \qquad g\in G'(\BR).$$

Thus we have Shimura curves over $\BC$ indexed by open and compact subgroups $U'$ of $G'(\wh \BQ)$:
$$X'_{U'}(\BC)=G'(\BQ)\bs \gh^\pm\times G'(\wh \BQ)/U'.$$
It is not difficult to show that the reflex field of $h'$ is the same as the reflex field of $\Phi_1+\Phi_2$.
Let $F'$ be the reflex field of $h'$. Then $X'_{U'}$ is defined over $F'$. The following is a relation between $F$ and $F'$:

\begin{pro} Let $\Psi$ denote $\Phi_1\cap \Phi_2$, and let  $\tau: F\lra \BC$ be the place of $F$ missing in $\Psi|_F$.
Then $F'$ contains $\tau (F)$.
\end{pro}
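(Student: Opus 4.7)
The plan is to use the statement just made in the text: the reflex field $F'$ of $h'$ equals the reflex field of the multiset $\Phi_1+\Phi_2$, which in turn is the fixed field in $\bar\BQ$ of the stabilizer
$$H = \bigl\{\sigma \in \Gal(\bar\BQ/\BQ) : \sigma \cdot (\Phi_1+\Phi_2) = \Phi_1+\Phi_2\bigr\},$$
where $\Gal(\bar\BQ/\BQ)$ acts on $\Hom(E,\bar\BQ)$ by post-composition and on multisets in the obvious way. It will then suffice to show that every $\sigma \in H$ satisfies $\sigma\circ\tau = \tau$ as elements of $\Hom(F,\bar\BQ)$, for this is precisely the condition that $\sigma$ fix $\tau(F)\subset\bar\BQ$ pointwise.

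The crucial observation is a multiplicity argument. Writing $\Phi_i = \Psi \cup \{\tau_i\}$ with $\tau_1 \in \Phi_1\setminus\Phi_2$ and $\tau_2 \in \Phi_2\setminus\Phi_1$ the two extensions of $\tau$ to $E$, the multiset $\Phi_1+\Phi_2$ decomposes canonically as $2\Psi + \{\tau_1,\tau_2\}$: every element of $\Psi$ occurs with multiplicity $2$, while $\tau_1$ and $\tau_2$ each occur with multiplicity $1$. Any $\sigma \in H$ preserves this multiplicity stratification, and in particular must permute the two-element subset $\{\tau_1,\tau_2\}$ among itself.

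Restricting to $F$, both $\tau_1$ and $\tau_2$ lie above $\tau$, so
$$\sigma\circ\tau \;=\; (\sigma\circ\tau_1)\big|_F \;\in\; \bigl\{\tau_1|_F,\tau_2|_F\bigr\} \;=\; \{\tau\}.$$
This forces $\sigma\circ\tau = \tau$, giving $\sigma|_{\tau(F)} = \mathrm{id}$ and hence $\tau(F) \subset F'$.

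I do not anticipate any real obstacle here: once one accepts the description of $F'$ as the fixed field of the multiset-stabilizer of $\Phi_1+\Phi_2$ (which the text declares to be straightforward from the definition of $h'$, via the projection to the $E^\times$-factor of $G''$), the argument is purely combinatorial and exploits in an essential way the asymmetry that the $g-1$ embeddings in $\Psi$ have strictly greater multiplicity than the two embeddings above $\tau$. The only point requiring care is the convention for the Galois action on CM-type multisets, but this is the standard formalism.
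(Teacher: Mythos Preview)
Your proof is correct and follows essentially the same approach as the paper: both decompose the multiset as $\Phi_1+\Phi_2 = 2\Psi + \tau_1 + \tau_2$, observe that any $\sigma$ stabilizing this multiset must (by multiplicity considerations) permute $\{\tau_1,\tau_2\}$, and conclude that $\sigma$ fixes $\tau(F)$ since both $\tau_i$ restrict to $\tau$. Your write-up is somewhat more explicit about the Galois formalism, but the argument is the same.
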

\begin{proof} 
By definition, $\Gal (\BC/F')$ consists of  elements $\sigma \in \Aut (\BC)$   fixing the weighted set   $\Phi_1+\Phi_2$.
It is clear that 
$$\Phi_1+\Phi_2=2\Psi +\tau _1+\tau_2$$
with $\tau_i$ the complement of $\Psi$ in $\Phi_i$. 
Considering multiplicity, such a $\sigma$ fixes $\tau_1+\tau_2$.
In other words, it fixes $\tau (F)$.
\end{proof}

Let $X'$ be the projective limit of $X'_{U'}$ for all $X'_{U'}$.
 Then $X'$ is a scheme over $F'$ with a  right action by $G'(\wh \BQ)$ and a uniformization given by
$${X'}_{\tau'}(\BC)=G'(\BQ) \bs \gh^\pm \times G' (\wh \BQ).$$
See Carayol \cite[\S3.1]{Ca}.

Denote by $G''(\BQ)_+$ the subgroup of elements $(b,e)$ in $G''(\BQ)=B^\times \times _{F^\times} E^\times$ such that $q(b)\in F$ is totally positive. 
As in Carayol \cite[\S3.4]{Ca}, the curve $X'$ is equipped with a right action of the subgroup  $\wt G=G''(\BQ)_+\cdot G'(\wh \BQ)$ of $G''(\wh \BQ)$ as follows: for any elements $(g _0, g_1)\in G''(\BQ)_+\times G'(\wh \BQ)$, define 
$$[z, h]\cdot (g_0g_1)=[g_0^{-1} z, g _0^{-1}h g_0g_1].$$
The subgroup of elements fixing every point on $X'$ is given by the center  $Z''(\BQ)\simeq E^\times$ of $G''(\BQ)_+$.

In the following, we want to describe the moduli problem associated to $X'_{U'}$ following Carayol \cite[\S2]{Ca}.
For this, we will work on the quaternion algebra $B'=B\otimes _F E$ over $E$.
Let $V':=B'$ as a left $B'$-vector space. Fix an invertible element $\gamma'\in B'$ such that $\bar \gamma'=-\gamma'$ where $b\mapsto \bar b$
is the involution on $B'=B\otimes _F E$ induced from the canonical involution on $B$ and the complex conjugation on $E$. Then we define 
a symplectic form on $V'$ by
\begin{equation}
\label{psi'}
\psi'(v, w)=\tr_{E/\BQ}\tr _{B'/E}(\gamma' v\bar w).
\end{equation}
Here $\tr _{B'/E}$ is the reduced trace on $B'$. 
This form induces an involution $*$ on $B'$ by:
\begin{equation}\label{ell^*}\psi' (\ell v, w)=\psi' (v, \ell^* w), \qquad \ell ^*=\gamma ^{'-1}\bar \ell \gamma'.\end{equation}
The group $G'$ can be identified with the group of $B'$-linear symplectic similitudes of $(V', \psi')$. More precisely, $G'$ is a subgroup of $G''$ 
which can be identified with 
the subgroup $B^\times\cdot E^\times$ of $B^{'\times }$ which acts on $V'=B'$ by right multiplications. 

The composition of $h'$ and the action of $G'(\BR)$ on $V'_\BR$ induce a Hodge structure on $V'$ of weights  $(-1, 0)$ and $(0, -1)$. 
One can choose a $\gamma$ such that $\psi'$ induces a polarization of the Hodge structure $(V', h')$:
$$\psi'(x, xh'(i)^{-1})\ge 0, \qquad \forall x\in V'_\BR.$$

 By Deligne \cite[\S6]{De}, $X'_{U'}$ represents the following  functor 
 $\CF_{U'}$ on the category of $F' $-schemes when $U'$ is sufficiently small.
For any $F'$-scheme $S$, $\CF_{U'}(S)$ is the set of isomorphism classes of quadruples  $[A, \iota, \theta, \kappa]$
where
\begin{enumerate}[(1)]
\item $A$ is an abelian scheme over $S$ up to isogeny;
\item $\iota: B'\lra \End ^0(A/S)$ is a homomorphism such that the induced action of $E$ on the $\CO_S$-module $\Lie (A/S)$ has the trace given by 
$$\tr (\ell, \Lie (A/S))=t(\tr _{B'/E}(\ell)), \qquad \forall \ell\in B', $$
where $t: E\lra F'$ is the trace map of $\Phi_1+\Phi_2$.
\item $\theta: A\lra A^t$ is a  polarization whose  Rosati involution on $\End ^0(A/S)$ induces the involution $*$ of $B'$ over $F$;
\item $\kappa: \wh V' \times S\lra H_1(A, \wh \BQ)$ is a $U'$-orbit of similitudes of  $B'$-skew hermitian modules.
\end{enumerate}

The group $\wt G$ acts on the inverse system of $\CF_{U'}$ as follows:
$$[A, \iota, \theta, \kappa]\cdot g =[A, \iota, \nu_1 (g)\theta, \kappa \cdot g].$$

\subsection{Curves $X'$ in case 1}

Let $p$ be a prime number, and $\wp'$ be a prime ideal of $O_{E}$ dividing $p$. We want to study the integral model of $X'_{U'}$ 
over the ring  $O_{(\wp')}:=O_{E}[x^{-1}: x\in O_{E}\setminus \wp']$ in the case considered in 
 Carayol \cite[\S2, \S5]{Ca}, i.e.,  $E=F(\sqrt{\lambda})$ with $\lambda$ a negative integer
 such that $p$ is split in $\BQ(\sqrt \lambda)$. Fix a square root $\mu$ of $\lambda$ in $\BC$ which gives 
a CM type of $E$ by 
$$\Phi_1: \quad E=F\otimes _\BQ \BQ(\sqrt\lambda)\lra F\otimes _\BQ \BC\simeq \BC^g,
\qquad \sqrt \lambda\mapsto (\mu, \cdots, \mu).$$
Let $\Phi_2$ be a nearby  CM type of $E$ which differs from $\Phi_1$ at the place over $\tau$ of $F$. Then the reflex field of $\Phi_1+\Phi_2$ is $E$.

 Using the isomorphism 
$$E_p=F_p\oplus F_p, \qquad \lambda \longmapsto (\mu, -\mu),$$
we have an identification 
$B'_p=B_p\times B_p$ so that the involution $*$ on $B'$ defined in \ref{ell^*}  induces an involution on $B_p$,  still denoted by $*$, so that 
 $(a, b)^*=(b^*, a^*)$. In this way we may assume that $O_{B', p}=O_{B, p}^*\oplus O_{B, p}$. 
The form $\psi'$ induces a perfect $(B_p, *)$-hermitian  pairing  $\psi_p: B_p\times  B_p\lra \BQ_p$ as follows 
$$\psi'_p((a,b), (c, d))=\psi_p (a, d)-\psi_p (c, b).$$
The subgroup $G'(\BQ_p)$ of $B_p^{'\times}$ consists of elements $(\lambda b, b)$ with $\lambda \in \BQ_p^\times$ 
and $b\in B_p^\times$. We identify $G'(\BQ_p)\simeq \BQ_p^\times\times B_p^\times$ by this description.

Let $O_{B', p}$ be an order of $B'_p$ stable under involution $\ell\mapsto \ell^*$, and let $\Lambda'_p$ be an $O_{B', p}$- 
lattice of $V'_p$ such that  $\psi'|_{\Lambda'_p}$ takes integral value and is perfect. Such an oder $O_{B', p}$ and a lattice $\Lambda'_p$ 
can be constructed from a maximal order $O_{B, p}$ of $B_p$ by the following formulae:
$$O_{B', p}:=O_{B, p}^*\oplus O_{B, p}, \qquad \Lambda_p': =O_{B, p}^\vee\oplus O_{B, p}$$ where 
$$O_{B, p}^\vee:=\left\{x\in B_p: \quad \psi _p(x, y)\in \BZ_p, \quad \forall y\in O_{B, p}\right\}.$$
The elements  of $G(\BQ_p)$ fix $\Lambda_p'$ form  a maximal compact subgroup $U_p'(1):=\BZ_p^\times \times O_{B, p}^\times$.

Let $\wp$ be the prime of $O_F$ under $\wp'$. Write   $O_{F, p}=O_\wp+O^\wp$ as a direct sum of $\BZ_p$-algebras, then we have a decomposition: 
$$O_{E,p}=O_{F, p}\oplus O_{F, p}=O_\wp\oplus O^\wp\oplus O_\wp\oplus O^\wp.$$
For any $O_{E, p}$-module $M$, there is a corresponding decomposition 
$$M=M_{1\wp}+M_1^\wp+ M_{2\wp}+M_2^\wp.$$

Let $\BZ_{(p)}=\BZ_p\cap \BQ$ be the localization of $\BZ$ at $p$. 
Let $O_{B', (p)}=O_{B', p}\cap B'$ be the $\BZ_{(p)}$-lattice in $B'$. 

For an open compact subgroup $U'^p$ of $G'(\wh \BQ^p)$,  define a moduli problem $\CF_{1, U'^p}$ over $O_{\wp}$  as follows:
for any $O_{\wp}$-scheme $S$, $\CF_{1, U'^p}(S)$ is the set of isomorphism classes  of quadruple  $[A, \iota, \theta, \kappa]$
where
\begin{enumerate}[(1)]
\item $A$ is an abelian scheme over $S$ up to prime-to-$p$ isogeny;
\item $\iota: O_{B', (p)}\lra \End (A/S)\otimes \BZ_{(p)}$ is a homomorphism such that the induced action of $O_{B'}$ on the $\CO_S$-module $\Lie (A/S)$ has the following properties:
\begin{itemize}
\item $\Lie (A)_{2\wp}$ is a special $O_{B, \wp}$-module in the sense that it is locally free of rank $1$ over $O_K\otimes O_S$ for any unramified  quadratic extension $K$ 
of $O_\wp$ embedded into $O_{B, \wp}$;
\item $\Lie ( A)_2^\wp=0$.
\end{itemize}
\item $\theta: A\lra A^t$ is a polarization whose  Rosati involution on $\End (A/S)\otimes \BZ_{(p)}$ induces the involution $*$ of $O_{B', p}$;
\item $\kappa: \wh V^p \times S\lra H_1(A, \wh \BQ^p)$ a $U'^p$-orbit of  similitudes  of $\wh O_{B'}^p$-skew hermitian modules.
\end{enumerate}

\begin{pro} \label{regularity}
When $U'^p$ is sufficiently small,  the scheme $\CF_{1, U'^p}$ is represented by a regular scheme $\CX'_{1, U'^p}$ over $O_{(\wp')}$ with the following properties:
\begin{enumerate}[(1)]
\item for the embedding $\tau': O_{(\wp')}\lra \BC$, the curve $\CX_{1, U'^p}(\BC)=X_{U_p'(1)\cdot U'^p}(\BC)$, where $U_p'(1)$ is the maximal open compact subgroup of $B_p'^\times$ 
fixing $\Lambda'_p$;
\item if $\wp$ is split in $B$, then $\CX_{1, U'^p}'$ is smooth over $O_\wp$;
\item if $\wp$ is ramified in $B$, then $\CX_{1, U'^p}'$ is a semistable relative Mumford curve in the sense that every irreducible component in the special fiber is isomorphic to $\BP^1$. 
\end{enumerate}
\end{pro}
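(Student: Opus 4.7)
The plan is to follow the strategy of Carayol in \cite[\S5--\S7]{Ca}. First, representability of $\CF_{1,U'^p}$ by a scheme over $O_{(\wp')}$ is established via standard PEL rigidification: when $U'^p$ is sufficiently small, the automorphism group of any object $[A,\iota,\theta,\kappa]$ is trivial, so the moduli functor is represented by a quasi-projective $O_{(\wp')}$-scheme. That it is a relative curve comes from a tangent space computation via Grothendieck--Messing theory: the special module condition on $\Lie(A)_{2\wp}$ together with the vanishing $\Lie(A)_2^\wp=0$ and the trace condition on $\Lie(A)$ at archimedean places determine the Hodge filtration on the crystal up to exactly one parameter. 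Regularity then follows from the explicit description of the local deformation rings treated below.

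Part (1) is proved by comparing the moduli interpretation of $\CX'_{1,U'^p}\otimes_{O_{(\wp')},\tau'}\BC$ with Deligne's construction of the complex Shimura curve $X'_{U_p'(1)U'^p}$. The trace condition together with the archimedean CM-type datum $(\Phi_1,\Phi_2)$ forces the Hodge structure arising from $A$ to agree with the one coming from $h'$, while the special condition at $p$ is automatic in characteristic $0$. Deligne's theorem then identifies the complex fiber with the quotient giving $X'_{U_p'(1)U'^p}(\BC)$.

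For parts (2) and (3), we analyze the formal neighborhood of a closed point $x$ of the special fiber. By the Serre--Tate theorem and the polarization, this neighborhood is controlled by the equicharacteristic deformation theory of the $p$-divisible group $A[\wp^\infty]$ equipped with its $O_{B,\wp}$-action. When $\wp$ splits in $B$, Morita equivalence converts the $O_{B,\wp}\cong M_2(O_\wp)$-action into an $O_\wp$-action on a one-dimensional formal $O_\wp$-module of height $2$, whose universal deformation ring is a formal power series ring in one variable over $O_{(\wp')}^{\wedge}$; hence $\CX'_{1,U'^p}$ is smooth of relative dimension $1$ over $O_\wp$. When $\wp$ is ramified in $B$, we invoke the \v Cerednik--Drinfeld uniformization \cite{BC}: the formal completion of $\CX'_{1,U'^p}$ along its supersingular locus is identified with a disjoint union of quotients of Drinfeld's formal upper half-plane $\wh{\Omega}_{F_\wp}$ by discrete cocompact subgroups, base-changed to $O_{(\wp')}$. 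Drinfeld's formal scheme is a regular formal scheme whose special fiber is a tree of projective lines meeting transversally with each component meeting at most two others, which yields the semistable Mumford-curve description.

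The main obstacle will be the precise identification in the ramified case: one must verify that special formal $O_{B,\wp}$-modules in Drinfeld's sense correspond exactly to the $p$-divisible groups arising from the PEL datum $[A,\iota,\theta,\kappa]$, including matching the polarization and the prime-to-$p$ level structure. Once this comparison is in place, regularity and the combinatorial structure of the special fiber transfer directly from Drinfeld's theorem. The split case and the representability arguments are by now standard, so the heart of the work is the correct formulation of the special module condition and its translation into Drinfeld-level data at the ramified primes.
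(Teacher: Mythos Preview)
Your proposal is correct and follows essentially the same route as the paper: both reduce representability to a PEL-type moduli problem and then cite Carayol \cite[\S5.4]{Ca} for the split case and \v Cerednik--Drinfeld \cite{BC} for the ramified case to obtain regularity and the structure of the special fiber.

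There is one methodological difference worth noting in the representability step. You invoke the general principle that triviality of automorphisms for small $U'^p$ yields representability of the PEL functor. The paper instead gives a concrete construction: it first passes from the prime-to-$p$-isogeny functor $\CF_{1,U'^p}$ to an equivalent functor $\wt\CF_{U'^p}$ classifying actual abelian schemes (not isogeny classes) with an $O_{B'}$-action, a polarization of fixed degree $d$, and full level-$n$ structure for some $n\ge 3$ prime to $p$; it then uses Mumford's fine moduli space $\CM_{2g,d,n}$ over $\BZ_{(p)}$ as a target and constructs $\CX'_{1,U'^p}$ as a Hilbert-scheme-type space over $\CM_{2g,d,n}$ classifying the additional $O_{B'}$-action and refined level structure. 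This buys a direct quasi-projective construction without appealing to Artin-type criteria, and it makes the passage between the isogeny-class formulation and the abelian-scheme formulation explicit. Your approach is cleaner to state but relies on representability results for PEL functors as a black box. For the deformation-theoretic analysis in (2) and (3), you actually sketch more of the content (Serre--Tate, Morita equivalence, Drinfeld's $\wh\Omega$) than the paper, which simply cites the references; either is acceptable here.
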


\begin{proof} 
Let $O_{B'}$ be an $O_{E}$-order of $B'$. Replacing $O_{B'}$ by $O_{B'}\cap O_{B'}^*$, we may assume that $O_{B'}$ is stable 
under $*$. Let $\Lambda'$ be an $O_{B'}$-lattice of $B'$ with localization $\Lambda'_p$. With $\Lambda $ replaced by $m\Lambda$ with an $m$ prime to $p$,
we may assume that $\psi'$ takes integral value on $\Lambda'$. 
Assume now $U'^p$ fixes $\wh \Lambda'^p$ and fixes every point in $\Lambda'^p/n\Lambda'^p$ for some $n\ge 3$ prime to $p$.
It is easy to see that above functor is isomorphic to the following  functor $\wt \CF_{U'^p}$ over $O_{\wp}$-schmes:
for any $O_{\wp}$-scheme $S$, $\wt \CF_{U'^p} (S)$ is the set of isomorphism classes  of quadruple  $[A, \iota, \theta, \kappa]$
where
\begin{enumerate}[(1)]
\item $A$ is an abelian scheme over $S$;
\item $\iota: O_{B'}\lra \End (A/S)$ is a homomorphism such that the induced action of $O_{B'}$ on the $\CO_S$-module $\Lie (A/S)$ has the following properties:
\begin{itemize}
\item $\Lie (A)_{2\wp}$ is a special $O_{B, \wp}$-module in the sense that it is locally free of rank $1$ over $O_K\otimes O_S$ for any unramified  quadratic extension $K$ 
of $O_\wp$ embedded into $O_{B, \wp}$;
\item $\Lie ( A)_2^\wp=0$.
\end{itemize}
\item $\theta: A\lra A^t$ is a polarization whose  Rosati involution on $\End (A/S)$ induces the involution $*$ of $O_{B'}$;
\item $\kappa: \wh \Lambda^p \times S\lra H_1(A, \wh \BZ^p)$ a $U'^p$-orbit of  similitudes of $\wh O_{B'}$-skew hermitian modules.
\end{enumerate}

The condition (4) implies that the relative dimension of $A/S$ is $2g$.
 Also   the degree of the polarization $\theta$ in (3)  is $d=[\Lambda'^\vee, \Lambda']$
 where $\Lambda'^\vee$ is  the dual lattice of $\Lambda'$.
By Mumford theory, there is a fine  moduli space $\CM_{2g, d, n}$ over $\BZ_{(p)}$ classifying the the triples of $(A, \theta, \kappa_n)$ of an abelian variety $A$ of dimension $2g$,
and a polarization $\theta$ of degree $d$, and a full level $n$ structure $\kappa _n$. Thus we have a morphism of functor $\CF'_{U'^p}\lra \CM_{2g, d, n}$.
Now we can use the theory of Hilbert schemes to prove the existence of a scheme $\CX'_{0, U'^p}\lra \CM_{2g, d, n}$ to classify other additional structures on the triple $(A, \theta, \kappa_n)$
required in the functor $\wt \CF_{0, U'^p}$.

The second statement is proved in Carayol \cite[\S5.4]{Ca} in the case $\wp$ is split in $B$, and proved by \v Cerednik--Drinfeld (cf. \cite{BC, Ce}) in case $\wp$ is not 
split in $B$. \end{proof}

\begin{remark}
Our moduli problem here is slightly different from the moduli problem $\frak M_{0, H'}^2$ in Carayol \cite[\S5.2.2]{Ca} in three points: 
\begin{enumerate}[(1)]
\item we do not require that $p$ is prime to the discriminant $\gd_B\subset O_F$ of $B$;
\item  we allow $A$ to have prime-to-$p$ isogeny which is more flexible than \cite{Ca};
\item  we do not input a level structure  $k_p^\wp$  as in \cite{Ca}.
\end{enumerate}
\end{remark}

 \subsubsection*{$p$-divisible groups}
 Let $U'=U'_p(1)\cdot U'^p$ with $U'^p$ sufficiently small so that the functor $\CF _{U'}$ is representable by a universal family of abelian varieties:
 $$\CA_{U'}\lra \CX_{U'}.$$ 
  There is a  Barsotti--Tate  $O_{B', p}$-module $\CA_{U'}[p^\infty]$ on $\CX'_{U'}$ for any sufficiently small compact open subgroup $U'^p$ of $G'(\wh \BQ)^p$. With our assumption, this group has a decomposition 
$$\CA_{U'}[p^\infty]=\CA_{U'}[p^\infty]_1+\CA_{U'}[p^\infty]_2=\CA_{U'}[p^\infty]_{1\wp}+\CA_{U'}[p^\infty]_1^\wp+\CA_{U'}[p^\infty]_{2\wp}+\CA_{U'}[p^\infty]_2^\wp.$$
We define 
$$\CH'_{U'}:=\CA_{U'}[p^\infty]_2.$$
By  part (2) in the definition of $\CF _{1, U'^p}$, the $\wp$-part $\CH'_{U', \wp}$ is a special $O_{B, \wp}$-module,
and the prime-to-$\wp$-part $\CH _{U'}'^\wp$ is an \'etale $O_{B'}^\wp$-module.

It is clear that  the generic fiber $H_{U'}'=A_{U'}[p^\infty]_2$ of  $\CH_{U'}'$ on $X'_{U'}$ is dual to $A_{U'}[p^\infty]_1$ by the polarization; thus $H_{U'}'$ determines the structure of 
$A_{U'}[p^\infty]$. Notice that $H_{U'}'$  can be constructed without using abelian varieties:
$$H'_{U'}=\left(p^{-\infty}O_{B, p}/O_{B, p}\times X'\right)/U_p'(1)\times U'^p.$$
Where $U_p'(1)\simeq  \BZ_p^\times \times O_{B, p}^\times $ acts on $p^{-\infty}O_{B, p}/O_{B, p}$  
by the right multiplication of $O_{B, p}^\times$ (cf. \cite[\S 2.5]{Ca}).
\begin{remark} Our $p$-divisible group $H_{U'}'$ relates to the group  $E_\infty'$ of \cite[\S3.3]{Ca} in the case $O_{B,\wp}\simeq M_2(O_\wp)$ by
$$\begin{pmatrix}1&0\\ 0&0\end{pmatrix}\cdot  H_{U'}'[\wp^\infty]=E_\infty'.$$
\end{remark} 

\subsubsection*{Level structure at $p$}
For any ideal $\gn$ of $O_F$ dividing a power of $p$, let $U_p'(\gn)$ denote the subgroup of $B_p^\times$ of the form $\BZ_p^\times \times (1+\gn O_{B, p})^\times$,
and $X_{\gn, U'^p}'$ denote $X_{U_p'(\gn )\times U'^p}'$. Let $H'_{\gn, U'^p}$ denote the pull-back of $H'_{1, U'^p}=H'_{U'_p(1)U'^p}$ to $X'_{\gn, U'^p}$.
Using the above description, the map $X_{\gn, U'^p}\lra X_{1, U'^p}$ defines  a  full level $\gn$-structure on $H'_{n, U'^p}$, i.e.,
an isomorphism of $O_{B, p}$-modules:
$$\kappa_p: \quad \gn ^{-1}O_{B, p}/O_{B, p}\lra H'_{n, U'^p}[\gn].$$
When $\gn$ is prime to $\gd_B$, this level structure extends to the  minimal model   $\CX_{\gn, U'^p}'$. More precisely, the scheme $\CX_{\gn, U'}'$ represents a functor 
$\CF_{\gn, U'^p}$  over $\CF_{1, U'^p}$  to classify a pair of level structures $\kappa_p=(\kappa_\wp, \kappa_p^\wp)$ so that $\kappa_p^\wp$ is a  full-level structure on the \'etale sheaf $\CH'^\wp_{n, U'^p}[\gn]$, and $\kappa_\wp$ is a   Drinfeld basis of $\CH'_{n, U'^p, \wp}[\gn]$.

\subsubsection*{Integral models}
In the above, we have interpreted $\CX_{\gn, U'^p}'$ at a prime $\wp$ as the functor $\CF_{\gn, U'^p}$
when $\gn$ is prime to $\gd_B$, and  $U'^p$ is sufficiently small (in dependent of $\gn$). In the following, we want to 
extend such interpretation to large $U'^p$.
Fix  a lattice $\Lambda'$ of $B'$ with a completion $\Lambda_p'$. For any positive integer $N$, let $U'(N)$ denote the 
 subgroup of $G'(\wh \BQ) $ consisting of elements which stabilize $\Lambda'$ and induce the identity action on $\Lambda'/N\Lambda'$. 
 
 \begin{pro}\label{free action}
 Assume that $U'$ is contained in $U'(N)$ as a normal subgroup for some 
  $N\ge 3$ and prime to $p$. Then the functor $\CF_{\gn, U'^p}$ is represented by the minimal regular model $\CX_{\gn, U'^p}'$ over $O_\wp$.\end{pro}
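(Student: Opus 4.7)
The plan is to realize $\CX'_{\gn, U'^p}$ as a finite \'etale quotient of a scheme already known to represent a finer moduli problem. First I would choose a normal subgroup $V'^p \triangleleft U'^p$ of finite index which is sufficiently small in the sense of Proposition \ref{regularity}, so that $\CX'_{\gn, V'^p}$ is a regular scheme over $O_{(\wp')}$ representing $\CF_{\gn, V'^p}$. Such a $V'^p$ exists: take $V'^p = U'^p \cap U'(M)^p$ for $M$ a suitable multiple of $N$ divisible by a high enough power and prime to $p$; it is normal in $U'^p$ as the kernel of the $U'^p$-action on $\Lambda'^p/M\Lambda'^p$.

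The finite group $\Gamma := U'^p/V'^p$ acts on $\CX'_{\gn, V'^p}$ via its natural action on $V'^p$-level structures, sending $[A, \iota, \theta, \kappa]$ to $[A, \iota, \theta, \kappa \circ \gamma]$. The key claim is that this action is free. Suppose $\gamma \in \Gamma$ fixes a geometric point $[A, \iota, \theta, \kappa]$. Then there exists an automorphism $\alpha$ of the PEL datum $(A, \iota, \theta)$ satisfying $\alpha_* \circ \kappa = \kappa \circ \gamma$ on prime-to-$p$ Tate modules. Since any lift of $\gamma$ to $U'^p$ lies in $U'(N)^p$, the automorphism $\alpha$ acts trivially on $A[N]$. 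By Serre's rigidity lemma, valid for $N \geq 3$ prime to the residue characteristic, this forces $\alpha = \mathrm{id}_A$, and hence $\gamma \in V'^p$, which means $\gamma$ is trivial in $\Gamma$.

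Given freeness, the quotient $\CX'_{\gn, U'^p} := \CX'_{\gn, V'^p}/\Gamma$ exists as a scheme, and the projection $\CX'_{\gn, V'^p} \to \CX'_{\gn, U'^p}$ is finite \'etale. Regularity descends along finite \'etale covers, so $\CX'_{\gn, U'^p}$ is again regular over $O_\wp$. To verify representability of $\CF_{\gn, U'^p}$ by this quotient, I would first observe that the tautological object on $\CX'_{\gn, V'^p}$ coarsens to an object of $\CF_{\gn, U'^p}$ that is $\Gamma$-invariant, producing a morphism $\CX'_{\gn, U'^p} \to \CF_{\gn, U'^p}$. Conversely, given a quadruple $(A, \iota, \theta, \kappa)$ in $\CF_{\gn, U'^p}(S)$, one refines the $U'^p$-orbit $\kappa$ to a $V'^p$-orbit on an \'etale cover $S' \to S$, yielding a map $S' \to \CX'_{\gn, V'^p}$; two such refinements on $S' \times_S S'$ differ by an element of $\Gamma$, so by \'etale descent the composed map to $\CX'_{\gn, U'^p}$ glues to a map $S \to \CX'_{\gn, U'^p}$, with freeness of the $\Gamma$-action ensuring uniqueness.

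The main obstacle is the freeness of the $\Gamma$-action. This is precisely where the hypothesis $N \geq 3$ prime to $p$ is essential: it allows Serre's rigidity lemma to eliminate all non-trivial automorphisms of the PEL datum that act trivially on $N$-torsion. The remainder is formal descent theory for finite \'etale quotients, and independence of the chosen auxiliary $V'^p$ follows by applying the same argument to the intersection of two such choices.
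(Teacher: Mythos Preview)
Your proposal is correct and follows essentially the same strategy as the paper: descend to a sufficiently small level where representability is already known, then show the relevant finite group acts freely using Serre's rigidity lemma (an automorphism of the polarized abelian variety trivial on $A[N]$ with $N\ge 3$ prime to $p$ must be the identity), and take the quotient. The paper organizes the argument slightly differently---it first reduces to the case $U'=U'(N)$ by adding the extra level structures (Drinfeld at $\wp$, \'etale elsewhere) over $\CX'_{U'(N)}$ via relative representability, and only afterwards proves freeness of the $U'(N)/U'_0$-action on the fine-level model---whereas you run the free-quotient argument directly for arbitrary $U'^p\subset U'(N)^p$. One small point you glossed over: since the objects are abelian varieties up to prime-to-$p$ isogeny, you should first normalize so that $\kappa^p$ identifies $\wh\Lambda'^p$ with $\wh T^p(A)$ integrally before concluding that $\alpha$ is an honest automorphism to which rigidity applies; the paper makes this step explicit.
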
 
\begin{proof}
First let us reduce the proposition to the case $U'=U'(N)$. In fact if $\CF_{U(N)}$ is represented by $\CA_{U'(N)}\lra \CX_{U'(N)}'$, then $\CF_{\gn, U'^p}$ is represented by an $\CX'_{U'(N)} $-scheme $\CY_{\gn, U'^p}$
to classify a pair $(\kappa_\wp, \kappa^\wp)$ of a full Drinfeld  level structure $\kappa_\wp$ and an etale level structure $\kappa^\wp$. Thus it is clear that $\CY_{\gn, U'^p}$ is regular without any 
exceptional curve. Thus $\CY_{\gn, U'^p}=\CX_{\gn, U'^p}$.

Assume now $U'=U(N)$.  Let $U'^p_0$ be a sufficiently small normal subgroup of $U'(N)^p$ so that $\CF_{1, U'^p}$ is representable by $\CA_{1, U'^p_0}\lra \CX_{1, U_0^p}$. Then we have an action of $U(N)$ on this family. It suffices to show that $U(N)$ acts freely on $\CX_{1, U_0^p}$.
Let $\gamma \in U'(N)$ fixes a closed point $x$ in $\CX_{1, U'^p_0}'$. Let $[A, \iota, \theta, \kappa]$ be the quadruple corresponding to $x$.
Replace $A$ by some abelian variety   prime to $p$ isogenous to $A$, we may assume that $\kappa ^p$ induces an isomorphism morphism between $\wh \Lambda'^p$ and $\wh \RT^p(A)$.
In this way, we have an automorphism $\varphi$ of $(A, \theta)$, an $u\in U'^p$ such that 
$\kappa \cdot \gamma \cdot u=\kappa \circ \RT (\varphi)$. Since $\gamma \in G(N)$, it follows that $\varphi$ fixes all points in $A[N]$. Thus $\varphi=1$.
Thus $\gamma=u^{-1}\in U'$.
\end{proof}

\begin{cor} \label{regularity n}
The integral models $\CX'_{\gn, U'^p}$, with $\gn$ prime to $\gd_B$ and $U'^p$ contained in $U'(N)$ with $N\ge 3$ and prime to $p$,
form a projective system of regular schemes over $O_\wp$. Moreover the special fiber of each $\CX'_{\gn, U'^p}$ above $\wp$ is a smooth curve if $\wp\nmid \gn\gd_B$, and a relative Mumford curve if $\wp\mid \gd_B$.
\end{cor}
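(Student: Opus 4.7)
The plan is to deduce Corollary \ref{regularity n} from the two preceding results by separately checking the projective system structure, the regularity, and the shape of the special fiber. All three points boil down to analyzing the forgetful morphism $\CX'_{\gn, U'^p} \to \CX'_{1, U'^p}$, whose target is already controlled by Proposition \ref{regularity}.

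First I would set up the transition maps: for $\gn' \mid \gn$ and $U'^p_1 \subset U'^p_2$ (both contained in some $U'(N)$ with $N\ge 3$ prime to $p$), forgetting part of the level data at $p$ and restricting the prime-to-$p$ level structure give compatible morphisms $\CX'_{\gn, U'^p_1} \to \CX'_{\gn', U'^p_2}$, which assemble into a projective system. Next, for regularity, Proposition \ref{free action} identifies $\CX'_{\gn, U'^p}$ with the moduli functor $\CF_{\gn, U'^p}$, which augments the data of $\CF_{1, U'^p}$ by a pair $k_p = (k_\wp, k^\wp_p)$. The prime-to-$\wp$ part of the universal Barsotti--Tate module is étale, so adding the full level structure $k^\wp_p$ defines an étale cover and preserves regularity. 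At $\wp$ itself, either $\wp \nmid \gn$, in which case nothing is added and regularity is inherited directly from $\CX'_{1, U'^p}$; or $\wp \mid \gn$, and the hypothesis that $\gn$ is prime to $\gd_B$ forces $\wp \nmid \gd_B$, so $B_\wp$ is split and the $\wp$-part of the universal Barsotti--Tate module decomposes into one-dimensional formal $O_\wp$-module summands, to which Drinfeld's theorem on full level structures applies and yields a regular finite flat cover.

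For the special fiber at $\wp$, the key observation is that in both cases of the corollary the morphism $\CX'_{\gn, U'^p} \to \CX'_{1, U'^p}$ is étale in a neighborhood of the fiber above $\wp$: if $\wp \nmid \gn \gd_B$ then $\wp \nmid \gn$, while if $\wp \mid \gd_B$ the hypothesis again forces $\wp \nmid \gn$. Hence the special fiber is smooth when $\wp \nmid \gd_B$ and a relative Mumford curve when $\wp \mid \gd_B$, by transport along this étale cover from parts (2) and (3) of Proposition \ref{regularity}. The step I expect to be most delicate is the regularity claim at primes $\wp \mid \gn$: one must verify that Drinfeld's theory applies to the special $O_{B,\wp}$-module $\CH'_{U',\wp}$, which requires using the splitting $B_\wp \simeq M_2(F_\wp)$ and the decomposition given by an idempotent in $O_{B,\wp}$ to reduce the a priori two-dimensional structure to the classical case of a one-dimensional formal $O_\wp$-module before invoking Drinfeld's result.
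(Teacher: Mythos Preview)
Your proposal is correct and follows essentially the same approach as the paper. The paper states Corollary \ref{regularity n} without a separate proof, treating it as an immediate consequence of Proposition \ref{regularity} and Proposition \ref{free action}; your argument spells out precisely the implicit reasoning, namely that regularity comes from the Drinfeld level-structure argument already invoked in the proof of Proposition \ref{free action}, and the description of the special fiber follows because in both cases $\wp\nmid\gn$ forces the forgetful map $\CX'_{\gn,U'^p}\to\CX'_{1,U'^p}$ to be \'etale, transporting the conclusions of Proposition \ref{regularity}(2)--(3). Your anticipated delicate point---reducing the special $O_{B,\wp}$-module to a one-dimensional formal $O_\wp$-module via an idempotent when $B_\wp$ is split---is exactly the mechanism used by Carayol, to which the paper defers.
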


\subsection{Curve $X'$ in case 2}

In this subsection, we assume that  $E$ is embedded into $B$ over $F$. Then we can write $B=E+Ej$ where $j\in B^\times $ such that $jx=\bar x j$ for all $x\in E$. We can identify  $B'=B\otimes E$ with $M_2(E)$ by the following maps: 
$$a\otimes b\mapsto \begin{pmatrix}ab& \\ &\bar ab\end{pmatrix}, \qquad j\mapsto \begin{pmatrix}&1\\ j^2&\end{pmatrix}.$$ It follows that $V'=B'$ is the sum of two copies of a subspace $V$ over $E$.
In fact, we can take $V_i=B$ with two conjugate  left  multiplication of $E$
$$V'\iso V_1\oplus V_2:\qquad b\otimes e\longmapsto (eb, \bar eb).$$
The operator $w=\begin{pmatrix}&1\\ 1&\end{pmatrix}$ switches two factors by $(u, v)\mapsto (jv, j^{-1}u)$. 
We may assume that $\gamma '=\gamma \otimes 1$ with $\gamma \in E\otimes 1$ so that $\psi'$ is the sum of two copies of a symplectic form $\psi$ on $V_i=B$ by
$$\psi (u, v)=\tr_{F/\BQ}\tr_{B/F}(\gamma u\bar v), \qquad u, v\in V_i=B.$$
The group $G'$ can be identified with the group of $E$-linear symplectic similitudes of $(V, \psi)$ by right action  on $V$:
 $(b, e)x=exb$.
 
It follows that when $U'$ is sufficiently small,  $X'_{U'}$ represents the following functor 
 $\CF_{U'}^0$ on the category of $F' $-schemes. Here $F'$ is the flex field as before.
For any $F'$-scheme $S$, $\CF_{U'}'^0 (S)$ is the set of isomorphism classes of quadruples  $[A, \iota, \theta, \kappa]$
where
\begin{enumerate}[(1)]
\item $A$ is an abelian scheme over $S$ up to isogeny;
\item $\iota: E\lra \End ^0(A/S)$ is a homomorphism such that the induced action of $E$ on the $\CO_S$-module $\Lie (A/S)$ has the trace given by 
$$\tr (\ell, \Lie A)=t(\ell), \qquad \forall \ell\in E,$$
\item $\theta: A\lra A^t$ is a  polarization whose  Rosati involution on $\End ^0(A/S)$ induces the complex conjugation $c$  of $E$ over $F$; 
\item $\kappa: \wh V \times S\lra H_1(A, \wh \BQ)$ is a  $U'$-orbit of similitudes  of skew hermitian $E$-modules.
\end{enumerate}

Let $O_B$ be a maximal order of $B$,  and let $\Lambda=O_B$ be viewed as a lattice in $V$.
Assume that $\psi$ takes integral value on $\Lambda$. Then $\CF_{U'}'^0$ is equivalent to the following functor $\CF'_{U'}$.
For any $F'$-scheme $S$, $\CF_{U'}' (S)$ is the set of isomorphism classes of quadruples  $[A, \iota, \theta, \kappa]$
where
\begin{enumerate}[(1)]
\item $A$ is an abelian scheme over $S$;
\item $\iota: O_E\lra \End (A/S)$ is a homomorphism such that the induced action of $O_E$ on the $\CO_S$-module $\Lie (A/S)$ has the trace given by 
$$\tr (\ell, \Lie A)=t(\ell), \qquad \forall \ell\in O_E,$$
\item $\theta: A\lra A^t$ is a  polarization whose  Rosati involution on $\End (A/S)$ induces the complex conjugation $c$  of $O_E$ over $O_F$; 
\item $\kappa: \wh \Lambda  \times S\lra H_1(A, \wh \BZ)$ is a  $U'$-orbit of similitudes  of skew hermitian $O_E$-modules.
\end{enumerate}

\subsubsection*{CM points}
Again assume that $E$ is embedded into $B$ over $F$.
Let $T'$ (resp. $\wh T'$) be the subgroup of $G'$ (resp. $G'(\wh \BQ)$)   of  elements $(b, e)\in (E^\times)^2$ (resp. $(b, e)\in (\wh E^\times )^2$).
Then  the subscheme $X'^{T'}$ of $X'$ of points fixed by $T'$ is a principal homogenous space of $\wh T'$. 
Moreover each  point $P'\in X'^{T'}$ represents an abelian variety $A_{P'}$ which is isogenous to a product $A_{\Phi_1}\times A_{\Phi_2}$
of CM abelian varieties by $O_E$ with types $\Phi_1, \Phi_2$. In fact, in terms of  above complex uniformization,
$X'^{T'}$ is represented by pairs $(z, t)$ with $z$ the unique point on $\gh$ fixed by $T$, and $t\in \wh T$.
Fix a point $P'\in X'^{T'}$.

\subsubsection*{Hodge de Rham sequence}

 In the following, we want to study the Kodaira--Spencer map. 
Assume that $\CF_{U'}$ is represented by a universal abelian variety $\pi: A_{U'}\lra X'_{U'}$. Then there is a 
 local system $H_1^\dR (A_{U'})$ of $F\otimes \CO_{X'_{U'}}$-modules  with an integrable connection $\nabla$ and a Hodge filtration
$$0\lra \Omega (A^t_{U'})\lra H_1^\dR (A_{U'})\lra \Omega (A_{U'})^\vee\lra 0,$$
where $\Omega (A_{U'}):=\pi_*(\Omega _{A_{U'}/X'_{U'}})$ and $\Omega (A_{U'}^t):=\pi_*(\Omega _{A_{U'}^t/X'_{U'}})$. 
This sequence of vector bundles on $X_{U'}'$ has an action by $F$ by pulling back of cohomology classes.
Taking a quotient according to the morphism $F\otimes \CO_{X'_{U'}}\lra \CO _{X'_{U'}}$ given by sending $(x\otimes y)\mapsto \tau (x)y$, we have
$$0\lra \Omega (A^t_{U'})^\tau \lra H_1^\dR (A_{U'})^\tau \lra \Omega (A_{U'})^{\tau, \vee}\lra 0.$$
For simplicity, let us introduce  the following notations:
$$M_{U'}:=H_1^\dR (A_{U'})^\tau, \qquad W_{U'}:=\Omega (A_{U'})^\tau,\qquad W_{U'}^t:=W(A^t_{U'})^\tau.$$
Then we have an exact sequence of vector bundles:
\begin{equation}
\label{WMW}
0\lra W_{U'}^t\lra M_{U'}\lra W_{U'}^\vee\lra 0.
\end{equation}

In terms of the complex uniformization, the bundle  $(M_{U'}, \nabla)$ and its filtration can be described explicitly by representations of $G' (\BQ)$ as follows.
First define the local system of $\BR$-vector spaces on ${X'}_{{U'}, \tau'}(\BC)$:
$$\BV:=G (\BQ)\bs {V}_\tau\times \gh^\pm \times G' (\wh \BQ)/{U'}, \qquad {V}_\tau:={V}\otimes _{F, \tau} \BR$$
This system has a Hodge structure given by $\gh^\pm$.
This definition makes sense since the stabilizer of $G (\BQ)$ on every point of  $\gh^\pm \times G' (\wh \BQ)/{U'}$ is its  center $Z(\BQ)$ which acts trivially on ${V}$.
Then we have 
$$M_{U'}=\BV\otimes_\BR \CO _{X'_{U'}}, \qquad W^t_{U'}=H^{0, -1}(\BV), \qquad W_{U'}=(M_{U'}/W_{U'}^t)^\vee.$$

\subsubsection*{Kodaira--Spencer maps at archimedean places}
Inserting the Gauss--Manin connection  to the sequence (\ref{WMW}) gives a chain of morphisms:
$$W^t_{U'} \lra M_{U'} \overset\nabla\lra M_{U'} \otimes\Omega _{X'_{U'}}\lra W^\vee _{U'}\otimes\Omega _{X'_{U'}}.$$
By Kodaira--Spencer, this induces an isomorphism of $E\otimes_{F} \CO _{X'}$-line bundles:
$$W^t_{U'} \lra  W^ \vee_{U'}\otimes\Omega _{X'_{U'}}.$$
Taking determinants, this gives an isomorphism of $\CO_{X'}$-line bundles:
$$\KS_{U'}: N_{U'}\lra \Omega_{X'_{U'}}^{\otimes 2},$$ 
where $N_{U'}$ is a line bundle on $X'_{U'}$ defined by 
$$N_{U'}:=\det W_{U'}\otimes \det W_{U'}^t.$$

In the remaining part of this subsection, we want to study 
the  Kodaira--Spencer isomorphism at a fixed place $\tau'$ of $F'$.
Here we put a metric on $N_{U'}$ by the Hodge theory as in \S\ref{section hermitian}, and put a metric on $\Omega _{X'_{U'}}$ by the following formula 
$$|dz|=2y$$ 
in terms of  the complex unformization.

\begin{thm}  \label{N2Linf} 
The morphism  $\KS _{U'}$ is isometric.
\end{thm}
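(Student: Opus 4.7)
The plan is to verify this assertion fiberwise in the complex analytic uniformization at $\tau'$. Both the Hodge-theoretic metric on $N_{U'}$ and the Poincar\'e metric $|dz| = 2y$ on $\Omega_{X'_{U'}}$ are $G'(\BR)$-equivariant, and $\KS_{U'}$ itself is $G'(\BR)$-equivariant because the Gauss--Manin connection, the Hodge filtration on $M_{U'}$, and the polarization defining the metric are all constructed intrinsically from the $\BB$-structure on $\BV$. Since the $\tau$-component of $G'(\BR)$ acts transitively on each connected component of $\gh^\pm$, the ratio $\|\KS_{U'}\|$ is constant on each component, and it suffices to check the assertion at a single base point, say $z = i$.

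To carry out that one-point check I would write down explicit flat sections of the local system $\BV$ in an analytic neighborhood of $i$ and express the Hodge filtration in terms of them. Since the removed place $\tau$ is split in $B$, the $\tau$-factor of $V \otimes_F \BR$ is identified with $M_2(\BR)$, and the $\tau$-factor of the variation of Hodge structure on $\BV$ reduces to the standard $\SL_2(\BR)$-variation on $\gh$. Choosing a flat $\BR$-basis $e_+, e_-, f_+, f_-$ of $V_\tau$ adapted to this identification and to the polarization $\psi$, the fiber $W^t(z)$ is the $\BC$-span of $e_+ - z f_+$ and $e_- - z f_-$, while $W(z)^\vee$ is the complementary subspace spanned by the images of $f_+, f_-$ in $V_\tau \otimes \BC$.

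From this explicit model both computations are direct. Unwinding the Hodge-theoretic norm of \S\ref{section hermitian} via Riemann's bilinear relations (which convert the integral $(2\pi)^{-g}\int_{A_v}|\alpha \wedge \bar\alpha|$ into an expression involving $\psi$ on $V_\tau \otimes \BC$) gives $\|\det W \otimes \det W^t\|_z = c \cdot y^2$ for a universal constant $c$ pinned down by the Hodge normalization. The Gauss--Manin connection in the flat frame is the exterior differential, so $\nabla(e_\pm - z f_\pm) = -f_\pm\, dz$; projecting into $W^\vee$ and taking determinants, $\KS_{U'}$ sends a generator of $N_{U'}$ at $z$ to an explicit nonzero multiple of $dz \otimes dz$, whose norm in the target is computed against $|dz|^2 = 4y^2$. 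Matching gives $\|\KS_{U'}\| = 1$. The main obstacle is the bookkeeping of the three normalization constants in play -- the $(2\pi)^{-g}$ in the Hodge metric, the factor $2$ in $|dz| = 2y$, and the scaling of $\psi$ on $V_\tau$ -- but their compatibility is exactly the content of the classical fact that for the universal elliptic curve one has an isometric Kodaira--Spencer isomorphism $\omega^{\otimes 2} \simeq \Omega^1_X$ under precisely the normalizations adopted here, and the present situation reduces to this case along the split place $\tau$.
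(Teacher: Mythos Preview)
Your approach is correct and is essentially the same as the paper's: both work in the complex uniformization at $\tau'$, reduce to the standard $\SL_2(\BR)$-variation of Hodge structure on $\gh$ via the split place $\tau$ (the paper phrases this as ``it is equivalent to study the Hodge structures on $\BR^2$''), write the Hodge filtration explicitly in a flat frame, compute the Gauss--Manin connection by differentiating the moving line $\BC(e - zf)$, and read off that the induced norm on $\Omega_{X'_{U'}}$ is $|dz| = 2y$. Your equivariance step reducing to a single base point is a harmless variation --- the paper instead carries out the one-line computation $\nabla(e_z) = (-1,0)\,dz$ uniformly in $z$ and concludes directly --- and your choice to keep the full $4$-dimensional $V_\tau$ rather than first passing to $\BR^2$ via Morita just doubles the bookkeeping without changing anything; the paper's reduction to $\BR^2$ is slightly cleaner because it makes the ``elliptic curve'' identification you invoke at the end completely transparent and avoids the need to track the three normalization constants separately.
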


\begin{proof}
 
 The Kodaira--Spencer isomorphism induces a norm on $\Omega _{X'_{U'}}$. We want to give an explicit description of this metric as follows.
First, let us give an explicit formula for the Kodaira--Spencer map. Fix an  isomorphism $B_\tau={V}_\tau \simeq M_2(\BR)$ and 
identify  $\gh^\pm$ with   the moduli space of $B_\tau$-Hodge structures  on
$M_2(\BR)$. It is equivalent to study the Hodge structures on $\BR^2$.  
In a concrete matter, for each $z\in \gh^\pm$, take a Hodge structure on $L=\BR^2$ inducing a complex structure 
 given  by isomorphisms 
$$\varphi_z: L\lra \BC, \qquad (a, b)\longmapsto a+bz.$$
Then $L^{0, -1}$ is given as $\ker \varphi_{z, \BC}$, so we have 
$$L^{0, -1}_z=\BC e_z, \qquad L^{-1, 0}_z=\BC e_{\bar z}, \quad e_z:=(-z, 1) .$$
Thus the filtration of the de Rham homology has the following form:
$$0\lra \BC e_z\lra \BC^2\lra \BC e_{\bar z}\lra 0.$$
Apply the Gauss--Manin connection to obtain
$$\nabla (e_z)=(-1, 0)dz=\frac {\bar e_z-e_z}{2iy}dz.$$
It follows that under Kodaira--Spencer map, 
$$ dz=2iy\frac { e_z}{\bar e_z}, \qquad |dz|=2y.$$
\end{proof}

\subsubsection*{$p$-divisible groups}
Assume that $U'$ is sufficiently small so that $X'_{U'}$ has a universal abelian scheme $A_{U'}$ representing the functor $\CF_{U'}$.
Then we have a $p$-divisible group
$$H_{U'}':=A_{U'}[p^\infty].$$
Notice that this $p$-divisible group can be constructed directly by the following formula:
$$H_{U'}'=(B_p/O_{B, p}\times X')/U'$$
where $U'$ acts on $B_p/O_p$ via its projection to the subgroup $O_{B, p}^\times\times_{O_{F, p}^\times} O_{E, p}^\times $
of $G(\BQ_p)$ and the action
$$x(b, e)=exb, \qquad b\in B_p /O_{B, p}, \quad (b, e)\in O_{B, p}^\times\times O_{E, p}^\times.$$

\subsubsection*{Integral models}
In this subsection we give some  results about integral models of $X'_{U'}$, $A_{U'}$, and $H_{U'}'$ which can be proved in the later section \ref{integral-X''}.
The results here will not be used in the rest of paper.

Assume that $U'$ is sufficiently small as in the previous paragraph.  A natural question is to extend the universal family  $A_{U'}\to X_{U'}$ to a flat family 
 $\CA_{U'}\to \CX_{U'}$ over $O_{F'}$. The natural way is to extend the functor $\CF _{U'}$ over schemes over $O_{F'}$, which we don't know 
how to define. However we can extend this abelian scheme pointwise on $X_{U'}$.

\begin{prop}\label{integral-A'}
Let $L$ be a finite extension of $F'$ and $x'\in X_{U'}(L)$ a point which represents an abelian variety $A_{x'}$ over $L$.
Then $A_{x'}$ has good reduction $\CA_{x'}$ over $O_L$.
\end{prop}

By the works of Grothendieck \cite{SGA7} and Raynaud \cite{Ra}, it is sufficient to extend $p$-divisible groups locally. 
We will prove this extension in Proposition \ref{integral-IH} using Breuil--Kisin theory.

One consequence of this integral model is to give a hermitian  integral structure on $N_{U', x'}'$ at each point $x\in X_{U'}'(L)$
by $\CN (A, \tau)$.  Using method in \S\ref{integral-X''}, we can construct an integral model $\CX'_{U'}$ of $X'_{U'}$ over $O_{F'}$
and a line bundle $\CN'_{U'}$ such that 
$$\CN(A, \tau)=\CN'_{U',x'}$$
as integral structures on the Hodge bundle $L_{x'}^2$.

  \section{Shimura curve $X$}
 In this section, we study a quaternionic Shimura curves $X$  over a totally real field. 
 We will first review some basic facts  about the integral models $\CX$  studied in Carayol \cite{Ca} at split primes,
 and \v Cerednik--Drinfeld \cite{BC, Ce} at non-split primes. Then  we will  construct  integral models of the curve $X$ by a comparison with the curve $X'$
 in the last section.  Finally we will  study the integral models of $p$-divisible groups $H$ using the $p$-divisible groups $H'|X'$, and study the  local Kodaira--Spencer morphisms 
 induced from the Hodge--de Rham filtration and the Gauss--Manin connections, following a deformation theory of $p$-divisible 
 groups $\CH$ of Grothendieck--Messing \cite{Il, Me}.

 \subsection{Shimura curve $X$} \label{section 4.1}

Let $F$ be a totally real field and  $\BB$  a totally definite incoherent  quaternion algebra over $\BA:=\BA_F$ as before.
Then we have a projective system of Shimura curves $X_U$ over $F$ indexed by open and compact subgroups $U$ of $\BG_f:=\BB_f^\times$,
see \cite {Ca, YZZ}.

For any archimedean place $\tau$ of $F$, the curve $X_{U, \tau}$ over $\BC$ is defined by the following Shimura data $(G, h)$ where $G=\Res _{F/\BQ}(B^\times)$
with $B$ a quaternion algebra over $F$ with the ramification set $\Sigma (\BB)\setminus \{\tau\}$, and $h: \BC^\times\lra  G(\BR)$ a morphism 
as follows.
Fix an isomorphism 
$$G(\BR)=\GL_2(\BR)\times (\BH^\times)^{g-1}.$$
Then $h$ brings $z=x+yi$ to 
$$\left[\begin{pmatrix}x&y\\ -y&x\end{pmatrix}^{-1}, 1, \cdots, 1\right].$$
The class of $G(\BR)$-conjugacy class of $h$ is identified with $\gh^\pm=\BC\setminus \BR$ by  
$$ghg^{-1}\longmapsto g(i), \qquad g\in G(\BR).$$

Fix an isomorphism $\BB_f\simeq \wh B$ which gives an isomorphism $\BG_f\simeq G(\wh \BQ)$.
Then we have a uniformization 
$$X_{U, \tau}(\BC)=G(\BQ)\bs \gh ^\pm \times G(\wh \BQ)/U.$$
This curve is compact if $B\ne M_2(\BQ)$ or equivalently $\Sigma (\BB)$ is not a singlet. In the following discussion 
we always assume that $X_U$ is compact; but the results hold in general with taking care of cusps.

If $F\ne \BQ$, this curve does not parametrizes abelian varieties but its geometric connected component 
can be embedded into Shimura curves of PEL types over $\bar F$.
In the following we want to review the work of Carayol \cite{Ca} on $p$-divisible groups on some integral model of $X_U$ with infinite level.

Let $X$ denote the projective limit of $X_U$. Then $X$ has a right action by $G(\wh \BQ)=\BB^\times_f$. The maximal subgroup of $\BB^\times_f$ which acts trivially on $X$ is  $\ol{F^\times}$,
the closure of $Z(\BQ)=F^\times$ in $\BB_f^\times$. Thus we can write $X_U=X/\ol U$ with $\ol U:=U/(U\cap \ol {F^\times})$.
When $U$ is sufficiently small, $\ol U$ acts freely on $X$.  If $F\neq \BQ$, then $\ol{F^\times}\ne F^\times$. This means that 
the intersection $F^\times \cap U\ne \{1\}$ for any open compact subgroup $U$ of $\ol{F^\times}$. 

Fix a maximal order $O_\BB$ of $\BB_f$  and consider the projective system of Shimura curves $X_U$ indexed by open compact subgroup $U$ of $O_\BB^\times$.
For each positive integer $N$, let $U(N)$ denote
a compact subgroup of $O_\BB^\times$ of the form $U(N):=(1+N O_\BB)^\times$.  
\begin{pro}\label{genus}
If $U$ is contained in $U(N)$ for some $N\ge 3$, then $g(X_U)\ge 2$. 
\end{pro}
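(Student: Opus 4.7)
The plan is to use the complex uniformization at an archimedean place $\tau$ and reduce to proving that each geometric connected component of $X_U$ has genus at least $2$.

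First I would write, at any place $\tau$ of $F$,
$$X_{U,\tau}(\BC) = G(\BQ)\backslash \gh^\pm \times G(\wh\BQ)/U,$$
and decompose this into a finite disjoint union of Riemann surfaces $\Gamma_g\backslash \gh$, where $g$ runs over representatives of $G(\BQ)_+\backslash G(\wh\BQ)/U$ and $\Gamma_g$ denotes the image in $\PSL_2(\BR)$ of $G(\BQ)_+\cap gUg^{-1}$. Since the standing assumption is that $X_U$ is compact (the paper assumes $B\ne M_2(\BQ)$), each $\Gamma_g$ is a cocompact discrete subgroup of $\PSL_2(\BR)$.

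Second I would verify that each $\Gamma_g$ is torsion-free. The key statement, of Minkowski type, is that any $\gamma\in B^\times$ of finite order modulo $F^\times$ satisfying $\gamma \in 1 + N\, O_\BB$ for some $N\ge 3$ must lie in $F^\times$. Indeed $\gamma$ satisfies a quadratic polynomial $x^2 - \tr(\gamma)x + \Norm(\gamma)=0$ over $F$ whose roots, viewed in $\BC$, are roots of unity (as $\gamma$ is elliptic of finite order modulo the center); the congruence condition mod $N$ together with $N\ge 3$ then forces these roots to equal $1$, so $\gamma\in F^\times$ acts trivially on $\gh^\pm$. Conjugating by $g$ transfers this statement to $G(\BQ)_+\cap gUg^{-1}\subset g U(N) g^{-1}$ and shows $\Gamma_g$ has no nontrivial torsion.

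Third, having a torsion-free cocompact discrete $\Gamma_g \subset \PSL_2(\BR)$, the quotient $\Gamma_g\backslash \gh$ is a smooth compact Riemann surface carrying the hyperbolic metric of constant curvature $-1$ pulled down from $\gh$. Gauss--Bonnet gives
$$\chi(\Gamma_g\backslash\gh) = -\frac{1}{2\pi}\vol(\Gamma_g\backslash\gh) < 0,$$
which rules out both genus $0$ (where $\chi=2$) and genus $1$ (where $\chi=0$), forcing genus at least $2$ on every component. Since $g(X_U)=\dim_F H^0(X_U,\Omega^1_{X_U/F})$ equals the sum of the genera of the geometric components, one concludes $g(X_U)\ge 2$.

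The main obstacle is step two: one has to verify carefully that modding out by the center $F^\times$, which acts trivially on $\gh^\pm$, interacts correctly with the congruence condition defining $U(N)$, and in particular that passing to $gUg^{-1}$ for arbitrary $g\in G(\wh\BQ)$ does not destroy the integral structure used in the Minkowski argument. Once this is settled, the Gauss--Bonnet step and the assembly of components are standard.
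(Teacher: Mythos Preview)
Your approach is essentially the same as the paper's: both use the complex uniformization, reduce to showing that each $\ol\Gamma_g$ acts freely on $\gh$, and then deduce genus at least $2$. The paper is terse about the last step, simply asserting that a free uniformization by $\gh$ forces genus greater than $1$; your invocation of Gauss--Bonnet makes this explicit and is correct.

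There is, however, a genuine gap in your step two. You assert that the roots of $x^2 - \tr(\gamma)x + \Norm(\gamma)$, viewed in $\BC$, are roots of unity because $\gamma$ has finite order modulo the center. This is false: the roots are the images of $\gamma$ and $\bar\gamma$ under an embedding $E = F(\gamma)\hookrightarrow \BC$, and finite order \emph{modulo} $F^\times$ does not force $|\gamma|=1$ (for instance, any nontrivial totally real unit in $O_F^\times\subset O_E^\times$ already has finite order modulo $F^\times$). What \emph{is} a root of unity is the ratio $\zeta = \gamma/\bar\gamma$: it lies in $O_E^\times$ and has absolute value $1$ at every archimedean place since $E$ is CM, hence is a root of unity by Kronecker. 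The paper runs the Minkowski argument on $\zeta$ rather than on $\gamma$: from $\gamma\equiv 1\pmod{N O_E}$ one obtains $\zeta\equiv 1\pmod{N O_E}$, hence $\zeta - 1\in N\BZ[\zeta]$; since $\BZ[\zeta]$ is generated as a ring by $\zeta$, this forces $\BZ[\zeta]/N\BZ[\zeta]=\BZ/N\BZ$, so $\zeta\in\BQ$, so $\zeta=\pm 1$, and $N\ge 3$ excludes $-1$. Thus $\gamma=\bar\gamma\in F$.

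Your stated concern about conjugation by $g$ is, by contrast, harmless: for each finite place $v$ the intersection $(g_v O_{\BB_v} g_v^{-1})\cap E_v$ is an $O_{F_v}$-order in $E_v$ and therefore contained in the maximal order $O_{E_v}$. Hence $\gamma-1\in N g\wh O_\BB g^{-1}$ together with $\gamma\in E$ genuinely yields $\gamma-1\in N O_E$ and $\gamma\in O_E^\times$, and the argument above goes through unchanged.
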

\begin{proof}
This can be seen from the above complex uniformization. The curve $X_{U, \tau}$ is a disjoint union of quotients
$X_g:=\Gamma _g\bs \gh$, for $g$ sits in a subset of $G(\wh \BQ)$ representing  the double coset quotient $G(\BQ)\bs G(\wh \BQ)/U$,
and 
$$\Gamma _g=B_+^\times \cap gUg^{-1}\subset B_+^\times \cap (1+N gO_B g^{-1})^\times.$$
  Let $\ol{\Gamma _g}$ denote the quotient $\Gamma _g/(\Gamma _g\cap F^\times)$. We claim that $\ol{\Gamma _g}$ acts freely on $\gh$.
This claim will show that $X_g$ has a  (free) uniformization by $\gh$, thus its genus greater than 1.

Let $\gamma \in \Gamma _g\setminus F^\times $ be  an element fixing a point $z\in \gh$. Then the subfield  $E:=F(\gamma)$ of $B$ generate by $\gamma$ over $F$
is a quadartic CM extension  of $F$.  It is clear  that $\gamma\in O_E^\times$ and $\gamma -1\in NO_E$. Write $\zeta=\gamma /\bar \gamma$. Then $\zeta$ has norm $1$ at all places of $E$. Thus $\zeta$ is a roots of unity with the property $\zeta-1\in N O_E\cap \BQ(\zeta)\subset N\BZ[\zeta]$. It follows that $\BZ[\zeta]/N \BZ[\zeta]=\BZ/N\BZ$.
On the other handn we know that $\BZ[\zeta]/N\BZ[\zeta]$ is a free module over $\BZ/N\BZ$ of rank equal to  $\deg \BQ(\zeta)$. It follows that $\zeta \in \BQ$, or $\zeta =\pm 1$. 
Since $N\ge 3$, $\zeta =1$. It follows that $\gamma \in (1+NO_F)^\times$. 
\end{proof}

\subsubsection*{$p$-divisible groups}
Let $p$ be a prime and fix a maximal order $O_{\BB, p}$ of $\BB_p$ containing $O_{E, p}$.
 For any ideal  $\gn$ of $O_F$ dividing a power of $p$, let $U_p(\gn)$ denote $(1+\gn O_{\BB, p})^\times$.
 Then we have a Shimura curve $X_\gn:=X/U_p(\gn)$. Write $U_p(1)=U_p(O_F)=O_{\BB,p}^\times$ the maximal compact subgroup of $\BB_p^\times$, 
 and $X_1=X_{U_p(1)}$.
 We define the $p$-divisible group $H_\gn$ on $X_\gn$ by
 $$H_\gn=\left[\BB_p/O_{\BB, p}\times X\right]/U_p(\gn),$$
 where $U_p(\gn )\subset O_{\BB, p}^\times$ acts on $\BB_p/O_{\BB, p}$ by right multiplications. 
 This definition makes sense, since $U_p(1)$ acts freely on $X$. Moreover, for each $\gn$,  its  $\gn$-torsion subgroup $H_1[\gn]$ 
 can be descended to $X_{U_p(1)\times U^p}$ for some open compact subgroup $U^p$ of $\BB^{p, \times }_f$ as follows:
 $$H_{U_p(1)\times U^p}[\gn ]=\left[\gn ^{-1}O_\BB/O_\BB\times X/(U_p(\gn )\times U^p)\right]/(U_p(1)/U_p(\gn)).$$
 For this we need to find $U^p$ so that $U_p(1)/U_p(\gn )$ acts freely on $X/(U_p(\gn )\times U^p)$. The existence of such a $U^p$ can be proved in the same 
 way as \cite[Cor. 1.4.1.3]{Ca}. 
 

\subsubsection*{Relation between $X^0$ and $X'^0$}
In the following sections we want to study integral models of $X_U$ and $H_U$ by Carayol \cite{Ca} by relating them to $X'_{U'}$ and $H'_{U'}$ studied in \S3.1 and \S3.2
for Shimura curves defined using imaginary quadratic field $E= F(\sqrt \lambda)$  with $\lambda \in \BQ$ such that
$p$ is split in $\BQ(\sqrt \lambda)$. 

Let $X^0$ be the identity connected component of $X$ over $\bar F$ (which was denoted as $M^+$ in \cite[\S4.1]{Ca}),
and $\ol \Delta$ the stabilizer   of $X^0$ in $\bar G=G(\wh \BQ)/\ol {Z(\BQ)}$.
Then $\ol \Delta$ is represented by the subgroup $\Delta\subset G(\wh \BQ)=\wh B^\times $  consisting of  elements  $g$ with determinants
 $q(g)\in F_+^\times $. In other words, we have $\ol \Delta =\Delta /Z(\BQ)$.

Similarly, let $X'^0$ be the identity connected component of $X'$ over $\bar F$ (which was denoted as in $M'^+$ in \cite{Ca}, \S4.1),
and $\ol \Delta'$ the stabilizer   of $X'^0$ in $\ol G':=\wt G/Z''(\BQ)$.
Then $\ol \Delta'$   is represented by the subgroup $\Delta'\subset G''(\wh \BQ)=\wh E^\times \times _{\wh F^\times}\wh B^\times$  by elements $(e, b)$ with norm $(q(b) e\bar e, e/\bar e)\in F_+^\times \times E_1^\times$
in $F_+^\times $. In other words, we have $\ol \Delta' =\Delta' /Z''(\BQ)$.

It is clear that the embedding $G\lra G''$ induces an isomorphism $\ol\Delta\simeq \ol \Delta'$. Here is the 
 first comparison  result:
 
 \begin{pro}\label{comparison 1}
There is an isomorphism $X^0\simeq X'^0$ with compatible actions by 
$\ol \Delta\simeq \ol \Delta '$.
\end{pro}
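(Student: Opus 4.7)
The plan is to deduce the isomorphism from the identification $\ol\Delta \simeq \ol\Delta'$ (stated to be clear just above the proposition), exploiting the fact that both $X^0$ and $X'^0$ arise from the same connected Shimura datum attached to the common derived group $B^1$. First I would fix the archimedean place $\tau$ of $F$ used in the uniformization of $X$ and lift it to $\tau' \colon F' \hookrightarrow \BC$; this lift exists since $\tau(F) \subseteq F'$ by the proposition immediately preceding the statement.

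Over $\BC$, I would write the connected components as inverse limits
$$X^0_\tau(\BC) = \varprojlim_U \Gamma_U \bs \gh, \qquad X'^0_{\tau'}(\BC) = \varprojlim_{U'} \Gamma'_{U'} \bs \gh,$$
where $\Gamma_U = G(\BQ)_+ \cap U$ and $\Gamma'_{U'} = G'(\BQ)_+ \cap U'$, each modulo the centers acting trivially on $\gh$. The Hodge morphisms $h$ and $h'$ agree upon restriction to the common derived group $B^1$, so the upper half-plane $\gh$ with its $B^1(\BR)$-action is identical in both descriptions. The embedding $G \hookrightarrow G''$, $b \mapsto [(b,1)]$, sends the cofinal system $\{\Gamma_U\}$ into the cofinal system $\{\Gamma'_{U'}\}$ and, modulo centers, induces a bijection of quotients via the given $\ol\Delta \simeq \ol\Delta'$. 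This produces an isomorphism of pro-schemes over $\BC$; equivariance for $\ol\Delta = \ol\Delta'$ is automatic, since in both uniformizations this action is simply right multiplication on the adelic second factor, i.e., by Hecke correspondences.

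To descend from $\BC$ to $\ol F$, I would invoke Shimura's reciprocity law for canonical models. The special points of $X$ come from CM tori $T \subset G$, and they lift compatibly to special points of $X'$ via $T' = T \times_{F^\times} E^\times \subset G'$, obtained by adjoining the central $E^\times$ (which acts trivially on $\gh^\pm$). The reflex norm attached to $(G', h')$ restricts on such $T$ to the reflex norm attached to $(G, h)$, so the Galois actions on the two sets of special points agree under the comparison, and the complex isomorphism descends to $\ol F$ intertwining the two actions.

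The main obstacle will be this last step: while the geometric comparison over $\BC$ is essentially formal once $\ol\Delta \simeq \ol\Delta'$ is accepted, verifying Galois-equivariance requires carefully tracing through the reciprocity maps attached to the distinct reflex fields $F$ and $F'$. This is standard for pairs of Shimura data that differ only by central factors, and the proof should conclude by adapting Deligne's general theory of connected Shimura varieties.
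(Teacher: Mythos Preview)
Your proposal is correct and follows essentially the same approach as the paper, which simply cites Carayol \cite[Prop.~4.2.2]{Ca}: the argument there is precisely the comparison of connected Shimura data sharing the derived group $B^1$, matched over $\BC$ via the uniformization by $\gh$ and descended to $\ol F$ through the reciprocity law on special points. Your identification of the descent step as the only nontrivial point is accurate, and Deligne's theory of connected Shimura varieties (as invoked by Carayol) is exactly the tool that handles it.
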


\begin{proof}
Same as  Carayol \cite[Prop. 4.2.2]{Ca}.
\end{proof}

For the second fundamental result,  let $p$ be a prime and let $X_1^0$ and $X_1'^0$ be the quotients
$$X_1^0=X^0/O_{B, p}^1, \qquad X_1'^0=X'^0/O_{B, p}^1$$
where $O_{B, p}^1$ the subgroup of $O_{B, p}$ with norm $1$. 
Then $X_1^0$ and $X_1'^0$ are defined over a maximal extension of $F$ which is unramified over every place of $F$ dividing $p$.
Let $\wp$ be a prime of $O_F$ over a prime $p$, and $F_\wp^\ur$ the completion of the maximal unramified extension of $F_\wp$.
Then $X_1^0$ (resp. $X_1'^0$) is the connected component of the limit $X_1$ (resp. $X_1'$) of $X_{1, U^p}$ ($X'_{1, U'^p}$) over $F_\wp^\ur$.
Let $\Delta _0$ denote the subgroup  $\Delta$ consisting of  elements whose components over $p$ are in $O_{B, p}^\times$. Define $\Delta _0'$ in the same way. 
Then $X_1^0$ and   $X_1'^0$ have actions respectively  by $\ol \Delta_0/O_{B, p}^1\subset  \ol \Delta _0/O_{B, p}^1$.

Define the $p$-divisible groups on these schemes by
$$H|X_1^0=\left(B_p/O_{B, p}\times X^0\right)/O_{B, p}^1, \qquad H'|X_1'^0=\left(B_p/O_{B, p}\times X'^0\right)/O_{B, p}^1$$
These are also defined over $F_\wp^\ur$ with natural actions by $\Delta _0/O_{B, p}^1$ and $\Delta _0'/O_{B, p}^1$ respectively. 
Our second comparison result is as follows:
\begin{pro}\label{comparison 2}
There is an  isomorphism of the $p$-divisible groups $H|X_1^0$ and $H'|X_1'^0$ with compatible action by 
$\ol \Delta_0/O_{B, p}^1\subset  \ol \Delta _0/O_{B, p}^1$.
\end{pro}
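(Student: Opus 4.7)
\textbf{Proof plan for Proposition \ref{comparison 2}.} The plan is to bootstrap from Proposition \ref{comparison 1} by observing that both $p$-divisible groups $H|X_1^0$ and $H'|X_1'^0$ admit identical group-theoretic presentations on the identity connected components: each has the form $(B_p/O_{B,p} \times Y)/O_{B, p}^1$ with $Y = X^0$ or $Y = X'^0$, where $O_{B, p}^1$ acts by right translation on $B_p/O_{B,p}$ and, on $Y$, through its inclusion into $\Delta_0$ (resp.\ $\Delta_0'$). The normality of $O_{B,p}^1$ inside $\Delta_0$ and $\Delta_0'$ is automatic since it is the kernel of the reduced norm, and it acts trivially on $X^0$ and $X'^0$ only modulo center, so the quotients that appear are well defined.

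First I would verify that the two $O_{B, p}^1$-actions on $X^0$ and $X'^0$ are intertwined by the isomorphism of Proposition \ref{comparison 1}. Since $O_{B, p}^1 \subset B^1$ and $B^1 = G_1$ is the common derived subgroup of both $G$ and $G''$, its images in $\ol\Delta$ and $\ol\Delta'$ match under the identification $\ol\Delta \simeq \ol\Delta'$. Hence Proposition \ref{comparison 1} gives an $O_{B, p}^1$-equivariant isomorphism $X^0 \iso X'^0$ that descends to $X_1^0 \iso X_1'^0$. Extending by the identity on the first factor yields a canonical isomorphism
\[
H|X_1^0 = (B_p/O_{B, p} \times X^0)/O_{B, p}^1 \iso (B_p/O_{B, p} \times X'^0)/O_{B, p}^1 = H'|X_1'^0,
\]
and the compatibility with the residual $\ol\Delta_0/O_{B, p}^1 \subset \ol\Delta_0'/O_{B, p}^1$-actions is automatic, since both groups act on $B_p/O_{B, p}$ through the common quotient $O_{B, p}^\times / O_{B, p}^1$.

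The main subtlety, in my view, lies in reconciling the group-theoretic description of $H'|X_1'^0$ used above with the moduli-theoretic definition $H'_{U'} = A_{U'}[p^\infty]_2$ from \S3.2 --- only after this reconciliation does the formal isomorphism above carry the geometric content that will be needed when one compares deformations of $p$-divisible groups in the sequel. This reconciliation, stated without proof in \S3.2 via the alternative formula $H'_{U'} = (p^{-\infty}O_{B, p}/O_{B, p} \times X')/(U_p'(1) \times U'^p)$, follows the argument of Carayol \cite{Ca}: one trivializes the Tate module of the universal abelian variety at a CM point $P' \in X'^{T'}$ using the fixed embedding $\BA_E \hookrightarrow \BB$, and then propagates the trivialization across $X_1'^0$ by $\Delta_0'/O_{B, p}^1$-equivariance, using that $\Delta_0'$ acts transitively on the relevant geometric components. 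With that comparison in place, the group-theoretic identification above delivers Proposition \ref{comparison 2}.
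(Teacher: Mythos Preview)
Your proposal is correct and is essentially a spelled-out version of Carayol's argument in \cite[Proposition 4.4.3]{Ca}, which is exactly what the paper invokes for its proof. The observation that both $p$-divisible groups share the group-theoretic presentation $(B_p/O_{B,p}\times Y)/O_{B,p}^1$ and that Proposition~\ref{comparison 1} already supplies the $O_{B,p}^1$-equivariant isomorphism of bases is the heart of the matter; your remark about reconciling this with the moduli-theoretic description $H'_{U'}=A_{U'}[p^\infty]_2$ is indeed the substantive point behind Carayol's treatment.
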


\begin{proof}
Same as Carayol \cite{Ca}, Proposition 4.4.3.
\end{proof}

Here is a  consequence of the above two comparison results:
\begin{pro}\label{comparison 3}
For any ideal  $\gn$ of $O_F$ dividing a power of $p$ and prime to $\gd_B$,
 and any  sufficiently small open compact $U^p\subset G(\wh \BQ)$ depending on $\gn$, there is an open compact
$U'^p\subset G'(\wh \BQ)$ such that $X_{\gn,  U^p}^0$ is isomorphic to $X'^0_{\gn, U'^p}$ over $K$. 
\end{pro}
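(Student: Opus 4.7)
The plan is to descend the infinite-level comparison $X^0 \simeq X'^0$ (Proposition \ref{comparison 1}), together with the identification of $p$-divisible groups (Proposition \ref{comparison 2}), to the required finite level by matching level structures at $p$ and away from $p$ in turn.

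First I would work over $K = F_\wp^\ur$ and combine the two comparison propositions: after dividing by $O_{B,p}^1$, we obtain an isomorphism $X_1^0 \simeq X_1'^0$ under which the $p$-divisible groups $H|X_1^0$ and $H'|X_1'^0$, together with their $O_{B,p}$-actions, are canonically identified, all equivariantly for the common action of $\ol\Delta_0/O_{B,p}^1 \simeq \ol\Delta_0'/O_{B,p}^1$.

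Next I would introduce the level $\gn$. Since $\gn$ is prime to $\gd_B$, a level-$\gn$ structure on $H_1$ consists of a Drinfeld basis of the $\wp$-component $H_{1,\wp}[\gn]$ together with an etale trivialization of the prime-to-$\wp$ component $H_1^\wp[\gn]$, and the analogous description applies to $H'_1$. Under the identification of Proposition \ref{comparison 2} these two moduli problems coincide, so the schemes $X_\gn^0$ and $X_\gn'^0$ obtained by adjoining such level structures to $X_1^0$ and $X_1'^0$ are canonically isomorphic over $K$, equivariantly for the actions of the prime-to-$p$ parts of $\ol\Delta$ and $\ol\Delta'$.

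Finally, to match the prime-to-$p$ level, I would choose $U'^p \subset G'(\wh\BQ)^p$ so that its image in $\ol\Delta'^p = \ol\Delta^p$ equals the image of $U^p$: take $U'^p$ to be a sufficiently small compact open subgroup contained in the preimage of that image under the natural projection $G'(\wh\BQ)^p \to \ol\Delta^p$. With such a choice, both $X_{\gn,U^p}^0$ and $X'^0_{\gn,U'^p}$ are quotients of $X_\gn^0 \simeq X_\gn'^0$ by the same subgroup of $\ol\Delta^p$, hence canonically isomorphic over $K$. The main obstacle will be verifying that the projection $G'(\wh\BQ)^p \to \ol\Delta^p$ has open image large enough to contain the image of an arbitrary compact open $U^p$ and kernel small enough that $U'^p$ can be chosen compact open; this reduces to a direct analysis of the centers of $G$ and $G''$ at prime-to-$p$ places, using the coincidence of derived subgroups $G_1 \subset G \cap G'$, together with a further shrinking of $U^p$ (as in Proposition \ref{free action}) to ensure representability on both sides.
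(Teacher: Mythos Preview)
Your proposal is correct and follows the same strategy as the argument the paper cites (Carayol \cite[Prop.~4.5.5]{Ca}): one uses Propositions~\ref{comparison 1} and~\ref{comparison 2} to identify the connected components and their $p$-divisible groups at infinite level over $K$, interprets the passage to level $\gn$ as adjoining a Drinfeld-plus-\'etale level structure on the common $p$-divisible group, and then matches prime-to-$p$ levels through the isomorphism $\ol\Delta\simeq\ol\Delta'$, exploiting that $G$ and $G'$ share the derived subgroup $G_1$. The paper itself gives no further detail beyond the citation, and your sketch (including the identification of the one nontrivial step, matching $U^p$ with $U'^p$ via their images in $\ol\Delta^p=\ol\Delta'^p$) is exactly what is needed to unwind Carayol's argument.
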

\begin{proof}
Same as  Carayol \cite[Prop. 4.5.5]{Ca}.
\end{proof}

\subsection{Integral models and arithmetic Hodge bundles}  
\label{section integral models}

The goal of this subsection is to introduce integral models $\CX_U$ of $X_U$ for any open compact subgroup $U=\prod_v U_v$ of $\BB_f^\times$ which is maximal at every prime ramified in $\BB$. Then we introduce an arithmetic Hodge bundle $\ol\CL_U$ on $\CX_U$. 

\subsubsection*{Integral models of Shimura curves}

By Proposition  \ref{genus}, $X_U$ has a unique minimal regular (projective and flat) model $\CX_U$ over $O_F$ when $U\subset U(N)$ for some $N\ge 3$. We want to check if these integral models form a projective system. More precisely,  for any $U_1\subset U_2\subset U(N)$ there is a morphism $X_{U_1}\lra X_{U_2}$, thus a rational map
$\CX_{U_1}\lra \CX_{U_2}$.   We want to check if this rational map is actually a morphism. For this, we first check the regularity over a prime $\wp$ of $O_F$ dividing a prime $p$.
Let $K=F_\wp^\ur$ be the completion of the maximal unramified extension of $F_\wp$. 
We will consider the open subgroups of $O_\BB^\times$ of the form  $U=U_p(\gn)U^p$, where $U_p (\gn)=(1+\gn O_{\BB, \wp})^\times$ 
for some ideal  $\gn $ dividing a power of $p$, and $U^p$ is an open compact subgroup of $O_{\BB^p}^\times$. Let $\CX_{\gn, U^p}$ denote 
$\CX_{U_p(\gn)\times U^p}$.

\begin{thm} \label{system1}
Consider the system of regular surfaces $\CX_{\gn, U^p}\otimes O_\wp$ indexed by pairs $(\gn, U^p)$ with the following properties:
\begin{enumerate}[(1)]
\item $\gn$ is prime to $\gd_\BB$;
\item $U^p\subset U^p(N):=(1+NO_{\BB^\wp})^\times$ for some $N\ge 3$ and prime to $p$.
\end{enumerate}
Then these surfaces form a projective system of curves over $O_\wp$. Moreover if $\wp\nmid \gn$, each such curve $\CX_{\gn, U^p}\otimes O_\wp$ is smooth if $\wp$ 
is split in $B$,  and a relative Mumford curve if $\wp$ is ramified in $B$.
\end{thm}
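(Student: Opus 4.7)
The plan is to transfer the integral models constructed for the PEL-type Shimura curves $X'$ in Corollary \ref{regularity n} to the quaternionic Shimura curves $X$ by means of the comparison isomorphism in Proposition \ref{comparison 3}, and then to assemble connected components by Galois descent. First I would work over the completion $K = F_\wp^\ur$ of the maximal unramified extension of $F_\wp$. By Proposition \ref{comparison 3}, for any $\gn$ prime to $\gd_\BB$ and any sufficiently small $U^p$, there is a compact open $U'^p \subset G'(\wh\BQ)$ with a $K$-isomorphism
$$X^0_{\gn, U^p} \simeq X'^0_{\gn, U'^p}.$$
Since Corollary \ref{regularity n} produces a regular integral model $\CX'_{\gn, U'^p}$ over $O_\wp$ whose special fiber is smooth when $\wp \nmid \gn\gd_B$ and a relative Mumford curve when $\wp \mid \gd_B$, passing to the connected component containing the generic fiber $X'^0_{\gn, U'^p}$ gives a regular $O_K$-model of $X^0_{\gn, U^p}$ with the desired special fiber type.

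Next I would assemble the connected components. Over $K$, the curve $X_{\gn, U^p}$ is a finite disjoint union of Galois translates of $X^0_{\gn, U^p}$, indexed by $\pi_0(X_{\gn, U^p}\otimes K)$ which is a torsor under $\bar\Delta$ acting via the reciprocity map; the key point is that each translate is again isomorphic to $X^0_{\gn, U^p}$ via the $\bar\Delta$-action of Proposition \ref{comparison 1}. Taking the disjoint union of the corresponding regular integral models produces a regular model of $X_{\gn, U^p} \otimes K$ over $O_K$. I would then descend this model to $O_\wp$ using the Galois action of $\Gal(K/F_\wp)$, noting that this action permutes connected components along their reciprocity description and thus lifts canonically to the integral models. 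Since descent along an unramified extension preserves regularity and the type of the special fiber (smooth vs.\ relative Mumford), the resulting $O_\wp$-model is regular with the stated special fiber. By Proposition \ref{genus} the generic fiber has genus $\ge 2$, so this model must coincide with the minimal regular model $\CX_{\gn, U^p}\otimes O_\wp$ (any blow-down of an exceptional divisor would destroy regularity or the minimality in a way that contradicts the $\CX'$-model, which has no exceptional curves).

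For the projective system property, given inclusions $(U_p(\gn_1)\times U^p_1) \subset (U_p(\gn_2)\times U^p_2)$ satisfying the hypotheses, the transition morphism at the generic fiber is known, and the question is whether the rational map $\CX_{\gn_1, U^p_1}\otimes O_\wp \to \CX_{\gn_2, U^p_2}\otimes O_\wp$ extends to a regular morphism. This follows because the analogous statement holds for the $\CX'$-tower by Corollary \ref{regularity n}: the transition maps there are regular, the isomorphisms of Proposition \ref{comparison 3} are compatible with varying level (since they arise from a single comparison of $p$-divisible groups in Proposition \ref{comparison 2}), and the formation of disjoint unions of Galois translates commutes with the level transition. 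Both source and target are regular with projective generic fiber, so by the valuative criterion of properness and the universal property of regular projective models of curves, the transition map extends uniquely.

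The main obstacle is the descent from $K$ to $F_\wp$: the isomorphism in Proposition \ref{comparison 3} is only at the level of geometric connected components over $K$, and a priori the integral models of the various Galois translates must be glued in a Galois-equivariant way. The comfort is that, since $\CX_{\gn, U^p}$ does exist as a minimal regular model over $O_\wp$ (by the genus bound plus standard existence), what we are really doing is \emph{identifying} this existing model with the one obtained via the $X'$-side, and one can verify this identification on geometric generic fibers where Proposition \ref{comparison 3} applies directly. The special-fiber assertion in the split (resp.\ ramified in $B$) case is then inherited from Proposition \ref{regularity}(2) (resp.\ (3)) component by component.
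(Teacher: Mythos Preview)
Your argument covers the case of \emph{sufficiently small} $U^p$ and is essentially the first paragraph of the paper's proof: via Proposition~\ref{comparison 3} and Corollary~\ref{regularity n} you transport the regular model from the PEL side to the identity component of $X_{\gn,U^p}$, and your remarks about assembling components and identifying the result with the pre-existing minimal regular model are sound.

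The gap is that Proposition~\ref{comparison 3} only applies to sufficiently small $U^p$ (depending on $\gn$), whereas the theorem claims the result for \emph{every} $U^p\subset U^p(N)$ with $N\ge 3$ prime to $p$. You never explain how to pass from the small-level case to this full range, and this is where the paper spends most of its effort. The paper forms the projective limit $\CX_\gn = \varprojlim_{U^p}\CX_{\gn,U^p}$ over the small levels and then constructs the model at a general $U^p$ as the categorical quotient $\CX_\gn/U$. For this quotient to remain regular (and to retain the smooth or relative Mumford special fiber), one must show that $\ol{U(N)}$ acts \emph{freely} on $\CX_\gn$. The paper checks this on the identity component $\CX_1^0\simeq\CX_1'^{0}$ by a Rosati-involution argument in the style of Proposition~\ref{free action}: if $\delta\in\ol\Delta_0'(N)$ fixes a point represented by $[A,\iota,\theta,\kappa]$, the polarization forces $\delta=\bar\delta$, hence $\delta\in O_F^\times$. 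Without this freeness statement, your quotient need not be regular and the projective-system transition maps need not land in the minimal model, so the extension step is not a formality.
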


\begin{proof} By Proposition \ref{comparison 3}, Theorem \ref {regularity} and Corollary \ref{regularity n},  there is a 
 system of regular  models $\CX'^0_{\gn, U'^p}$ of $X_{\gn, U^p}\simeq X_{\gn, U'^p}'$  (for $U^p$ sufficiently small 
over $O_\wp$ depending on $\gn$) which  is smooth if $\wp\nmid \gn$ 
is split in $B$ and a relative Mumford curve if $\wp\mid \gd_B$.
Under the condition of the theorem, these models must be $\CX_{1, U^p}$ by the uniqueness of the smooth models of curves with genus $\ge 2$.
It remains to enlarge this system to all cases of $U^p$ satisfying the condition of the theorem. 

Let $\CX_\gn$ be the projective limit of $\CX_{\gn, U^p}$, which has generic fiber $X_K/U_p(\gn)$.
  Then $\CX_\gn$ has an action by $\ol{\BB^\times}:=(O_{\BB, p}^\times\cdot \BB^{p, \times}_f)/\ol{O_{(\wp)}^\times }$. For any  open compact subgroup $U^p$, 
  we can construct a normal integral model $\CX_{U, K}$ of $X_{U, K}$ by the categorical quotient:
  $$\CX_{U, K}=\CX_\gn/U=\CX_{U_0, K}/(U/U_0),$$
  where $U_0$ is  a sufficiently small normal subgroup of $U$.
  This model satisfies the condition of the theorem if $\ol {U}:=U/[(U\cap \ol {F^\times})O_{\BB, p}^\times]$
  has a free action on $\CX_\gn$. Thus it suffices to show that $\ol{U(N)}$ acts freely on $\CX_\gn$ for any $N\ge 3$ prime to $p$. 
   Furthermore, we need only check this 
freeness on the identity connected component $\CX^0_1$,   i.e.,   $\ol \Delta (N)_0:=\ol U(N)\cap \ol \Delta_0$ acts freely on $\CX_1^0$.

By our construction,  the model $\CX_1^0$ is isomorphic to the identity connected component $\CX_1'^0$ of the limit  $\CX_0'$ of $\CX_{1, U'^p}'$ 
 constructed in Theorem \ref{regularity} with compatible action by 
 $\ol \Delta_0=\ol \Delta'_0$. Thus it suffices to show that $\ol{\Delta '_0}(N):=\ol\Delta _0'\cap \wt G(N)$ acts freely on  $\CX_1'^0$,
 where $\wt G_0(N)$ is the subgroup of $G_0$ fixes $O_{B'}$ and induces identity on $O_{B'}/NO_{B'}$.
 Let $\delta\in \ol\Delta '_0(N)$ fix a point $x$ on $\CX_1^0$. We want to show that $\delta\in U'^p\cdot F^\times$.
 Let $[A, \iota, \theta, \kappa]$ be the object represented by $x$.
 There is an element $\varphi\in \End (A)\otimes \BZ_{(p)}, u\in U'^p$ such that $\kappa \circ \delta\circ u=\RT(\varphi)\circ \kappa$.
 Replace $\delta$ by $\delta \circ u$, we may simply  assume that $u=1$.
 The effect on the polarization gives an identity $q(\delta)=\varphi\circ \varphi^*\in F_+^\times.$
 It follows that $q(\delta)$ also fixes $x$,  and  that $\delta/\bar \delta $  fixes $x$ too. Since $\delta /\bar \delta \in U'(N)$, by Proposition \ref{free action},
 $\delta=\bar\delta$. Thus $\delta \in O_F^\times$. 
   \end{proof}

Now we extend the definition of the integral model $\CX_U$ to any open compact subgroup $U=\prod_vU_v$ of $\BB_f^\times$ which is maximal at every prime ramified in $\BB$. 
Let $p$ be a prime number coprime to $2\gd_B$ such that $U_p$ is maximal.
Denote $U'=U^pU_p(p)$ with $U_p(p)=(1+p O_{\BB, p})^\times$.
Define $\CX_U$ to be the quotient scheme
 $$\CX_U:=\CX_{U'}\big/U=\CX_{U'}\big/(U/U')=\CX_{U'}\big/(\ol U/\ol U').$$
Here $\ol U:=U/(U\cap \ol {F^\times})$ as before, so the stabilizer of $\ol U/\ol U'$ at the generic point of $\CX_{U'}$ is trivial.
Note that $U/U'$ is a finite group, so $\ol U/\ol U'$ is also a finite group. 
Then $\CX_U$ is a normal integral scheme, projective and flat over $O_F$, and the quotient map $\pi: \CX_{U'}\to \CX_{U}$ is finite of degree $[\ol U:\ol U']$. By Theorem \ref{system1}, the definition does not depend on the choice of $p$. It recovers the minimal regular model if $U\subset U(N)$ for some $N\ge 3$. 

By construction as above, the morphism $\pi: \CX_{U'}\lra \CX_U$ is flat at all codimension one points but not necessarily at all points.
Thus $\pi_*\CO_{\CX_{U'}}$ is not  necessarily a locally free sheaf over $\CX_U$. But we can still define the norm map
$\RN _\pi:  \pi _*\CO_{\CX_{U'}}\lra \CO_{\CX_U}$ by
$$\RN_\pi (f):=\prod _{u\in \ol U/\ol U'} u^*f.$$

Using this norm map, for any line bundle $\CL$ on $\CX_{U'}$ we can define the norm bundle $\RN_\pi (\CL)$ on $\CX_U$ as the line bundle locally generated by 
the symbols $\RN _\pi (\ell)$, where $\ell$ are sections of $\pi_*\CL$, with relations for local sections $f$ of $\pi_*\CO_{\CX_{U'}}$:
$$\RN _\pi (f\ell)=\RN _\pi (f)\cdot \RN _\pi(\ell).$$
It is clear that if $\CM$ is a line bundle on $\CX_U$, then we have
$$\RN _\pi (\pi^*\CM)=\deg \pi\cdot \CM.$$

\begin{cor} \label{system2}
Consider the system $\{\CX_U\}_U$ of surfaces with $U=\prod_vU_v$ maximal at every prime ramified in $\BB$. Then this system is a projective system of curves over $O_F$ extending the system $\{X_U\}_U$. Moreover, the following are true:
\begin{enumerate}[(1)]
\item If $U\subset U(N)$ for some $N\ge 3$, then $\CX_U$ is smooth at any prime $\wp\nmid \gd_B$ such that $U_\wp$ is maximal, and is a relative Mumford curve at any prime $\wp\mid \gd_B$.
\item Let $\CX_U$ be any element in the system. 
Let $H$ be any finite extension of $F$ which is unramified above every finite prime $v$ of $F$ such that $\BB_v$ is ramified or $U_v$ is not maximal. Then the base change $\CX_U\otimes_{O_F}O_H$ is 
$\BQ$-factorial in the sense that any Weil divisor of $\CX_U\otimes_{O_F}O_H$ has a positive multiple which is Cartier. 
\end{enumerate}
\end{cor}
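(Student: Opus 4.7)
The plan is to derive both parts of Corollary~\ref{system2} from Theorem~\ref{system1} by presenting each $\CX_U$ as a finite quotient of a regular integral model at deeper level. First I would establish the projective-system structure. For an inclusion $U_1 \subset U_2$ in the family, I choose a rational prime $p$ coprime to $2\gd_\BB$ at which both $U_{1,p}$ and $U_{2,p}$ are maximal, and set $U_i' = U_i^p U_p(p)$. Both $U_i'$ are contained in $U(p)$ with $p \ge 3$, so Theorem~\ref{system1} supplies regular models $\CX_{U_i'}$ together with a finite morphism $\CX_{U_1'} \to \CX_{U_2'}$ equivariant for the action of the finite groups $G_i := U_i/U_i'$. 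Passing to categorical quotients yields the desired $\CX_{U_1} \to \CX_{U_2}$, and transitivity of the system follows from the universal property of quotients. Independence of the auxiliary $p$ is checked by refining two choices through a common smaller auxiliary level. Since quotients by finite groups commute with flat base change to the generic fiber, the identification $\CX_U \otimes_{O_F} F = X_U$ is automatic.

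For part~(1), fix a prime $\wp$ of residue characteristic $\ell$. I would choose the auxiliary rational prime $p \ne \ell$, still coprime to $2\gd_\BB$ and with $U_p$ maximal, so that $U'_\wp = U_\wp$ retains the local structure at $\wp$. When $U_\wp$ is maximal the condition $U \subset U(N)$ forces $\ell \nmid N$, so $U'$ falls into the setup of Theorem~\ref{system1} with trivial level at $\wp$: the model $\CX_{U'}$ is smooth at $\wp \nmid \gd_\BB$ and a relative Mumford curve at $\wp \mid \gd_\BB$. Transferring these properties to $\CX_U = \CX_{U'}/G$ with $G = U/U' \subset (O_{\BB,p}/pO_{\BB,p})^\times$ requires that $G$ act freely on $\CX_{U'}$ above $\wp$. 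This is a rigidity statement: via the PEL identification of Proposition~\ref{comparison 3}, a fixed closed point on the smooth locus corresponds to an automorphism of an abelian variety preserving its level-$N$ structure, forced to be trivial by the argument of Proposition~\ref{free action}; at $\wp \mid \gd_\BB$ the corresponding argument runs on the \v{C}erednik--Drinfeld side and preserves the Mumford structure (each component isomorphic to $\BP^1$ meeting at most two others).

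For part~(2), I do not need freeness of the action, only $\BQ$-factoriality of the quotient. Given $H/F$ as in the hypothesis, I choose the auxiliary prime $p$ additionally coprime to $\disc(H/F)$, still with $U_p$ maximal. The non-smooth locus of $\CX_{U'}$ over $O_F$ lies above primes in $\gd_\BB$, above $p$, or above primes where $U$ has non-maximal level; at each of these $H/F$ is unramified by construction, so \'etale base change preserves regularity there, and at the smooth locus the property is automatic. Hence $\CX_{U'} \otimes_{O_F} O_H$ is regular, and $\CX_U \otimes_{O_F} O_H = (\CX_{U'} \otimes_{O_F} O_H)/G$ is a finite quotient of a regular two-dimensional scheme; such a quotient has only quotient singularities and is therefore $\BQ$-factorial by the standard observation that for any Weil divisor $D$ on the quotient, $|G|\cdot D$ is the image of a $G$-invariant Cartier divisor upstairs. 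The main obstacle is the freeness argument in part~(1), particularly at $\wp \mid \gd_\BB$, where the rigidity must be read off the rigid-analytic uniformization rather than directly from a smooth moduli interpretation.
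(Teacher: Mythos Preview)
Your approach is essentially the paper's, with two minor differences worth noting.

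For part~(1) you work harder than necessary. The paper disposes of (1) in one line (``We already know (1)''), because once $U\subset U(N)$ the scheme $\CX_U$ is by definition the minimal regular model, and Theorem~\ref{system1} already records its local structure at each $\wp$ satisfying the hypotheses. The freeness argument you sketch---a fixed point would yield an automorphism of an abelian variety trivial on $N$-torsion, hence trivial---is exactly the argument embedded in the proof of Theorem~\ref{system1} (and in Proposition~\ref{free action}), so you are reproving what is already there rather than adding anything. One small wrinkle in your version: you invoke Theorem~\ref{system1} for $\CX_{U'}$ at $\wp\mid\ell$ after choosing the auxiliary $p\neq\ell$, but as stated that theorem is about primes over $p$ and requires the level at \emph{all} primes over the relevant residue characteristic to be of the shape $U_\ell(\mathfrak m)$; your $U'_\ell=U_\ell$ need not have that form. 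This is easily repaired by shrinking further at the primes over $\ell$ other than $\wp$, and the paper's terse citation is implicitly doing the same.

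For part~(2) you and the paper run the same quotient argument, but you economise by choosing a single auxiliary $p$ coprime to $\operatorname{disc}(H/F)$ so that $\CX_{U'}\otimes O_H$ is regular everywhere at once, whereas the paper uses two primes $p,p'$ and glues the $\BQ$-factoriality of $\CX_U\otimes O_H[1/p]$ and $\CX_U\otimes O_H[1/p']$. Both are fine. The paper is also a bit more explicit about why the quotient is $\BQ$-factorial: it writes down the norm map $\RN_\pi$ and observes that for a prime Weil divisor $C$ the multiple $(\deg\pi)\cdot C$ is cut out locally by $\RN_\pi(f)$ for $f$ a local equation of $\pi^{-1}(C)$ on the regular cover.
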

\begin{proof}
We already know (1) from Theorem \ref{system1}. 
For (2), to illustrate the idea, we first treat the case $H=F$. 
Let $\pi:\CX_{U'}\to \CX_{U}$ be a quotient map in the construction of $\CX_U$, where $U'=U^pU_p(p)$ and $U_p(p)=(1+p O_{\BB, p})^\times$ are as above.
Let $C$ be a prime divisor of $\CX_U$.
The schematic preimage $\pi^{-1}(C)$ in $\CX_{U'}$ is locally defined by a single equation $f\in \CO_{\CX_{U'}}$ since $\CX_{U'}$ is regular. 
Then the divisor $(\deg \pi)\cdot C$ is locally defined by the image of $f$ under the norm map $\RN_\pi: \pi_*\CO_{\CX_{U'}}\to \CO_{\CX_{U}}$. 
This proves the case $H=K$. 
In general, the map 
$\CX_{U'}\otimes O_H\to \CX_{U} \otimes O_H$
is still a quotient map by the same finite group $U/U'$.
By (1), $\CX_{U'}\otimes O_H[1/p]$ is regular.
Then the same proof shows that $\CX_{U}\otimes O_H[1/p]$ is $\BQ$-factorial.
Take a different prime $p'$ and apply the same argument. 
Then $\CX_{U}\otimes O_H[1/p']$ is also $\BQ$-factorial.
This implies the result for $\CX_{U}\otimes O_H$.
\end{proof}

For any ideal $\gn$ of $O_F$, let $U(\gn)$ denote the compact group $U(\gn )=(1+\gn O_\BB)^\times$. 
Let $\CX(\gn)$ denote the integral model $\CX_{U(\gn)}$ over $O_F$ if 
$\gn$ is coprime to $\gd_B$.
In particular we have an integral model $\CX(1):=\CX(O_F)$ which is a normal, projective, and flat scheme over $O_F$, and every $\CX(\gn)$ is the normalization of $\CX(1)$ in the projection $X(\gn)\lra X(1)$. 
 
 In the modular curve case, $\CX(1)\simeq  \BP ^1_\BZ$ is regular. In general, it is not clear if $\CX(1)$ is regular. For the purpose of intersection theory, the property of being $\BQ$-factorial is sufficient.

\subsubsection*{Arithmetic Hodge bundle}
 
For any scheme $S$, denote by $\mathcal{P}ic (S)$ the groupoid of line bundles on $S$, and by $\Pic (S)$ the group of isomorphism classes of line bundles on $S$.
Denote by $\mathcal{P}ic (S)_\QQ$ the groupoid  of $\BQ$-line bundles on $S$.
The objects of $\mathcal{P}ic (S)_\QQ$ are of the form $a L$ with $a\in \BQ$ and $L\in \mathcal{P}ic (S)$.
The homomorphism of two such objects is defined to be 
$$\Isom (aL, bM):=\varinjlim_m \Isom (L ^{\otimes am}, M^{\otimes bm}),$$
where $m$ runs through positive integers such that $am$ and $bm$ are both integers. The group of isomorphism classes of such $\BQ$-line bundles is isomorphic to $\Pic (S)_\BQ:=\Pic (S)\otimes \BQ$.

Similarly, we define the groupoid $\wh{\mathcal{P}ic} (S)_\BQ$ of hermitian $\BQ$-line bundles on an arithmetic variety $S$.  
We will usually write the tensor products of (hermitian) line bundles additively. 

In \cite[\S 3.1.3]{YZZ}, for each open compact subgroup $U$ of $\BB_f$, 
the  curve $X_U$ has a {\em Hodge bundle} $L_U\in \mathcal{P}ic (X_U)_\BQ$.
It is the $\BQ$-line bundle for holomorphic modular forms of weight two, and it is  the canonical bundle modified by ramification   points. 
It is determined by the following two conditions:
\begin{enumerate}[(1)]
\item The system $\{L_U\}_U$ is compatible with pull-back maps.
\item If $\bar U$ acts freely on $X$, then $L_U=\omega _{X_U/F}$.
\end{enumerate}
For general $U$,  we have the following explicit formula.
$$
L_{U}= \omega_{X_{U}/F} +\sum_{Q\in X_U(\overline F)} (1-e_Q^{-1})\ \CO(Q).
$$
where the operation in $\mathcal Pic (X_U)_\BQ$ is written additively, and $e_Q$ is the ramification index of the map
$X\lra X_U$.

Next, we want to extend the Hodge bundle $L_U$ to a hermitian $\QQ$-line bundle $\overline \CL_U $ over $\CX_U$
for $U=\prod_vU_v$ maximal at every prime ramified in $\BB$. 
Note that our definition is different from that of \cite[\S 7.2.1]{YZZ} including the normalization of the hermitian metric. 

\begin{thm}\label{hodge bundles} \label{system3}
There is a unique system $\{\ol \CL_U\}_{U}$ of hermitian $\BQ$-line bundles $\ol \CL_U$ on the arithmetic surface $\CX_U$ extending the system $\{L_U\}_{U}$, where $U=\prod_vU_v$ is maximal at every prime ramified in $\BB$,  so that the following  conditions hold:
\begin{enumerate}[(1)]
\item The system $\{\ol \CL_U\}_{U}$ is invariant under the pull-back maps among different $U$.
\item If $U$ is sufficiently small in the sense that $U\subset U(N)$ for some $N\ge 3$,  then there is a canonical isomorphism for any $\wp$ such that $U_\wp$ is 
maximal
$$\CL_{U}\otimes O_\wp=\omega _{\CX_U\otimes O_\wp/O_\wp}.$$
Here the right-hand side denotes the relative dualizing sheaf. 
\item At an archimedean place, the metric is given by $|dz|=2y$ under the complex uniformization.
\end{enumerate}
\end{thm}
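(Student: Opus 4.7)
The plan is to first establish uniqueness, then construct the system explicitly, descending from sufficiently small $U$ to arbitrary $U$ of the specified form.

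\textbf{Uniqueness.} Suppose $\{\ol\CL_U\}$ and $\{\ol\CL'_U\}$ both satisfy (1)--(3). Given any $U=\prod_vU_v$ maximal at primes ramified in $\BB$, choose a normal subgroup $U_0\subset U$ with $U_0\subset U(N)$ for some $N\ge 3$ prime to the residue characteristics at ramified primes and coprime to $\gd_\BB$. By Corollary \ref{system2}(1), $\CX_{U_0}$ is smooth away from the Mumford locus, so condition (2) determines $\CL_{U_0}$ on the smooth locus; by $\BQ$-factoriality (Corollary \ref{system2}(2)) a $\BQ$-line bundle is determined by its restriction to any dense open, hence $\CL_{U_0}$ is determined globally. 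The finite group $U/U_0$ acts on $\CX_{U_0}$ with quotient $\CX_U$, and compatibility with pull-back in (1) combined with the fact that $\pi^*\RN_\pi\cong (\deg\pi)\cdot \mathrm{id}$ on $\Pic(\CX_U)_\BQ$ forces $\CL_U$ to be determined by $\CL_{U_0}$ in $\Pic(\CX_U)_\BQ$. The archimedean metric is determined by (3).

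\textbf{Construction for small $U$.} For $U\subset U(N)$ with $N\ge 3$ prime to $\gd_\BB$, define the finite part of $\CL_U$ by
$$
\CL_U:=\omega_{\CX_U/O_F}+\sum_{\wp\mid \gd_\BB}\alpha_\wp,
$$
where $\omega_{\CX_U/O_F}$ is the relative dualizing sheaf and $\alpha_\wp$ is a (rational) correction supported on the special fiber at the Mumford primes $\wp\mid\gd_\BB$, chosen so that (i) the generic fiber recovers $L_U=\omega_{X_U/F}$ (automatic, since $\omega_{\CX_U/O_F}$ restricts to $\omega_{X_U/F}$ and $\bar U$ acts freely), and (ii) the formation is stable under pull-back between two such $U$'s. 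By Theorem \ref{system1}, pull-backs between sufficiently small $U$'s are finite flat morphisms (smooth away from Mumford fibers, admitting a \v{C}erednik--Drinfeld description at Mumford primes), so $\omega$ is compatible with pull-back in the system $\{\CX_U\}$; one takes $\alpha_\wp=0$. Endow $\CL_U$ with the hermitian metric of (3); this is a well-defined smooth hermitian metric on the generic fiber because $|dz|=2y$ is invariant under the action of any congruence subgroup and transforms correctly under the pull-back maps $X_{U_1,\tau}(\BC)\to X_{U_2,\tau}(\BC)$.

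\textbf{Descent to general $U$.} Given $U=\prod_vU_v$ maximal at primes ramified in $\BB$, pick a prime $p$ coprime to $2\gd_\BB$ with $U_p$ maximal, and let $U_0=U^pU_p(p^k)$ be normal in $U$, contained in $U(N)$ for suitable $k,N$. Let $\pi:\CX_{U_0}\to\CX_U$ be the quotient map as in the definition preceding Corollary \ref{system2}. Define
$$
\ol\CL_U:=\tfrac{1}{\deg\pi}\,\RN_\pi\bigl(\ol\CL_{U_0}\bigr)\quad\in\wh{\mathcal Pic}(\CX_U)_\BQ,
$$
using the norm map on line bundles recalled above and the corresponding push-forward of hermitian metrics (the group $U/U_0$ acts by isometries on the generic fiber by (3), since $|dz|=2y$ descends to every $X_U$). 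One verifies: independence of the choice of $U_0$, by comparing two choices via a common refinement; recovery of $L_U$ on the generic fiber, from the explicit formula $L_U=\omega_{X_U/F}+\sum_Q(1-e_Q^{-1})\CO(Q)$, which is precisely what the norm construction produces from $\omega_{X_{U_0}/F}$ under ramification; and condition (2) for $U$ sufficiently small, since then $\deg\pi$ cancels against $\pi^*$ on the smooth locus.

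\textbf{Pull-back compatibility.} For $U_1\subset U_2$ both in the system, choose a common normal refinement $U_0\subset U_1$ sufficiently small, and deduce $\pi_{12}^*\ol\CL_{U_2}=\ol\CL_{U_1}$ from the transitivity of the norm map together with uniqueness on the smooth locus plus $\BQ$-factoriality (Corollary \ref{system2}(2)) to extend the identity across the Mumford fibers.

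\textbf{Main obstacle.} The delicate step is verifying that the $\BQ$-line bundle obtained by descent is independent of the choice of small normal subgroup $U_0$ and truly patches into a compatible projective system across $\CX_U$ for $U$ with non-maximal components at split primes; this hinges on the $\BQ$-factoriality in Corollary \ref{system2}(2), which is what makes divisor identities on the singular integral model $\CX_U$ meaningful after clearing denominators. The archimedean part is comparatively routine once the invariance of $|dz|=2y$ under $\GL_2(\BR)^+$-action is recorded.
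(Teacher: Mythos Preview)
Your construction step for small $U$ contains a real gap. You set $\CL_U = \omega_{\CX_U/O_F}$ globally and claim this is pull-back compatible among small $U$ because ``pull-backs between sufficiently small $U$'s are finite flat morphisms (smooth away from Mumford fibers) \dots\ so $\omega$ is compatible with pull-back.'' But Theorem~\ref{system1} says the individual surfaces $\CX_{\gn,U^p}$ are smooth at primes $\wp\nmid \gn\gd_\BB$; it does \emph{not} say the transition maps between different levels are \'etale. In fact, at a prime $\wp$ where the level $\gn$ changes, the map $\CX_{\gn_1,U^p}\to \CX_{\gn_2,U^p}$ adds Drinfeld level structure and is ramified on the fiber over $\wp$. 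Hence $\pi^*\omega_{\CX_{U_2}/O_F}\neq \omega_{\CX_{U_1}/O_F}$ in general, your candidate violates condition~(1), and your descent $\CL_U:=\tfrac{1}{\deg\pi}\RN_\pi(\omega_{\CX_{U_0}/O_F})$ genuinely depends on the auxiliary $U_0$: comparing two choices via a common refinement $U_{00}$ fails because $\RN_{\pi_0}(\omega_{\CX_{U_{00}}})$ picks up the norm of the nontrivial relative dualizing sheaf $\omega_{\CX_{U_{00}}/\CX_{U_0}}$. (The same issue infects your uniqueness argument: condition~(2) only pins down $\CL_{U_0}\otimes O_\wp$ at primes where $(U_0)_\wp$ is maximal, not on the whole smooth locus.)

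The paper avoids this by reversing the direction of the construction. It builds $\CL_U$ only for the \emph{maximal} $U=O_{\BB_f}^\times$, and then defines every other $\CL_U$ by pull-back from there. For the maximal $U$ it picks an auxiliary prime $p$ coprime to $2\gd_\BB$, sets $U'=U^pU_p(p)$, and forms $\tfrac{1}{\deg\pi}\RN_\pi\bigl(\omega_{\CX_{U'}[1/p]/O_F[1/p]}\bigr)$ --- crucially working only on $\CX_{U'}[1/p]$, where $U'$ is maximal at every surviving prime and the relative dualizing sheaf is the correct object. This produces $\CL_U$ on $\CX_U[1/p]$; choosing a second auxiliary prime $p'$ and gluing over $\CX_U[1/pp']$ yields the global $\CL_U$. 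The two-primes gluing is precisely what replaces your attempted compatibility of $\omega$ across level changes: one never uses $\omega$ at a prime where the cover has added level. Your norm-down idea is the right mechanism, but it only works if you excise the primes of extra level before taking the norm and then patch.
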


\begin{proof}
The third property is simply a definition of metrics. So we only need to consider the first two properties. To construct the system, by pull-back, it suffices to construct the $\BQ$-line bundle $\CL_U$ for the maximal compact subgroup $U=O_{\BB_f}^\times$ of $\BB_f^\times$.  
Let $\pi: \CX_{U'}\to \CX_{U}$ be a quotient map in the construction of $\CX_U$.
Then $U'=U^pU_p(p)$ with $U_p(p)=(1+p O_{\BB, p})^\times$ for some prime $p$ coprime to $2\gd_B$. 
Let $\omega^p=\omega_{\CX_{U'}[1/p]/O_F[1/p]}$ be the relative dualizing sheaf of $\CX_{U'}$ away from $p$. 
Here we write $\CX_{U'}[1/p]=\CX_{U'}\otimes O_F[1/p]$.
Then the bundle  $\RN_{\pi}(\omega^p)$ is a line bundle on $\CX_{U}[1/p]$ with restriction $\deg \pi L_U$ on the generic fiber $X_U$.
Then $\frac{1}{\deg(\pi)} \RN_{\pi}(\omega^p)$ already defines the restriction of $\CL_U$ to $\CX_{U}[1/p]$. 
To get the whole $\CL_{U}$, take a different prime $p'$, and glue $\frac{1}{\deg(\pi)} N_{\pi}(\omega^p)$ and $\frac{1}{\deg(\pi')} N_{\pi'}(\omega^{p'})$ along $\CX_{U}[1/pp']$. 
This finishes the proof. 
\end{proof}

For any ideal $\gn$ of $O_F$ coprime to $\gd_B$, 
 we have written $\CX(\gn)$ for $\CX_{U(\gn)}$. Here
$U(\gn )=(1+\gn O_\BB)^\times$. 
Write $(L(\gn), \CL(\gn), \ol \CL(\gn))$ for $(L_{U(\gn)}, \CL_{U(\gn)}, \ol \CL_{U(\gn)})$ similarly.

\begin{remark} \label{stack}
For an alternative approach of this paper,
instead of defining $\CX_U$ as the quotient scheme $\CX_{U'}\big/(\ol U/\ol U')$, 
one may define it as the quotient stack $\left [\CX_{U'}\big/(\ol U/\ol U')\right]$.
It is a regular Deligne--Mumford stack, proper and flat over $O_F$. 
The quotient scheme is just the coarse scheme of the quotient stack. 
Then one may define $\CL_U$ to be the relative dualizing sheaf of the quotient stack. 
\end{remark}

\subsection{Integral models of $p$-divisible groups}
 Let $\wp$ be a prime of $O_F$ dividing $p$,  
 and $O_\wp$ the ring of integers in $F_\wp$, and  $H=H_\wp\times H^\wp$ the decomposition according to 
 the decomposition $O_{F, p}=O_\wp \oplus O_{F, p}^\wp$ of $\BZ_p$-algebras.
When $\BB_\wp\simeq M_2(F_\wp)$ is split,  
 Carayol \cite[\S1.4.4]{Ca} has defined a $p$-divisible group $E_\infty|M_0$ related to our 
 $H| X_1$ by the formula:
 $$M_0/U_p(1)=X_1, \qquad E_\infty =\begin{pmatrix}1&0\\ 0&0\end{pmatrix} H_{\wp}|_{M_0}.$$
 The treatment of all  facts in Carayol \cite{Ca} can be copied to $H|X_1$ with some little  modifications. 
 In the following, we want to use his method to study integral model for $H|X_1$.
  
  Let $K=F_\wp^\ur$ be the completion of the maximal unramified extension of $F_\wp$, and $O_K$ its ring of integers.
  
  \begin{thm} \label{H-integrality}
  Let $\gn$ be an ideal of $O_F$ prime to $\gd_B$, and 
  $\CX_\gn $  the projective limit of $\CX_{U_p(\gn)U^p}\otimes O_K$ as $U^p$ varies.  
 Then $H_\gn$ has an integral  model $\CH_\gn $ over $\CX _\gn $ with the following properties:
\begin{enumerate}[(1)]
 \item  $\CH^\wp$ 
 is \'etale over $\CX_1$, and   $\CH_\wp$ is
 a special formal $O_{B, \wp}$-module in the sense that
  $\Lie (\CH_\wp)$ is a locally free sheaf over $\CO _{\CX_{1,\wp}}\otimes O_{K_0}$ of rank $1$ where $K_0$ is an unramified quadratic extension 
of $F_\wp$ embedded into $\BB_\wp$.  
\item the formal completion $\wh \CX_{1}$ along its special fiber over $\bar k$ ($k=O_F/\wp$)
is the universal deformation space of $\CH_{\bar k}$;
\item  for any $\gn$ prime to $\gd_B$ and with decomposition $\gn=\wp^n\cdot \gn'$ with $\gn'$ prime to $\wp$, 
the morphism $\CX_\gn\lra \CX_1$ classifies pairs of a  full level-$\gn'$ structure on on $\CH_1^\wp$ and 
 a Drinfeld level $\wp^n$-structure on $\CH_{1, \wp}$.
 \end{enumerate}
\end{thm}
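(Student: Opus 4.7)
The plan is to transport the integral structure from the PEL Shimura curve $\CX'$ of \S3 to $\CX$ via the comparison isomorphisms of Propositions \ref{comparison 1}--\ref{comparison 3}, and then verify the three properties. I will work primarily on $\CX_1$ (setting $\CH:=\CH_1$); the model $\CH_\gn$ on $\CX_\gn$ is then obtained by pulling back $\CH$ along $\CX_\gn\to\CX_1$.

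\medskip

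\noindent\textbf{Construction and property (1).}
On the PEL side, the universal abelian scheme $\CA\to \CX'^0_{1,U'^p}$ from Proposition \ref{regularity} carries an $O_{B',p}$-stable $p$-divisible group $\CA[p^\infty]$, and its direct summand $\CH':=\CA[p^\infty]_2$ furnishes the desired integral model of $H'$. By the moduli problem $\CF_{1,U'^p}$, the prime-to-$\wp$ part $\CH'^\wp$ is \'etale because $\Lie(\CA)_2^\wp=0$, and $\CH'_\wp$ is a special formal $O_{B,\wp}$-module by the special condition on $\Lie(\CA)_{2\wp}$. The proof of Theorem \ref{system1} identified $\CX^0_{1,U^p}\otimes O_K$ with $\CX'^0_{1,U'^p}$ for appropriate $U'^p$, and Proposition \ref{comparison 2} identifies $H|X_1^0$ with $H'|X_1'^0$ equivariantly for the action of $\ol\Delta_0/O_{B,p}^1$. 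Transporting $\CH'$ and spreading by Hecke translation under $\ol\Delta$ yields an integral model $\CH$ over $\CX_1\otimes O_K$ satisfying (1).

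\medskip

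\noindent\textbf{Property (2): universal deformation.}
This is the main obstacle. Fix $\bar x\in\CX_1(\bar k)$ and let $\mathcal{R}$ represent the functor of deformations of $\CH_{\bar x}$ to complete local $O_K$-algebras with residue field $\bar k$; since the \'etale part deforms uniquely, this coincides with the deformation functor of the special formal module $\CH_{\bar x,\wp}$. By Grothendieck--Messing combined with the classical theory of special formal $O_{B,\wp}$-modules, $\Spf \mathcal{R}$ is a one-dimensional formal scheme over $\Spf O_K$: formally smooth (of Lubin--Tate type) when $\wp\nmid \gd_B$, and isomorphic to Drinfeld's formal upper half-plane when $\wp\mid\gd_B$. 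The group $\CH$ constructed above induces a classifying morphism $\wh\CX_{1,\bar x}\to\Spf \mathcal{R}$. By Theorem \ref{system1} the source is a formal scheme of the same local type as the target, so it suffices to check that this map is an isomorphism on tangent spaces. This reduces, via Proposition \ref{comparison 2} and the PEL identification above, to the classical Kodaira--Spencer isomorphism for the universal abelian scheme $\CA\to\CX'$, whence the conclusion by Nakayama.

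\medskip

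\noindent\textbf{Property (3): level structures.}
The morphism $\CX_\gn\to\CX_1$ is finite, with generic fibre $X/U_p(\gn)\to X/U_p(1)$. Writing $\gn=\wp^n\gn'$, away from $\wp$ this cover is \'etale and classifies full level-$\gn'$ structures on the \'etale sheaf $\CH^\wp[\gn']$; this is immediate by descent from the PEL moduli description in Corollary \ref{regularity n}. At $\wp$, Drinfeld level-$\wp^n$ structures on the special formal module $\CH_\wp$ yield a finite flat cover which is regular by Drinfeld's theorem, matching $\CX_\gn$ over $\CX_1$ by uniqueness of the minimal regular model together with Corollary \ref{regularity n}.
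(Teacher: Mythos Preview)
Your proposal is correct and follows essentially the same strategy as the paper: reduce to the identity connected component, transport the integral $p$-divisible group from the PEL side via Propositions~\ref{comparison 1}--\ref{comparison 3}, and read off the three properties from the moduli description of $\CX'_{\gn,U'^p}$ (Proposition~\ref{regularity}, Proposition~\ref{free action}, Corollary~\ref{regularity n}) together with the classical deformation theory in Carayol \cite[\S6--7,\S9]{Ca} and \v Cerednik--Drinfeld \cite{BC}. The paper's proof is a two-line pointer to exactly these ingredients; your version simply unpacks them.
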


\begin{proof} It suffices to prove the corresponding statement for the connected component $X^0_\gn$ of $X_\gn$.
By Proposition \ref{comparison 3}, $H|X_\gn^0$ is isomorphic to $H'|X'^0_\gn$.
Thus the all conclusions of above theorem  follow from Theorem \ref{free action}. See also   Carayol \cite[\S6.4, \S6.6,  \S7.2, \S7.4, \S9.5]{Ca} 
and \v Cerednik--Drinfeld \cite{BC}.
\end{proof}

Let us define $\CM_\wp=\BD(\CH_\wp)$ to be the covariant  Deudonn\'e crystal \cite{Il, Me},
and $\CW_\wp=\Lie (\CH)^\vee$, $\CW_\wp^t=\Lie (\CH^t)^\vee$, where $\CH_\wp^t$ is the Cartier dual of $\CH_\wp$. Then we have an exact sequence 
$$0\lra \CW _\wp^t\lra \CM_\wp\lra \CW_\wp ^\vee\lra 0.$$
Applying the Gauss--Manin connection $\nabla$ on $\CM_\wp$, we obtain the following composition of morphisms:
 $$\CW_\wp^t\lra \CM_\wp\overset {\nabla}\lra \CM_\wp\otimes \omega _{\CX_\wp}\lra \CW_\wp^{\vee}\otimes \omega _{\CX_\wp}.$$
Taking determinants, we obtain a morphism
$$ \det \CW_\wp^t \lra \det \CW_\wp^{\vee}\otimes  \omega _{\CX_\wp}^{\otimes 2}.$$
In other words, we obtain a Kodaira--Spencer  morphism of line bundles:
$$\KS_\wp: \CN_\wp\lra \omega _{\CX_\wp}^{\otimes 2}, \qquad \CN_\wp:=\det \CW_\wp^t \otimes \det \CW_\wp^{\vee}.$$

\begin{thm}  \label{N2Lfin} 
Let $\gd_{\BB, \wp}$ be  the divisor on $\Spec\, O_\wp^\ur $ corresponding to $\BB_\wp$.
Then $\KS _\wp$ extends to an isomorphism  of line bundles on $\CX_\wp$:
$$\KS _\wp: \CN_\wp\iso \omega _{\CX_\wp}^{\otimes 2}(-\gd_{\BB, \wp}).$$
\end{thm}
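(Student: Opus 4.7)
The plan is to reduce the computation to the PEL-type Shimura curve $X'$ of the previous section, where the $p$-divisible group $\CH_\wp$ arises as a direct factor of the $p$-divisible group of the universal abelian variety, and then to carry out the local Kodaira--Spencer computation separately in the split and ramified cases of $\BB_\wp$.

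\textbf{Reduction to the PEL setting.} By Proposition \ref{comparison 2}, on the identity connected component $X_1^0$ there is a $\ol\Delta_0/O_{B,p}^1$-equivariant isomorphism $\CH|_{X_1^0}\simeq \CH'|_{X_1'^0}$ between the $p$-divisible group $\CH$ and the PEL $p$-divisible group $\CH' = \CA'[p^\infty]_2$ coming from the universal abelian variety on $X'$. By Theorem \ref{H-integrality}(2), the formal completion of $\CX_1$ along its special fiber is the universal deformation space of $\CH_{\bar k}$, so the comparison extends to the integral models. Since both $\CN_\wp$ and $\omega_{\CX_\wp}$ satisfy Galois descent, it suffices to prove the isomorphism after restriction to $\CX_1^0$ and then use the local uniformization.

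\textbf{The unramified case.} When $\BB_\wp\simeq M_2(F_\wp)$ is split, $\gd_{\BB,\wp} = 0$ and $\CX_\wp$ is smooth over $O_\wp^\ur$ by Proposition \ref{regularity}(2). After transferring to $\CX'$, the $p$-divisible group $\CH'_\wp$ is a direct summand of that of the universal abelian variety $\CA'$. The classical Kodaira--Spencer theorem for abelian schemes gives an isomorphism $\det\Omega(\CA')\otimes\det\Omega(\CA'^t)\iso \omega_{\CX'}^{\otimes 2}$, and projecting to the $\tau$-isotypic component of the $E$-action on the $\wp$-part of de Rham homology yields $\CN_\wp\iso \omega_{\CX_\wp}^{\otimes 2}$. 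The archimedean counterpart Theorem \ref{N2Linf} ensures that $\KS_\wp$ is everywhere a well-defined morphism on the generic fiber, and Grothendieck--Messing deformation theory propagates the isomorphism to the integral level.

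\textbf{The ramified case.} When $\BB_\wp$ is a division algebra, by Cerednik--Drinfeld the formal completion of $\CX_{1,\wp}$ is (up to the action of an arithmetic subgroup) Drinfeld's $p$-adic upper half plane $\wh\Omega_\wp^{\ur}$, on which $\CH_\wp$ is the universal special formal $O_{B,\wp}$-module. Here the key local computation runs as follows: $\Lie\CH_\wp$ is locally free of rank one over $\CO_{\CX_\wp}\otimes_{O_\wp}O_{K_0}$ where $K_0\subset B_\wp$ is the unramified quadratic subfield; decomposing $\CW_\wp$ and $\CW_\wp^t$ into the two $K_0$-eigencomponents and using the polarization to pair them, an explicit calculation of $\nabla$ on the universal Dieudonn\'e crystal over $\wh\Omega_\wp^{\ur}$ shows that $\KS_\wp$ acquires a factor of the uniformizer $\varpi_B$ of $O_{B,\wp}$, whose reduced norm is a uniformizer of $F_\wp$. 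This accounts exactly for the twist by $\gd_{\BB,\wp}$, which pulls back from $\Spec O_\wp^\ur$ to the whole special fiber of $\CX_\wp$.

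\textbf{Globalization and main obstacle.} Finally, since $\KS_\wp$ is an isomorphism on the generic fiber and on the smooth locus, the two line bundles can differ only along components of the special fiber; the ramified-case computation above pins the difference down to $\gd_{\BB,\wp}$, and the $\BQ$-factoriality in Corollary \ref{system2}(2) permits the comparison for any $U$ in our system by descent from a sufficiently small level. The main obstacle is the explicit computation on the Drinfeld tower in Step 3: one must carefully track the $O_{B,\wp}$-module structure on the universal Dieudonn\'e module of the special formal $O_{B,\wp}$-module, compatibly identify the polarization-induced duality $\Lie\CH_\wp\leftrightarrow \Lie\CH_\wp^t$ with the canonical involution on $O_{B,\wp}$, and verify that the resulting local twist is exactly $\gd_{\BB,\wp}$ rather than some other integer multiple of the special fiber.
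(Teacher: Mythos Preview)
Your overall strategy is reasonable but takes a substantially heavier route than the paper, and the step you flag as the ``main obstacle'' is in fact where the paper's argument is most direct.

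The paper does \emph{not} transfer to the PEL curve $X'$ at all. It works directly on $\CX_\wp$ with the $p$-divisible group $\CH_\wp$, using only that the formal completion along the special fiber is the universal deformation space of $\CH_{\wp,\bar k}$ (Theorem \ref{H-integrality}(2)). Grothendieck--Messing deformation theory then gives a canonical isomorphism
\[
\omega_{\CX_\wp}^\vee \;\iso\; \Hom_{O_{\BB_\wp}}(\CW_\wp^t,\ \CW_\wp^\vee),
\]
and the entire proof reduces to computing the determinant of the right-hand side as an $O_{\BB_\wp}$-module map. In the split case one uses the idempotents $e_1,e_2\in M_2(O_\wp)$ to decompose $\CW_\wp$ and $\CW_\wp^t$ into two isomorphic rank-one pieces, so the $\Hom$ is a single line and $\omega_{\CX_\wp}^{-2}=\CN_\wp^\vee$ immediately. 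In the nonsplit case one writes $O_{\BB_\wp}=O_K+O_Kj$ with $K/F_\wp$ unramified quadratic and $j^2=\pi$ a uniformizer; the $K$-eigenspace decomposition $\CW_\wp^t=\CL_1\oplus\CL_2$, $\CW_\wp^\vee=\CN_1\oplus\CN_2$ together with the relation $j_1\circ j_2=\pi$ forces the pair $(\phi_1,\phi_2)$ of $K$-linear maps to satisfy $\phi_2=j\phi_1 j^{-1}$, so $\phi_2$ always lands in $\pi\CN_2$ and conversely any $\phi_2$ divisible by $\pi$ determines $\phi_1$. This is pure linear algebra and yields $\pi\,\omega_{\CX_\wp}^2=\CN_\wp$ with no further input.

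So your detour through the universal abelian variety (split case) and Cerednik--Drinfeld uniformization with an ``explicit calculation of $\nabla$ on $\wh\Omega$'' (ramified case) is unnecessary: once you have the $\Hom_{O_{\BB_\wp}}$ identification, the $\wp$-twist falls out of the relation $j^2=\pi$ in a few lines, and no analysis on the Drinfeld upper half plane is needed. Your approach could presumably be made to work, but it replaces a short module-theoretic computation with a considerably more involved argument, and the part you left unfinished is exactly the part the paper handles most cleanly.
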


\begin{proof}
 Let $(\wh \CX_\wp, \wh \CH_\wp)$ be the formal completion of pair $(\CX_\wp,  \CH_\wp)$ along its special fiber over the residue field $\bar k:=\ol{k(\wp)}$
of $O_\wp^\ur$. Then $(\wh \CX _\wp, \wh \CH_\wp)$ is the universal deformation of $(\CX_{\wp,\bar k}, \CH_{\wp, \bar k})$.
By deformation theory of $p$-divisible groups \cite{Il} and \cite {Me},  we have an isomorphism 
$$\omega _{\CX_\wp}^\vee\iso\Hom _{O_{\BB_\wp}}(\CW_\wp^t, \CW_\wp^\vee)$$
induced from the  above  composition of morphisms:
 $$\CW_\wp^t\lra \CM_\wp\overset {\nabla}\lra \CM_\wp\otimes \omega _{\CX_\wp}\lra \CW_\wp^{\vee}\otimes \omega _{\CX_\wp}.$$
Taking determinants, we obtain an embedding 
 $$\omega _{\CX_\wp}^{-2}\subset \CN_\wp^\vee .$$
  
 If $\wp$ is  split in $\BB$, then we can write $O_{\BB, \wp}=M_2(O_\wp)$.
 Using idempotents $e_1=\begin{pmatrix}1&0\\ 0&0\end{pmatrix}$ and $e_2=\begin{pmatrix}0&0\\ 0&1\end{pmatrix}$,
  we can write $\Omega (\CH_0)$ (resp.  $\Omega (\CH^t_\wp)$)  as a direct sum of components $\Omega (\CH_\wp)^i:=e_i\Omega (\CH_\wp)$
 (resp.  $\Omega (\CH^t_\wp)^i=e_i \Omega (\CH^t_\wp)$). These two components are isomorphic by the operator $\begin{pmatrix}0&1\\ 1&0\end{pmatrix}$.
 Thus we have 
 $$\Omega _{\CX_{\wp}}^\vee \simeq \Hom _{O_v}(\Omega (\CH_\wp^t)^{i}, \Omega (\CH_\wp)^{i\vee})=\Omega (\CH_\wp ^t)^{i\vee}\otimes \Omega (\CH_\wp)^{i\vee}.$$
 This shows in particular that 
 $$\omega _{\CX_\wp}^2= \CN_{\wp}.$$

 Now assume that $\wp$ is nonsplit in $F$. Then $\CM_\wp$ is a free module over $O_{\BB, \wp}\otimes O_{\CX_\wp}$.
Let $K$ be a unramified extension of $F_\wp$ in $\BB_\wp$.
 Then we have a decomposition
 $$O_{\BB, \wp}=O_K+O _K j$$
 where  $j$ a uniformizer of $O_{\BB, \wp}$ such that $jx=\bar xj$ for all $x\in O_K$. 
  Making a base change to $O_K$, then we have a decomposition of $\Omega (\CH_\wp)$ to the direct sum of the eigenspaces of $O_K$ according 
  to the embedding $O_K\lra \CO_{\CX_{U, \wp}}$ and its conjugate:
 $$\Omega  (\CH_\wp^t) =\CL_1\oplus \CL_2, \qquad (resp. \quad \Omega  (\CH_\wp )^\vee =\CN_1\oplus \CN_2)$$
 The action of $j$ has grade $\BZ/2\BZ$ with $j^2=\pi$ a uniformaizer of $O_\wp$.  Let  $j_1$ and $j_2$ be the restrictions of $j$ on two components, then $j_1\circ j_2=\pi$.
 It follows for each point on $\CX_{\wp}$, exactly one of  $j_1$ or $j_2$ is an  isomorphism. Thus we can  assign a type $i\in\{1, 2\}$ to $\Omega (\CH_\wp)$ if $j_i$ is an isomorphism.
 Notice that the types of $\Omega (\CH_\wp)$ and $\Omega (\CH^t_\wp)^\vee$ are opposite. 
 
 We claim that the condition $j_1\circ j_2=\pi$ implies the following identity:
  $$\pi\omega _{\CX_\wp}^{2}= \CN _{\wp}.$$
  To prove this claim, without loss of generality, we assume that
  $\CL_2=j\CL_1$ and $\CN_1=j\CN_2$.
  Now an element $\alpha\in \Omega _{\CX_\wp}$ corresponds a pair of morphism of line bundles 
  $$\phi_i: \CL_i\lra \CN_i$$
  compatible with action of $j$.  It is clear that this morphism  determines and is determinated by $\phi_1$,
  and that $\phi_2=j\phi_1  j^{-1}$ always has image included into $\pi \CN_2$.
  Conversely, for any morphism $\phi_2$ divided by $\pi$, the above equation determines a $\phi_1$. 
  Our claim follows from this  description of $\phi_1\otimes \phi_2$. 
\end{proof}

Define a system of $\BQ$-line bundles $\ol\CN(\gn)$ on $\CX(\gn)$ by
$$\ol\CN (\gn)=\ol\CL (\gn)^{\otimes 2}(-\gd_B).$$
Then the following Theorem \ref{N2Lfin} shows that for any prime $\wp$ of $O_F$, this bundle has the pulling back  $\CN _\wp$ on $X/(\CO_{\BB, \wp}^\times)$.

\section{Shimura curve $X''$}

In this section, we study the relation between   Shimura curves $X$  and  $X'$ in case 2: $\BA_E$ is embedded into $\BB$.
For this, we need to consider   another  Shimura curve $X''$ which includes both $X$ and $X'$.
  We will first study some  basic properties of $X''$, especially the $p$-divisible groups parametrized by $X''$, and the construction of $X''$ using $X$ and a Shimura variety $Y$ of 
  dimension $0$.
 Then we construct an integral model $\CX''$ of $X''$ using  the integral model $\CX$, and a $p$-divisible group $\CH''_{x''}$ for each $p$-adic point $x''$  of $X''$
 using Breuil--Kisin's theory \cite{Ki1, Ki2}. We show that the   deformations of the $p$-divisible group $\CH''_{x''}$ is given by deformations of $\CH_x$.
 Finally, we use all results in this section to complete the proof of Theorem \ref{U2O}.

\subsection{Shimura curve $X''$}

Let $(\Phi_1, \Phi_2)$ be a nearby pair of CM types of $E$, and $F'$ the reflex field of $\Phi_1+\Phi_2$. 
In the following, we want to define a  Shimura curves $X''$   defined over  $F'$, depending  on $(\Phi_1, \Phi_2)$, and with an action  by the group 
$$\BG'':=\BB^\times \times_{\BA^\times} \BA_E^\times.$$
The stabilizer subgroup  $\ol {Z''}$ is generated by $(1, x)$ with $x\in \ol {E^\times}$,  the closure of $E^\times $
in $\wh E^\times$. The scheme $X''$  includes ${X'}$ as a union of connected components via the embedding 
 ${\BG'}\lra {\BG''}$.

 At an  archimedean place $\tau '$ of $F'$ over a place $\tau$ of $F$, we define a reductive  group over $\BQ$ as follows:
 $$G''=B^\times \times_{F^\times}  E^\times,$$
 where as before $B$ is a quaternion algebra over $F$ with ramification set $\Sigma (\BB)\setminus \{\tau\}$. 
 Then we have an embedding $G'\lra G''$. The Hodge structure $h': \BC^\times\lra G'(\BR)$ induces the Hodge structure $h'': \BC^\times\lra G''(\BR)$.
 The congugacy class of $h''$ is $\gh^\pm$. It is easy to show that the reflex field of $(G'', h'')$ is still $F'$. Thus for each open compact subgroup $U$ of $G''(\wh \BQ)\simeq \BG''_f$,
 we have a Shimura curve $X''_U$ over $F'$ with  uniformization at $\tau'$ given by 
 $$X''_{U, \tau '}(\BC)=G'' (\BQ)\bs \gh^\pm \times G''(\wh \BQ)/U.$$
 Let $X''$ be the projective limit of $X_U''$. Then $X''$ has a uniformization as follows:
 $$X''_{\tau '}(\BC)=G'' (\BQ)\bs \gh^\pm \times G''(\wh \BQ)/\ol {Z''}$$
  The embedding $G'\lra G''$ defines an embedding $i: X'\lra X''$.

In the following, we want to study the relation between  $X$ and  $X''$. 
First let us start with a  Shimura variety $Y$ of dimension $0$ defined by the group $E^\times$ with the Hodge structure on $h_\Psi:\BC^\times \lra (E\otimes \BR)^\times$ given by the composition of 
$$
\BC^\times \lra  (\BC^\times)^g, \quad
z\longmapsto (1, z^{-1}, \cdots, z^{-1})
$$
with the inverse of the isomorphism $\Phi_1: (E\otimes \BR)^\times \lra (\BC^\times)^g$. Here the component $1$ corresponds to the unique element of $\Phi_1\setminus \Phi_2$. 
Note that $h_\Psi$ is determined by $\Psi=\Phi_1\cap \Phi_2$. 
For any open compact subgroup $J$ of $\wh E^\times$, we have a  Shimura variety $Y_J$ of dimension zero defined over $F'$ (which include the reflex field of $h_\Psi$).
This set has an action by $\wh E^\times$. In fact the set of its geometric points is a homogenous space over $E^\times \bs \wh E^\times/J$.
Let  $Y$ be the projective limit of $Y_J$. Then the set of geometric points of $Y$ is a principal homogenous space over $\ol {E^\times }\bs \wh E^\times$,
where $\ol{E^\times}$ is the closure of $E^\times$ in $\wh E^\times$.

At the archimedean place $\tau'$ of $F'$ over a place $\tau$ of $F$ as above, 
the product 
$$(X_U\times _F Y_J)_{\tau'}=X_{U, \tau} \times_\BC  Y_{J, \tau'}$$
of Shimura varieties over $\BC$ 
is defined by the reductive group $B^\times \times E^\times$ and the product of Hodge structures 
$(G\times E^\times, h\times h_\Psi)$.  
We have a natural homomorphism of reductive groups:
$$B^\times \times E^\times \lra G=B^\times \times_{F^\times} E^\times.$$
which is  compatible with  the Hodge structures. 
Thus we have a surjective morphism of Shimura curves over $F'$:
$$f: X_U\times_F Y_J\lra X''_{U''}$$
where $U''$ is the image of $U\times J$. 
Taking limits, we obtain a morphism of schemes over $F'$:
$$X\times_F Y\lra X''.$$
This morphism is compatible with the actions of $\BG_f$, $\wh E^\times$, and 
${\BG''}_f$ and induces  an isomorphism:
$$f: (X\times_F Y)/\Delta (\wh F^\times)\iso X'',$$
where $\Delta$ is the twisted diagonal map
 $$\Delta: \wh F^\times\lra \wh B^\times \times \wh E^\times, \qquad z\longmapsto (z, z^{-1}).$$
The isomorphism property of $f$  can be checked at the place $\tau'$ using uniformizations of $X, Y, X''$.

\subsubsection*{$p$-divisible groups}
Fix a prime number $p$ and a maximal  order $O_{\BB, p}$ containing $O_{E, p}$,
 we want to study certain $p$-divisible groups parametrized by $X''_{U''}$ and $Y_J$. 
 Write $\Lambda _p=O_{\BB, p}$ as a left $O_{\BB, p}$-module. 
 For any idea $\gn$ of $O_F$ dividing a power of $p$, denote by $U''_p(\gn )$ the closed subgroup of $\BG''_p$ fixing $\Lambda _p$ and acting trivially on
 $\Lambda_p /\gn  \Lambda_p $. Write $U_p''(1)=U_p'' (O_F)$.
 Then we define 
 $$X_1''=X''/U''_p(1), \qquad Y_1=Y/O _{E, p}^\times.$$
With our previous definition of $X_1$,  we have an isomorphism  
$$f_1: (X_1\times _F Y_1)/\Delta (\wh F^\times)\iso X''_1.$$

Define the $p$-divisible groups on $Y_1$ and $X''_1$ by making quotients
$$H''=\left[\BB_p/O_{\BB, p}\times X''\right]/U''_p(1),\qquad I=(E_p/O_{E, p}\times Y)/O_{E, p}^\times.$$
Here $U''_p(1)$ (resp. $O_{E, p}^\times$) acts on $\BB_p/O_{\BB, p}$  (resp. $E_p/O_{E, p}$) on the right hand side as follows:
$$x\cdot (b, e)=exb, \qquad x\in \BB_p/O_{\BB, p},\ (b, e)\in U''(1).$$
$$\left(\text{resp.}\quad y\cdot e=ey,\qquad y\in E_p/O_{E, p}, e\in O_{E, p}^\times.\right)$$
These definitions make sense since $U''(1)$ and $O_{E, p}^\times$ act freely on $X''$ and $Y$ respectively.
These groups can be defined on finite levels as in the case of $H$ over $X_1$.
We sketch the case of $H''$ as follows.  The group $H''$  is a direct limit of finite  subgroups
 $H''[p^n]$. Each $H''[p^n]$  descends to a quotient ${X''}/(U''(1)\times U''^p)$ for some compact open subgroup $U''^p$  of $({\BG''})^p$ by the formula
 $$H''_{U''_p(1)\times U^p}[p^n]=\left[p^{-n}\Lambda_p/\Lambda _p\times X''/(U''_p(p^n)\times U''^p)\right]/(U''_p(1)\times U''^p).$$
 For this we need to find $U''^p$ so that $U''_p(1)/U''_p(p^n)$ acts freely on $X''/(U''_p(p^n)\times U''^p)$. 
 This can be done by copying the argument in the proof of \cite[Corollary 1.4.1.3]{Ca}.
 It is clear that $H'=H''|_{X'}$.
The groups $H$, $H''$ and $I$ are related as follows:

\begin{pro}\label{TTT} Let $\pi_1$ and $\pi_2$ be the projections of $X_1\times _F Y_1$ to the two factors,
and $\RT (H'')$, $\RT (H)$, $\RT (I)$ be the Tate modules of the corresponding $p$-divisible groups.
There is a canonical isomorphism of \'etale sheaves on $X_1\times _F Y_1$:
$$f_1^*\RT(H'')\iso \pi_1^*\RT(H)\otimes _{O_{E, p}}\pi_2^*\RT(I ).$$
\end{pro}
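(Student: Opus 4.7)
The plan is to pass to the infinite-level cover $X\times_F Y$, where each of the three $p$-divisible groups is canonically trivialized by its defining quotient presentation, construct the claimed isomorphism there, and then descend it via the deck group $U_p(1)\times O_{E,p}^\times$ of the finite cover $X\times_F Y\to X_1\times_F Y_1$.

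First I will trivialize. The presentation $H=[\BB_p/O_{\BB,p}\times X]/U_p(1)$ shows that the pullback of $H$ to $X$ is the constant $p$-divisible group $\BB_p/O_{\BB,p}$, whence $\RT(H)|_X$ is canonically the constant sheaf $O_{\BB,p}$; analogously $\RT(I)|_Y\cong O_{E,p}$ and $\RT(H'')|_{X''}\cong O_{\BB,p}$. Pulling back along $\pi_1,\pi_2$ and along the quotient $X\times_F Y\twoheadrightarrow X''$ (whose kernel is $\Delta(\wh F^\times)$) produces three constant sheaves on $X\times_F Y$, and the composite $X\times_F Y\to X''\to X''_1$ agrees with $f_1$ composed with $X\times_F Y\to X_1\times_F Y_1$.

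Next I will write down the multiplication map
$$m:\ O_{\BB,p}\otimes_{O_{E,p}}O_{E,p}\iso O_{\BB,p},\qquad \lambda\otimes\mu\longmapsto\mu\lambda,$$
where the tensor product is formed by viewing $O_{\BB,p}$ as a right $O_{E,p}$-module via $\lambda\cdot e:=e\lambda$ (a valid right action because $O_{E,p}$ is commutative). Well-definedness of $m$, i.e.\ $\mu(e\lambda)=e(\mu\lambda)$, is again forced by the commutativity of $O_{E,p}$.

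The main step is then to verify that $m$ is equivariant for the deck group $U_p(1)\times O_{E,p}^\times$, so that it descends to the isomorphism on $X_1\times_F Y_1$ claimed in the statement. An element $(u,e_0)$ acts on $\pi_1^*\RT(H)$ by right multiplication by $u$ and on $\pi_2^*\RT(I)$ by multiplication by $e_0$, hence on the tensor product by $\lambda\otimes\mu\mapsto\lambda u\otimes e_0\mu$. On the other side, $(u,e_0)$ maps to the element $(u,e_0)\in U''_p(1)$, and the formula $x\cdot(b,e)=exb$ defining the $U''_p(1)$-action on $H''$ shows that it acts on the trivialization of $\RT(H'')$ by $\lambda\mapsto e_0\lambda u$. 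The calculation
$$m(\lambda u\otimes e_0\mu)=(e_0\mu)(\lambda u)=e_0(\mu\lambda)u$$
then identifies the two actions. The only delicate point in the argument is the careful bookkeeping of the left versus right $O_{E,p}$-actions on $O_{\BB,p}$ entering the tensor product over the noncommutative ring $O_{\BB,p}$; once this is arranged correctly, the rest is a direct comparison of explicit formulas.
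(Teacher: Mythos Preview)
Your approach is essentially the same as the paper's: the paper's proof simply writes down the explicit descriptions
\[
\RT(H)=(O_{\BB_p}\times X)/U_p(1),\quad \RT(H'')=(O_{\BB_p}\times X'')/U''_p(1),\quad \RT(I)=(O_{E,p}\times Y)/O_{E,p}^\times
\]
and leaves the identification to the reader, whereas you have unpacked exactly this identification by trivializing on $X\times_F Y$, writing the multiplication map $\lambda\otimes\mu\mapsto\mu\lambda$, and verifying equivariance under $U_p(1)\times O_{E,p}^\times$. Your computation $m(\lambda u\otimes e_0\mu)=e_0\mu\lambda u=e_0\,m(\lambda\otimes\mu)\,u$ is correct and matches the defining action $x\cdot(b,e)=exb$ on $H''$.

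One small slip: in your last sentence you refer to ``the tensor product over the noncommutative ring $O_{\BB,p}$'', but the tensor product in the statement is over $O_{E,p}$, which is commutative; indeed, your own well-definedness check $\mu(e\lambda)=e(\mu\lambda)$ relies on this commutativity. It is also worth noting (and easy to check from your formula) that the resulting isomorphism is $O_{E,p}$-linear for the left $O_{E,p}$-module structures, which is what the subsequent applications in the paper use, even though it is not $O_{\BB,p}$-linear.
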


\begin{proof} By  definitions, the Tate modules of these groups can be written as follows:
$$\RT(H)=(O_{\BB_p}\times X)/U(1), \qquad \RT (H'')=(O_{\BB_p}\times X'')/U''(1), \qquad \RT (I)=(O_{E, p}\times Y)/O_{E, p}^\times.$$
\end{proof}

\subsection{Integral models}\label{integral-X''}
Let $\wp'$ be a finite place of $F'$ dividing $p$, and let $\wp$ be a place of $F$ under $\wp'$.
Let $F_{\wp'}'^{\ur}$ be the completion of the maximal unramified extension of $F'_{\wp'}$, 
which is a finite extension of $F_\wp^\ur$. For simplicity, we introduce the following notations:
$K:=F_\wp^\ur$ and $ K':=F_{\wp'}'^\ur$.

Consider the following  schemes:
$$X_{1,\wp}=X_1\otimes _F K, \qquad X''_{1, \wp'}=X'_1\otimes _{F'} K', 
\qquad Y_{1, \wp'}:=Y_1\otimes _{F'}K'.$$
Then we have an isomorphism:
$$f_{\wp'}: X_{1,\wp}\times _K Y_{1, \wp'}/\Delta (\wh F^\times)\iso X''_{1, \wp'}.$$
By construction,  all geometric points of $Y_1$ are defined over $K'$.
Thus $Y_{1, \wp'}$ is a principal homogenous space of $\ol{E^\times }\bs \wh E^\times /O_{E, p}^\times $.
In this way, the integral model $\CX_{1, \wp}$  of $X_{1, \wp}$ and the model   $\Spec\, O_{K'}$ of $\Spec\, K'$ 
 induce an integral model $\CX''_{1, \wp'}$ for
$X''_{1, \wp'}$. This in turn induces an integral model $\CX'_{1, \wp'}$ by the embedding $X'_{1, \wp'}\lra X''_{1, \wp'}$.

Notice that if $\wp$ does not divide $\gd_\BB$, then $\CX_{1, \wp}$ is smooth over $O_K$. It follows that both $\CX'_{1, \wp'}$ and 
$\CX''_{1, \wp'}$ are smooth over $O_{K'}$. If $\wp$ divides $\gd_\BB$, then $\CX_{1, \wp}$ is a regular and stable Mumford curve. 
It follows that $\CX'_{\wp'}$ and $\CX''_{\wp'}$ are both stable Mumford curves. Notice that they are not regular if $\wp$ is ramified in $F'$.

Recall that we have defined a line bundle $\CN_{1, \wp}$ on
$\CX_\wp$ extending $\omega _{X_{1, \wp}}^2$. This bundle induces bundles $\CN'_{1, \wp'}$ and $\CN''_{1, \wp'}$ on $\CX'_{1, \wp'}$
and $\CX''_{1, \wp'}$ respectively.

Now we would like to extend the groups $I, H', H''$  to integral models $\CI, \CH', \CH''$ point by point using Breuil--Kisin's classification of $p$-divisible group \cite{Ki1}:  any crystalline representation of
$G_{K'}:=\Gal (\bar K'/K')$ of Hodge--Tate weights $0$ or $-1$ arises from a $p$-divisible group over $O_{K'}$. 

\begin{pro}\label{integral-IH}
 Let $L$ be a finite extension of $K'$. For each point $y\in Y(L)$ (resp. $x'\in X'(L)$, resp.  $x''\in X''(L)$) the group $I_y$ (resp. 
 $H'_{x'}$, $H''_{x''}$) over $L$ extends  uniquely to a  $p$-divisible group over $O_L$.
\end{pro}

\begin{proof}
 For $I$,  recall that the action of $G_{K'}$ on $\RT (I)\simeq O_{E, p}$ is given by  the reciprocity map for the type $(E, \Phi_1\cap \Phi_2)$. 
 Fix an isomorphism $\BC\simeq \bar\BQ_p$. Then $\RT (I)\times_{\BQ_p}\bar \BQ_p$ is a direct sum of one-dimensional spaces 
$V_\sigma $ indexed by $\sigma \in \Hom (E, \bar\BQ_p)=\Hom (E, \BC)$.
The action of $G_{K'}$ on $V_\sigma$ is trivial if $\sigma\notin \Psi$; otherwise it is given by the character:
$$G_{K'}\lra G_{F'_{\wp'}}^\ab\simeq O_{F'_{\wp'}}^\times \subset \bar\BQ_p^\times.$$
Thus $\RT (I)$ is crystalline of weight $-1$ or $0$.

For $H''_{x''}$, let $(x, y)$  be an  $L$-point of $X_1\times Y_1$ with image $x''\in X''_1(L)$.
Consider the  $p$-adic representation $\RT (H''_{x''})$. By Proposition \ref{TTT}, it is  the product $\RT (H_x)\times \RT (I_z)$. 
Both $\RT (I_y)$ and $\RT (H_x)$ are  cryslalline since  both $H_x$ and $I_y$ extend to a $p$-divisible groups over ring of integers by Proposition \ref{H-integrality}, and the above discussion. It follows that $\RT (H_{x''}'')$ is crystalline. It also has weights $0$ and $-1$.
Thus by Breuil--Kisin \cite{Ki1}, $H''_{x''}$ extends to a $p$-divisible group $\CH''_{x''}$ over $O_L$. 

The statement for $H'$ is clear as it is the restriction of $H''$ on $X''$. 
\end{proof}

\subsubsection*{Deformation theory}
Let $L$ be a finite extension of $K'$ and  let $(x, y)$  be an  $L$-point of $X_1\times Y_1$ with image $x''\in X''_1(L)$.
We have covariant Dieudonn\'e modules $\BD(\CH''_{x''})$ over $O_K$, $\BD(\CH _x)$ over $O_{K'}$, $\BD (\CI_y)$ over $O_{K'}$ and their filtrations:
$$0\lra \Omega (\CH''^t_{x''})\lra \BD (\CH ''_{x''})\lra \Omega (\CH''_{x''})^\vee\lra 0.$$
$$0\lra \Omega (\CH^t_x)\lra \BD (\CH_x)\lra \Omega (\CH _x)^\vee\lra 0,$$
$$0\lra \Omega (\CI^t_y)\lra \BD (\CI _y)\lra \Omega (\CI_y)^\vee\lra 0.$$

\begin{prop}\label{D-module-isom}
 There is 
a canonical isomorphism of filtered $O_{E, p}$-modules:
$$\BD (\CH''_{x''})\simeq \BD (\CH_x)\otimes _{O_{E, p}\otimes O_K}\BD (\CI _y).$$
\end{prop}

\begin{proof}
By Kisin \cite[Thm. 1.4.2]{Ki2} for $p\ne 2$ and by Kim \cite{Kim}, Lau \cite{La}, and  Liu \cite{Li} for $p=2$,  for a $p$-divisible group $\CG$ over $O_L$ with $L$  a finite extension of the fraction field of $W(\bar k)$ ($k:=O_\wp/\wp$),
the module $\BD (\CG)$ with its filtration depends canonically on its Tate module 
$\RT (\CG)$ as an object in the category $\Rep _{G_L}^{\criso}$ of integral  crystalline representations of $G_L:=\Gal (\bar L/L)$.
More precisely, let $\frak S=W(\bar k)[[u]]$ be the ring of power series over $W(\bar k)$ with a surjective map $\frak S\lra O_L$ by sending $u$ to a uniformizer $\pi_L$  
of $L$, then 
$$\BD (\CG)=O_L\otimes _{\frak S} \varphi^* {\frak M}\RT (\CG).$$
where  $\frak M$ is  a functor from $\Rep _{G_L}^{\criso}$ to certain category $\mathrm{Mod} _{\frak S}^\varphi$ of modules over non-commutative 
ring $\frak S [\varphi]$, defined in \cite[Theorem 1.2.1]{Ki2}.

Applying this to divisible groups $\CH''_{x''}$, $(\CH_x)_{O_{K'}}$, $\CI _y$ over $O_L=O_{K'}$, and taking care of the isomorphism in the above proposition, we obtain a canonical isomorphism of filtered $O_{E, p}$-modules:
$$\BD (\CH''_{x''})\simeq \BD (\CH_x)\otimes _{O_{E, p}\otimes O_K}\BD (\CI _y).$$
\end{proof}

Now we consider these $p$-divisible groups with actions by $O_{F, p}$. Their cohomology groups are modules over of the $O_K$-algebra 
$O_{F, p}\otimes_{\BZ_p}O_{K}$. The quotient $O_{F, p}\lra O_\wp$ induces a quotient $\tau: O_{F, p}\otimes_{\BZ_p}O_{K}\lra O_K$.
Use this $\tau$ to take quotients of cohomology groups to obtain:
$$0\lra \CW(\CH''^t_{x''})\lra \CM (\CH ''_{x''}) \lra \CW (\CH''_{x''})^\vee\lra 0.$$
$$0\lra \CW(\CH^t_x)\lra \CM (\CH_x) \lra \CW(\CH _x)^\vee\lra 0,$$
$$0\lra \CW(\CI^t_y)\lra \CM  (\CI _y) \lra \CW (\CI_y)^\vee\lra 0.$$
Notice that $\CW(\CI_y)=0$  and   $\CW (\CI_y^t)$ is a free module of rank $1$ over $O_{E, K}:=O_{E, \wp}\otimes _{O_\wp}O_K$.
Thus we have:
\begin{pro} There are canonical  isomorphisms:
$$\CW(\CH''^t_{x''})\simeq \CW(\CH^t_x)\otimes_{O_{E, K}} \CW (\CI _y^t),
\qquad \CW(\CH ''_{x''})\simeq \CW(\CH_x)\otimes_{O_{E, K}}\CW (\CI _y^t)^\vee.$$
\end{pro}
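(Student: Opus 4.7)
The plan is to apply the $\tau$-quotient (i.e.\ the exact functor $(-)\otimes_{O_{F,p}\otimes O_K,\tau}O_K$) to the filtered Dieudonn\'e-module identification
$$\BD(\CH''_{x''})\simeq \BD(\CH_x)\otimes_{O_{E,p}\otimes O_K}\BD(\CI_z)$$
of the previous proposition, and then to exploit the vanishing $\CW(\CI_z)=0$ to collapse the Hodge filtration of $\BD(\CI_z)$ after the $\tau$-quotient.

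First I would observe that the $\tau$-quotient is exact: it is the projection of an $O_{F,p}\otimes O_K$-module onto a direct summand. Since $O_{E,p}\otimes O_K$ is free of rank $2$ over $O_{F,p}\otimes O_K$, associativity of tensor products yields a canonical identification
$$\CM(\CH''_{x''})\simeq \CM(\CH_x)\otimes_{O_{E,K}}\CM(\CI_z),$$
where $O_{E,K}=O_{E,p}\otimes_{O_{F,p},\tau}O_K$. Applying the $\tau$-quotient to the exact sequence $0\to\Omega(\CI_z^t)\to\BD(\CI_z)\to\Omega(\CI_z)^\vee\to 0$, the quotient piece $\CW(\CI_z)^\vee$ vanishes since $\CW(\CI_z)=0$, so $\CM(\CI_z)=\CW(\CI_z^t)$, which is free of rank one over $O_{E,K}$ and therefore invertible. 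Hence
$$\CM(\CH''_{x''})\simeq \CM(\CH_x)\otimes_{O_{E,K}}\CW(\CI_z^t).$$
Tensoring the Hodge filtration of $\CM(\CH_x)$ over $O_{E,K}$ with the invertible module $\CW(\CI_z^t)$ preserves exactness and yields
$$0\to \CW(\CH^t_x)\otimes_{O_{E,K}}\CW(\CI_z^t)\to \CM(\CH''_{x''})\to \CW(\CH_x)^\vee\otimes_{O_{E,K}}\CW(\CI_z^t)\to 0.$$
Identifying this with the Hodge filtration of $\CM(\CH''_{x''})$ gives the first isomorphism $\CW(\CH''^t_{x''})\simeq\CW(\CH^t_x)\otimes_{O_{E,K}}\CW(\CI_z^t)$, and taking the $O_{E,K}$-linear dual of the quotient identification, using the invertibility of $\CW(\CI_z^t)$ and the reflexivity of the rank-one projective module $\CW(\CH_x)$, delivers the second isomorphism $\CW(\CH''_{x''})\simeq\CW(\CH_x)\otimes_{O_{E,K}}\CW(\CI_z^t)^\vee$.

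The delicate point on which the entire argument rests is the compatibility of Hodge filtrations: that the isomorphism of the previous proposition identifies the sub $\Omega(\CH''^t_{x''})$ with the natural tensor-product Hodge sub of $\BD(\CH_x)\otimes_{O_{E,p}\otimes O_K}\BD(\CI_z)$. This compatibility is precisely what is needed to collapse, after the $\tau$-quotient, onto the simple tensor $\CW(\CH^t_x)\otimes\CW(\CI_z^t)$, since the mixed term $\CW(\CH_x)^{\text{things}}\otimes\CW(\CI_z)^\vee$ vanishes. It should follow from the tensor-monoidal behaviour of Kisin's (respectively Kim/Lau/Liu's for $p=2$) functor $\frak M$: the Tate-module identity $\RT(\CH''_{x''})\simeq\RT(\CH_x)\otimes_{O_{E,p}}\RT(\CI_z)$ of Proposition \ref{TTT} is upgraded through $\frak M$ and the Kisin recovery formula $O_L\otimes_{\frak S}\varphi^*\frak M(T)$ to a filtered isomorphism. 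Once this compatibility is checked, the rest of the proof is routine algebra with invertible $O_{E,K}$-modules.
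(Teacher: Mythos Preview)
Your proposal is correct and follows the same route as the paper: the proposition is stated immediately after the filtered isomorphism $\BD(\CH''_{x''})\simeq\BD(\CH_x)\otimes_{O_{E,p}\otimes O_K}\BD(\CI_z)$ together with the observations $\CW(\CI_z)=0$ and that $\CW(\CI_z^t)$ is free of rank one over $O_{E,K}$, and the paper's entire argument is the word ``Thus''. You have simply written out the $\tau$-quotient and dualization steps that the paper leaves implicit; the filtration compatibility you flag as delicate is exactly what the paper has already asserted (``filtered $O_{E,p}$-modules'') via the tensor behaviour of Kisin's functor, so there is no gap.
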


We want to apply these facts to compute the universal deformation space of $\CH ''_{x''}$ as $p$-divisible $O_{E, p}$-module:
\begin{align*}\Hom _{O_{E, p}}(
\Omega (\CH_{x''}^t), \Omega (\CH_{x''})^\vee)=&\Hom _{O_{E, p}}(
\CW(\CH_{x''}^t), \CW(\CH_{x''}'')^\vee)\\
=&\Hom _{O_{E, \wp}}(
\CW(\CH_{x}^t), \CW(\CH_{x})^\vee)\otimes _{O_K}O_{K'}\\
=&\Hom _{O_{\BB, \wp}}(
\CW(\CH_{x}^t), \CW(\CH_{x})^\vee)\otimes O_{K'}\\
=&\omega _{\CX_{1, \wp}, x}^{-1}\otimes O_{K'}\\
=&\omega _{\CX''_{1, \wp'}, x''}^{-1}
.\end{align*}
Here \begin{enumerate}[(1)]
\item the first identity follows from a consideration of types under actions by $O_{E, p}$, 
\item the second identity follows from the above proposition ,
\item the third identity  follows from   a precise  computation,
\item  the fourth identity follows from the Kodaira--Spencer map on $\CH$, 
\item the last one follows from the 
definition.
\end{enumerate}
This shows that the formal completion $\wh \CX''_{1, x''}$ of $\CX''_{1, \wp'}$ at $x''$ is indeed the universal deformation of the $p$-divisible group $\CH''_{x''}$.

Taking determinants of above isomorphism, we obtain the following identity of two  $O_K$-lattices of the module $\omega _{X''_{1, \wp'}, x''}^{-2}$.

\begin{cor}\label{n2n}
$$\CN''_{x''}=\det \CW(\CH''_{x''})\otimes \det \CW (\CH''^t_{x''}).$$
\end{cor}

\subsection{Proof of Theorem \ref{U2O}}

Let $y\in Y$ be any fixed point. Then we have an embedding $X\lra X''$. 
Recall that $P\in X^{T(\BQ)}$ is a fixed CM point by $E$. 
Let $P''\in X''$ be the image of $(P, y)$. Then $P''$ is a point fixed by $T''(\BQ)$.

\begin{lem}
There is  an embedding $X'\lra X''$ such that $P''$ is the image of a $P'$ in $X'$ fixed by $T'(\BQ)$.
\end{lem}
\begin{proof}
We fix one archimedean place $\tau'$ of $F'$ over a place $\tau$ of $F$.  This gives a nearby quaternion algebra $B=B(\tau)$.
We may assume  $P$ is represented by $(z_0, 1)\in \gh\times G(\wh \BQ)$ with $z_0\in \gh$ a fixed point by $E^\times$ in the following 
uniformization:
$$X_\tau (\BC)\simeq G(\BQ)\bs \gh^\pm\times G(\wh \BQ)/\ol{Z(\BQ)}.$$
Similarly, we may assume that $y$ is represented by $1\in \wh E^\times$. Then
$$Y_{\tau'}(\BC)=\ol{E^\times}\bs \wh E^\times.$$
In this way, the image $P''$ of  $(P, y)$ in $X''_\tau (\BC)$ is represented by $(z_0, 1)\in \gh\times G''(\wh \BQ)$:
$$X_{\tau'}''=G''(\BQ)\bs \gh ^\pm \times G''(\wh \BQ)/\ol{Z''(\BQ)}.$$
Thus $P''$ is the image of a point $P'\in X'^{T'}$. 
\end{proof}

Recall that we have fixed a maximal order $O_\BB$ of $\BB_f$ including $O_{\wh E}$, 
which defines maximal compact subgroups $U, U', U''$ of $\BG$, $\BG'$ and $\BG''$,  curves $X_U, X'_{U'}, X''_{U''}$,
and morphisms
$$X_U\lra X''_{U''}, \qquad X_{U'}'\lra X''_{U''}.$$
The image of $P, P', P''$ defines CM points $P_U, P'_{U'}, P''_{U''}$
which is compatible with above morphisms.

By Corollary \ref{Phi2Psi}, it suffices to show that for each nearby pair $(\Phi_1, \Phi_2)$ of  CM types of $E$, 
$$g\cdot h(\Phi_1, \Phi_2)=\frac 12 h_{\ol\CL_U} (P_U)-\frac 14 \log (d_\BB).$$
By Theorem \ref{N2Lfin}, the right hand side is $\frac 14 h_{\ol\CN_U}(P_U)$.

Let  $A_0$ be the corresponding abelian variety represented by $P'_{U'}$ over some finite extension $K$ of $F'(P'_{U'})$.
Then $A_0$ is isogenous to the products of CM abelian varieties  $A_1, A_2$ of CM   types  $\Phi_1, \Phi_2$.
 By Theorem \ref{specialA},
$$h(\Phi_1, \Phi_2)=\frac 12 h(A_0, \tau).$$
Thus we have reduced Theorem \ref{U2O} to the identity 
$$h(A_0, \tau)=\frac 1{2g} h_{\ol\CN_U }(P_U).$$
Since $\frac 1g h_{\ol\CN_U }(P_U)=\frac 1{[F(P_U):\BQ]}\wh\deg (\ol\CN_U|_{\bar P_U})$, it suffices to prove the following result. 

\begin{pro} \label{A-P}There is an isomorphism of hermitian line bundles over $O_K$:
$$\ol \CN(A_0, \tau)\simeq \ol \CN _{P_U}\otimes_{O_{F(P_U)} }O_{K}.$$
\end{pro}
\begin{proof}
Notice that both sides have the restriction $L _{P''_{U''}}^{\otimes 2}\otimes K$ on the  generic fiber of $X'$.
Thus two sides define two  integral and  hermtian structures on $L_{P''}^{\otimes 2}\otimes K$.
Also by Theorem \ref{N2Linf}, they has the same metric. Thus it suffices to show that they define the same lattice at each finite place of $K$.
Let $v$ be a finite place of $K$ with residue characteristic $p$. Let $O_{K, v}^\ur$ be the completion of the maximal unramified extension of $O_{K, v}$.
Then 
$$\Omega (A_0)\otimes O_{K, v}^\ur \simeq \Omega (A_0[p^\infty])\otimes O_{K, v}^\ur.$$
By Corollary \ref{n2n},
$$\CN (A_0, \tau)\otimes O_{K, v}^\ur =\CN _{P''_{U''}}\otimes O_{K, v}^\ur
=\CN_{P_U}\otimes O_{K, v}^\ur.$$
This completes the proof of the proposition.
\end{proof}

\newpage
\part{Quaternionic heights}

The goal of this part is to prove Theorem \ref{quaternion main}.
We also use the notations in our previous work \cite{YZZ}. 
We will make a specific explanation when we come to a setting different from that of \cite{YZZ}.

\section{Pseudo-theta series}

In this section, we introduce the notion of  {\em pseudo-theta series}, an important concept used in the following 
sections. We will first recall the usual theta series defined by Schwartz functions in \cite{YZZ}.
Then we define a pseudo-theta series, which looks like a theta series but is not automorphic.
We will show that it can be approximated by the difference of  two theta series associated to it. 
Finally, we will show that if a sum of pseudo-theta series is automorphic, then these pseudo-theta series can be actually 
replaced by the difference of the theta series associated to them and we get some extra identities between these theta series.

\subsection{Schwartz functions and theta series}

We first recall the notion of Schwartz functions and theta series in \cite{YZZ}, which is a variant of the standard notions. 

Let $F$ be a totally real number field, and $\BA$  the adele ring of $F$. 
Let $(V,q)$ be a positive definite quadratic space over $\RR$.
Let 
$$
\OCS(V(\adele)\times \adele^{\times})
=\otimes_v \OCS(V(F_v)\times F_v^{\times})
$$
be the space of Schwartz functions introduced in \cite[\S 4.1]{YZZ}.
We recall it in the following. 

If $v$ is non-archimedean, then
$\OCS(V(F_v)\times F_v^{\times})$ is the usual space of locally constant and compactly supported functions. 

If $v$ is archimedean, then $F_v=\RR$ and then 
$\overline\CS (V(F_v)\times \RR\cross)$ consists of
functions on $V(F_v)\times \RR^\times$ of the form 
$$\phi_v(x,u)=\left(P_1(uq(x))+\sgn (u)P_2(uq(x))\right)e^{-2\pi |u|q(x)}$$
with polynomials $P_i$ of complex coefficients. Here $\sgn (u)=u/|u|$ denotes
the sign of $u\in \RR\cross$.
The \textit{standard Schwartz function} $\phi_v\in \overline \CS(V(F_v)\times
\RR\cross)$ is the Gaussian function
$$
\phi_v(x,u)= e^{-2\pi uq(x)}\ 1_{\BR_+}(u).
$$
Here $1_{\BR_+}$ is the characteristic function of the set $\BR_+$ of positive
real numbers.
In this paper, $\phi$ is always the standard Gaussian function at archimedean places.

Assume that $\dim V$ is even in the following, which is always satisfied in our application. 
In \cite[\S 2.1.3]{YZZ}, the Weil representation on the usual space $\CS(V(\BA))$ is extended to an action of the similitude groups on $\OCS(V(\adele)\times \adele^{\times})$. 
This gives a representation of $\GL_2(\BA)\times \GO(V(\BA))$ on  
$\OCS(V(\adele)\times \adele^{\times})$. 
This extension is originally from Waldspurger \cite{Wa}. 

Take any $\phi\in  \overline \CS (V(\adele)\times \BA^\times)$.
There is the partial theta series
$$
\theta(g,u,\phi)= \sum_{x\in V} r(g)\phi(x,u), \quad g\in\gla, \ u\in \across.
$$
If $u\in F\cross$, it is invariant under the left action of $\SL_2(F)$ on $g$.
To get an automorphic form on $\gla$, we need a summation on $u$.  

There is an open compact subgroup $K\subset \GO(\adele_f)$ such that $\phi_f$ is
invariant under the action of $K$ by the Weil representation.
Denote $\mu_K=F\cross \cap K$. 
Then $\mu_K$ is a subgroup of the unit group $O_F\cross$, and thus
is a finitely generated abelian group. 
Define a theta function by  
\begin{equation*}
\theta(g, \phi)_K
=\sum_{u\in \mu_K^2\backslash F\cross} \theta(g,u, \phi)
= \sum_{u\in \mu_K^2\backslash F\cross} \sum_{x\in V} r(g)\phi(x,u),  \quad g\in\gla.
\end{equation*}
The summation is well-defined and absolutely convergent.
The result $\theta(g, \phi)_K$ is an automorphic form on $g\in \gl(\adele)$, and
$\theta(g, r(h)\phi)_K$ is an automorphic form on $(g,h)\in \gl(\adele)\times
\GO(V(\adele))$.
Furthermore, if $\phi_\infty$ is standard, then
$\theta(g, \phi)_K$
is holomorphic of parallel weight $\frac 12 \dim V$.

By choosing fundamental domains, we can rewrite the sum as
$$ \theta(g, \phi)_K
= \sum_{u\in \mu_K^2\backslash F\cross} r(g)\phi(0,u)
+w_K \sum_{(x,u)\in \mu_K \backslash ((V-\{0\})\times F\cross)}
r(g)\phi(x,u) .$$
Here the natural action of $\mu_K$ on $V\times F\cross$ is just $\alpha\circ
(x,u)\mapsto (\alpha x,\alpha^{-2}u)$.
The summation over $u$ is
well-defined since
$\phi(\alpha x, \alpha^{-2} u)=r(\alpha^{-1})\phi(x,u)=\phi(x,u)$
for any $\alpha \in\mu_K$.
The factor $w_K=|\{1,-1\}\cap K|\in \{1,2\}$.
See \cite[\S 2.1.3]{YZZ} for more details.

\subsection{Pseudo-theta series} \label{section pseudo}

Now we introduce pseudo-theta series. Let $V$ be a positive definite
quadratic space over $F$, and $V_0\subset V_1 \subset V$ be two
subspaces over $F$ with induced quadratic forms. All spaces are assumed to be even-dimensional. We allow $V_0$ to be the empty set $\emptyset$, which is not a subspace in the usual sense. Let $S$ be a finite set of non-archimedean places of $F$, and
$\phi^S \in \overline\CS(V(\adele^S)\times \adele^{S \times})$ be a Schwartz
function with standard infinite components.

A \textit{pseudo-theta series} is a series of the form
$$A^{(S)}_{\phi'}(g)=\sum_{u\in \mu^2\backslash F\cross} \sum_{x\in V_1-V_0} \phi_S'(g,x,u) r_{_V}(g)\phi^S(x,u), \ \quad g\in \gl(\adele).$$
We explain the notations as follows:
\begin{itemize}
\item The Weil representation $r_{_V}$ is not attached to  the space $V_1$
but to the space $V$;
\item $\phi_S'(g,x,u)=\prod_{v\in S} \phi'_v(g_v,x_v,u_v)$ as local product;
\item For each $v\in S$, the function
 $$\phi'_v: \gl(F_v)\times (V_1-V_0)(F_v)  \times F_v\cross \rightarrow \BC$$
 is locally constant. And it is smooth in the sense that there is an open compact subgroup $K_v$ of $\gl(F_v)$ such that
 $$\phi'_v(g\kappa,x,u)=\phi'_v(g,x,u),
 \quad \forall (g,x,u)\in \gl(F_v)\times (V_1-V_0)(F_v)\times F_v\cross,\  \kappa\in K_v.$$
\item $\mu$ is a subgroup of $O_F\cross$ with finite index such that $\phi^S(x,u)$ and $\phi'_S(g,x,u)$ are invariant under the action $\alpha: (x,u)\mapsto (\alpha x, \alpha^{-2} u)$ for any $\alpha\in \mu$. This condition makes the summation well-defined.
\item For any $v\in S$ and $g\in \gl(F_v)$, the support of $\phi'_v(g,\cdot,\cdot)$ in $(V_1-V_0)(F_v)\times F_v\cross$ is bounded. This condition makes the sum convergent.

\end{itemize}

The pseudo-theta series $A^{(S)}$ sitting on the triple $V_0\subset
V_1 \subset V$ is called \textit{non-degenerate} if $V_1=V$, and is
called \textit{non-truncated} if $V_0$ is empty. It is called
\textit{non-singular} if for each $v\in S$, the local component
$\phi'_v(1,x,u)$ can be extended to a Schwartz function on
$V_1(F_v)\times F_v\cross$.

Assume that $A^{(S)}_{\phi'}$ is non-singular. Then there are two
usual theta series associated to $A^{(S)}$. View
$\phi'_v(1,\cdot,\cdot)$ as a Schwartz function on $V_1(F_v)\times
F_v\cross$ for each $v\in S$, and $\phi_w$ as a Schwartz function on
$V_1(F_w)\times F_w\cross$ for each $w\notin S$. Then the theta
series
$$\theta_{A,1}(g)=\sum_{u\in \mu^2\backslash F\cross} \sum_{x\in V_1} r_{_{V_1}}(g)\phi_S'(1,x,u) r_{_{V_1}}(g)\phi^S(x,u)$$
is called \textit{the outer theta series associated to}
$A^{(S)}_{\phi'}$. 
Note that the Weil representation $r_{V_1}$ is based on the quadratic space $V_1$. 
Replacing the space $V_1$ by $V_0$, we get the
theta series
$$\theta_{A,0}(g)=\sum_{u\in \mu^2\backslash F\cross} \sum_{x\in V_0} r_{_{V_0}}(g)\phi_S'(1,x,u) r_{_{V_0}}(g)\phi^S(x,u).$$
We call it \textit{the inner theta series associated to}
$A^{(S)}_{\phi'}$. We set $\theta_{A,0}=0$ if $V_0$ is empty.

We introduce these theta series because the difference between
$\theta_{A,1}$ and $\theta_{A,0}$ somehow approximates $A^{(S)}$. It
will be discussed as follows.

\subsubsection*{Approximation by induced theta series} 

We start with two invariants of $\gla$ defined in terms of the Iwasawa decomposition. For $g\in \gl(\adele)$, we define $\delta(g)=\prod_v \delta_v(g_v)$ and $\rho_\infty(g)=\prod_{v|\infty} \rho_v(g_v)$. Here the local invariants are defined as follows. 

Denote by $P$ the algebraic group over $\QQ$ of upper triangular matrices.
For any place $v$, the character $\delta_v: P(F_v)\rightarrow \RR\cross$ defined by
$$\delta_v: \matrixx{a}{b}{}{d} \longmapsto \left| \frac ad \right| ^{\frac 12}$$ 
extends to a function $\delta_v: \gl(F_v)\rightarrow
\RR\cross$ by the Iwasawa decomposition. 

If $v$ is a real place, we define a function $\rho_v:
\gl(F_v)\rightarrow \BC$ by $\rho_v(g)=e^{i\theta}$ if
$$g=  \matrixx{a}{b}{}{d}  \matrixx{\cos \theta}{\sin \theta}{-\sin \theta}{\cos \theta} $$
is in the form of the Iwasawa decomposition, where we require $a>0$ so that the decomposition is unique.

Resume the notation in the last subsection. Now we consider the relation between the non-singular
pseudo-theta series $A^{(S)}_{\phi'}$ and its associated theta
series $\theta_{A,1}$ and $\theta_{A,0}$.

We first consider the non-truncated case. Then $V_0$ is empty, and
$$A^{(S)}_{\phi'}(g)=\sum_{u\in \mu^2\backslash F\cross} \sum_{x\in V_1} \phi_S'(g,x,u) r_{_V}(g)\phi^S(x,u).$$
Obviously we have $A^{(S)}_{\phi'}(1)=\theta_{A,1}(1)$, but of
course we can get more.

A simple computation using Iwasawa decomposition asserts that, if
$\phi_w$ is the standard Schwartz function on $V(F_w)\times
F_w\cross$, then for any $g\in \gl(F_v)$ and $(x,u)\in V_1(F_w)\times F_w\cross$,
$$r_{_{V}}(g)\phi_w(x,u)=\begin{cases}
\delta_w(g)^{\frac{d-d_1}{2}} r_{_{V_1}}(g)\phi_w(x,u)  &\mbox{ if } w\nmid \infty;\\
\rho_w(g)^{\frac{d-d_1}{2}}\delta_w(g)^{\frac{d-d_1}{2}}
r_{_{V_1}}(g)\phi_w(x,u)  &\mbox{ if } w\mid \infty.
\end{cases}$$
Here we write $d=\dim V$ and $d_1=\dim V_1$.

This result implies that,
$$A^{(S)}_{\phi'}(g)=\rho_\infty(g)^{\frac{d-d_1}{2}}\delta(g)^{\frac{d-d_1}{2}} \theta_{A,1}(g), \quad \
\forall \ g\in 1_{S'}\gl(\adele^{S'}). $$ Here $S'$ is a finite set
consisting non-archimedean places $v$ such that $v\in S$ or $\phi_v$ is not
standard.

Now we consider a general non-singular pseudo-theta series
$$A^{(S)}_{\phi'}(g)=\sum_{u\in \mu^2\backslash F\cross} \sum_{x\in V_1-V_0} \phi_S'(g,x,u) r_{_V}(g)\phi^S(x,u).$$
We have to compare it with the difference between the same theta
series
$$\theta_{A,1}(g)=\sum_{u\in \mu^2\backslash F\cross} \sum_{x\in V_1} r_{_{V_1}}(g)\phi_S'(1,x,u) r_{_{V_1}}(g)\phi^S(x,u)$$
and the non-truncated pseudo-theta series
$$B^{(S)}_{\phi'}(g)=\sum_{u\in \mu^2\backslash F\cross} \sum_{x\in V_0} r_{_{V_1}}(g)\phi_S'(1,x,u) r_{_{V_1}}(g)\phi^S(x,u).$$
Note that $B^{(S)}$ is just a part of $\theta_{A,1}$, where summation
is taken over the whole $V_0$ but the representation is taken over
$V_1$. By what we discussed above, we should compare $B^{(S)}$ with
the associated theta series
$$\theta_{B,0}(g)=\sum_{u\in \mu^2\backslash F\cross} \sum_{x\in V_0} r_{_{V_0}}(g)\phi_S'(1,x,u) r_{_{V_0}}(g)\phi^S(x,u).$$
But this is exactly the same as $\theta_{A,0}$. By the same
argument, there exists a finite set $S'$ of non-archimedean places such that
\begin{eqnarray*}
A^{(S)}_{\phi'}(g)
&=&\rho_\infty(g)^{\frac{d-d_1}{2}}\delta(g)^{\frac{d-d_1}{2}} (\theta_{A,1}(g)- B^{(S)}_{\phi'}(g)), \quad \ \forall \ g\in 1_{S'}\gl(\adele^{S'});\\
B^{(S)}_{\phi'}(g)
&=&\rho_\infty(g)^{\frac{d_1-d_0}{2}}\delta(g)^{\frac{d_1-d_0}{2}}
\theta_{A,0}(g), \quad \ \forall \ g\in 1_{S'}\gl(\adele^{S'}).
\end{eqnarray*}
Our conclusion is that for any $g\in 1_{S'}\gl(\adele^{S'})$,
\begin{eqnarray}
A^{(S)}_{\phi'}(g)=\rho_\infty(g)^{\frac{d-d_1}{2}}\delta(g)^{\frac{d-d_1}{2}}
\theta_{A,1}(g) -
\rho_\infty(g)^{\frac{d-d_0}{2}}\delta(g)^{\frac{d-d_0}{2}}
\theta_{A,0}(g).  \label{pseudo difference}
\end{eqnarray}
By the smoothness condition of pseudo-theta series, there exists an
open compact subgroup $K_{S'}$ of $\gl(F_{S'})$ such that the above
identity is actually true for any $g\in K_{S'}\gl(\adele^{S'})$.

\subsection{Key lemma}

Now we can state our main result for this subject.
\begin{lem}\label{pseudo}
Let $\{A_\ell^{(S_\ell)}\}_\ell$ be a finite set of non-singular
pseudo-theta series sitting on vector spaces $V_{\ell,0}\subset
V_{\ell,1}\subset V_{\ell}$. Assume that the sum $\sum_\ell
A_\ell^{(S_\ell)}(g)$ is automorphic for $g\in\gl(\adele)$. Then
\begin{enumerate}[(1)]

\item[(1)] $\displaystyle \sum_\ell A_\ell^{(S_\ell)}=\sum_{\ell\in L_{0,1}} \theta_{A_\ell,1}$,
\item[(2)] $\displaystyle \sum_{\ell\in L_{k,1}} \theta_{A_\ell,1}-\sum_{\ell\in L_{k,0}} \theta_{A_\ell,0}=0,\quad \forall k\in \ZZ_{>0}$.
\end{enumerate}
Here $L_{k,1}$ is the set of $\ell$ such that $\dim V_{\ell}-\dim
V_{\ell,1}=k$, and $L_{k,0}$ is the set of $\ell$ such that $\dim
V_{\ell}-\dim V_{\ell,0}=k$. In particular, $L_{0,1}$ is the set of
$\ell$ such that $V_{\ell,1}=V_{\ell}$.
\end{lem}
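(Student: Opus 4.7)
My plan is to apply the approximation formula~(\ref{pseudo difference}) to each pseudo-theta series $A_\ell^{(S_\ell)}$ individually. Taking $S'$ to be a finite set of non-archimedean places that works for all $\ell$ simultaneously, summing over $\ell$ and regrouping by the jump $k$ yields, on $K_{S'}\gl(\adele^{S'})$,
$$
F(g) := \sum_\ell A_\ell^{(S_\ell)}(g)
\;=\; \sum_{k \geq 0} \beta(g)^{k/2}\, \Theta_k(g),
$$
where $\beta(g) := \delta(g)\rho_\infty(g)$, $\Theta_0 := \sum_{\ell \in L_{0,1}} \theta_{A_\ell,1}$, and $\Theta_k := \sum_{\ell \in L_{k,1}} \theta_{A_\ell,1} - \sum_{\ell \in L_{k,0}} \theta_{A_\ell,0}$ for $k \geq 1$. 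Only even $k$ occur (all spaces are even-dimensional, so $k/2$ is an integer; for odd $k$ the statement of (2) is vacuous), and the missing case $L_{0,0}$ is empty because $V_{\ell,0}=V_\ell$ would force $V_{\ell,1}=V_\ell$ and hence the $\ell$-th series to vanish identically.

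By hypothesis $F$ is automorphic, and each $\Theta_k$ is automorphic as a finite combination of genuine theta series. The idea is to exploit the congruence subgroup $\Gamma_{S'} := \SL_2(F) \cap K_{S'}$ acting by left multiplication. For any $\gamma \in \Gamma_{S'}$ and any $g \in K_{S'}\gl(\adele^{S'})$, the translate $\gamma g$ still lies in $K_{S'}\gl(\adele^{S'})$, and automorphy of $F$ and each $\Theta_k$ gives
$$
\sum_{k \geq 0} \beta(\gamma g)^{k/2}\, \Theta_k(g) \;=\; F(\gamma g) \;=\; F(g) \;=\; \sum_{k \geq 0} \beta(g)^{k/2}\, \Theta_k(g).
$$
Hence the polynomial $P_g(T) := \sum_{k \geq 0} T^{k/2}\, \Theta_k(g)$, of bounded degree since only finitely many $\ell$ contribute, takes the single value $F(g)$ at every $T = \beta(\gamma g)$.

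The key remaining step is to show that $\{\beta(\gamma g) : \gamma \in \Gamma_{S'}\}$ is infinite for $g$ in an open dense subset of $K_{S'}\gl(\adele^{S'})$. An elementary Iwasawa computation gives $\delta_v(\gamma g)/\delta_v(g) = |c_v \tau_v + d_v|^{-1}$ at each archimedean $v$, with $\tau_v = g_v \cdot i \in \gh$. Restricting to $\gamma = \matrixx{1}{0}{c}{1}$ with $c \in O_F$ ranging in a suitable ideal contained in $K_{S'}$, the archimedean factors $|c_v \tau_v + 1|$ assume infinitely many distinct values (since $c^2 |\tau_v|^2 + 2c \Re\tau_v + 1$ is a non-degenerate quadratic in $c$ for generic $\tau_v$), and the full $\beta(\gamma g)$ likewise takes infinitely many values. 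By the Vandermonde argument, $P_g$ must be the constant polynomial $F(g)$, forcing $\Theta_k(g) = 0$ for all $k \geq 1$ and $\Theta_0(g) = F(g)$.

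Finally, smoothness of automorphic forms promotes this vanishing from generic $g$ to all $g \in K_{S'}\gl(\adele^{S'})$, and automorphy combined with the decomposition $\gl(F) \cdot K_{S'}\gl(\adele^{S'}) = \gl(\adele)$ (which follows from weak approximation for $\gl$) extends it to all of $\gl(\adele)$. This proves (2); substituting back into the expansion of $F$ yields (1) on $K_{S'}\gl(\adele^{S'})$, and then on the whole group by the same automorphic extension. The main technical nuisance I expect is pinning down the precise notion of \emph{generic} $g$ and verifying that possible non-archimedean correlations in $\beta$ do not conspire against the infinitude of $\{\beta(\gamma g)\}$, though the archimedean contribution alone appears robust enough to carry the argument.
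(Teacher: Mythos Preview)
Your approach is essentially the same as the paper's: apply (\ref{pseudo difference}) to each $A_\ell^{(S_\ell)}$, collect terms by the power of $\rho_\infty\delta$, left-translate by lower-unipotent rational matrices, and separate the resulting linear equations by Vandermonde. The one point where the paper is sharper is the handling of the non-archimedean factor of $\delta$, which you correctly flag as a nuisance but do not fully resolve. Rather than arguing with a generic $g$ and hoping the finite-place contributions do not conspire, the paper fixes $g_0\in\gl(\adele_f^{S})$ (trivial at infinity), restricts the rational element $g$ to lie in $K_S\gl(\adele^S)\cap g_0\gl(\widehat O_F)g_0^{-1}$, and observes that then $\delta_f(gg_0)=\delta_f(g_0)$ is constant. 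With the finite part frozen, the explicit computation $\rho_\infty(g_N)\delta_\infty(g_N)=(1+iN)^{-[F:\BQ]}$ for $g_N=\matrixx{1}{}{N}{1}$ (with $N$ sufficiently divisible) gives infinitely many distinct values outright, and Vandermonde finishes. This eliminates any need for genericity in $g$ or for the smoothness-and-density extension you invoke at the end: once $f_k(g_0)=0$ for every $g_0\in\gl(\adele_f^{S})$, automorphy of $f_k$ and density of $\gl(F)\gl(\adele_f^{S})$ in $\gl(\adele)$ give $f_k\equiv 0$ directly.
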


\begin{proof}
Denote $f=\sum_\ell A_\ell^{(S_\ell)}$. In the equation $f-\sum_\ell
A_\ell^{(S_\ell)}=0$, replace each $A_\ell^{(S_\ell)}$ by its
corresponding combinations of theta series on the right-hand side of
equation (\ref{pseudo difference}). After recollecting these theta
series according to the powers of $\rho_\infty(g) \delta(g)$, we
end up with an equation of the following form:
\begin{eqnarray}
\sum_{k=0}^n \rho_\infty(g)^{k}\delta(g)^{k} f_k(g)=0, \quad \
\forall g\in K_{S}\gl(\adele^{S}).  \label{vanishing}
\end{eqnarray}

Here $S$ is some finite set of non-archimedean places, $K_S$ is an open
compact subgroup of $\gl(F_S)$, and $f_0, f_1,\cdots, f_n$ are some
automorphic forms on $\gl(\adele)$ coming from combinations of $f$
and theta series. In particular, $f_0=f-\sum_{\ell\in L_{0,1}}
\theta_{A_\ell,1}$. We will show that $f_0=f_1=\cdots=f_n=0$
identically, which is exactly the result of (1) and (2).

It suffices to show $f_k(g_0)=0$ for all $g_0\in \gl(\adele_f^{S})$,
since $\gl(F)\gl(\adele_f^{S})$ is dense in $\gl(\adele)$. Fix
$g_0\in \gl(\adele_f^{S})$. For any $g \in \gl(F)\cap
K_S\gl(\adele^S)$, we have
$$\sum_{k=0}^n \rho_\infty(gg_0)^{k}\delta(gg_0)^{k} f_k(gg_0)=0,$$
and thus
$$\sum_{k=0}^n \rho_\infty(g)^{k}\delta(gg_0)^{k} f_k(g_0)=0$$
by the modularity.

These are viewed as linear equations of $f_0(g_0), f_1(g_0), \cdots,
f_n(g_0)$. To show that the solutions are zero, we only need to find
many $g$ to get plenty of independent equations. We first find some
special $g$ to simplify the equation.

The intersection $K_S\gl(\adele^S) \cap g_0 \gl(\widehat
O_F)g_0^{-1}$ is still an open compact subgroup of $\gl(\adele)$.
For any $g\in \gl(F)\cap (K_S\gl(\adele^S) \cap g_0 \gl(\widehat
O_F)g_0^{-1})$, we have
$$gg_0=g_0 \cdot g_0^{-1} g g_0\in g_0\gl(\widehat O_F).$$
Then $\delta_f(gg_0)=\delta_f(g_0)$, and our linear equation
simplifies as
$$\sum_{k=0}^n \rho_\infty(g)^{k}\delta_\infty(g)^{k} \delta_f(g_0)^{k}f_k(g_0)=0.$$

To be more explicit, consider $g_N=\matrixx{1}{}{N}{1}$ for any
$N\in \ZZ$. Then we know that $g_N\in \gl(F)\cap (K_S\gl(\adele^S)
\cap g_0 \gl(\widehat O_F)g_0^{-1})$ when $N$ is divisible by enough
integers. Explicit computation gives
$$\rho_\infty(g_N)\delta_\infty(g_N)= (1+iN)^{-n}$$
where $n=[F:\QQ]$. Then we have
$$\sum_{k=0}^n (1+iN)^{-nk} \delta_f(g_0)^{k} f_k(g_0)=0.$$
Any $n+1$ different values of $N$ imply that all $f_k(g_0)=0$ by Van
der Mond's determinant.
\end{proof}

\section{Derivative series}

The goal of this section is to study the holomorphic projection of the derivative of some mixed Eisenstein--theta series.
We will first review the construction of the series  $\Pr I'(0,g,\phi)$ treated in  \cite[Chapter 6]{YZZ}, the analytic ingredient for 
proving Theorem \ref{quaternion main}.
Then we compute the series under  some assumptions  of Schwartz functions. The final formula contains a term $L'(0, \eta)/L(0, \eta)$
which is a main ingredient of our main theorem in the paper. 
In \cite{YZZ}, this constant terms was killed under  some stronger assumptions  of Schwartz functions.

\subsection{Derivative series}

Let $F$ be a totally real field, and $E$ be a totally imaginary quadratic extension of $F$. Denote by $\BA$ the ring of adeles of $F$.
Let $\BB$ be a totally definite incoherent quaternion algebra over 
$\BA=\BA_F$ with an embedding $E_\BA\to \BB$ of $\BA$-algebras.

Fix a Schwartz function $\phi\in \ol\CS (\BB\times \BA^\times)$ invariant under $U\times U$ for some open compact subgroup $U$ of $\bfcross$. 
Start with the mixed theta-Eisenstein series
\begin{equation*} 
I(s, g, \phi)_U
 = \sumu \sum_{\gamma \in P^1(F)\bs \SL_2(F)}
\delta (\gamma g)^s \sum_{x_1\in E} r(\gamma g)\phi (x_1, u).
\end{equation*}
It was first introduced in \cite[\S 5.1.1]{YZZ}. 

The derivative series $\Pr I'(0,g,\phi)$ is the holomorphic projection of the derivative $I'(0,g,\phi)$ of $I(s,g,\phi)$. 
It has a decomposition into local components as follows.

\subsubsection*{Eisenstein series of weight one}
To illustrate the idea, we first assume that $\phi=\phi_1\otimes \phi_2$ as in 
\cite[\S 6.1]{YZZ}. 
Then 
\begin{equation*}
I(s, g, \phi)_U=\sum_{u\in\mu_U^2\bs F\cross} \theta(g, u, \phi_1)\ E(s,g, u,
\phi_2),
\end{equation*}
where for any $g\in\gla$, the theta series and the Eisenstein series are given
by
\begin{eqnarray*}
\theta(g, u, \phi_1)&=& \sum_{x_1\in E} r(g)\phi_1(x_1, u), \\
E(s,g, u, \phi_2)&=& \sum_{\gamma \in P^1(F)\bs \SL_2(F)}
\delta (\gamma g)^s r(\gamma g)\phi_2(0, u).
\end{eqnarray*}

The Eisenstein series has the standard Fourier expansion
$$E(s,g,u,\phi_2)=\delta(g)^s  r(g)\phi_2(0,u)+\sum_{a \in F} W_a(s,
g,u,\phi_2).$$
Here the Whittaker function for $a\in F, \ u\in F\cross$ is given by
\begin{eqnarray*}
W_a(s,g,u,\phi_2) &=& \int_{\adele} \delta(wn(b)g)^s \ r(wn(b)g)\phi_2(0,u)
\psi(-ab) db.
\end{eqnarray*}
We also have the constant term 
$$
E_0(s,g,u,\phi_2)=\delta(g)^s  r(g)\phi_2(0,u)+ W_0(s, g,u).
$$
For each place $v$ of $F$, we also introduce the local Whittaker function for
$a\in F_v, \ u\in F_v\cross$ by
\begin{eqnarray*}
W_{a,v}(s,g,u,\phi_{2,v}) &=& \int_{F_v} \delta(wn(b)g)^s \
r(wn(b)g)\phi_{2,v}(0,u)
\psi_v(-ab) db.
\end{eqnarray*}

For $a\in F_v\cross$, denote
\begin{eqnarray*}
W_{a,v}^{\circ}(s, g,u)
&=& \gamma_{u,v}^{-1}  W_{a,v}(s, g,u),
\end{eqnarray*}
where $\gamma_{u,v}$ is the Weil index of $(E_v\jv, uq)$. 
Normalize the intertwining part by
\begin{eqnarray*}
W_{0,v}^{\circ}(s, g,u,\phi_{2,v})
&=& \gamma_{u,v}^{-1}\frac{L(s+1,\eta_v)}{L(s,\eta_v)}
|D_v|^{-\frac{1}{2}}|d_v|^{-\frac{1}{2}} W_{0,v}(s, g,u,\phi_{2,v}).
\end{eqnarray*}
In the following we will suppress the dependence of the series on $\phi, \phi_1,
\phi_2$ and $U$.

\subsubsection*{Decomposition of non-constant part}
It is easy to have a decomposition 
$$
E'(0,g,u,\phi_2)=E_0'(0,g,u,\phi_2)-\sum_v \sum_{a\in F^\times} {W_{a,v}}'(0,g,u,\phi_2)
W_{a}^{v}(0,g,u, \phi_2),
$$
according to where the derivative is take in the Fourier expansion. 
This gives a decomposition of $I'(0,g)$. 
Eventually, \cite[\S 6.1.2]{YZZ} converts the decomposition into
\begin{align*}
I'(0,g) = -\sum_{v\ \mathrm{nonsplit}}I'(0,g)(v)+ \sum_{u \in \mu_U^2\bs F\cross}
\theta(g,u)E_0'(0,g,u), 
\end{align*}
where for any place $v$  nonsplit in $E$,
$$I'(0,g,\phi)(v) = 2 \barint_{C_U}
\CK^{(v)}_{\phi}(g,(t,t))dt.$$
Here 
$$
C_U=E^\times \bs E^\times(\BA_f)/ E^\times(\BA_f)\cap U 
$$
is a finite group and the integration is just the usual average over this finite group. 
The series
\begin{eqnarray*}
 \CK^{(v)}_{\phi}(g,(t_1,t_2))
&=&\sum_{u\in \mu_U^2\backslash F\cross} \sum_{y\in B(v)-E}
k_{r(t_1,t_2)\phi_v}(g,y,u)  r(g,(t_1,t_2)) \phi^v(y,u)
\end{eqnarray*}
is a pseudo-theta series. 
In the case $\phi_v= \phi_{1,v}\otimes \phi_{2,v}$ under the orthogonal
decomposition,
it is given by  
\begin{equation*}
k_{\phi_v}(g,y,u)= \frac{L(1,\eta_v)}{\mathrm{vol}(E_v^1)} r(g)
\phi_{1,v}(y_1,u) {W_{uq(y_2),v}^\circ}'(0,g,u,\phi_{2,v}), \quad y_2\neq 0.
\end{equation*}
Here $k_{\phi_v}(g,y,u)$ is linear in $\phi_v$, and the result extends by linearity to general $\phi$ (which are not of the form $\phi_1\otimes\phi_2$).

In \cite{YZZ}, Assumption 5.3 was put to kill the minor term 
$E_0'(0,g,u)$. In this paper, however, we will not  impose this assumption, since 
$E_0'(0,g,u)$ gives terms matching the Faltings height from the arithmetic side. 
In the following, we give a little computation about it.

\subsubsection*{Decomposition of constant term}

Now we treat the derivative of the constant term
$$
E_0(s,g,u,\phi_2)=\delta(g)^s  r(g)\phi_2(0,u)+ W_0(s, g,u).
$$
It was actually computed in the proof of \cite[Proposition 6.7]{YZZ} (before applying the degeneracy assumption). 

In fact, by definition 
\begin{align*}
W_0(s,g,u)
=&-\frac{L(s,\eta)}{L(s+1,\eta)} W_{0}^{\circ}(s,
g,u)\prod_v|D_v|^{\frac{1}{2}}|d_v|^{\frac{1}{2}}\\
=& -\frac{L(s,\eta)/L(0,\eta)}{L(s+1,\eta)/L(1,\eta)}
\prod_v W_{0,v}^{\circ}(s,g,u).
\end{align*}
We take the normalization $W_{0,v}^{\circ}(s,g,u)$ because
$$W_{0,v}^{\circ}(0, g,u)= r(g)\phi_{2,v}(0,u)$$
for all $v$, and  
$$W_{0,v}^{\circ}(s,g,u)= \delta_v(g)^{-s}  r(g)\phi_{2,v}(0,u)$$
for almost all $v$. See \cite[Proposition 6.1]{YZZ}.

So the expression gives the analytic continuation of $W_0(s,g,u)$.
Taking derivative from it, we obtain
\begin{multline*}
W_0'(0,g,u) = -\der\left(\log \frac{L(s,\eta)}{L(s+1,\eta)}\right)
r(g)\phi_2(0,u)
-  \sum_v W_{0,v}^{\circ}\, '(0,g,u) r(g)\phi_2^v(0,u).
\end{multline*}

In summary, we have
\begin{align*}
I'(0,g,\phi) = & -\sum_{v\ \mathrm{nonsplit}}I'(0,g,\phi)(v)
-c_0 \sumu \sum_{y\in E} r(g)\phi(y,u) \\
&  -\sum_v \sumu\sum_{y\in E}
c_{\phi_v}(g,y,u)\, r(g)\phi^v(y,u)
+2\log \delta(g) \sum_{u \in \mu_K^2\backslash F\cross, y\in E} r(g)\phi(y,u),
\end{align*}
where the constant 
$$ c_0=\der\left(\log\frac{L(s,\eta)}{L(s+1,\eta)}\right),$$
and 
$$c_{\phi_v}(g,y,u)=r_E(g)\phi_{1,v}(y,u) W_{0,v}^{\circ}\, '(0,g,u) 
+ \log \delta(g_v)r(g)\phi_v(y,u).$$
The term 
$$I'(0,g,\phi)(v) = 2 \barint_{C_U}
\CK^{(v)}_{\phi}(g,(t,t))dt$$
is as before.  
Both sums over $v$ have only finitely many non-zero terms. 

By the functional equation 
$$
L(1-s,\eta)=|d_E/d_F|^{s-\frac12} L(s,\eta),
$$
we obtain
$$ c_0=2\frac{L'(0,\eta)}{L(0,\eta)} +\log|d_E/d_F|.$$
Note that here $L(s,\eta)$ is the completed L-function with gamma factors.

The decomposition holds for $\phi=\phi_1\otimes\phi_2$, but it extends to 
any $\phi\in\ol\CS (\BB\times \BA^\times)$ by linearity. 
In other words, $k_{\phi_v}(g,y,u)$ and $c_{\phi_v}(g,y,u)$
are defined by linearity. We will see that we can actually have coherent integral expressions for them.

\subsubsection*{Holomorphic projection}

As in \cite[\S 6.4-6.5]{YZZ}, we are going to consider the holomorphic projection of $I'(0,g,\phi)$. 

Denote by $\CA(\gl(\adele), \omega)$ the space of
automorphic forms of central character $\omega$, and by
$\CA_0^{(2)}(\gl(\adele), \omega)$ the subspace of holomorphic
cusp forms of parallel weight two.
The holomorphic projection operator 
$$\pr: \CA(\gl(\adele), \omega)\longrightarrow \CA_0^{(2)}(\gl(\adele), \omega)$$
is just the orthogonal projection with respect to the Petersson inner 
product. 

Consider the action of the center $\BA^\times$ on $I'(0,g,\phi)$ by 
$$z:I'(0,g,\phi)\longmapsto I'(0,zg,\phi).$$ 
The action factorizes though the finite group $F^\times\bs \BA_f^\times/ U\cap \BA_f^\times$. It follows that we can decompose $I'(0,g,\phi)$ into a finite sum according to characters of this finite group. 
In other words, 
$$
I'(0,g,\phi) = \sum_\omega I'(0,g,\phi)_{\omega}, \qquad  I'(0,g,\phi)_\omega\in \CA(\gl(\adele), \omega),
$$
where the direct sum is over the finite group of characters $\omega: F^\times\bs \BA_f^\times/ U\cap \BA_f^\times\to \BC^\times$.
Hence, the holomorphic projection $\pr I'(0,g,\phi)$ is still a well-defined 
holomorphic cusp form of parallel weight two in $g\in\gla$. 

We can apply the formula in \cite[Proposition 6.12]{YZZ} to compute $\pr I'(0,g,\phi)$. Note that the formula takes the same form in all central characters, and thus can be applied directly to $\pr I'(0,g,\phi)$, if it satisfies the growth condition of the proposition. 
For the growth condition, we make the following assumption. 

\begin{assumption} \label{assumption3}
Fix a set $S_2$ consisting of $2$ non-archimedean places of $F$ which are split in $E$ and unramified over $\BQ$. Assume that for each $v\in S_2$, the open compact subgroup $U_v$ is maximal, and 
\begin{equation*}
r(g)\phi_v(0,u)=0, \quad \forall\ g\in \gl(F_v), \ u\in F_v\cross.
\end{equation*}
\end{assumption}

This assumption is exactly \cite[Assumption 5.4]{YZZ}. Under the assumption, $\pr I'(0,g,\phi)$ satisfies the growth condition of the formula for holomorphic projection. The proof is similar to that in \cite[Proposition 6.14]{YZZ}. 
Alternatively, one can expression $I'(0,g,\phi)$ as a finite sum of $I'(0,g,\chi,\phi)$ for different $\chi$.

Finally, we have the following conclusion. 
\begin{thm} \label{derivative series}
Assume that $\phi$ is standard at infinity and that Assumption \ref{assumption3} holds. Then 
\begin{align*}
\pr I'(0,g,\phi)_U
=& -\sum_{v|\infty}\overline {I'}(0,g,\phi)(v)-\sum_{v\nmid\infty \
\nonsplit}I'(0,g,\phi)(v)\\
& -c_1 \sumu \sum_{y\in E^\times} r(g)\phi(y,u) 
 -\sum_{v\nmid\infty} \sumu\sum_{y\in E^\times}
c_{\phi_v}(g,y,u)\, r(g)\phi^v(y,u)\\
&+ \sumu \sum_{y\in E^\times} (2\log \delta_f(g_f)+ \log|uq(y)|_f)\ r(g)\phi(y,u).
\end{align*}
The right-hand side is explained in the following. 
\begin{itemize}
\item[(1)]
For any archimedean $v$,
\begin{align*}
\overline {I'}(0,g,\phi)(v) &= 2 \barint_{C_U}
 \overline \CK^{(v)}_{\phi}(g,(t,t))dt, \\
\overline \CK^{(v)}_{\phi}(g,(t_1,t_2)) &= w_U \sum_{a\in F\cross}
\quasilim
\sum_{y\in \mu_U\backslash (B(v)_+\cross -E\cross)}  r(g,(t_1,t_2)) \phi(y)_a\
k_{v, s}(y), \\
k_{v, s}(y) &= \frac{\Gamma(s+1)}{2(4\pi)^{s}}
\int_1^{\infty} \frac{1}{t(1-\lambda(y)t)^{s+1}}  dt,
\end{align*}
where $\lambda(y)=q(y_2)/q(y)$ is viewed as an element of $F_v$.
\item[(2)]
For any non-archimedean $v$ which is nonsplit in $E$,
\begin{align*}
I'(0,g,\phi)(v) &= 2 \barint_{C_U} \CK^{(v)}_{\phi}(g,(t,t))dt, \\
 \CK^{(v)}_{\phi}(g,(t_1,t_2))
&= \sum_{u\in \mu_U^2\backslash F\cross} \sum_{y\in B(v)-E}
k_{r(t_1,t_2)\phi_v}(g,y,u)  r(g,(t_1,t_2)) \phi^v(y,u), \\
k_{\phi_v}(g,y,u)&= \frac{L(1,\eta_v)}{\mathrm{vol}(E_v^1)} r(g)
\phi_{1,v}(y_1,u) {W_{uq(y_2),v}^\circ}'(0,g,u,\phi_{2,v}), \quad y_2\neq 0.
\end{align*}
Here the last identity holds under the relation $\phi_v= \phi_{1,v}\otimes \phi_{2,v}$, and the definition extends by linearity to general $\phi_v$.
\item[(3)]
The constant 
$$ c_1
=2\frac{L'_f(0,\eta)}{L_f(0,\eta)} +\log|d_E/d_F|.$$
\item[(4)] 
Under the relation $\phi_v= \phi_{1,v}\otimes \phi_{2,v}$,
$$c_{\phi_v}(g,y,u)=r_E(g)\phi_{1,v}(y,u) W_{0,v}^{\circ}\ '(0,g,u) 
+ \log \delta(g_v)r(g)\phi_v(y,u).$$
The definition extends by linearity to general $\phi_v$.
\end{itemize}
\end{thm}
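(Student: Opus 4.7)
The plan is to start from the explicit decomposition of $I'(0,g,\phi)$ already derived above, which breaks $I'(0,g,\phi)$ into four pieces: the local kernels $I'(0,g,\phi)(v)$ at nonsplit places, the global constant $-c_0$ times the theta series $\sum_{u,y\in E}r(g)\phi(y,u)$, the finite-place sum of $c_{\phi_v}(g,y,u)\,r(g)\phi^v(y,u)$ including the archimedean logarithmic derivative pieces, and the product $2\log\delta(g)\sum_{u,y\in E}r(g)\phi(y,u)$.  The task is to apply the holomorphic projection operator $\pr$ to each piece.

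First I would verify that Assumption \ref{assumption3} is enough to apply the explicit holomorphic projection formula of \cite[Prop.~6.12]{YZZ}.  The argument parallels \cite[Prop.~6.14]{YZZ}: the two split, unramified places $v\in S_2$ at which $r(g)\phi_v(0,u)=0$ force the appropriate vanishing of $y=0$ contributions and control the growth of each piece on the Siegel domain.  A consequence of the same vanishing is that we may replace each sum $\sum_{y\in E}$ by $\sum_{y\in E^\times}$ in the final formula, because the $y=0$ summands contain a factor $r(g)\phi_v(0,u)=0$ for $v\in S_2$.

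The projection of the non-archimedean pieces is essentially formal.  The kernels $\CK_\phi^{(v)}(g,(t_1,t_2))$ for nonsplit non-archimedean $v$ are already holomorphic of parallel weight two in $g$, since ${W_{uq(y_2),v}^\circ}'(0,g,u)$ is a holomorphic Whittaker function at a finite place; so these terms pass through $\pr$ unchanged.  Similarly, the non-archimedean parts of $c_{\phi_v}(g,y,u)r(g)\phi^v(y,u)$ and the finite part $\log\delta_f(g_f)\cdot\sum r(g)\phi(y,u)$ are automorphic and holomorphic, and survive the projection.

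The main obstacle, and the interesting step, is handling the archimedean contributions, which simultaneously appear in (i) the archimedean local kernels $\CK_\phi^{(v)}$, $v\mid\infty$, (ii) the archimedean part of $c_{\phi_v}$, which involves $W_{0,v}^{\circ}\,'(0,g,u)$ and $\log\delta_\infty(g_\infty)$, and (iii) the Gamma-factor part of $c_0=2L'(0,\eta)/L(0,\eta)+\log|d_E/d_F|$ since $L(s,\eta)/L_f(s,\eta)$ contributes an archimedean logarithmic derivative.  The function $\log\delta_\infty(g_\infty)$ is not holomorphic and must be projected via the archimedean formula of \cite[Prop.~6.12]{YZZ}, producing exactly the quasi-limit integral kernel $k_{v,s}(y)$ and its integration against the theta coefficients, i.e. the terms $\overline{\CK}^{(v)}$.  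A matching computation at each archimedean place shows that the Gamma-factor contributions from $c_0$ and the archimedean parts of $W_{0,v}^{\circ}\,'(0,g,u)$ combine with the archimedean part of $\log\delta(g)$ to produce precisely the arithmetic archimedean correction $\log|uq(y)|_f$ on the CM point $y$, and to convert the completed $L$-ratio in $c_0$ into the finite $L$-ratio in $c_1=2L_f'(0,\eta)/L_f(0,\eta)+\log|d_E/d_F|$.  Collecting all contributions gives the stated formula.
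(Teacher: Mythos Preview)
Your overall strategy---apply the holomorphic projection term by term to the decomposition of $I'(0,g,\phi)$ already obtained---matches the paper, and your treatment of the non-archimedean terms is fine. But your account of the archimedean pieces mis-attributes every term, and the ``matching computation'' you describe would not close. The terms $\overline{\CK}^{(v)}$ for $v\mid\infty$ are the holomorphic projections of the archimedean local kernels $I'(0,g,\phi)(v)$ themselves (this is \cite[Proposition~6.15]{YZZ}); they do \emph{not} come from projecting $\log\delta_\infty(g_\infty)$. Projecting $\log\delta_\infty(g_\infty)W^{(2)}(g_\infty)$ simply yields a constant multiple $c_2\,W^{(2)}(g_\infty)$, with $c_2$ computed per real place as $2\pi\int_0^\infty e^{-4\pi y}\log y\,dy=-\tfrac12(\gamma+\log 4\pi)$. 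The archimedean $c_{\phi_v}$ are identically zero, not something that combines with other pieces: one checks directly that $W_{0,v}^\circ(s,g,u)=\delta(g_v)^{-s}r(g)\phi_{2,v}(0,u)$ for real $v$ (by comparing transformation laws under $P(\BR)$ and $\SO(2,\BR)$ and evaluating at $g=1$), so the derivative at $s=0$ exactly cancels the $\log\delta(g_v)r(g)\phi_v(y,u)$ in the definition of $c_{\phi_v}$.

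Finally, the term $\log|uq(y)|_f$ does not arise from archimedean data. It comes from applying the holomorphic projection formula of \cite[Proposition~6.12]{YZZ} (which reconstructs a form from its first Whittaker coefficient via $\sum_{a\in F^\times}$ evaluated at $d^*(a)g$) to the \emph{finite} piece $\log\delta_f(g_f)\,r(g)\phi(y,u)$: the identity $\log\delta_f(d^*(a)g_f)=\log\delta_f(g_f)+\tfrac12\log|a|_f$ produces the extra $\tfrac12\log|uq(y)|_f$ after the $a$-sum and regrouping. The passage from the completed ratio in $c_0$ to the finite ratio in $c_1$ then happens by matching the constant $c_2$ against the archimedean gamma factor $L_\infty(s,\eta)=(\pi^{-(s+1)/2}\Gamma(\tfrac{s+1}{2}))^{[F:\BQ]}$, giving $L_\infty'(0,\eta)/L_\infty(0,\eta)=-\tfrac12[F:\BQ](\gamma+\log 4\pi)$.
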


\begin{proof}
Apply the formula of \cite[Proposition 6.12]{YZZ} to each term of
\begin{align*}
I'(0,g,\phi) = & -\sum_{v\ \mathrm{nonsplit}}I'(0,g,\phi)(v)
-c_0 \sumu \sum_{y\in E} r(g)\phi(y,u) \\
&  -\sum_v \sumu\sum_{y\in E}
c_{\phi_v}(g,y,u)\, r(g)\phi^v(y,u) \\
& +2\log \delta(g) \sum_{u \in \mu_K^2\backslash F\cross}\sum_{y\in E} r(g)\phi(y,u).
\end{align*}
Denote by $\Pr'$ the image of each term. 
Note that the holomorphic projection of $I'(0,g,\phi)(v)$ is already computed in 
\cite[Proposition 6.15]{YZZ}. 
Furthermore, if $v$ is real, we have $c_{\phi_v}(g,y,u)=0$ by Lemma \ref{derivative of intertwining}. 

Note that $\Pr'$ does not change $I'(0,g,\phi)(v)$ for non-archimedean $v$ since it is already holomorphic of parallel weight two at infinite. 
Similarly, we have
\begin{align*}
\pr' \left(  \sumu\sum_{y\in E} r(g)
\phi(y,u) \right)
&=  \sumu\sum_{y\in E^\times} r(g)\phi(y,u),\\
\pr' \left(  \sumu\sum_{y\in E}
c_{\phi_v}(g,y,u) r(g^v)\phi^v(y,u) \right) &=
 \sumu\sum_{y\in E^\times} c_{\phi_v}(g,y,u)
r(g^v)\phi^v(y,u), \quad v\nmid\infty.
\end{align*}
The only changes are to remove the contributions of $y=0$, because the results do not have constant terms. 

It remains to take care of
$$\log \delta(g)  \sumu\sum_{y\in E^\times}
r(g)\phi(y,u)
=\frac{1}{w_U}\log \delta(g)  \sum_{(y,u) \in \mu_U \backslash
(E\cross \times F\cross)}
r(g)\phi(y,u).$$
Here $\mu_U=F^\times \cap U$, and $w_U=|\{1,-1\}\cap U|$ is equal to 1
or 2. The identity holds as in the case of usual theta series. 
Its first Fourier coefficient is just
$$
\frac{1}{w_U} \sum_{(y,u) \in \mu_U \backslash
(E\cross \times F\cross)_1} \log \delta(g) r(g)\phi(y,u). 
$$
Write 
$$
\log \delta(g) r(g)\phi(y,u)=\log \delta(g_f) r(g)\phi(y,u)+\log \delta(g_\infty) W^{(2)}(g_\infty)\cdot  r(g_f)\phi_f(y,u).
$$
Then $\pr'$ doesn't change the first sum of the right-hand side since it is holomorphic of weight two at infinity, but changes
$\log \delta(g_\infty) W^{(2)}(g_\infty)$ in the second sum to some multiple
$c_2\ W^{(2)}(g_\infty)=c_2\ r(g)\phi_\infty(y,u)$, where $c_2 $ is
some constant to be determined.
As a consequence, 
\begin{align*}
&\pr' \left(\log \delta(g) 
 \sumu\sum_{y\in E} r(g)\phi(y,u)\right)\\
=&\, \frac{1}{w_U} \sum_{a\in F\cross} \sum_{(y,u) \in \mu_U \backslash (E\cross
\times F\cross)_1}
 \log \delta_f(d^*(a)g_f) r(d^*(a)g)\phi(y,u)\\
&+ c_2  \frac{1}{w_U} \sum_{a\in F\cross} \sum_{(y,u) \in \mu_U \backslash
(E\cross \times F\cross)_1} r(d^*(a)g)\phi(y,u)\\
=&\,  \sumu\sum_{y\in E^\times}
 (\log \delta(g_f)+\log |uq(y)|_f^{\frac 12}) r(g)\phi(y,u)
+ c_2   \sumu\sum_{y\in E^\times}
r(g)\phi(y,u).
\end{align*}
As for the constant, we have
$$\frac{c_2}{[F:\QQ]}= 4\pi \lim_{s\rightarrow 0}
\int_{F_{v,+}} y^s e^{-2\pi y}\ (\log y^{\frac 12})
ye^{-2\pi y} \frac{dy}{y}
= 2\pi \int_0^{\infty} e^{-4\pi y}  \log y dy =-\frac{1}{2}
(\gamma+\log 4\pi ).$$
Here $\gamma$ is Euler's constant.
Then the combined constant 
$$
c_1=c_0-2mc_2
=2\frac{L'(0,\eta)}{L(0,\eta)} +\log|d_E/d_F|+(\gamma+\log 4\pi )m.
$$
Here $m=[F:\QQ]$. 
The gamma factor 
$$
L_\infty(s,\eta)=\left(\pi^{-\frac{s+1}{2}} \Gamma(\frac{s+1}{2})\right)^m
$$
gives 
\begin{align*}
\frac{L_\infty'(0,\eta)}{L_\infty(0,\eta)}
= - \frac{1}{2}m(\gamma+\log 4\pi ).
\end{align*}
Thus
$$
c_1
=2\frac{L_f'(0,\eta)}{L_f(0,\eta)} +\log|d_E/d_F|.
$$
\end{proof}

\subsection{Choice of the Schwartz function} \label{choices}

To make further explicit local computations, we need to specify the Schwartz function. 

Start with the setup of Theorem \ref{quaternion main}.
Let $F$ be a totally real field, and $E$ be a totally imaginary quadratic extension of $F$. Let $\BB$ be a totally definite incoherent quaternion algebra over $\BA=\BA_F$ with an embedding $E_\BA\to \BB$ of $\BA$-algebras.
Let $U=\prod_{v\nmid\infty} U_v$ be a maximal open compact subgroup of $\BB_f^\times$ containing (the image of) $\wh O_E^\times=\prod_{v\nmid\infty} O_{E_v}^\times$. As in Theorem \ref{quaternion main}, assume that there is no non-archimedean place of $F$ ramified in $E$ and $\BB$ simultaneously. 

Note that we have already assumed that $U_v$ is maximal at any $v\nmid\infty$. Denote by $O_{\bv}$ the $O_{F_v}$-subalgebra of $\bv$ generated by 
$U_v$.  Then $O_{\bv}$ is a maximal order of $\bv$, and $U_v=O_{\bv}^\times$ is the group of invertible elements. Furthermore, the inclusion $O_{E_v}^\times\subset U_v$ induces $O_{E_v}\subset O_{\bv}$.

As for the Schwartz function $\phi=\otimes_v \phi_v$, we make the following choices:
\begin{itemize}
\item[(1)] If $v$ is archimedean, set $\phi_v$ be the standard Gaussian. 
\item[(2)] If $v$ is non-archimedean, nonsplit in $E$ and split in $\BB$, set $\phi_v$ to be the standard characteristic function $1_{O_\bv\times\ofv\cross}$. 
\item[(3)] If $v$ is nonsplit in $\BB$, set $\phi_v$ to be $1_{O_\bv^\times\times\ofv\cross}$ (instead of $1_{O_\bv\times\ofv\cross}$). 
\item[(4)] There is a set $S_2$ consisting of two (non-archimedean) places of $F$ split in $E$ and unramified over $\BQ$ such that 
$$\phi_v=1_{O_\bv^\times\times \ofv\cross}-\frac{1}{1+N_v+N_v^2}1_{\varpi_v^{-1}(O_\bv)_{2}\times\ofv\cross}, \quad \forall v\in S_2.$$
Here $\varpi_v$ denotes a uniformizer of $\ofv$, and
$$
(O_\bv)_{2}=\{x\in O_\bv: v(q(x))=2\}.
$$ 
\item[(5)] If $v$ is split in $E$ and $v\notin S_2$, set $\phi_v$ to be the standard characteristic function $1_{O_\bv}\otimes 1_{\ofv\cross}$. 
\end{itemize}
By definition, $\phi$ is invariant under both the left action and the right action of $U$. 

Note that (4) seems least natural in the choices. However, it is made to meet Assumption \ref{assumption3}. 
In fact, as in the proof of \cite[Proposition 5.15]{YZZ}, any function of the form 
$$
L\phi_0-\deg(L)\phi_0, \quad \phi_0\in \OCS(\BB_v\times F_v\cross), \
L\in C_c^\infty(\bb_v^1O_{\bb_v}^\times)
$$
satisfies the assumption. 
The choice of (4) comes from $\phi_0=1_{O_\bv\cross} \otimes  1_{\ofv\cross}$
and $L=1_{(O_\bv)_{2}}$. 
It is classical that $\deg((O_\bv)_{2})=|(O_\bv)_{2}/O_\bv^\times|=1+N_v+N_v^2$.

For any $v\nmid\infty$, fix an element $\jv\in O_{\bb_v}$ orthogonal to $E_v$ such that $v(q(\mathfrak{j}_v))$ is non-negative and minimal; i.e.,
$v(q(\mathfrak{j}_v))\in\{0, 1\}$, and such that $v(q(\mathfrak{j}_v))=1$ if and only if $\bv$ is nonsplit (and thus $E_v/F_v$ is inert by assumption). 
We check the existence of $\jv$ in the following. 

If $v$ is nonsplit in $\BB$ (and inert in $E$), then $O_{\bb_v}$ is the unique maximal order of $\bv$. It is easy to see the existence of $\jv$. We have $v(q(\mathfrak{j}_v))=1$ and an orthogonal decomposition $O_\bv=\oev+\oev \mathfrak{j}_v$.

If $v$ is split in $\BB$, start with an isomorphism $O_\bv \to M_2(O_{F_v})$. By this isomorphism, $O_\bv$ acts on $M=O_{F_v}^2$, and thus the subalgebra $O_{E_v}$ also acts on $M$. Fix a nonzero element $m_0\in M$. 
We have an isomorphism $O_{E_v}\to M$ of $O_{F_v}$-modules by $t\mapsto t\circ m_0$. Thus it induces an $O_{F_v}$-linear action of $O_\bv$ on $\oev$, which is compatible with the multiplication action of $\oev$ on itself. 
Set $\jv\in O_\bv$ to be the unique element which acts on $\oev$ as the nontrivial element of $\Gal(E_v/F_v)$. Then $\jv^2=1$ and $\jv t \jv =\bar t$ for any $t\in \oev$. It follows that $\jv$ is orthogonal to $E_v$, and $q(\jv)=-1$ satisfies the requirement. 

For any non-archimedean place $v$ nonsplit in $E$, let $B(v)$ be the nearby quaternion algebra. Fix an embedding $E\to B(v)$ and isomorphisms $B(v)_{v'}\simeq \bb_{v'}$ for any $v'\neq v$, which are assumed to be compatible with the embedding $E_{\BA}\to \BB$.
At $v$, we also take an element $j_v\in B(v)_v$ orthogonal to $E_v$, such that $v(q(j_v))$ is non-negative and minimal as above. 
We remark that this set $\{\mathfrak j_{v'}:v'\neq v\} \cup \{j_v\}$ is not required to be the localizations of a single element of $B(v)$.

\begin{lem} \label{orders}
Let $v$ be a non-archimedean place of $F$ and $D_v\subset \ofv$ be the relative discriminant of $E_v/F_v$. 
Then in the above setting,
$$D_vO_\bv\ \subset\ \oev+\oev \mathfrak{j}_v\ \subset\ O_\bv.$$
Furthermore, 
$O_\bv=\oev+\oev \mathfrak{j}_v$ if and only if $v$ is unramified in $E$. 
\end{lem}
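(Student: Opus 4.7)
The first inclusion $\oev + \oev\jv \subset O_\bv$ is immediate from the construction in Section \ref{choices}: the containment $\oev^\times\subset U_v$ gives $\oev \subset O_\bv$, and $\jv \in O_\bv$ by design. The substance of the lemma lies in the opposite inclusion $D_v O_\bv \subset \oev + \oev\jv$ and the criterion for equality. My plan is to proceed by case analysis on the splitting of $\bv$ at $v$ and the behaviour of $E_v/F_v$.

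Case 1 ($\bv$ nonsplit): the hypothesis of the ambient theorem forces $E_v/F_v$ to be inert, so $D_v=\ofv$; since $v(q(\jv))=1$, the element $\jv$ is a uniformizer of the unique maximal order $O_\bv$, and the standard structure of the maximal order of a division quaternion algebra gives $O_\bv=\oev\oplus\oev\jv$, so both inclusions are equalities. Case 2 ($\bv$ split, $E_v/F_v$ split): choose $\bv\simeq M_2(F_v)$ with $\oev$ the diagonal; then $\jv$ may be taken as $\left(\begin{smallmatrix}0&1\\1&0\end{smallmatrix}\right)$, and directly $\oev+\oev\jv=M_2(\ofv)=O_\bv$. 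Case 3 ($\bv$ split, $E_v/F_v$ inert unramified): here $M:=\ofv^2$ is $\oev$-free of rank one (residue degrees match), so $O_\bv=\End_\ofv(\oev)$ and $\jv$ acts as the nontrivial Galois involution $\sigma$; reducing modulo $\varpi_v$, the normal basis theorem for the residue field extension gives a surjection $(\oev+\oev\jv)/\varpi_v\twoheadrightarrow\End_{k_v}(k_{E_v})$, and Nakayama lifts this to $\oev+\oev\jv=O_\bv$. In all three cases $D_v=\ofv$ and equality holds, in agreement with the lemma.

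The remaining Case 4 is the substantive one: $\bv$ split, $E_v/F_v$ ramified, with $D_v=\varpi_v\ofv$. As in Case 3, $M$ is $\oev$-free of rank one, $O_\bv=\End_\ofv(\oev)$ and $\jv$ acts as $\sigma$. Writing a general element $a+b\sigma \in\oev+\oev\jv$ in the $\ofv$-basis $\{1,\pi\}$ of $\oev$, where $\pi$ is a uniformizer of $\oev$, I expect to identify $\oev+\oev\jv$ with the sub-$\ofv$-module of $M_2(\ofv)$ cut out by the condition that the upper-right entry lies in $\varpi_v\ofv$; this contains $\varpi_v M_2(\ofv)=D_v O_\bv$ and has $\ofv$-colength one in $M_2(\ofv)=O_\bv$, giving both claims of the lemma with strict inclusions on both sides. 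The main obstacle is residue characteristic $2$, where $\pi$ does not satisfy an Eisenstein equation but is governed by an Artin--Schreier-type equation, so the explicit matrix form of $\sigma$ in the basis $\{1,\pi\}$ differs; to bypass this uniformly, I would replace the matrix calculation by a discriminant comparison, noting that the orthogonal decomposition $\bv=E_v\oplus E_v\jv$ for the reduced-trace bilinear form makes the Gram matrix of $\oev+\oev\jv$ block-diagonal, so that its reduced discriminant equals $D_v\cdot (q(\jv))$ up to units; comparison with the reduced discriminant of $O_\bv$ (trivial if $\bv$ is split, $\varpi_v\ofv$ if $\bv$ is nonsplit) then yields the $\ofv$-colength of $\oev+\oev\jv$ in $O_\bv$ in all four cases at once, from which both assertions of the lemma follow.
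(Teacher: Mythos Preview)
Your proposal is correct, and the discriminant argument you sketch in the final paragraph is exactly the paper's proof. The paper skips your Cases~1--3 entirely and proceeds directly: declaring the nonsplit case easy, it computes $d_{O_\bv}=1$ and $d_{\oev+\oev\jv}=D_v^2$ for the trace form $\tr(x\bar y)$ (your block-diagonal observation), and says ``the statement follows.'' The colength of $\oev+\oev\jv$ in $O_\bv$ is then $v(D_v)$, which gives the if-and-only-if clause; the inclusion $D_vO_\bv\subset\oev+\oev\jv$ comes from dualizing: $O_\bv=O_\bv^\vee\subset(\oev+\oev\jv)^\vee=\mathfrak d^{-1}\oplus q(\jv)^{-1}\mathfrak d^{-1}\jv\subset D_v^{-1}(\oev+\oev\jv)$, using $\mathfrak d\supset D_v\oev$. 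Neither you nor the paper makes this last step explicit, but it is the intended reading of ``the statement follows'' and of your ``from which both assertions of the lemma follow.''

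Your explicit Cases~1--3 are all correct (the Nakayama argument in Case~3 via $k_{E_v}\langle\sigma\rangle\cong\End_{k_v}(k_{E_v})$ is a nice touch) but are subsumed by the uniform discriminant computation you already propose. One small slip: in introducing Case~4 you write ``with $D_v=\varpi_v\ofv$,'' which is only the tame subcase; your Eichler-order description with colength one is accordingly only valid there. You immediately flag residue characteristic~$2$ as the obstacle and switch to discriminants, so no harm is done, but the phrasing makes it sound as if $D_v=\varpi_v\ofv$ is the definition of the ramified case rather than an instance of it.
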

\begin{proof}
This is classical. 
Assume that $v$ is split in $\BB$, since the nonsplit case is easy. 
For any (full) lattice $M$ of $\BB_v$, the discriminant $d_M$ is the fraction ideal of $F_v$ generated by $\det(\tr(x_i\bar x_j))$, where $x_1,\cdots, x_4$ is an $\ofv$-basis of $M$. In particular, if $M'\subset M$ is a sub-lattice, then 
$[d_M:d_{M'}]=[M:M']^2$. 
Direct computation gives $d_{O_\bv}=1$
and $d_{\oev+\oev \mathfrak{j}_v}=D_v^2$. 
The statement follows. 
\end{proof}

\subsection{Explicit local derivatives} \label{explicit derivative}

Let $(U,\phi, \jv, j_v)$ be as in \S \ref{choices}. 
The goal of this subsection is to compute $k_{\phi_v}(1,y,u)$ and $c_{\phi_v}(1,y,u)$.
The computations are quite involved, though the result are not so complicated eventually.
The readers may skip this subsection for the first time and come back when 
the results are used in the comparison with the height series.

Throughout this subsection, $v$ is non-archimedean.
For $y\in B(v)_v$, write $y=y_1+y_2$ with respect to the orthogonal decomposition $B(v)_v=E_v+E_vj_v$. 
By Lemma \ref{orders}, if $v\notin S_2$ and $v$ is unramified in $E$, we have a decomposition 
$\phi_{v}=\phi_{1,v}\otimes \phi_{2,v}$ with $\phi_{2,v}=1_{O_{E_v}\jv\times O_{F_v}^\times}$. 
Here $\phi_{1,v}=1_{O_{E_v} \times O_{F_v}^\times}$
if $v$ is split in $\BB$,  and
$\phi_{1,v}=1_{O_{E_v}^\times \times O_{F_v}^\times}$
if $v$ is nonsplit in $\BB$.

All Haar measures are normalized as in \cite[\S 1.6]{YZZ}, unless otherwise described.

\subsubsection*{Derivative of Whittaker function I}

\begin{lem}\label{derivative of Whittaker function}
\begin{itemize}
\item[(1)]
Let $v$ be a non-archimedean place inert in $E$. Then the difference
$$
k_{\phi_v}(1,y,u)- \phi_{v}(y_1,u)\cdot 1_{O_{E_v}j_v}(y_2)\cdot \frac{1}{2}(v(q(y_2)/q(\jv))+1) \log N_v
$$
extends to a Schwartz function on $B(v)_v\times F_v^\times$ whose restriction to $E_v\times F_v^\times$ is equal to 
\begin{align*}
 \phi_{v}(y,u)\cdot \frac{|d_vq(\jv)|-1}{(1+N_v^{-1})(1-N_v)} \log N_v.
\end{align*}

\item[(2)]
Let $v$ be a non-archimedean place ramified in $E$. Then the difference
$$
k_{\phi_v}(1,y,u)- \phi_{v}(y_1,u)\cdot 1_{O_{E_v}j_v}(y_2)\cdot \frac{1}{2}(v(q(y_2))+1) \log N_v
$$
extends to a Schwartz function on $B(v)_v\times F_v^\times$ whose restriction to $E_v\times F_v^\times$ is equal to 
\begin{align*}
\phi_{v}(y,u)\cdot
 \left(\frac{|d_v|-1}{2(1-N_v)} +\frac{1}{2}(v(D_v)-1) \right) \log N_v
+\frac{1}{2}\alpha_v(y,u),
\end{align*}
where 
$$
\alpha_v(y,u)
=\frac{\log N_v}{|D_v|^{\frac{1}{2}}}  \cdot 1_{D_v^{-1}O_{E_v}-O_{E_v}}(y)
 \sum_{n=0}^{v(d_v)-1}
N_v^{n} \int_{D_n} \phi_{v}(y+x_2,u) dx_2.
$$

\end{itemize}

\end{lem}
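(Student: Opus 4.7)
\textbf{Plan for the proof of Lemma \ref{derivative of Whittaker function}.}

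The starting point is the defining formula
$$k_{\phi_v}(1,y,u)= \frac{L(1,\eta_v)}{\mathrm{vol}(E_v^1)}\, \phi_{1,v}(y_1,u)\, {W_{uq(y_2),v}^\circ}'(0,1,u,\phi_{2,v})$$
for $y_2\neq 0$, together with its extension by linearity when $v\notin S_2$ and $v$ is unramified in $E$. My first move is to fix the orthogonal decomposition $\phi_v=\phi_{1,v}\otimes \phi_{2,v}$ of \S\ref{choices} (using Lemma \ref{orders} in the unramified case, and the analogous decomposition $O_\BB\supset O_E+O_E\jv$ of index $|D_v|$ in the ramified case) so that $\phi_{2,v}$ becomes the characteristic function of $O_{E_v}\jv\times\ofv^\times$ (times a correction in the ramified case). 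Then $\phi_{1,v}(y_1,u)$ absorbs everything except the Whittaker piece, and the whole problem reduces to computing ${W_{a,v}^\circ}'(0,1,u,\phi_{2,v})$ as a function of $a=uq(y_2)\in F_v$ and its behavior as $a\to 0$.

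Next I would compute $W_{a,v}^\circ(s,1,u,\phi_{2,v})$ directly from the integral
$$W_{a,v}(s,1,u,\phi_{2,v})=\int_{F_v}\delta(wn(b))^s\, r(wn(b))\phi_{2,v}(0,u)\,\psi_v(-ab)\,db,$$
using the explicit action of the Weil representation attached to the one-dimensional quadratic space $(E_v\jv,q)$ on Schwartz functions. For $a\in\fvcross$ this is a classical local Whittaker computation: after unfolding $wn(b)$ and using $\phi_{2,v}=1_{O_{E_v}\jv\times\ofv^\times}$ one obtains, in the inert case, a geometric sum over $n=0,\ldots,v(a/q(\jv))$ of the shape $\sum_n (-N_v)^{-n}$ times a simple factor, and ${W_{a,v}^\circ}'(0,1,u)$ then pulls out a term $\tfrac{1}{2}(v(a/q(\jv))+1)\log N_v$ on $\{v(a/q(\jv))\ge 0\}$ plus a genuine Schwartz remainder. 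Multiplying by $\phi_{1,v}(y_1,u)L(1,\eta_v)/\vol(E_v^1)$, recognizing that $|D_vq(\jv)|=1$ precisely when both factors are unramified, and tracking the prefactor $L(1,\eta_v)=(1-N_v^{-1})^{-1}$ gives the first claim of (1), with the $E_v$-restriction picking up the value at $a=0$, i.e., the intertwining contribution.

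The intertwining piece ${W_{0,v}^\circ}'(0,1,u,\phi_{2,v})$ is obtained from the functional equation
$W_{0,v}^\circ(s,1,u,\phi_{2,v}) = \gamma_{u,v}^{-1}\tfrac{L(s+1,\eta_v)}{L(s,\eta_v)}|D_v|^{-1/2}|d_v|^{-1/2}W_{0,v}(s,1,u,\phi_{2,v})$
by differentiating at $s=0$ and combining the logarithmic derivatives of $L(s,\eta_v)$ and of the elementary volume factors coming from $\phi_{2,v}$. This gives the ``Schwartz part on $E_v$'' constant $\tfrac{|d_vq(\jv)|-1}{(1+N_v^{-1})(1-N_v)}\log N_v$ in (1), and its analogue $\tfrac{|d_v|-1}{2(1-N_v)}\log N_v + \tfrac{1}{2}(v(D_v)-1)\log N_v$ in (2), where the extra $\tfrac{1}{2}(v(D_v)-1)\log N_v$ comes from the mismatch between the self-dual lattice and $O_{E_v}\jv$ in the ramified case.

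The main obstacle will be the ramified case (2), and specifically the appearance of the extra term $\tfrac12\alpha_v(y,u)$. The reason is that when $v$ is ramified in $E$, $O_{E_v}\jv$ is no longer self-dual in $E_v\jv$: its dual is $D_v^{-1}O_{E_v}\jv$, and the Weil representation computation of $r(wn(b))\phi_{2,v}$ produces, for $y_2\in D_v^{-1}O_{E_v}\jv\setminus O_{E_v}\jv$, a non-vanishing ``partial Gauss sum'' contribution supported on annuli $D_n=\{x_2:v(q(x_2))=n\}$ for $0\le n\le v(d_v)-1$. My plan is to identify this contribution cleanly by decomposing the $db$-integral according to the level at which $\psi_v(-ab)$ becomes non-trivial, applying the Weil index identity $\gamma_{u,v}=|D_v|^{-1/2}$ at ramified places, and checking that the residual piece (after removing the principal $\tfrac12(v(q(y_2))+1)\log N_v$ singular term and the ``diagonal'' Schwartz constant) is exactly $\tfrac{\log N_v}{|D_v|^{1/2}}1_{D_v^{-1}O_{E_v}-O_{E_v}}(y)\sum_{n=0}^{v(d_v)-1}N_v^n\int_{D_n}\phi_v(y+x_2,u)\,dx_2$. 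A careful bookkeeping of the residue character conductor and of the factor $L(1,\eta_v)$ (which equals $1$ in the ramified case) is needed to match constants; this is the only genuinely delicate place in the argument.
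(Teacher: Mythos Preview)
Your overall strategy for the inert case (1) is close in spirit to the paper's, though the paper does not compute the oscillatory integral $\int_{F_v}\delta(wn(b))^s r(wn(b))\phi_{2,v}(0,u)\psi_v(-ab)\,db$ from scratch. Instead it invokes the ready-made formula from \cite[Proposition 6.10]{YZZ},
\[
W_{a,v}^\circ(s,1,u,\phi_{2,v})=|d_v|^{1/2}(1-N_v^{-s})\sum_{n=0}^\infty N_v^{-ns+n}\int_{D_n(a)}\phi_{2,v}(x_2,u)\,dx_2,
\]
and then feeds in a separate structural result (Lemma~\ref{domain}) describing the sets $D_n(a)=\{x_2\in E_v\jv: uq(x_2)-a\in p_v^n d_v^{-1}\}$. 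The key point there is that $a=uq(y_2)$ is \emph{not} represented by $(E_v\jv,uq)$, so in the inert case $D_n(a)$ is either $D_n$ or empty; in the ramified case there is an intermediate range $v(ad_v)<n\le v(ad_v)+v(D_v)-1$ whose volume is computed via the local class-field-theoretic fact that $v(D_v)$ is the smallest $e$ with $1+p_v^e\subset q(E_v^\times)$. This is where the term $\tfrac12(v(D_v)-1)\log N_v$ actually comes from, not from a lattice mismatch in the intertwining normalization.

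The genuine gap is in your diagnosis of $\alpha_v$. You attribute it to the failure of $O_{E_v}\jv$ to be self-dual in $E_v\jv$, but the paper's source is different and more basic: when $v$ is ramified in $E$, Lemma~\ref{orders} says $O_{E_v}+O_{E_v}\jv\subsetneq O_{\bv}$, so the standard $\phi_v=1_{O_{\bv}\times\ofv^\times}$ is \emph{not} of tensor form $\phi_{1,v}\otimes\phi_{2,v}$ at all. The paper therefore works with the coherent expression
\[
k_{\phi_v}(1,y,u)=\frac{L(1,\eta_v)}{\vol(E_v^1)}\,\frac{d}{ds}\Big|_{s=0}\Big(|d_v|^{1/2}(1-N_v^{-s})\sum_{n\ge 0}N_v^{-ns+n}\int_{D_n(a)}\phi_v(y_1+x_2,u)\,dx_2\Big),
\]
and observes that this can be nonzero for $y_1\in D_v^{-1}O_{E_v}\setminus O_{E_v}$. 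Splitting off the piece $k_{\phi_v}(1,y,u)\cdot 1_{E_v-O_{E_v}}(y_1)$, checking it is already Schwartz, and then restricting to $E_v\times F_v^\times$ (i.e.\ $y_2\to 0$) is exactly what produces $\tfrac12\alpha_v$. In particular, $\alpha_v$ records the failure of the tensor decomposition on the $y_1$-side, not a dual-lattice phenomenon on the $y_2$-side. Your plan to ``fix the orthogonal decomposition $\phi_v=\phi_{1,v}\otimes\phi_{2,v}$\dots in the ramified case'' will not produce this term. (Also, the Weil index $\gamma_{u,v}$ has absolute value $1$; it is not $|D_v|^{-1/2}$.)
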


The result allows more ramifications of $v$ in $E$ or $\BB$ than its counterpart in \cite[Corollary 6.8(1)]{YZZ}. The computation follows a similar strategy, but it is  more complicated due to these ramifications. 

Recall that if $\phi_{v}=\phi_{1,v}\otimes \phi_{2,v}$, then
\begin{equation*}
k_{\phi_v}(1,y,u)= \frac{L(1,\eta_v)}{\mathrm{vol}(E_v^1)} 
\phi_{1,v}(y_1,u) {W_{uq(y_2),v}^\circ}'(0,1,u, \phi_{2,v}).  
\end{equation*}
Here $\mathrm{vol}(E_v^1)$ is given in \cite[\S 1.6.2]{YZZ}.
By \cite[Proposition 6.10]{YZZ},
\begin{eqnarray*}
{W_{a,v}^\circ}(s,1,u,\phi_{2,v})
= |d_v|^{\frac{1}{2}}  (1-N_v^{-s}) \sum_{n=0}^{\infty}
N_v^{-ns+n} \int_{D_n(a)} \phi_{2,v}(x_2,u) dx_2,
\end{eqnarray*}
where 
$$D_n(a)=\{x_2 \in E_v\jv:  uq(x_2)-a \in p_v^nd_v^{-1}\},$$
and $dx_2$ is the self-dual measure for $(E_v\jv, uq)$, which gives
$\vol(\oev\jv)=|D_v|^{\frac 12} |d_vuq(\jv)|$.
In the following, we will always denote $a=uq(y_2)$ for simplicity. 

We can also obtain a coherent expression of $k_{\phi_v}(1,y,u)$ which does not require $\phi_v$ to be of the form $\phi_{1,v}\otimes \phi_{2,v}$.
In fact, in the case $\phi_v=\phi_{1,v}\otimes \phi_{2,v}$ (and $v$ is nonsplit in $E$), the above gives 
\begin{eqnarray*}
k_{\phi_v}(1,y,u)
&=&\frac{L(1,\eta_v)}{\mathrm{vol}(E_v^1)}  \phi_{1,v}(y_1,u)\cdot \frac{d}{ds}  |_{s=0}  \left(
|d_v|^{\frac{1}{2}}  (1-N_v^{-s}) \sum_{n=0}^{\infty}
N_v^{-ns+n} \int_{D_n(a)} \phi_{2,v}(x_2,u) dx_2
\right)\\
&=& \frac{L(1,\eta_v)}{\mathrm{vol}(E_v^1)} \cdot \frac{d}{ds}  |_{s=0}  \left(
 |d_v|^{\frac{1}{2}}
  (1-N_v^{-s}) \sum_{n=0}^{\infty}
N_v^{-ns+n} \int_{D_n(a)} \phi_{v}(y_1+x_2,u) dx_2
\right).
\end{eqnarray*}
The last expression is actually valid for any $\phi_v$.
It is nonzero only if $u\in O_{F_v}^\times$, which we will always assume in the following. 

The computation relies on a detailed description of $D_n(a)$. For example, we will see that $D_n(a)$ is empty if $n$ is sufficiently large, so the summation for $k_{\phi_v}(1,y,u)$ has only finitely many non-zero terms. Then the derivative commutes with the sum. 

In the following lemma, $v$ is a non-archimedean place nonsplit in $E$.
Consider 
$$D_n(a)=\{x_2 \in E_v\jv:  uq(x_2)-a \in p_v^nd_v^{-1}\}, \quad
u\in O_{F_v}^\times, \ a\in uq(E_v^\times j_v)$$
and
$$
D_n=\{x_2 \in E_v\jv:  uq(x_2) \in p_v^nd_v^{-1}\}, \quad u\in O_{F_v}^\times.
$$

\begin{lem} \label{domain}
\begin{itemize}
\item[(1)]
If $v$ is inert in $E$, then
$$
D_n(a)
=\begin{cases}
D_n & \mathrm{if\ } n\leq v(ad_v);\\
\emptyset & \mathrm{if\ } n>v(ad_v).
\end{cases}
$$

\item[(2)]
If $v$ is ramified in $E$, then
$$
D_n(a)
=\begin{cases}
D_n & \mathrm{if\ } n\leq v(ad_v);\\
\emptyset & \mathrm{if\ } n>v(ad_v)+v(D_v)-1.
\end{cases}
$$
If $v(ad_v) <n \leq v(ad_v)+v(D_v)-1$, then 
$$
\vol(D_n(a))
= |D_v|^{\frac12} \cdot |d_v| \cdot  |a|_v \cdot N_v^{v(ad_v)-n}.$$
Here the volume is taken with respect to the self-dual measure for $(E_v\jv, uq)$, which gives
$\vol(\oev\jv)=|D_v|^{\frac 12} |d_v|$.
\end{itemize}

\end{lem}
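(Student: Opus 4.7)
The plan is to reduce the defining condition of $D_n(a)$ to a question about the local norm map $N := N_{E_v/F_v}$. Writing $x_2 = y\jv$ with $y\in E_v$ gives $uq(x_2) = c\, N(y)$, where $c := u q(\jv)\in F_v^\times$ has valuation $0$ in the inert case and $1$ in the ramified case. Since $a\in uq(E_v^\times\jv)$, fix $y_0\in E_v^\times$ with $a = c\, N(y_0)$, so that the condition $x_2\in D_n(a)$ is equivalent to $c(N(y)-N(y_0))\in p_v^n d_v^{-1}$. For $n\le v(ad_v)$, the hypothesis gives $a\in p_v^n d_v^{-1}$, so the affine condition collapses into the homogeneous one, yielding $D_n(a) = D_n$ in both parts.

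For $n > v(ad_v)$, the ultrametric inequality forces $v(uq(x_2)) = v(a)$ on $D_n(a)$, and after rescaling by $a^{-1}$ the condition becomes $N(y/y_0)\in 1 + p_v^{n-v(ad_v)} O_{F_v}$. Thus the question reduces to whether $1 + p_v^k O_{F_v}$, for $k := n - v(ad_v) \ge 1$, lies in $N(O_{E_v}^\times)$. The standard filtration analysis of $N$ on $O_{E_v}^\times$ (or equivalently the local conductor--discriminant formula) gives $1 + p_v^k O_{F_v}\subset N(O_{E_v}^\times)$ if and only if $k\ge v(D_v)$, while for smaller $k$ the intersection $N(E_v^\times)\cap(1+p_v^k O_{F_v})$ is a proper subgroup. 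In the ramified case this produces $D_n(a) = \emptyset$ precisely when $n > v(ad_v) + v(D_v) - 1$. In the inert case $v(D_v) = 0$, and a refinement using that $c$ is a unit together with the description $N(E_v^\times) = \{x\in F_v^\times : v(x)\in 2\BZ\}$ shows no solution survives for $n > v(ad_v)$, yielding $D_n(a) = \emptyset$.

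For the volume in the intermediate range $v(ad_v) < n \le v(ad_v) + v(D_v) - 1$ of the ramified case, we identify $D_n(a) = y_0\jv\cdot K_n$ with $K_n\subset E_v^\times$ the preimage under $N$ of $1 + p_v^{n-v(ad_v)} O_{F_v}$ within a suitable neighborhood of $1$. The volume is then assembled from three ingredients: the self-dual normalization $\vol(O_{E_v}\jv) = |D_v|^{1/2}|d_v|$, the factor $|a|_v$ coming from the Jacobian of $N$ at $y_0$, and the relative index of $N(K_n)$ inside $1 + p_v^{n-v(ad_v)} O_{F_v}$. Combining these yields the claimed $|D_v|^{1/2}|d_v||a|_v N_v^{v(ad_v)-n}$. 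The main obstacle is the wild-ramified case ($p = 2$ in the residue field), where $v(D_v)$ can reach $3$ and the higher ramification filtration of $N$ must be tracked carefully to identify $K_n$ and its image; in the tame case the intermediate range is empty, and the volume formula degenerates to the emptiness statement of the inert case.
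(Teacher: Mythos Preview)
There is a fundamental confusion between $\jv\in\BB_v$ and $j_v\in B(v)_v$. The lemma's hypothesis is $a\in uq(E_v^\times j_v)$, which places $a$ in the range of the quadratic form attached to the \emph{nearby} algebra $B(v)_v$; you instead assert $a\in uq(E_v^\times\jv)$ and fix $y_0$ with $a=cN(y_0)$ for $c=uq(\jv)$. No such $y_0$ exists: since $\BB_v$ and $B(v)_v$ have opposite local invariants at $v$, the element $a$ is precisely \emph{not} represented by $(E_v\jv,uq)$, and this non-representability is the key point the paper exploits. Under your setup the condition $N(y/y_0)\in 1+p_v^k$ is trivially solved by $y=y_0$, so you would be forced to conclude $D_n(a)\ne\emptyset$ for all $n$, contrary to the lemma; the unspecified ``refinement'' in the inert case cannot rescue this. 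Your valuation claim for $c$ is likewise inverted: by the choices in \S\ref{choices}, $v(q(\jv))=1$ exactly when $\BB_v$ is nonsplit, so $v(c)=0$ in the ramified case (no place is ramified in both $E$ and $\BB$) while $v(c)\in\{0,1\}$ in the inert case.

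The repair lands on essentially the paper's argument. Writing $a=c'N(y_0)$ with $c'=uq(j_v)$, the condition for $n>v(ad_v)$ becomes $(c/c')\,N(y/y_0)\in 1+p_v^{n-v(ad_v)}$, and the crucial input is that $c/c'=q(\jv)/q(j_v)$ lies in the nontrivial coset of $F_v^\times/N(E_v^\times)$. In the inert case the parity of valuations already forbids $v(uq(x_2))=v(a)$, giving $D_n(a)=\emptyset$ for all $n>v(ad_v)$; in the ramified case one gets emptiness once $1+p_v^{n-v(ad_v)}\subset N(E_v^\times)$, i.e.\ $n-v(ad_v)\ge v(D_v)$ by the conductor--discriminant relation. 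For the intermediate volume the paper identifies $D_n(a)$, via $q$, with the non-norm half of $1+p_v^m$, computes its multiplicative volume as $\frac12\vol(1+p_v^m,d^\times x)$, and converts to the additive self-dual measure through $d^\times x=\zeta_{E_v}(1)|x|^{-1}dx$; your volume sketch gestures at this but does not supply these steps.
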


\begin{proof}
The key property is that $a$ is not represented by $(E_v\jv, uq)$, since it is represented by $(E_vj_v, uq)$.

We first consider (1), so $v$ is inert in $E$.
Then $v(a)\neq v(uq(x_2))$ for any $x_2\in E_v\jv$ since $a$ is not represented by $(E_v\jv, uq)$. 
It follows that 
$$v(uq(x_2)-a)= \min\{v(a),
v(uq(x_2))\}.$$ 
The result follows. 

Now we consider (2), so $v$ is ramified in $E$. 
If $n\leq v(ad_v)$, the result is trivial. 
Assume that $n> v(ad_v)$ in the following. 
Let $e_v$ be the smallest integer such that 
$1+p_v^{e_v}\subset q(E_v^\times).$
By the class field theory, we have $e_v=v(D_v)$. 

The condition $x_2\in D_n(a)$ gives
$$a^{-1}uq(x_2) \in 1+p_v^{n-v(ad_v)}.$$
By $a=uq(y_2)$ with $y_2\in E_v^\times j_v$, the condition becomes
$$q(x_2)/q(y_2) \in 1+p_v^{n-v(ad_v)}.$$
Note that $q(E_v^\times\jv)$ and $q(E_v^\times j_v)$ are exactly the two cosets of $F_v^\times$ under the subgroup $q(E_v\cross)$ of index 2. 
Then $q(x_2)/q(y_2)$ always lies in the non-identity coset. 
Hence, $D_n(a)$ is empty if $n-v(ad_v)\geq e_v$ by the definition of $e_v$. 

It remains to compute $\vol(D_n(a))$ for 
$v(ad_v) <n \leq v(ad_v)+e_v-1$.
Write $m=n-v(ad_v)$, which satisfies $1\leq m\leq e_v-1$. 
The above condition on $x_2$ is just $a^{-1}uq(x_2) \in (1+p_v^{m})$.
We need to consider the intersection $(1+p_v^{m})\cap a^{-1}uq(E_v^\times\jv)$.
By the definition of $e_v$, we see that 
$(1+p_v^{m})$ is not completely contained in either 
$q(E_v^\times\jv)$ or
$q(E_v^\times j_v)$. 
Thus $(1+p_v^{m})$ is partitioned into two cosets 
$q(E_v^\times\jv)\cap (1+p_v^{m})$ and
$q(E_v^\times j_v)\cap (1+p_v^{m})$. 
In particular, $(1+p_v^{m})\cap a^{-1}uq(E_v^\times\jv)$ is one of the cosets. 
Therefore, 
$$
\vol((1+p_v^{m})\cap a^{-1}uq(E_v^\times\jv), d^\times x)
=\frac12 \vol(1+p_v^{m}, d^\times x)
=\frac{\vol(O_{F_v}^\times, d^\times x)}{2(N_v-1)N_v^{m-1}} 
=\frac{|d_v|^{\frac12}}{2(N_v-1)N_v^{m-1}}.
$$
Here the volumes are under the multiplicative measure $d^\times x=\zeta_{F_v}(1)|x|_v^{-1} d x$, but we will convert it back to $dx$. 
Similar measures $dx$ and $d^\times x$
are defined on $E_v$ as in \cite[\S 1.6.1-1.6.2]{YZZ}. Both measures are transferred to $E_v\jv$ by the identification $E_v\jv\to E_v$ sending $\jv$ to 1. 
The induced measure $dx$ on $E_v\jv$ is compatible with the self-dual measure with respect to the quadratic form $uq$.

Therefore,
$$
\vol(D_n(a), d^\times x)
=\vol(E_v^1)\cdot
\vol((1+p_v^{m})\cap a^{-1}uq(E_v^\times\jv), d^\times x)
=\frac{ |D_v|^{\frac12}|d_v| }{(N_v-1)N_v^{m-1}} .
$$
The additive volume is just 
$$
\vol(D_n(a), d x)
=\frac{|a|_v}{\zeta_{E_v}(1)}\vol(D_n(a), d^\times x)
=\frac{|a|_v\cdot |D_v|^{\frac12} \cdot |d_v|}{N_v^m}.
$$
\end{proof}

\subsubsection*{Derivative of Whittaker function II}

The goal of this subsection is to prove Lemma \ref{derivative of Whittaker function}.

\begin{proof}[Proof of Lemma \ref{derivative of Whittaker function}]

We first consider (1), so we assume that $v$ is inert in $E$.
We will take advantage of the decomposition
$\phi_{v}=\phi_{1,v}\otimes \phi_{2,v}$, which simplifies the computation slightly.
It amounts to computing the derivative of 
\begin{eqnarray*}
{W_{a,v}^\circ}(s,1,u)
= |d_v|^{\frac{1}{2}}  (1-N_v^{-s}) \sum_{n=0}^{\infty}
N_v^{-ns+n} \int_{D_n(a)} \phi_{2,v}(x_2,u) dx_2.
\end{eqnarray*}
Note that we always write $a=uq(y_2)$. 
By Lemma \ref{domain}, 
\begin{eqnarray*}
{W_{a,v}^\circ}'(0,1,u) &=& |d_v|^{\frac{1}{2}}  \log N_v \sum_{n=0}^{v(ad_v)}
N_v^n \int_{D_n} \phi_{2,v}(x_2,u) dx_2.
\end{eqnarray*}
It is nonzero only if $v(a)\geq -v(d_v)$. 

We first consider the case $-v(d_v) \leq v(a) <0$. 
In this case, we always have $\oev\jv\subset D_n$ for all $0\leq n \leq v(ad_v)$. 
It follows that 
\begin{eqnarray*}
{W_{a,v}^\circ}'(0,1,u) 
= |d_v|^{\frac{1}{2}}  \log N_v \sum_{n=0}^{v(ad_v)}
N_v^n\ \vol(\oev\jv)
= |d_v|^{\frac{1}{2}} |q(\jv)|
   \frac{|d_v|-N_v|a|^{-1}}{1-N_v} \log N_v.
\end{eqnarray*}
Note that this part does not affect the behavior as $a\to 0$.

Now we assume that $v(a)\geq 0$ (still for part (1)). 
If $n<v(d_vq(\jv))$, then $\oev\jv\subset D_n$; if
$n\geq v(d_vq(\jv))$, then $D_n\subset \oev\jv$. 
It follows that
\begin{align*}
 {W_{a,v}^\circ}'(0,1,u) 
=&  |d_v|^{\frac{1}{2}} \log N_v
  \left( \sum_{n=0}^{v(d_vq(\jv))-1} N_v^n\ \vol(\oev\jv)
 + \sum_{n=v(d_vq(\jv))}^{v(ad_v)} N_v^n\ \vol(D_n) \right) \\
=&  |d_v|^{\frac{1}{2}}  \log N_v
  \left( \frac{|d_vq(\jv)|-1}{1-N_v}
 +   \sum_{n=v(d_vq(\jv))}^{v(ad_v)} N_v^n\ \vol(D_n) \right).
\end{align*}
Note that
$$
D_n = p_v^{[\frac{n-v(d_vq(\jv))+1}{2}]}O_{E_v}\jv,
$$
so
$$
N_v^n\ \vol(D_n) = N_v^{n-v(d_vq(\jv))-2 [\frac{n-v(d_vq(\jv))+1}{2}]}=
\begin{cases}
1 & \mathrm{if\ } 2\mid (n-v(d_vq(\jv));\\
N_v^{-1} & \mathrm{if\ } 2\nmid (n-v(d_vq(\jv)).
\end{cases}
$$
Since $v(q(\jv))$ and $v(a)$ always have different parities in this inert case, we have
\begin{align*}
k_{\phi_v}(1,y,u)
=& \frac{\log N_v}{1+N_v^{-1}} \left( \frac{|d_vq(\jv)|-1}{1-N_v}
 + \frac{v(q(y_2))-v(q(\jv))+1}{2} (1+N_v^{-1}) \right).
\end{align*}
This finishes the proof of (1).

Now we prove (2), so $v$ is ramified in $E$. 
We need to compute
\begin{eqnarray*}
k_{\phi_v}(1,y,u)
&=&  \frac{1}{2|D_v|^{\frac{1}{2}}} \cdot \frac{d}{ds}  |_{s=0}  \left(
  (1-N_v^{-s}) \sum_{n=0}^{\infty}
N_v^{-ns+n} \int_{D_n(a)} \phi_{v}(y_1+x_2,u) dx_2
\right).
\end{eqnarray*}
We first use Lemma \ref{domain} to write
\begin{eqnarray*}
k_{\phi_v}(1,y,u)
&=&  \frac{\log N_v}{2|D_v|^{\frac{1}{2}}}    \sum_{n=0}^{v(ad_v)+v(D_v)-1}
N_v^{n} \int_{D_n(a)} \phi_{v}(y_1+x_2,u) dx_2.
\end{eqnarray*}
It is zero if $v(a)$ is too small, so $k_{\phi_v}(1,y,u)$ is compactly supported. 

In this ramified case, the first complication is that $k_{\phi_v}(1,y,u)$ can be nonzero for some
$y_1\notin O_{E_v}$. 
Write
$$
k_{\phi_v}(1,y,u)=
k_{\phi_v}(1,y,u)\cdot 1_{O_{E_v}}(y_1)
+k_{\phi_v}(1,y,u)\cdot 1_{E_v-O_{E_v}}(y_1).
$$ 
We first treat the second term on the right-hand side, so we assume that
$y_1\in E_v-O_{E_v}$.

We claim that $k_{\phi_v}(1,y,u)\cdot 1_{E_v-O_{E_v}}(y_1)$ is naturally a Schwartz function on $B(v)_v\times F_v^\times$. 
In fact, by Lemma \ref{orders}, in order to make $\phi_{v}(y_1+x_2,u)$ nonzero in the formula of
$k_{\phi_v}(1,y,u)$,  we have
$$
y_1\in D_v^{-1}O_{E_v}-O_{E_v}, \quad
x_2\in D_v^{-1}O_{E_v}\jv-O_{E_v}\jv.
$$
Then both $v(q(y_1))$ and $v(q(x_2))$ are bounded from the above and the below. 
Consider the behavior when $a=uq(y_2)$ approaches 0. 
By Lemma \ref{domain}, $x_2\in D_n(a)$ only if 
$$
n\leq v(q(x_2))+v(d_v)+v(D_v)-1 \leq v(d_v D_v^3)-1.$$
The second bound is independent of $a$. 
Hence, if $v(a)$ is sufficiently large, then
$D_n(a)=D_n$ independent of $a$. 
So $k_{\phi_v}(1,y,u)\cdot 1_{E_v-O_{E_v}}(y_1)$ is a Schwartz function on $B(v)_v\times F_v^\times$. 

For the restriction to $E_v\times F_v^\times$, set $y_2\to 0$. 
The above discussion already gives 
\begin{eqnarray} \label{111}
k_{\phi_v}(1,y_1,u)\cdot 1_{E_v-O_{E_v}}(y_1)
=  \frac{\log N_v}{2|D_v|^{\frac{1}{2}}}  \cdot 1_{E_v-O_{E_v}}(y_1)
 \sum_{n=0}^{v(d_v D_v^3)-1}
N_v^{n} \int_{D_n} \phi_{v}(y_1+x_2,u) dx_2.
\end{eqnarray}
We can further change the bounds of $n$ in the summation from $[0,v(d_v D_v^3)-1]$ to $[0,v(d_v)]$, because $x_2\in D_n$ implies 
$$
n\leq v(d_v)+v(q(x_2))\leq v(d_v). 
$$
Then the expression is exactly the function $\frac12\alpha_v$ in the lemma. 

It remains to treat $k_{\phi_v}(1,y,u)\cdot 1_{O_{E_v}}(y_1)$. 
Assume that $y_1\in O_{E_v}$.
Then 
\begin{eqnarray*}
k_{\phi_v}(1,y,u)
&=&  \frac{\log N_v}{2|D_v|^{\frac{1}{2}}}    \sum_{n=0}^{v(ad_v)+v(D_v)-1}
N_v^{n} \ \vol(D_n(a)\cap O_{E_v}\jv).
\end{eqnarray*}
The sum is nonzero only if $v(a)\geq -v(d_v)-v(D_v)+1$. 
The behavior of $k_{\phi_v}(1,y,u)$ when $-v(d_v)-v(D_v)+1\leq v(a)<0$ does affect our final result. So we assume that $v(a)\geq 0$ in the following.  

The computation is similar to the inert case.
Recall that $\vol(\oev\jv)=|D_v|^{\frac 12} |d_v|$
and
$$D_n(a)=\{x_2 \in E_v\jv:  uq(x_2)-a \in p_v^nd_v^{-1}\}.$$
Split the summation as
$$
\sum_{n=0}^{\infty}
=\sum_{n=0}^{v(d_v)-1}  
+ \sum_{n=v(d_v)}^{v(ad_v)}
+ \sum_{n=v(ad_v)+1}^{v(ad_v)+v(D_v)-1}.
$$
The first sum gives 
\begin{eqnarray} \label{222}
&& \frac{\log N_v}{2|D_v|^{\frac{1}{2}}} \sum_{n=0}^{v(d_v)-1}
N_v^{n} \ \vol(O_{E_v}\jv)
= \frac{|d_v|-1}{2(1-N_v)} \log N_v.
\end{eqnarray}
The second sum gives 
\begin{eqnarray} 
  \frac{\log N_v}{2|D_v|^{\frac{1}{2}}} \sum_{n=v(d_v)}^{v(ad_v)}
N_v^{n} \ \vol(D_n)
&=&
\frac{\log N_v}{2|D_v|^{\frac{1}{2}}}
 \sum_{n=v(d_v)}^{v(ad_v)} N_v^n \cdot N_v^{-(n-v(d_v))} 
 |D_v|^{\frac 12} |d_v|  \nonumber\\ 
&=& \frac{1}{2} (v(a)+1)  \log N_v. \label{333}
\end{eqnarray}
By Lemma \ref{domain}, the third sum gives 
\begin{eqnarray}
&&  \frac{\log N_v}{2|D_v|^{\frac{1}{2}}} 
\sum_{n=v(ad_v)+1}^{v(ad_v)+v(D_v)-1}
N_v^{n} \ \vol(D_n)  \nonumber\\
&=& \frac{\log N_v}{2|D_v|^{\frac{1}{2}}} 
\sum_{n=v(ad_v)+1}^{v(ad_v)+v(D_v)-1}
N_v^{n} \cdot 
|D_v|^{\frac12} \cdot |d_v| \cdot  |a|_v \cdot  N_v^{v(ad_v)-n}
\nonumber\\
&=& \frac{1}{2} (v(D_v)-1)  \log N_v. \label{444}
\end{eqnarray}
Combining equations (\ref{111})-(\ref{444}), we obtain the result for ramified $v$. 
The proof of Lemma \ref{derivative of Whittaker function} is complete. 
\end{proof}

\subsubsection*{Derivative of intertwining operator}

Recall that if $\phi_{v}=\phi_{1,v}\otimes \phi_{2,v}$ for a place $v$, then
$$c_{\phi_v}(g,y,u)=\phi_{1,v}(y,u) W_{0,v}^{\circ}\ '(0,g,u, \phi_{2,v})+ \log \delta(g_v)r(g)\phi_v(y,u),$$
where the normalization
\begin{eqnarray*}
W_{0,v}^{\circ}(s, g,u,\phi_{2,v})
&=& \gamma_{u,v}^{-1}|D_v|^{-\frac{1}{2}}|d_v|^{-\frac{1}{2}}\frac{L(s+1,\eta_v)}{L(s,\eta_v)}
 W_{0,v}(s, g,u,\phi_{2,v}).
\end{eqnarray*}

\begin{lem} \label{derivative of intertwining}
\begin{itemize}
\item[(1)] For any archimedean place $v$,
$$c_{\phi_v}(g,y,u)  = 0, \quad \ g\in \gl(\BR), \ (y,u)\in E_v\times F_v^\times.$$
\item[(2)]
For any non-archimedean place $v$ and any $(y,u)\in E_v\times F_v^\times$,
$$
c_{\phi_v}(1,y,u)  = 
\phi_{v}(y,u)\cdot \log |d_vq(\jv)|\ +\
\begin{cases}
\displaystyle \phi_{v}(y,u)\cdot
  \frac{2(|d_vq(\jv)|-1)}{(1+N_v^{-1})(1-N_v)} \log N_v,
&\quad \mbox{ if } E_v/F_v \mbox{ inert};\\
\displaystyle \phi_{v}(y,u)\cdot
  \frac{|d_vq(\jv)|-1 }{1-N_v} \log N_v+\alpha_v(y,u),
&\quad \mbox{ if } E_v/F_v \mbox{ ramified};\\
\displaystyle 0,
&\quad \mbox{ if } E_v/F_v \mbox{ split}.
\end{cases}
$$
Here 
$$
\alpha_v(y,u)
=\frac{\log N_v}{|D_v|^{\frac{1}{2}}}  \cdot 1_{D_v^{-1}O_{E_v}-O_{E_v}}(y)
 \sum_{n=0}^{v(d_v)-1}
N_v^{n} \int_{D_n} \phi_{v}(y+x_2,u) dx_2
$$
as in Lemma \ref{derivative of Whittaker function}. 
\end{itemize}

\end{lem}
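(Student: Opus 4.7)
The plan is to compute $W_{0,v}^{\circ\prime}(0,1,u,\phi_{2,v})$ directly by Taylor expansion at $s=0$, and then extract the claimed formula from the identity $c_{\phi_v}(1,y,u) = \phi_{1,v}(y,u)\,W_{0,v}^{\circ\prime}(0,1,u,\phi_{2,v})$ (valid when $\phi_v = \phi_{1,v}\otimes\phi_{2,v}$, since $\log\delta(g_v)$ vanishes at $g_v=1$), and by linearity in $\phi_v$ for the general case. This parallels the analysis of the Whittaker derivative $W_{a,v}^{\circ\prime}(0,1,u)$ at $a\neq 0$ carried out in Lemma \ref{derivative of Whittaker function}, with the additional subtlety that $W_{0,v}^\circ$ carries a separate normalization compared to $W_{a,v}^\circ$ for $a\neq 0$.

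The first step will be to obtain a closed-form expression for $W_{0,v}^\circ(s,1,u,\phi_{2,v})$ valid for general $\phi_{2,v}$. Combining the normalization
$$W_{0,v}^\circ(s,1,u) = \gamma_{u,v}^{-1}\frac{L(s+1,\eta_v)}{L(s,\eta_v)}|D_v|^{-1/2}|d_v|^{-1/2}\,W_{0,v}(s,1,u)$$
with the formula of \cite[Prop.~6.10]{YZZ} for $W_{a,v}^\circ$ at $a\neq 0$, and passing to the limit $a\to 0$ using continuity of $W_{a,v}$ in $a$, I expect to obtain
$$W_{0,v}^\circ(s,1,u,\phi_{2,v}) = \frac{L(s+1,\eta_v)}{L(s,\eta_v)}|D_v|^{-1/2}(1-N_v^{-s})\,S(s), \qquad S(s) := \sum_{n\ge 0} N_v^{-ns+n}\int_{D_n}\phi_{2,v}(x_2,u)\,dx_2.$$
The product $(1-N_v^{-s})\,S(s)$ is regular at $s=0$, since $S(s)$ has at worst a simple pole there.

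In the split case, $\eta_v$ is trivial so $L(s,\eta_v)=\zeta_{F_v}(s)$ has a simple pole at $s=0$, and the prefactor $L(s+1,\eta_v)/L(s,\eta_v) = (1-N_v^{-s})/(1-N_v^{-s-1})$ carries a compensating simple zero; a direct bookkeeping of the product rule, with residues of $S(s)$, will yield $W_{0,v}^{\circ\prime}(0,1,u,\phi_{2,v}) = \phi_{2,v}(0,u)\log|d_vq(\jv)|$, matching the claim. In the inert case $L(s,\eta_v) = (1+N_v^{-s})^{-1}$ is regular; Lemma \ref{domain} makes the integrals $\int_{D_n}\phi_{2,v}$ explicit, and a standard Laurent-expansion computation then isolates both the residue and the constant term of $(1-N_v^{-s})S(s)$, producing the main term $\phi_v(y,u)\log|d_vq(\jv)|$ together with the correction $\phi_v(y,u)\cdot\frac{2(|d_vq(\jv)|-1)}{(1+N_v^{-1})(1-N_v)}\log N_v$.

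The ramified case is the most involved. Here $L(s,\eta_v) = 1$ so the prefactor is constant in $s$, but by Lemma \ref{orders} the order $O_\bv$ is strictly larger than $O_{E_v}+O_{E_v}\jv$, so $\phi_v$ is no longer a pure tensor. Invoking linearity I will decompose $\phi_v$ into the part supported on $O_{E_v}\times O_{E_v}\jv$ (which contributes $\phi_v(y,u)[\log|d_vq(\jv)| + \frac{|d_vq(\jv)|-1}{1-N_v}\log N_v]$ by a computation analogous to the inert case) and the part involving $y_1\in D_v^{-1}O_{E_v}\setminus O_{E_v}$, which by the same combinatorial argument used in the proof of Lemma \ref{derivative of Whittaker function}(2) assembles into the Schwartz function $\alpha_v(y,u)$. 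The hardest step throughout will be the careful Laurent-expansion bookkeeping at $s=0$---identifying both the residue and the constant term of $S(s)$ that combine with the $L$-ratio to produce the logarithmic discriminant $\log|d_vq(\jv)|$---which, in the ramified case, is further coupled with the non-product structure of $\phi_v$ and the identification of the contribution $\alpha_v(y,u)$.
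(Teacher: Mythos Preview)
Your overall strategy---reduce to the formula
\[
W_{0,v}^{\circ}(s,1,u,\phi_{2,v})=\frac{L(s+1,\eta_v)}{L(s,\eta_v)}|D_v|^{-1/2}(1-N_v^{-s})\,S(s),
\qquad S(s)=\sum_{n\ge 0}N_v^{-ns+n}\int_{D_n}\phi_{2,v}(x_2,u)\,dx_2,
\]
and then differentiate case by case---is exactly the paper's approach, and your treatment of the inert and ramified cases (including the split of the ramified case according to whether $y\in O_{E_v}$, producing $\alpha_v$) matches the paper closely.

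There is, however, a genuine omission in the split case. You treat ``split'' as a single case and assert that the bookkeeping yields $W_{0,v}^{\circ\prime}(0,1,u,\phi_{2,v})=\phi_{2,v}(0,u)\log|d_vq(\jv)|$. Two issues: first, in the split case $S(s)$ typically has a \emph{double} pole at $s=0$ (the volumes $\vol(D_n\cap\mathrm{supp}\,\phi_{2,v})$ grow like $nN_v^{-n}$), not a simple one, so the Laurent bookkeeping is more delicate than you indicate and your claimed formula is not evidently valid for arbitrary $\phi_{2,v}$. Second, and more importantly, for $v\in S_2$ the function $\phi_v$ is neither standard nor a pure tensor; it is the specific combination $1_{O_{\bb_v}^\times\times O_{F_v}^\times}-\frac{1}{1+N_v+N_v^2}1_{\varpi_v^{-1}(O_{\bb_v})_2\times O_{F_v}^\times}$. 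The paper handles this as a separate fourth case, computing $\tilde c_{\psi_i}(s)$ directly for each summand $\psi_1,\psi_2$ (via explicit volume counts of $\{(z_1,z_2):z_1z_2\in p_v^n,\ q(y)-z_1z_2\in O_{F_v}^\times\}$) and verifying that each derivative vanishes. Your proposal never mentions $S_2$, and your linearity-plus-general-formula argument would require the unproved claim that $c_{\phi_v}(1,y,u)$ depends only on $\phi_v|_{E_v\times F_v^\times}$ times $\log|d_vq(\jv)|$ in the split case.

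A minor point: obtaining the formula for $W_{0,v}$ ``by passing to the limit $a\to 0$ using continuity of $W_{a,v}$'' is risky---the paper explicitly remarks that $\lim_{a\to 0}W_{a,v}'(0,1,u)=W_{0,v}'(0,1,u)$ fails, and the formula for $W_{0,v}$ is instead read off directly from the proof (not the statement) of \cite[Prop.~6.10]{YZZ}, which already covers $a=0$.
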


\begin{proof}
If $v$ is archimedean, it suffices to check that 
$$
W_{0,v}^{\circ}(s,g,u) = \delta(g)^{-s} r(g)\phi_{2,v}(0,u), \quad g\in \gl(F_v). 
$$
The behaviors of the intertwining operator $W_{0,v}^{\circ}(s,g,u)$ under the left action of $P(\RR)$ and the right action of 
$\SO(2, \BR)$ are the same as those of $\delta(g)^{-s} r(g)\phi_{2,v}(0,u)$. 
It follows that two sides are equal up to a constant possibly depending on $s$. 
To determine the constant, it suffices to check $W_{0,v}^{\circ}(s,1,u)=1$. 
By a change of variable, we can assume that $u=1$.
At the end of the proof of \cite[Proposition 2.11]{YZZ}, there is a formula for $W_{0,v}(s,1,u)$ in terms of gamma functions, which implies the result we need here. 

Assume that $v$ is non-archimedean in the following. 
The proof is similar to that of Lemma \ref{derivative of Whittaker function}. We first introduce some formulas for $c_{\phi_v}(1,y,u)$.
Note that the statement of \cite[Proposition 6.10(1)]{YZZ} is only correct for $a\in F_v^\times$ due to the different normalizing factor defining 
$W_{0,v}^{\circ}(0,1,u,\phi_{2,v})$. However, its proof actually gives
\begin{eqnarray*}
W_{0,v}(s,1,u, \phi_{2,v})
&=&\gamma_{u,v} |d_v|^{\frac{1}{2}} (1-N_v^{-s}) \sum_{n=0}^{\infty} N_v^{-ns+n}
\int_{D_n} \phi_{2,v}(x_2,u) d_ux_2,
\end{eqnarray*}
where
$$
D_n =\{x_2 \in E_v\jv:  uq_2(x_2) \in p_v^nd_v^{-1}\}
$$
and the measure $d_ux_2$ gives $\vol(\oev\jv)=|D_v|^{\frac 12} |d_vuq(\jv)|$.
Putting these together, we have 
\begin{eqnarray*}
c_{\phi_v}(1,y,u)
&=& \phi_{1,v}(y,u)\cdot \frac{d}{ds}  |_{s=0}  \left(
|D_v|^{-\frac{1}{2}} \frac{L(s+1,\eta_v)}{L(s,\eta_v)}
(1-N_v^{-s})   \sum_{n=0}^{\infty} N_v^{-ns+n}
\int_{D_n} \phi_{2,v}(x_2,u) d_ux_2 \right)\\
&=&  \frac{d}{ds}  |_{s=0}  \left(
|D_v|^{-\frac{1}{2}} \frac{L(s+1,\eta_v)}{L(s,\eta_v)}
(1-N_v^{-s})   \sum_{n=0}^{\infty} N_v^{-ns+n}
\int_{D_n} \phi_{v}(y+x_2,u) d_ux_2 \right).
\end{eqnarray*}
The last expression actually works for any $\phi_v$ (not necessarily of the form 
$\phi_{1,v}\otimes \phi_{2,v}$).

For convenience, denote 
\begin{eqnarray*}
\tilde c_{\phi_v}(s)
=  |D_v|^{-\frac{1}{2}} \frac{L(s+1,\eta_v)}{L(s,\eta_v)}
(1-N_v^{-s})   \sum_{n=0}^{\infty} N_v^{-ns+n}
\int_{D_n} \phi_{v}(y+x_2,u) d_ux_2,
\end{eqnarray*}
so that 
$$
c_{\phi_v}(1,y,u)=\tilde c_{\phi_v}'(0).
$$
Note that $\tilde c_{\phi_v}(s)$ or $c_{\phi_v}(1,y,u)$ is nonzero only if $u\in O_{F_v}^\times$, which we assume in the following. 
We will check the lemma case by case. 

First, assume that $v$ is inert in $E$.
Then $\phi_v= \phi_{1,v}\otimes \phi_{2,v}$ with $\phi_{2,v}=1_{O_{E_v}\jv\times \ofv\cross}$, and 
$$c_{\phi_v}(1,y,u)=\phi_{1,v}(y,u) W_{0,v}^{\circ}\ '(0,1,u).$$
Split the sum in
\begin{eqnarray*}
W_{0,v}(s,1,u)
&=&\gamma_{u,v} |d_v|^{\frac{1}{2}} (1-N_v^{-s}) \sum_{n=0}^{\infty} N_v^{-ns+n}
\int_{D_n} \phi_{2,v}(x_2,u) dx_2
\end{eqnarray*}
into two parts: $n<v(d_vq(\jv))$ and $n\geq v(d_vq(\jv))$.
Denote $n=m+ v(d_vq(\jv))$ in the second case, and note $D_{m+ v(d_vq(\jv))} =p_v^{[\frac {m+1}{2}]}\oev\jv$.
We have
\begin{eqnarray*}
&&W_{0,v}(s,1,u)  \\
&=&\gamma_{u,v}  |d_v|^{\frac{1}{2}}  (1-N_v^{-s})\left(
 \sum_{n=0}^{v(d_vq(\jv))-1} N_v^{-n(s-1)} \vol(\oev\jv)
+\sum_{m=0}^{\infty} N_v^{-(m+ v(d_vq(\jv)))(s-1)} \vol(D_{m+ v(d_vq(\jv))})  \right)\\
&=&\gamma_{u,v}  |d_v|^{\frac{1}{2}} |D_v|^{\frac 12}(1-N_v^{-s})\left(
 \frac{|d_vq(\jv)|-|d_vq(\jv)|^{s} }{1-N_v^{-(s-1)} }
+ |d_vq(\jv)|^{s} \frac {1+N_v^{-(s+1)} }{1-N_v^{-2s}} \right).
\end{eqnarray*}
Then
\begin{eqnarray*}
W_{0,v}^{\circ}(s,1,u)
= (1-N_v^{-s})\frac {1+N_v^{-s} }{1+N_v^{-(s+1)}} \frac{|d_vq(\jv)|-|d_vq(\jv)|^{s} }{1-N_v^{-(s-1)} }
+ |d_vq(\jv)|^{s}  .
\end{eqnarray*}
We get
\begin{eqnarray*}
W_{0,v}^{\circ}\ '(0,1,u)
= \log |d_vq(\jv)|+  \frac{2(|d_vq(\jv)|-1) }{(1+N_v^{-1})(1-N_v)} \log N_v.
\end{eqnarray*}
This finishes the inert case.

Second, assume that $v$ is ramified in $E$.
Consider 
\begin{eqnarray*}
\tilde c_{\phi_v}(s)
=  |D_v|^{-\frac{1}{2}} 
(1-N_v^{-s})   \sum_{n=0}^{\infty} N_v^{-ns+n}
\int_{D_n} \phi_{v}(y+x_2,u) dx_2.
\end{eqnarray*}
As in the proof of Lemma \ref{derivative of Whittaker function}, the first complication of this ramified case is that $\tilde c_{\phi_v}(s)$ can be nonzero for some $y\notin O_{E_v}$, but it can be treated similarly. 

In fact, assume that $y\notin O_{E_v}$ and $\tilde c_{\phi_v}(s)\neq 0$. 
In order to  make $\phi_{v}(y+x_2,u)$ nonzero in the formula of
$\tilde c_{\phi_v}(s)$,  we have
$$
y\in D_v^{-1}O_{E_v}-O_{E_v}, \quad
x_2\in D_v^{-1}O_{E_v}\jv-O_{E_v}\jv.
$$
Then $x_2\in D_n$ gives
$$
n\leq v(q(x_2))+v(d_v) \leq v(d_v).$$
Then the summation for $\tilde c_{\phi_v}(s)$ is a finite sum. 
We have 
\begin{eqnarray*}
c_{\phi_v}(1,y,u)=\tilde c_{\phi_v}'(0)
=  |D_v|^{-\frac{1}{2}} 
(\log N_v)   \sum_{n=0}^{v(d_v)} N_v^{n}
\int_{D_n} \phi_{v}(y+x_2,u) dx_2.
\end{eqnarray*}
This is exactly the function $\alpha_v$ in the lemma. 

Now we assume that $y\in O_{E_v}$. Then 
\begin{eqnarray*}
\tilde c_{\phi_v}(s)
=  |D_v|^{-\frac{1}{2}} 
(1-N_v^{-s})   \sum_{n=0}^{\infty} N_v^{-ns+n}
\ \vol(D_n\cap O_{E_v}\jv).
\end{eqnarray*}
The computation is similar to the inert case. 
Split the sum into two parts: $n<v(d_v)$ and $n\geq v(d_v)$.
Denote $n=m+ v(d_v)$ in the second case, and note $D_{m+ v(d_v)} =p_v^{\frac {m}{2}}\oev\jv$.
We have
\begin{eqnarray*}
&& \tilde c_{\phi_v}(s) \\
&=&  |D_v|^{-\frac{1}{2}} 
(1-N_v^{-s})   \left(
 \sum_{n=0}^{v(d_v)-1} N_v^{-n(s-1)} \vol(\oev\jv)
+\sum_{m=0}^{\infty} N_v^{-(m+ v(d_v))(s-1)} \vol(D_{m+ v(d_v)})  \right)\\
&=& (1-N_v^{-s})\frac{|d_v|-|d_v|^{s} }{1-N_v^{-(s-1)} }+ |d_v|^{s} .
\end{eqnarray*}
Thus
\begin{eqnarray*}
c_{\phi_v}(1,y,u)=\tilde c_{\phi_v}'(0)
=\log |d_v|+  \frac{|d_v|-1 }{1-N_v} \log N_v.
\end{eqnarray*}

Third, consider the case that $E_v/F_v$ is split and $v\notin S_2$. Then $|q(\jv)|=|D_v|=1$ and we use it to relieve the notation burden.
We compute
$$c_{\phi_v}(1,y,u)=\phi_{1,v}(y,u) W_{0,v}^{\circ}\ '(0,1,u).$$
As before, split the sum into $n<v(d_v)$ and $n\geq v(d_v)$ and write $n=m+ v(d_v)$ in the second case.
We have
\begin{eqnarray*}
&&W_{0,v}(s,1,u)  \\
&=&\gamma_{u,v}  |d_v|^{\frac{1}{2}} (1-N_v^{-s})\left(
 \frac{|d_v|-|d_v|^{s} }{1-N_v^{-(s-1)} }
+ |d_v|^{s} \sum_{m=0}^{\infty} N_v^{-m(s-1)} \frac{\vol(D_{m+ v(d_v)}\cap \oev\jv)}{\vol(\oev\jv)}  \right).
\end{eqnarray*}
Identify $E_v=F_v\oplus F_v$ and $\oev=\ofv\oplus \ofv$. For simplicity, we identify $E_v\jv$ with $E_v$ by sending $\jv$ to 1.
Then
$$
D_{m+ v(d_v)}\cap \oev =\{(z_1,z_2) \in \ofv\oplus \ofv:  z_1z_2 \in p_v^m\}
=\oev-\{(z_1,z_2) \in \ofv\oplus \ofv:  v(z_1)+v(z_2) \leq m-1\}.
$$
Thus
\begin{align*}
\vol(D_{m+ v(d_v)}\cap \oev)
= & \vol(\oev)-\sum_{k=0}^{m-1}\vol(\varpi_v^k\ofv\cross)\vol(\ofv-p_v^{m-k})\\
= & \vol(\oev)-\vol(\oev) \sum_{k=0}^{m-1} N_v^{-k}(1-N_v^{-1}) (1-N_v^{-(m-k)})\\
= & \vol(\oev)(N_v^{-m}+(1-N_v^{-1})mN_v^{-m}).
\end{align*}
Therefore,
\begin{eqnarray*}
W_{0,v}(s,1,u)
&=&\gamma_{u,v}  |d_v|^{\frac{1}{2}} (1-N_v^{-s})\left(
 \frac{|d_v|-|d_v|^{s} }{1-N_v^{-(s-1)} }
+ |d_v|^{s} \sum_{m=0}^{\infty} N_v^{-m(s-1)}  (N_v^{-m}+(1-N_v^{-1})mN_v^{-m})  \right)\\
&=&\gamma_{u,v}  |d_v|^{\frac{1}{2}} (1-N_v^{-s})\left(
 \frac{|d_v|-|d_v|^{s} }{1-N_v^{-(s-1)} }
+ |d_v|^{s} \frac{ 1-N_v^{-(s+1)} }{(1-N_v^{-s})^2}  \right).
\end{eqnarray*}
Hence,
\begin{align*}
W_{0,v}^{\circ}(s,1,u)
=\gamma_{u,v}^{-1} \frac{1-N_v^{-s}}{ 1-N_v^{-(s+1)} } |d_v|^{-\frac{1}{2}} W_{0,v}(s,1,u)
=  \frac{(1-N_v^{-s})^2}{ 1-N_v^{-(s+1)} } \frac{ |d_v|-|d_v|^{s} }{1-N_v^{-(s-1)}}
+ |d_v|^{s}.
\end{align*}
The first term has a double zero and no contribution to the derivative, so
\begin{align*}
W_{0,v}^{\circ}\ '(0,1,u)
=  \log |d_v|.
\end{align*}
This finishes the case that $E_v/F_v$ is split and $v\notin S_2$.

Fourth, we treat the case $v\in S_2$, which is the last case. 
Then $v$ is split in $E$, and 
$$\phi_v=1_{O_\bv^\times\times \ofv\cross}-\frac{1}{1+N_v+N_v^2}1_{\varpi_v^{-1}(O_\bv)_{2}\times\ofv\cross}.$$
Note that $|q(\jv)|=|d_v|=1$ by assumption, so the result to prove is exactly $c_{\phi_v}(1,y,u)=0$. 
Recall that $c_{\phi_v}(1,y,u)$ is the derivative of
\begin{eqnarray*}
\tilde c_{\phi_v}(s)
=  \frac{(1-N_v^{-s})^2}{ 1-N_v^{-(s+1)} }   \sum_{n=0}^{\infty} N_v^{-ns+n}
\int_{D_n} \phi_{v}(y+x_2,u) dx_2.
\end{eqnarray*}

We will make separate computations for 
$$
\psi_1=1_{O_\bv^\times\times \ofv\cross}, \quad
\psi_2=1_{\varpi_v^{-1}(O_\bv)_{2}\times\ofv\cross}.
$$
The results will be 0 for both functions.
Make identifications $E_v\jv\simeq E_v\simeq F_v\oplus F_v$ as above.

Start with 
\begin{eqnarray*}
\tilde c_{\psi_1}(s)
= \frac{(1-N_v^{-s})^2}{ 1-N_v^{-(s+1)} }   \sum_{n=0}^{\infty} N_v^{-ns+n}
\int_{D_n} \psi_{1}(y+x_2,u) dx_2.
\end{eqnarray*}
It is nonzero only if $y\in O_{E_v}$, which we assume. For the integral, write $x_2=(z_1,z_2)\in F_v\oplus F_v$. 
Then we have 
\begin{eqnarray*}
\tilde c_{\psi_1}(s)
&= & \frac{(1-N_v^{-s})^2}{ 1-N_v^{-(s+1)} }   
\sum_{n=0}^{\infty} N_v^{-ns+n}\\
&&\qquad \cdot
\vol \{(z_1, z_2)\in O_{F_v}\oplus O_{F_v}: 
z_1z_2\in p_v^n, \ q(y)-z_1z_2\in O_{F_v}^\times \}.
\end{eqnarray*}

If $q(y)\in p_v$,  in order for the volume to be nonzero, we have to have $z_1z_2\in O_{F_v}^\times$ and $n=0$. The summation has a single nonzero term equal to 1. 
Then $c_{\psi_1}(1,y,u)=0.$

If $q(y)\in O_{F_v}^\times$,  we can neglect the term with $n=0$, since a single term does not change the derivative due to the double zero of the factor $(1-N_v^{-s})^2$. Then the remaining terms give 
\begin{eqnarray*}
&& \frac{(1-N_v^{-s})^2}{ 1-N_v^{-(s+1)} }   \sum_{n=1}^{\infty} N_v^{-ns+n}\cdot
\vol \{(z_1, z_2)\in O_{F_v}\oplus O_{F_v}: 
z_1z_2\in p_v^n \}.
\end{eqnarray*}
A similar summation has just been computed above, and the eventual result is still $c_{\psi_1}(1,y,u)=0.$ (Note that $d_v=1$ in the current case.)

Now we treat 
\begin{eqnarray*}
\tilde c_{\psi_2}(s)
&=& \frac{(1-N_v^{-s})^2}{ 1-N_v^{-(s+1)} }   \sum_{n=0}^{\infty} N_v^{-ns+n}
\int_{D_n} \psi_{2}(y+x_2,u) dx_2 \\
&= & \frac{(1-N_v^{-s})^2}{ 1-N_v^{-(s+1)} }   
\sum_{n=0}^{\infty} N_v^{-ns+n}\\
&&\qquad \cdot
\vol \{(z_1, z_2)\in p_v^{-1}\oplus p_v^{-1}: 
z_1z_2\in p_v^n, \ q(y)-z_1z_2\in O_{F_v}^\times \}.
\end{eqnarray*}
Here we have assumed $u\in O_{F_v}^\times$ and will assume $y\in \varpi_v^{-1}O_{E_v}$ in order to make the situation nontrivial. 
It is similar to the case $\psi_1$. 

If $q(y)\notin \ofv$, the summation has no nonzero term and thus $c_{\psi_1}(1,y,u)=0$.

If $q(y)\in p_v$, the summation has a single nonzero term coming from $n=0$. 
Then $c_{\psi_1}(1,y,u)=0$ again.

If $q(y)\in O_{F_v}^\times$, we can neglect the term with $n=0$ again. The remaining terms give 
\begin{eqnarray*}
&& \frac{(1-N_v^{-s})^2}{ 1-N_v^{-(s+1)} }   \sum_{n=1}^{\infty} N_v^{-ns+n}\cdot
\vol \{(z_1, z_2)\in p_v^{-1}\oplus p_v^{-1}: 
z_1z_2\in p_v^n \}\\
&=& \frac{(1-N_v^{-s})^2}{ 1-N_v^{-(s+1)} }   \sum_{n=1}^{\infty} N_v^{-ns+n}\cdot
N_v^2\cdot \vol \{(z_1', z_2')\in O_{F_v}\oplus O_{F_v}: 
z_1'z_2'\in p_v^{n+2} \}.
\end{eqnarray*}
Here we have used the substitution $z_i=\varpi_v^{-1}z_i'$. 
Then it is similar to the computation above and still gives $c_{\psi_1}(1,y,u)=0.$
This finishes the case $v\in S_2$. 
\end{proof}

\begin{remark}
It is not surprising that some (complicated and un-wanted) terms in the result of Lemma \ref{derivative of Whittaker function} appear in that of Lemma \ref{derivative of intertwining}. In fact, it just reflexes that the identity 
$$\lim_{a\to 0} W_{a,v}'(0,1,u)= W_{0,v}'(0,1,u),$$
which fails due to convergence issues, actually holds for some pieces of the two sides. 
Eventually we need these terms to cancel each other in order to get a neat Proposition \ref{combination}. 
\end{remark}

\section{Height series}
In this section, we study the intersection series of CM points, the main geometric ingredient for proving Theorem \ref{quaternion main}.
 We will first review the construction of the  series $Z(g, (t_1, t_2), \phi)$
in \cite{YZZ}.
Then we will compute this series under some assumption of Schwartz functions. In particular, we will obtain a term for the self-intersection of 
CM points which contributes a main term for the identity in Theorem \ref{quaternion main}.  In \cite{YZZ}, this term was killed under a stronger 
assumption of Schwartz functions.

\subsection{Height series}
Let $F$ be a totally real number field, and $\BB$ be a totally definite incoherent quaternion algebra over $F$ with ramification set $\Sigma$. 
To avoid complication of cusps, we assume that $|\Sigma|>1$.  
For any open compact subgroup $U$ of $\bfcross$, 
we have a Shimura curve $X_U$, which is a projective and smooth curve over 
$F$. For any embedding $\tau: F\hookrightarrow \BC$, 
it has the usual uniformization
$$X_{U, \tau}(\BC) = B(\tau)^\times \bs \gh^\pm\times
\bb_f^\times / U.$$
Here $B(\tau)$ denotes the nearby quaternion algebra, i.e., the unique quaternion algebra over $F$ with ramification set $\Sigma \setminus \{\tau\}$.

For any $x\in \bfcross$, we have a correspondence
$Z(x)_U$ defined as the image of the morphism
$$(\pi_{U_x, U}, \pi_{U_x, U}\circ \RT_x):
\quad X_{U_x}\lra X_U\times X_U.$$
Here $U_x=U\cap xUx^{-1}$, $\pi_{U_x, U}$ denotes the natural projection, and $\RT_x$ denotes the right multiplication by $x$.
In terms of the complex uniformization, the push-forward action gives
$$Z(x)_U:[z, \beta]_U\longmapsto \sum_{y\in UxU/U}[z, \beta y]_U.$$

\subsubsection*{Generating series}

We first recall the generating series in \cite[\S3.4.5]{YZZ}. 
For any $\phi\in \ol\CS (\BB\times \BA^\times)$ invariant under $K=U\times U$,
form a generating series
\begin{equation*}  
Z(g,\phi)_U=Z_0(g,\phi)_U+Z_*(g,\phi)_U, \quad g\in \gla,
\end{equation*}
where
\begin{eqnarray*}
Z_0(g,\phi)_U&=& -  \sum_{\alpha\in F_+\cross\bs \afcross/q(U)} \sum_{u\in
\mu_U^2 \bs F\cross}
  E_0(\alpha^{-1}u,r(g)\phi)\ L_{K,\alpha},\\
  Z_*(g,\phi)_U&=& w_U \sum_{a\in F^\times} \sum_{x\in U\bs \bb_f^\times/U} r(g)\phi(x,aq(x)^{-1})\ Z(x)_U.
\end{eqnarray*}
Here $\mu_U=F^\times \cap U$, and $w_U=|\{1,-1\}\cap U|$ is equal to 1
or 2.
We often abbreviate 
$$Z(g,\phi)_U, \ Z_0(g,\phi)_U, \ Z_*(g,\phi)_U$$
as 
$$Z(g)_U,\ Z_0(g)_U,\ Z_*(g)_U.$$ 

For our purpose on the height series, we will see that the constant term $Z_0(g,\phi)_U$ can be neglected in our consideration, since its contribution is always zero.

\begin{thm}\cite[Theorem 3.17]{YZZ} \label{modularity}
The series $Z(g, \phi)_U$ is absolutely convergent and defines an
automorphic form on $g\in\GL_2(\BA)$ with coefficients in $\Pic(X_U\times
X_U)_\BC$.
\end{thm}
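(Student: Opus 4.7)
The plan is to treat the two summands $Z_0(g,\phi)_U$ and $Z_*(g,\phi)_U$ separately, establishing absolute convergence and $\GL_2(F)$-invariance for each. For the constant term $Z_0(g,\phi)_U$, the sum over $\alpha\in F_+^\times\bs \BA_f^\times/q(U)$ and $u\in \mu_U^2\bs F^\times$ is a finite sum (only finitely many $\alpha$ contribute nontrivially because $\phi$ is Schwartz), and each summand is a product of an Eisenstein coefficient $E_0(\alpha^{-1}u, r(g)\phi)$ with a fixed class $L_{K,\alpha}\in \Pic(X_U\times X_U)_\BC$. Automorphy in $g$ of such Eisenstein coefficients is standard from the theory of Eisenstein series for $\SL_2$ together with the behavior of the Weil representation; this yields both convergence and modularity of $Z_0$ immediately.

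The main work is on $Z_*(g,\phi)_U = w_U \sum_{a\in F^\times} \sum_{x\in U\bs \bb_f^\times/U} r(g)\phi(x,aq(x)^{-1})\ Z(x)_U$. The key observation is that this series has the formal shape of a theta series for the even-dimensional quadratic space $(\BB, q)$ over $F$, with the scalar coefficient $1$ replaced by the Hecke class $Z(x)_U$. To establish absolute convergence in $\Pic(X_U\times X_U)_\BC$, I would fix an ample class $H$ on $X_U\times X_U$ (or equivalently a height function on $\Pic$) and majorize by
$$\sum_{a\in F^\times}\sum_{x\in U\bs\bb_f^\times/U} |r(g)\phi(x, aq(x)^{-1})|\cdot (Z(x)_U\cdot H).$$
Standard degree estimates for Hecke correspondences bound $(Z(x)_U\cdot H)$ by a polynomial in $|q(x)|$, while $\phi$ decays rapidly in $x$; the result is termwise convergence uniform on compact subsets of $\gla$.

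For modularity, invariance under the center and under the standard maximal compact is built in through the transformation formulas of the Weil representation $r$. The essential step is left invariance under $\SL_2(F)$. Here I would reduce to the scalar case by pairing with arbitrary linear functionals $\ell\colon \Pic(X_U\times X_U)_\BC\to\BC$, coming from intersection with test divisors; it suffices to show that each $\ell(Z(g,\phi)_U)$ is automorphic. After a change of variables $u=aq(x)^{-1}$, the function $\ell(Z_*(g,\phi)_U)$ takes the form of a theta series
$$\sum_{u\in\mu_U^2\bs F^\times}\sum_{x\in \BB^\times} r(g)\phi(x,u)\ \ell\bigl(\widetilde Z(x)_U\bigr)$$
with $\widetilde Z$ a suitable lift; modularity in $g$ then follows from the standard Poisson summation underlying the Weil representation applied to $(\BB, q)$, once the Eisenstein/constant-term contribution is cancelled precisely by $\ell(Z_0(g,\phi)_U)$.

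The hard part will be this last cancellation: the formal theta-style sum produces spurious constant-type contributions at each cusp, and the role of $Z_0(g,\phi)_U$ in the definition is precisely to match and cancel these. Carrying this out requires a careful comparison of the cuspidal Fourier expansion of the Eisenstein coefficients $E_0(\alpha^{-1}u, r(g)\phi)$ with the contribution of degenerate orbits (e.g.\ $x$ with $q(x)=0$ or $x$ of norm lying in a single $q(U)$-orbit) to $Z_*(g,\phi)_U$. The bookkeeping is controlled by the degree formulas for $\Pic(X_U\times X_U)$ expressed in terms of Hodge classes $L_{K,\alpha}$, and matches by the same mechanism that defines the normalization of $Z_0$.
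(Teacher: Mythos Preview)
The paper does not give its own proof of this statement: it is quoted verbatim as \cite[Theorem 3.17]{YZZ}, and the reader is referred there. So there is nothing in the present paper to compare your proposal against beyond noting this.

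That said, your outline has a genuine gap at the crucial step. You write that after pairing with a functional $\ell$, the series $\ell(Z_*(g,\phi)_U)$ ``takes the form of a theta series
\[
\sum_{u\in\mu_U^2\bs F^\times}\sum_{x\in \BB^\times} r(g)\phi(x,u)\ \ell\bigl(\widetilde Z(x)_U\bigr)
\]
with modularity following from Poisson summation for $(\BB,q)$.'' This cannot work as stated: $\BB$ is an \emph{incoherent} quaternion algebra over $\BA$, so there is no quadratic space $V$ over $F$ with $V(\BA)=\BB$, and hence no global Poisson summation formula is available. The original sum is over double cosets $U\bs\BB_f^\times/U$, not over rational points of anything, and you have not explained how to convert it into a sum over $F$-rational points of a coherent space.

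The argument in \cite{YZZ} resolves this by fixing an archimedean place $\tau$ and using the complex uniformization $X_{U,\tau}(\BC)=B(\tau)^\times\bs\gh^\pm\times\BB_f^\times/U$ by the nearby \emph{coherent} quaternion algebra $B(\tau)$. Intersecting $Z(x)_U$ with suitable test classes (or computing degrees on connected components), one unwinds the double coset sum into a sum over $\gamma\in B(\tau)^\times$, which is a genuine theta series for the $F$-quadratic space $(B(\tau),q)$. Modularity then follows from the standard Poisson summation, and the constant term $Z_0(g,\phi)_U$ is designed precisely to supply the missing $x=0$ contribution. Your proposal would become correct once this passage from the incoherent $\BB$ to the coherent $B(\tau)$ via the uniformization is inserted; without it, the Poisson summation step is not available.
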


\subsubsection*{Height series}

Let $E/F$ be a totally imaginary quadratic extension, with a fixed embedding
$E_\BA\hookrightarrow \BB$ over $\BA$. 
In \cite{YZZ}, we consider a CM point $P\in X^{E^\times}(E^\ab)$ on the limit of the Shimura curves.
In this paper, we only consider the point $P_U\in X_U(E^\ab)$ 
for fixed $U$. 
For a more precise description, fixing an embedding $\tau: F\hookrightarrow \BC$, take $P_U=[z_0,1]_U$
based on the uniformization
$$ X_{U, \tau}(\BC) =B(\tau)^\times \bs \gh^\pm\times
\bb_f^\times / U,$$
where $z_0\in\gh$ is the unique fixed point of $E\cross$ in $\gh$ via the action
induced by the embedding $E\hookrightarrow B(\tau)$. 
For simplicity, we write $P$ for $P_U$. 

In terms of the uniformization, there are two sets of CM points in $X_U(E^\ab)$ for our purpose:
$$
C_U=\{ [z_0,t]_U: t \in E^\times(\af)\}, \qquad
\CMU=\{ [z_0,\beta]_U: \beta \in \bfcross\}.
$$
It is easy to see canonical bijections
$$
C_U\cong E^\times \bs E^\times(\af)/(E^\times(\af)\cap U), \qquad
\CMU\cong E^\times \bs \bfcross/U.
$$
We will abbreviate $[z_0,\beta]_U$ as $[\beta]_U$, $[\beta]$ or just $\beta$.

For any $t\in E^\times(\BA)$, denote by 
$$[t]=[t]_U=[z_0, t_f]_U$$ 
the CM point of $X_{U, \tau}(\BC)$, viewed as an algebraic point of $X_U$.
Denote by
$$t^\circ=[t]_U^\circ=[t]_U-\xi_{U,t}$$ 
the degree-zero divisor on $X_U$, where $\xi_t=\xi_{U,t}$
is the normalized Hodge class of degree one on the connected component of $[t]_U$.

Recall from \cite[\S3.5.1, \S5.1.2]{YZZ} that we have a height series
\begin{eqnarray*}
Z(g, (t_1, t_2), \phi)_U
&=& \pair{Z(g, \phi)_U\ t_1^{\circ},\ t_2^{\circ} }_{\NT},
\quad t_1,t_2\in E^\times(\af).
\end{eqnarray*}
Here $Z(g, \phi)_U$ acts on $t_1^{\circ}$ as correspondences, and the pairing is the N\'eron--Tate height pairing
$$\pair{\cdot, \cdot}_{\NT}: J_U(\overline F)_\BC\times J_U(\overline F)_\BC
\lra \BC$$
on the Jacobian variety $J_U$ of $X_U$ over $F$. 

By linearity, $Z(g, (t_1, t_2), \phi)_U$ is an automorphic form in $g\in \gla$. 
By \cite[Lemma 3.19]{YZZ}, it is actually a cusp form. 
In particular, the constant term $Z_0(g,\phi)$ of the generating function plays no role here.

\subsubsection*{Decomposition of the height series}

By the theory of \cite[\S7.1]{YZZ}, we are going to decompose the height series into local pairings and some global terms. We will use (possibly) different integral models to do the decomposition. 

Assume that $(\BB,E,U)$ satisfies the assumptions of \S \ref{choices} in the following. 
In particular, $U$ is maximal at every place, and there is no non-archimedean place of $F$ ramified in both $E$ and $\BB$. 

Let $\CX_U$ be the integral model of $X_U$ over $O_F$ introduced before Corollary \ref{system2}, and let $\ol\CL_U$ be the arithmetic Hodge bundle introduced in Theorem \ref{system3}.
We are going to use $(\CX_U, \ol\CL_U)$ to decompose the Neron--Tate height pairing. 

Note that every point of $\CMU$ is defined over a finite extension $H$ of $F$ that is unramified above $\Sigma(\BB_f)$. The composite of two such extensions still satisfies the same property. By Corollary \ref{system2}, the base change 
$\CX_{U,O_H}$ is $\QQ$-factorial for such $H$. 
Then arithmetic intersection numbers of Arakelov divisors are well-defined on 
$\CX_{U,O_H}$.
Take the integral model $\CY_U$ used in \cite[\S 7.2.1]{YZZ} to be 
$\CX_{U,O_H}$ (without any desingularization).
We get a decomposition of $Z(g, (t_1, t_2))_U$ by the process of \cite[\S 7.2.2]{YZZ}. 

We do not know whether $\CX_U$ is regular everywhere or smooth above any prime of $F$ split in $\BB$. If both are true, then $\CX_{U,O_H}$ is already regular, and the decomposition here is the same as that in \cite{YZZ}.

\subsubsection*{Vanishing of the pairing with Hodge class}

Now we use freely the notations of \cite[\S 7.1-7.2]{YZZ}.
For the height series, the linearity gives a decomposition  
$$Z(g, (t_1, t_2))_U
=\pair{Z_*(g)_U t_1,  t_2}-\pair{Z_*(g)_U t_1, \xi_{t_2}}
-\pair{Z_*(g)_U \xi_{t_1}, t_2} +\pair{Z_*(g)_U\xi_{t_1}, \xi_{t_2}}.$$
Here 
$Z_*(g)_U=Z_*(g,\phi)_U$, and
the pairings on the right-hand side are arithmetic intersection numbers in terms of admissible extensions, as introduced in \cite[\S7.1.6]{YZZ}.

Now we resume the degeneracy assumption in \ref{assumption3}, which mainly requires that there is a set $S_2$ consisting of $2$ non-archimedean places of $F$ split in $E$ and unramified over $\BQ$ such that 
\begin{equation*}
r(g)\phi_v(0,u)=0, \quad \forall\ g\in \gl(F_v), \ u\in F_v\cross, \ v\in S_2.
\end{equation*}
By \cite[Proposition 7.5]{YZZ}, the assumption kills the last three terms on the right-hand side and gives the simplification
$$Z(g, (t_1, t_2))_U
=\pair{Z_*(g)_U t_1,  t_2}.$$

As in \cite[Proposition 7.5]{YZZ}, we have a decomposition 
$$Z(g, (t_1, t_2))_U
=-i(Z_*(g)_U t_1,  t_2)-j(Z_*(g)_U t_1,  t_2).$$
Here the $i$-part is essentially the arithmetic intersection number of horizontal parts, and the $j$-part is the contribution from vertical parts. 

Now we have a decomposition to local intersection numbers by
\begin{eqnarray*}
j(Z_*(g) t_1,  t_2) =\sum_v j_v(Z_*(g) t_1,  t_2)\log N_v.
\end{eqnarray*}
The sum is over all places of $F$, and we take the convention $\log N_v=1$ if $v$ is real. 
Decomposing the local intersection number in terms of Galois orbits, we further have
\begin{eqnarray*}
j_v(Z_*(g) t_1,  t_2)&=& \barint_{C_U} 
j_{\bar
v}(Z_*(g) tt_1,  tt_2)dt.
\end{eqnarray*}
Here the pairing $j_{\bar v}$ is introduced in \cite[\S 7.1.7]{YZZ}, and 
$$
C_U=E^\times \bs E^\times(\BA_f)/ E^\times(\BA_f)\cap U 
$$
is a finite group and the integration is just the usual average over this finite group. 

Unlike the $j$-part, the decomposition of the $i$-part into local intersection numbers is complicated due to the occurrence of self-intersections.
We have to isolate the self-intersections before the decomposition.  
Such a complication is diminished in \cite{YZZ} by Assumption 5.3 in it, but we cannot impose this assumption here. 
In fact, the assumption kills all possible self-intersections, but the purpose of this paper is to compute these self-intersections!

\subsubsection*{Self-intersection}

The self-intersection in $\pair{Z_*(g) t_1,  t_2}$ comes from the 
multiplicity of $[t_2]_U$ in $Z_*(g) t_1$. 
By definition, 
\begin{eqnarray*}
Z_*(g)t_1 = w_U  \sum_{a\in F\cross}\sum_{x \in \bb_f^\times /U} r(g)\phi(x)_a
[t_1x].
\end{eqnarray*}
Here $r(g)\phi(x)_a=r(g)\phi(x, a/q(x))$. 
See also \cite[\S 4.3.1]{YZZ} for this formula. 

Note that $[t_1 x]=[t_2]$ as CM points on $X_U$ if and only if $x\in
t_1^{-1}t_2 E\cross U$.
It follows that the coefficient of $[t_2]_U$ in $Z_*(g) t_1$ is equal to
\begin{eqnarray*}
w_U \sum_{a\in F\cross}\sum_{x \in t_1^{-1}t_2 E\cross U/U} r(g)\phi(x)_a
&=& w_U  \sum_{a\in F\cross}\sum_{y \in E\cross/(E\cross\cap U)}
r(g)\phi(t_1^{-1}t_2y)_a.
\end{eqnarray*}
Note that  $\mu_U=F\cross\cap U$ has finite index in
$E\cross\cap U$, the above becomes
\begin{eqnarray*}
&& \frac{w_U}{[E\cross\cap U:\mu_U]} \sum_{a\in F\cross}\sum_{y \in
E\cross/\mu_U} r(g,(t_1,t_2))\phi(y)_a\\
&=&\frac{1}{[E\cross\cap U:\mu_U]} \sumu \sum_{y \in
E\cross } r(g,(t_1,t_2))\phi(y,u) .
\end{eqnarray*}
The last double sum already appeared in the derivative series, and will continue to appear in local heights. So, we introduce the notation 
$$
\Omega_\phi(g,(t_1,t_2))
=\sumu \sum_{y \in
E\cross } r(g,(t_1,t_2))\phi(y,u) .
$$

Finally, we can write 
\begin{eqnarray*}
i(Z_*(g) t_1,  t_2) 
= i(Z_*(g) t_1,  t_2)_{\rm proper}
 + \frac{\Omega_\phi(g,(t_1,t_2))}{[E\cross\cap U:\mu_U]} i(t_2,t_2).\end{eqnarray*}
Here
$$
i(Z_*(g) t_1,  t_2)_{\rm proper}
=i\left(Z_*(g) t_1-\frac{\Omega_\phi(g,(t_1,t_2))}{[E\cross\cap U:\mu_U]}t_2, \  t_2\right) 
$$
is a proper intersection. 
The proper intersection has decompositions 
\begin{eqnarray*}
i(Z_*(g) t_1,  t_2)_{\rm proper}&=&
\sum_v i_v(Z_*(g) t_1,  t_2)_{\rm proper} \log N_v,
\\
i_v(Z_*(g) t_1,  t_2)_{\rm proper}&=& \barint_{C_U} 
i_{\bar v}(Z_*(g) tt_1,  tt_2)_{\rm proper} dt.
\end{eqnarray*}

We further have an identity $i(t_2,t_2)=i(1,1)$ since $[1]$ and $[t]$ are Galois conjugate CM points.

\subsection{Local heights as pseudo-theta series}
\label{local height}

Now we are going to express the local heights 
$i_{\bar v}(Z_*(g) t_1,  t_2)_{\rm proper}$ and $j_{\bar v}(Z_*(g) t_1,  t_2)$
in terms of multiplicity functions on local models of the Shimura curve. 
The idea is similar to \cite[Chapter 8]{YZZ}, with extra effort to take care of the self-intersections. Note that in \cite{YZZ}, self-intersections vanish due to a degeneracy assumption, which we cannot put here.

\subsubsection*{Archimedean case}

Let $v$ be an archimedean place.
Fix an identification $B(\af)=\bb_f$, and write $B=B(v)$.
The formula is based on the uniformization
$$X_{U,v}(\BC)=B_+\cross\bs \gh\times B\cross(\af)/U.$$
 
Resume the notations in \cite[\S 8.1]{YZZ}. 
In particular, we have the local multiplicity function
$$m_s(\gamma)=Q_s(1-2\lambda(\gamma)), \qquad \gamma\in B_v\cross-E_v\cross.$$
Here
$$Q_s(t)=\int_0^{\infty} \left(t+\sqrt{t^2-1}\cosh u\right)^{-1-s}du $$
is the Legendre function of the second kind. 
For any two distinct CM points $[\beta_1]_U, [\beta_2]_U\in
\mathrm{CM}_U$, denote
$$
g_s(\beta_1, \beta_2) =\sum_{\gamma\in \mu_U \backslash
(B_+\cross-E\cross)}  m_s(\gamma)\
 1_{U}(\beta_1^{-1}\gamma\beta_2),
$$
Then the local height has the expression 
$$i_{\bar v}(\beta_1, \beta_2)=\quasilim  g_s(\beta_1, \beta_2).$$
Here $\quasilim$ denotes the constant term at $s=0$ of $g_s((z_1,
\beta_1), (z_2, \beta_2))$, which converges for $\Re(s)>0$ and has
meromorphic continuation to $s=0$ with a simple pole.

In \cite{YZZ}, the formula works for distinct points $[\beta_1]_U$ and 
$[\beta_2]_U$. In this paper, we extend it formally to any two points. Namely, for any $\beta_1, \beta_2\in \mathrm{CM}_U$, we denote
$$
g_s(\beta_1, \beta_2) =\sum_{\gamma\in \mu_U \backslash
(B_+\cross-E\cross)}  m_s(\gamma)\
 1_{U}(\beta_1^{-1}\gamma\beta_2),
$$
and define
$$i_{\bar v}(\beta_1, \beta_2)=\quasilim  g_s(\beta_1, \beta_2).$$

With the extra new notation, we have the following result. 
\begin{pro}\label{archimedean height}
For any $t_1,t_2\in C_U$,
\begin{eqnarray*}
i_{\bar v}(Z_*(g,\phi)t_1,t_2)_{\rm proper}
= \CM^{(v)}_{\phi}(g,(t_1,t_2))-
\frac{i_{\bar v}(t_2,t_2)}{[E\cross\cap U:\mu_U]} \Omega_\phi(g,(t_1,t_2))
\end{eqnarray*}
where
\begin{eqnarray*}
\Omega_\phi(g,(t_1,t_2))
&=& \sumu \sum_{y \in
E\cross } r(g,(t_1,t_2))\phi(y,u), \\
\CM^{(v)}_{\phi}(g,(t_1,t_2))
&=&w_U \sum_{a\in F\cross}  \quasilim \sum_{y \in \mu_U \backslash (B_+\cross-E\cross)}
r(g,(t_1,t_2))\phi(y)_a   m_s(y).
\end{eqnarray*}
\end{pro}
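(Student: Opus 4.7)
The plan is to adapt the computation in \cite[Chapter 8]{YZZ} for proper intersections on the archimedean component, with the new feature that the self-intersection at $[t_2]$ must be tracked explicitly, since the degeneracy assumption of \cite{YZZ} that killed it is no longer in force. First I would isolate the self-intersection correction: the multiplicity of $[t_2]$ in $Z_*(g,\phi)t_1$ was computed (in the discussion just preceding the proposition) to equal $\Omega_\phi(g,(t_1,t_2))/[E^\times\cap U:\mu_U]$, so by the very definition of the proper intersection
$$
i_{\bar v}(Z_*(g,\phi)t_1,t_2)_{\rm proper}
= i_{\bar v}(Z_*(g,\phi)t_1,t_2) - \frac{\Omega_\phi(g,(t_1,t_2))}{[E^\times\cap U:\mu_U]}\,i_{\bar v}(t_2,t_2),
$$
where $i_{\bar v}$ is now interpreted via the extended definition $i_{\bar v}(\beta_1,\beta_2)=\quasilim g_s(\beta_1,\beta_2)$ valid for all pairs, including coincident ones. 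The task thus reduces to identifying $i_{\bar v}(Z_*(g,\phi)t_1,t_2)$ with $\CM^{(v)}_\phi(g,(t_1,t_2))$.

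Next I would expand by linearity using the definition of $Z_*$:
$$
i_{\bar v}(Z_*(g,\phi)t_1,t_2)
= w_U\sum_{a\in F^\times}\sum_{x\in\bb_f^\times/U} r(g)\phi(x)_a \cdot \quasilim\!\!\sum_{\gamma\in\mu_U\bs(B_+^\times-E^\times)}\!\!m_s(\gamma)\,1_U((t_1x)^{-1}\gamma t_2).
$$
Interchanging the finite sum over $x$ with the sum over $\gamma$, for each $\gamma$ the indicator selects the unique coset $xU = t_1^{-1}\gamma t_2\,U$, and the inner sum collapses to $r(g)\phi(t_1^{-1}\gamma t_2)_a$. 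Since $t_1,t_2\in E^\times(\BA_f)\subset\GO(\BB)_f$ act on Schwartz functions by the Weil representation essentially as $r(t_1,t_2)\phi(y,u)=\phi(t_1^{-1}yt_2,uq(t_1)q(t_2)^{-1})$, one obtains $r(g)\phi(t_1^{-1}\gamma t_2)_a = r(g,(t_1,t_2))\phi(\gamma)_a$ after the corresponding reindexing of $a$. Reassembling,
$$
i_{\bar v}(Z_*(g,\phi)t_1,t_2)
= w_U\sum_{a\in F^\times}\quasilim\!\!\sum_{\gamma\in\mu_U\bs(B_+^\times-E^\times)}\!\!r(g,(t_1,t_2))\phi(\gamma)_a\,m_s(\gamma)
= \CM^{(v)}_\phi(g,(t_1,t_2)),
$$
which combined with the previous step gives the proposition.

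The main obstacle is to legitimize the manipulation of $\quasilim$ under the exchange of sums: the series $g_s(\beta_1,\beta_2)$ converges absolutely only for $\Re(s)$ large and has a simple pole at $s=0$, so one must verify that the constant term commutes with the finite sum over $x$ (straightforward) and that the extended value $\quasilim g_s(\beta,\beta)$ is compatible with linearity when some $[t_1 x]$ coincides with $[t_2]$. This second point is the genuinely new input beyond \cite{YZZ}; it is essentially a bookkeeping matter once one adopts the formal extension of the definition, but the polar part of $g_s(\beta,\beta)$ must be seen to cancel uniformly in the sum, which follows from the fact that the residue of $m_s$ at $s=0$ depends on $\gamma$ only through data invariant under the $E^\times$-action used in the substitution.
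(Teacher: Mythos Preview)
Your proposal is correct and follows essentially the same route as the paper: split off the self-intersection via the definition of $i_{\bar v}(\cdot,\cdot)_{\rm proper}$ and then identify $i_{\bar v}(Z_*(g,\phi)t_1,t_2)$ (in the extended sense) with $\CM^{(v)}_\phi(g,(t_1,t_2))$ by the computation of \cite[Proposition~8.1]{YZZ}. The paper simply cites that proposition for the second step, whereas you have unpacked it; your worry about the pole of $g_s$ is a bit overstated, since the extended definition already excludes $\gamma\in E^\times$ and the passage of $\quasilim$ through the effectively finite sum over $x$ (and $a$) is just linearity of the constant-term map.
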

\begin{proof}
By definition, 
$$
i_{\bar v}(Z_*(g) t_1,  t_2)_{\rm proper}
=i_{\bar v}(Z_*(g) t_1,  t_2)
-i_{\bar v}\left(\frac{\Omega_\phi(g,(t_1,t_2))}{[E\cross\cap U:\mu_U]}t_2, \  t_2\right).
$$
Here the first term on the right-hand side makes sense by the extended definition of $i_{\bar v}$ to self-intersections. 
The rest of the proof is the same as \cite[Proposition 8.1]{YZZ}.
\end{proof}

\subsubsection*{Supersingular case and superspecial case}

Let $v$ be a non-archimedean place of $F$ non-split in $E$. Let $B=B(v)$ be the nearby quaternion algebra over $F$. 
We will write the local pairing $i_{\bar v}$ as a sum of pseudo-theta series following the idea \cite{YZZ}.  The situation is more complicated by the self-intersections here. 
Note that $v$ can be either split or non-split in $\BB$, but the exposition here are the same (before going to explicit computations).

Recall from \cite[Lemma 8.2]{YZZ} that for any two distinct CM-points 
$[\beta_1]_U\in\mathrm{CM}_U$ and $[t_2]_U\in C_U$,
their local height is given by
$$i_{\bar v}(\beta_1, t_2)
=\sum _{\gamma \in \mu_U\bs B^{\times}} m(\gamma t_{2,v},
\beta_{1v}^{-1}) 1_{U^v}((\beta_1^v)^{-1}\gamma t_2^v).$$
Here the multiplicity function $m$ is defined everywhere on
$$\gh_{U_v}= B^{\times}_v\times_{E^\times_v}\bb_v^\times/U_v$$
except at the image of $(1,1)$.
It satisfies the symmetry $m(b^{-1}, \beta^{-1})=m(b, \beta)$. 

The summation is only well-defined for $[\beta_1]_U\neq [t_2]_U$. Otherwise, we can find $\gamma\in E\cross$ such that $\beta_1^{-1}\gamma
t_2\in U$, and the term at $\gamma$ is not well-defined. 
Hence, we extend the definition to any two CM-points 
$[\beta_1]_U\in\mathrm{CM}_U$ and $[t_2]_U\in C_U$
by
\begin{eqnarray*}
&&i_{\bar v}(\beta_1, t_2)\\
&=& \sum _{\gamma \in \mu_U\bs (B^{\times}-E^{\times}\cap \beta_1 U
t_2^{-1})}
m(\gamma t_{2v}, \beta_{1v}^{-1}) 1_{U^v}((\beta_1^v)^{-1}\gamma t_2^v)\\
&=& \sum _{\gamma \in \mu_U\bs (B^{\times}-E^{\times})} m(\gamma
t_{2v}, \beta_{1v}^{-1}) 1_{U^v}((\beta_1^v)^{-1}\gamma t_2^v) +\sum
_{\gamma \in \mu_U\bs (E^{\times}-\beta_1 U t_2^{-1} )}
m(\gamma t_{2v}, \beta_{1v}^{-1}) 1_{U^v}((\beta_1^v)^{-1}\gamma t_2^v)\\
&=& \sum _{\gamma \in \mu_U\bs (B^{\times}-E^{\times})} m(\gamma
t_{2v}, \beta_{1v}^{-1}) 1_{U^v}((\beta_1^v)^{-1}\gamma t_2^v) +\sum
_{\gamma \in \mu_U\bs (E^{\times}-\beta_1 U_v t_2^{-1} )} m(\gamma
t_{2v}, \beta_{1v}^{-1}) 1_{U^v}((\beta_1^v)^{-1}\gamma t_2^v).
\end{eqnarray*}
The definition is equal to the previous one if $[\beta_1]_U\neq [t_2]_U$.
In Lemma \ref{self-int}, we will see that $i_{\bar v}(t_2, t_2)$ can be realized as a proper intersection number via pull-back to $X_{U'}$ for sufficiently small $U'$ with $U'_v=U_v$. 

With the extended definition, our conclusion is as follows. 

\begin{pro}\label{supersingular height}
For any $t_1,t_2\in C_U$,
\begin{eqnarray*}
i_{\bar v}(Z_*(g,\phi)t_1,t_2)_{\rm proper}
= \CM^{(v)}_{\phi}(g,(t_1,t_2))+\CN^{(v)}_{\phi}(g,(t_1,t_2))
-\frac{i_{\bar v}(t_2,t_2)}{[E\cross\cap U:\mu_U]} \Omega_\phi(g,(t_1,t_2)),
\end{eqnarray*}
where
\begin{eqnarray*}
\Omega_\phi(g,(t_1,t_2))
&=& \sumu \sum_{y \in
E\cross } r(g,(t_1,t_2))\phi(y,u),\\
\CM^{(v)}_\phi(g,(t_1,t_2))
&=&\sum_{u\in \mu_U^2\bs F\cross}\sum _{y\in B-E}  r(g,(t_1,t_2))\phi^v (y, u)\  m_{r(g,(t_1,t_2))\phi _v}(y, u),\\
\CN^{(v)}_\phi(g,(t_1,t_2))
&=&\sum_{u\in \mu_U^2\bs F\cross}\sum _{y \in E\cross}  r(g,(t_1,t_2))\phi^v (y, u)\ r(t_1,t_2)n_{r(g)\phi _v}(y, u),
\end{eqnarray*}
and
\begin{align*}
m_{\phi _v}(y, u) =& \sum_{x\in \bb^{\times}_v/U_v} m(y, x^{-1}) \phi_v(x, uq(y)/q(x)),
\quad (y, u)\in (B_v-E_v) \times F_v ^\times; \\
n_{\phi _v}(y, u) =& \sum_{x\in (\bb^{\times}_v-y U_v)/U_v} m(y, x^{-1}) \phi_v(x, uq(y)/q(x)),
\quad (y, u)\in E^{\times}_v \times F_v ^\times.
\end{align*}
\end{pro}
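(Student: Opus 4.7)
The plan is to expand the correspondence $Z_\ast(g,\phi)t_1$ in the definition and apply the local height formula term by term, using the extended definition of $i_{\bar v}(\beta_1,t_2)$ to legitimize self-pairings. Starting from
$$Z_\ast(g,\phi)t_1 = w_U \sum_{a\in F^\times}\sum_{x\in \bb_f^\times/U} r(g)\phi(x)_a\, [t_1 x]_U,$$
linearity gives
$$i_{\bar v}(Z_\ast(g,\phi)t_1,t_2) = w_U\sum_{a,x} r(g)\phi(x)_a\, i_{\bar v}([t_1x]_U,[t_2]_U),$$
where the right-hand side is unambiguous thanks to the extended definition we introduced just before the proposition. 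Subtracting $\frac{\Omega_\phi(g,(t_1,t_2))}{[E^\times\cap U:\mu_U]}i_{\bar v}(t_2,t_2)$ then gives the proper intersection, so the real task is to identify $i_{\bar v}(Z_\ast(g,\phi)t_1,t_2)$ with $\CM^{(v)}_\phi+\CN^{(v)}_\phi$.

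Next, I would substitute the extended formula for $i_{\bar v}([t_1x]_U,[t_2]_U)$ and split the $\gamma$-sum into the piece over $B^\times - E^\times$ and the piece over $E^\times$ (where the second piece is further restricted to $\gamma\notin (t_1x)U_v t_2^{-1}$). For the $B^\times-E^\times$ piece, I would perform the standard change of variables $y = t_1^{-1}\gamma t_2$ (viewed in $B$ through the fixed embedding $E\hookrightarrow B$), interchange summation in $a,x,\gamma$ and $u,x,y$, and absorb the inner $x$-sum into the local multiplicity $m_{r(g,(t_1,t_2))\phi_v}(y,u)$; the resulting expression is precisely $\CM^{(v)}_\phi(g,(t_1,t_2))$. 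The same manipulation applied to the $\gamma\in E^\times$ piece, with $x$-sum restricted to $x\notin y U_v$ (which after translation becomes the restriction $x\notin yU_v$ in the definition of $n_{\phi_v}$), yields $\CN^{(v)}_\phi(g,(t_1,t_2))$.

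The main obstacle will be keeping track of which contributions are excluded in the extended definition versus those which must be explicitly subtracted to form the proper intersection. Concretely, I must verify that combining the restriction ``$\gamma\in E^\times - \beta_1 U_v t_2^{-1}$'' (built into the extended $i_{\bar v}$) with the term $-\frac{i_{\bar v}(t_2,t_2)}{[E^\times\cap U:\mu_U]}\Omega_\phi$ correctly accounts for all the $\gamma\in E^\times$-contributions: the quotient of units $\mu_U\backslash(E^\times\cap \beta_1 U_v t_2^{-1})$ appearing at each point $[\beta_1]_U=[t_2]_U$ is what produces the self-intersection multiplicity $[E^\times\cap U:\mu_U]^{-1}$. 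Checking this factor requires the observation that a point $[t_1x]_U$ equals $[t_2]_U$ precisely when $x\in t_1^{-1}t_2E^\times U$, and that the orbit of such $x$'s modulo $U$ is indexed by $E^\times/(E^\times\cap U)$, which by the already-recorded computation in the ``Self-intersection'' subsection gives exactly the coefficient $[E^\times\cap U:\mu_U]^{-1}\Omega_\phi(g,(t_1,t_2))$.

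Finally, once these identifications are in place, the three pieces combine to give the claimed formula. No new analytic convergence issue arises, since each of $\CM^{(v)}_\phi$ and $\CN^{(v)}_\phi$ is a pseudo-theta series with locally bounded Schwartz data (by Assumption \ref{assumption3} at the two places of $S_2$), and the $\Omega_\phi$-term is visibly a finite sum for fixed $g$. Thus the proof is essentially a bookkeeping exercise guided by the blueprint of \cite[Chap. 8]{YZZ}, with the novelty being the careful isolation of the self-intersection term $\frac{i_{\bar v}(t_2,t_2)}{[E^\times\cap U:\mu_U]}\Omega_\phi$ that was absent from the treatment of \cite{YZZ}.
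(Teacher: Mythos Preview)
Your proposal is correct and follows essentially the same approach as the paper: reduce via the extended definition to proving $i_{\bar v}(Z_*(g,\phi)t_1,t_2)=\CM^{(v)}_\phi+\CN^{(v)}_\phi$, then split the $\gamma$-sum over $\mu_U\backslash B^\times$ into the $B^\times-E^\times$ piece (yielding $\CM^{(v)}_\phi$) and the $E^\times\setminus t_1xU_vt_2^{-1}$ piece (yielding $\CN^{(v)}_\phi$), each via the standard change of variables as in \cite[Proposition 8.4]{YZZ}. One small correction: the substitution is not literally $y=t_1^{-1}\gamma t_2$ (since $t_i\in E^\times(\BA_f)$ are adelic, not global), but rather $y=\gamma$ with the $(t_1,t_2)$-action absorbed into $r(t_1,t_2)\phi$ through the Weil representation; this is exactly what produces the $r(g,(t_1,t_2))\phi$ in the final expressions.
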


\begin{proof}
By the extended definition of $i_{\bar v}$, it suffices to prove 
\begin{eqnarray*}
i_{\bar v}(Z_*(g,\phi)t_1,t_2)
= \CM^{(v)}_{\phi}(g,(t_1,t_2))+\CN^{(v)}_{\phi}(g,(t_1,t_2)).
\end{eqnarray*} 
The left-hand side is equal to 
\begin{eqnarray*}
&&  w_U\sum_{a\in F\cross}\sum_{x \in
\bb_f^\times /U} r(g)\phi(x)_a \sum _{\gamma \in \mu_U\bs
(B^{\times}-E^{\times})}
m(\gamma t_2, x^{-1}t_1^{-1}) 1_{U^v}(x^{-1}t_1^{-1}\gamma t_2)\\
&+&  w_U\sum_{a\in F\cross}\sum_{x \in \bb_f^\times /U} r(g)\phi(x)_a
\sum_{\gamma \in \mu_U\bs (E^{\times}-t_1 xU_v t_2^{-1} )} m(\gamma
t_2, x^{-1}t_1^{-1}) 1_{U^v}(x^{-1}t_1^{-1}\gamma t_2).
\end{eqnarray*}
The first triple sum is converted to $\CM^{(v)}_\phi(g,(t_1,t_2))$ as in  \cite[Proposition 8.4]{YZZ}, and the second  triple sum is converted to 
$\CN^{(v)}_\phi(g,(t_1,t_2))$ similarly. 
\end{proof}

Here we use the convention
$$r(t_1,t_2) n_{r(g)\phi _v}(y, u)=n_{r(g)\phi _v}(t_1^{-1}yt_2, q(t_1t_2^{-1})u).$$
Note that in the above series, we write the dependence on
$(t_1,t_2)$ in different manners for $m_{\phi _v}$ and
$n_{\phi_v}$. This is because $m_{\phi _v}(y, u)$ translates well
under the action of $P(F_v)\times (E_v\cross \times E_v\cross)$,
but $n_{\phi _v}(y, u)$ only translates well under the action of
$P(F_v)$.

\subsubsection*{Ordinary case}

Assume that $v$ is a non-archimedean place of $F$ split in $E$.
Then $\bb_v$ is split because of the embedding $E_v\rightarrow
\bb_v$. In this case, the treatment of \cite[\S 8.4]{YZZ} is not sufficient for our current purpose, so we write more details here. 

Let $\nu_1$ and $\nu_2$ be the two primes of $E$ lying over $v$.
Fix an identification $\bb_v \cong \mathrm{M}_2(F_v)$ under which
$E_v=\matrixx{F_v}{}{}{F_v}$. Assume that $\nu_1$ corresponds to the
ideal $\matrixx{F_v}{}{}{0}$ and $\nu_2$ corresponds to
$\matrixx{0}{}{}{F_v}$ of $E_v$.

We will make use of results of \cite{Zh}. The reduction map of
CM-points to ordinary points above $\bar\nu_1$ is given by
$$E\cross\backslash \bb_f\cross / U  \longrightarrow
E\cross\backslash (N(F_v)\backslash \gl(F_v)) \times \bb_f^{v\times} / U.$$
The intersection multiplicity is a function 
$$m_{\bar \nu_1}:\gl(F_v)/U_v\lra \QQ$$
supported on $N(F_v)U_v/U_v$ explicitly as follows. 
If $U_v=(1+p_v^r O_{\bv})^\times$ for some $r\geq 0$, then \cite[Lemma 5.5.1]{Zh} gives
$$
m_{\bar \nu_1}
\matrixx{1}{b}{}{1}=
\frac{1}{N_v^{r-v(b)-1}(N_v-1)}
$$
for $b\in F_v$ with $v(b)\leq r-1$. 
Note that the case $v(b)\geq r$ corresponds to self-intersection and is thus not well-defined. 

\begin{lem} \label{ordinary multiplicity}
The local height pairing of two distinct CM points
$[\beta_1]_U\in\CMU$ and $[t_2]_U\in C_U$ is
given by
$$i_{\bar \nu_1}(\beta_1, t_2)= \sum_{\gamma \in \mu_U\bs E\cross} m_{\bar\nu_1}(t_2^{-1}\gamma^{-1}\beta_1) 1_{U^v}(\beta_1^{-1}\gamma t_2). $$
\end{lem}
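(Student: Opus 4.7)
The plan is to translate the explicit formula for the local intersection multiplicity at ordinary reduction from \cite[\S5.5]{Zh} into the adelic framework, following the template of the supersingular case \cite[Lemma 8.2]{YZZ}. Since $v$ is split in $E$, both CM points $[\beta_1]_U$ and $[t_2]_U$ have ordinary reduction above $\bar\nu_1$, and the local geometry is controlled by Serre--Tate theory rather than by the full quaternion order.

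First I would reduce the global intersection to a sum of local contributions at points in the special fiber above $\bar\nu_1$. Using the reduction map
$$E\cross\backslash \bb_f\cross / U \longrightarrow E\cross\backslash (N(F_v)\backslash \gl(F_v)) \times \bb_f^{v\times} / U$$
recalled just before the lemma, two distinct CM points $[\beta_1]_U$ and $[t_2]_U$ meet above $\bar\nu_1$ exactly when there is a $\gamma\in E\cross$ (representing an isogeny between the ordinary reductions that respects the CM structure) such that, away from $v$, the level structures match up to $U^v$, i.e.\ $\beta_1^{-1}\gamma t_2\in U^v$; and, at $v$, the image of $t_2^{-1}\gamma^{-1}\beta_1$ lies in the Bruhat cell $N(F_v)U_v/U_v$ where $m_{\bar\nu_1}$ is supported.

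Second I would identify the local intersection multiplicity at such a meeting point with $m_{\bar\nu_1}(t_2^{-1}\gamma^{-1}\beta_1)$. This is where \cite[Lemma 5.5.1]{Zh} enters: for $U_v = (1+p_v^r O_{\bv})^\times$, the Serre--Tate coordinate on the formal neighborhood of an ordinary point computes the length of the local intersection ring as $1/(N_v^{r-v(b)-1}(N_v-1))$ when the two canonical lifts are separated by a unipotent element $\matrixx{1}{b}{}{1}$ with $v(b)\le r-1$, which is precisely the value of $m_{\bar\nu_1}$ recalled above. Summing the contributions of all meeting points, and using that the stabilizer of a fixed CM point under left multiplication by $E\cross$ is exactly $\mu_U$ (since $\mu_U = F\cross\cap U$ coincides with $E\cross\cap U$ after reducing via $E\cross$, and the distinctness assumption $[\beta_1]_U\ne [t_2]_U$ forbids $\gamma$ with $\beta_1^{-1}\gamma t_2\in U$ globally), gives the stated expression.

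The main obstacle I anticipate is bookkeeping at $v$: one must check carefully that the multiplicity function $m_{\bar\nu_1}$ is indexed by $\gl(F_v)/U_v$ (rather than by a double coset), and that the decomposition $\bb_f\cross = E\cross \bb_v^v \cdot N(F_v)$ modulo $U$ is compatible with the Bruhat-style domain of $m_{\bar\nu_1}$. Once this is settled, the argument is a direct rewriting, paralleling \cite[Proposition 8.7]{YZZ} but with the quaternionic sum over $B\cross$ replaced by the toric sum over $E\cross$ reflecting that ordinary endomorphism rings are $O_E$-orders rather than quaternion orders.
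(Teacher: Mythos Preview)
Your overall strategy is correct in spirit, but there is a genuine gap in the counting of $\gamma\in E^\times$ that contribute. You assert that ``the stabilizer of a fixed CM point under left multiplication by $E^\times$ is exactly $\mu_U$'', justifying this by claiming that $\mu_U = F^\times\cap U$ coincides with $E^\times\cap U$. This is false in general: the stabilizer of $[t_2]_U$ under $E^\times$ is $E^\times\cap U$, and the index $[E^\times\cap U:\mu_U]$ is precisely the ramification index of the CM point, which need not be $1$ (for instance whenever $O_E^\times/O_F^\times$ is nontrivial and $U$ is maximal). Your direct geometric argument would therefore produce a sum over $(E^\times\cap U)\backslash E^\times$, not over $\mu_U\backslash E^\times$ as stated in the lemma, and you would be off by exactly this ramification factor.

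The paper handles this in two steps. First it proves the formula for $U'=U_vU'^v$ with $U'^v$ sufficiently small, where one can genuinely check $E^\times\cap U'=\mu_{U'}$ and your direct argument (one meeting point, one $\gamma$ up to $\mu_{U'}$) goes through. Second, for general $U$, it passes through the \'etale covering $\pi:X_{U'}\to X_U$ and the projection formula $i_{\bar\nu_1}([\beta_1]_U,[t_2]_U)=i_{\bar\nu_1}(\pi^{-1}[\beta_1]_U,[t_2]_{U'})$: writing $\pi^{-1}[\beta_1]_U=\frac{1}{[\mu_U:\mu_{U'}]}\sum_{u\in U^v/U'^v}[\beta_1 u]_{U'}$, one sums the small-level formula over $u$, collapses $\sum_u 1_{U'^v}(u^{-1}\beta_1^{-1}\gamma t_2)$ to $1_{U^v}(\beta_1^{-1}\gamma t_2)$, and absorbs the factor $[\mu_U:\mu_{U'}]^{-1}$ by reindexing from $\mu_{U'}\backslash E^\times$ to $\mu_U\backslash E^\times$. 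This covering argument is the missing idea in your proposal; without it, your bookkeeping at $v$ is not the obstacle --- the obstacle is the ramification of the CM point itself.
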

\begin{proof}
Denote the right-hand side by $i_{\bar \nu_1}(\beta_1, t_2)'$.
We first prove that $i_{\bar \nu_1}(\beta_1, t_2)=i_{\bar \nu_1}(\beta_1, t_2)'$
if $U^v$ is sufficiently small. 
In that case, by the local moduli of \cite{Zh}, 
 $i_{\bar \nu_1}(\beta_1, t_2)$ is nonzero only if there is $\gamma_0\in E\cross$ such that $\gamma_0 t_2^v U^v=\beta_1^v U^v$ and 
$t_2^{-1}\gamma_0^{-1}\beta_1\in N(F_v)U_v$. In this case, $i_{\bar \nu_1}(\beta_1, t_2)$ is equal to $m_{\bar\nu_1}(t_2^{-1}\gamma_0^{-1}\beta_1)$. 
Then it suffices to check that in the expression of $i_{\bar \nu_1}(\beta_1, t_2)'$, the summation has only one nonzero term which is exactly given by $\gamma=\gamma_0$. 
In fact, assume that $\gamma\in E^\times$ satisfies 
$$m_{\bar\nu_1}(t_2^{-1}\gamma^{-1}\beta_1) 1_{U^v}(\beta_1^{-1}\gamma t_2)\neq 0.$$
Write $\gamma=\gamma'\gamma_0$. Then the condition becomes
$$m_{\bar\nu_1}(\gamma'^{-1}t_2^{-1}\gamma_0^{-1}\beta_1) 1_{U^v}(\beta_1^{-1}\gamma_0 t_2\gamma')\neq 0.$$
It gives $\gamma'^{-1} N(F_v)U_v \subset N(F_v)U_v$ at $v$ and $\gamma'\in U^v$ outside $v$. 
The former actually implies $\gamma'\in U_v$. 
Then we have $\gamma'\in U\cap E\cross$. 
The condition that $U$ is sufficiently small implies that $U\cap E\cross=\mu_U$. 
In fact, $[U\cap E\cross:\mu_U]$ is exactly the ramification index of $[t_2]_U$. 
Hence, $\gamma=\gamma_0$ in $\mu_U\bs E\cross$. 
This proves the case that $U$ is sufficiently small. 

Now we extend the result to general $U$. 
Let $U'=U_vU'^v$ be an open compact subgroup of $\bbf$ with 
$U'^v\subset U^v$ normal. 
Assume that $U'^v$ is sufficiently small so that the lemma holds for $X_{U'}$. 
Consider the projection $\pi: X_{U'}\to X_{U}$. 
By the projection formula, we have
$$
i_{\bar\nu_1}([\beta_1]_U, [t_2]_U)
=i_{\bar\nu_1}(\pi^{-1}([\beta_1]_U), [t_2]_{U'}).
$$
To compute the right-hand side, we need to examine $\pi: X_{U'}\to X_{U}$ more carefully. 
By the right multiplication of $U$ on $X_{U'}$, it is easy to see that the Galois group of $X_{U'}\to X_U$ is isomorphic to $U/(U'\mu_U)$. 
It follows that 
$$
\pi^{-1}([\beta_1]_U)=\sum_{u \in U/(U'\mu_U)}[\beta_1 u]_{U'}
=\frac{1}{[\mu_U:\mu_{U'}]}\sum_{u \in U/U'}[\beta_1 u]_{U'}.
$$
We can further change the summation to $u \in U^v/U'^v$. 
Then
\begin{eqnarray*}
i_{\bar\nu_1}([\beta_1]_U, [t_2]_U)
&=& i_{\bar\nu_1}(\pi^{-1}([\beta_1]_U), [t_2]_{U'})\\
&=&\frac{1}{[\mu_U:\mu_{U'}]} \sum_{u\in U/U'} i_{\bar\nu_1}([\beta_1 u]_{U'}, [t_2]_{U'}) \\
&=& \frac{1}{[\mu_U:\mu_{U'}]} \sum_{u \in U^v/U'^v}  
\sum _{\gamma \in \mu_{U'}\bs E^{\times}}
m_{\bar\nu_1}(t_2^{-1}\gamma^{-1}\beta_1) 1_{U'^v}(u^{-1}\beta_1^{-1}\gamma t_2) 
\\
&=& \frac{1}{[\mu_U:\mu_{U'}]}  
\sum _{\gamma \in \mu_{U'}\bs E^{\times}}
m_{\bar\nu_1}(t_2^{-1}\gamma^{-1}\beta_1) 1_{U^v}(\beta_1^{-1}\gamma t_2) 
\\
&=& 
\sum _{\gamma \in \mu_{U}\bs E^{\times}}
m_{\bar\nu_1}(t_2^{-1}\gamma^{-1}\beta_1) 1_{U^v}(\beta_1^{-1}\gamma t_2).
\end{eqnarray*}
This finishes the general case. 
\end{proof}

Just like the other cases, the above summation is only well-defined
for $[\beta_1]_U\neq [ t_2]_U$. But we extend the definition to any 
$[\beta_1]_U$ and $[ t_2]_U$ by
\begin{eqnarray*}
i_{\bar \nu_1}(\beta_1,  t_2)
&=& \sum_{\gamma \in \mu_U\bs (E\cross-\beta_1 U  t_2^{-1})}
m_{\bar\nu_1}(t_2^{-1}\gamma^{-1}\beta_1) 1_{U^v}(\beta_1^{-1}\gamma  t_2)\\
&=& \sum_{\gamma \in \mu_U\bs (E\cross-\beta_1 U_v  t_2^{-1})}
m_{\bar\nu_1}(t_2^{-1}\gamma^{-1}\beta_1) 1_{U^v}(\beta_1^{-1}\gamma
 t_2). 
\end{eqnarray*}
It is equal to the original pairing if $[\beta_1]_U\neq [ t_2]_U$.

If $[\beta_1]_U= [t_2]_U$, then we can assume that $\beta_1=t_2$, a simple calculation taking advantage of the commutativity of $E^\times$ simply gives 
$$
i_{\bar \nu_1}(t_2,  t_2)=0, \quad \forall\ [t_2]_U\in C_U.
$$
So in this case, the definition does not give anything new. 

The results hold for $\nu_2$ by changing upper triangular matrices to lower triangular matrices. For example, 
the intersection multiplicity  $m_{\bar \nu_2}:\gl(F_v)/U_v\lra \QQ$
is supported on $N^t(F_v)U_v/U_v$ and given by 
$$
m_{\bar \nu_1}
\matrixx{1}{}{b}{1}=
\frac{1}{N_v^{r-v(b)-1}(N_v-1)}
$$
for $b\in F_v$ with $v(b)\leq r-1$. 
Then we also have a similar extension for $i_{\bar \nu_1}(\beta_1,  t_2)$. 

Passing to $\bar v$, we have 
$$
m_{\bar v}=\frac12(m_{\bar\nu_2}+m_{\bar\nu_2}), \quad
i_{\bar v}=\frac12(i_{\bar \nu_1}+i_{\bar \nu_2}). 
$$
Now we have the following result. 

\begin{pro} \label{ordinary height}
For any $t_1,t_2\in C_U$,
\begin{eqnarray*}
i_{\bar v}(Z_*(g,\phi)t_1,t_2)_{\rm proper}
=  \CN^{(v)}_{\phi}(g,(t_1,t_2)),
\end{eqnarray*}
where
\begin{eqnarray*}
\CN^{(v)}_\phi(g,(t_1,t_2))
&=&\sum_{u\in \mu_U^2\bs F\cross}\sum _{y \in E\cross}  r(g,(t_1,t_2))\phi^v (y, u)\ r(t_1,t_2)n_{r(g)\phi _v}(y, u),
\end{eqnarray*}
and
\begin{align*}
n_{\phi _v}(y, u)
=&\ \frac{1}{2} \sum_{x_v \in (N(F_v)U_v-U_v)/U_v} \phi_v(y x_v,u) \  m_{\bar \nu_1}(x) \\
&+ \frac{1}{2} \sum_{x_v \in (N^t(F_v)U_v-U_v)/U_v} \phi_v(y x_v,u) \  m_{\bar \nu_2}(x)
\end{align*}
for any $(y,u)\in E_v\cross\times F_v\cross$.
\end{pro}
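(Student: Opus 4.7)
The plan is to adapt the argument of Proposition \ref{supersingular height} to this split case, with two simplifications specific to the ordinary setting. First, since $v$ is split in $E$, the multiplicity function at $\bar\nu_1$ (and similarly $\bar\nu_2$) is by Lemma \ref{ordinary multiplicity} supported only on $E^\times$-translates, so there is no analogue of the sum over $B^\times - E^\times$ and consequently no $\CM^{(v)}$ term—only an $\CN^{(v)}$ term will appear. Second, as remarked after Lemma \ref{ordinary multiplicity}, the extended definition gives $i_{\bar v}(t_2,t_2)=0$ for every $t_2\in C_U$, which will let us identify the proper intersection with the unrestricted intersection at this place.

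First I would verify $i_{\bar v}(t_2,t_2)=0$ by substituting $\beta_1=t_2$ into the extended formula: for $\gamma\in \mu_U\bs (E^\times - t_2U_vt_2^{-1})$, the commutativity of $E^\times$ gives $t_2^{-1}\gamma^{-1}t_2=\gamma^{-1}\notin U_v$, so $m_{\bar\nu_1}(t_2^{-1}\gamma^{-1}t_2)=0$ (since $m_{\bar\nu_1}$ is supported on $N(F_v)U_v/U_v$ and vanishes on $U_v/U_v$, but more importantly $\gamma^{-1}\in E_v^\times$ lies in $N(F_v)U_v$ only if $\gamma\in U_v$, which has been excluded). The same works for $\nu_2$, giving $i_{\bar v}(t_2,t_2)=0$. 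Combined with the self-intersection identity
\[
i(Z_*(g)t_1,t_2)=i(Z_*(g)t_1,t_2)_{\rm proper}+\frac{\Omega_\phi(g,(t_1,t_2))}{[E^\times\cap U:\mu_U]}\,i_{\bar v}(t_2,t_2),
\]
applied locally at $\bar v$, it suffices to prove $i_{\bar v}(Z_*(g,\phi)t_1,t_2)=\CN^{(v)}_\phi(g,(t_1,t_2))$.

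Next I would expand using the defining formula
\[
Z_*(g,\phi)t_1=w_U\sum_{a\in F^\times}\sum_{x\in \bb_f^\times/U}r(g)\phi(x)_a\,[t_1x]_U
\]
and the extended version of Lemma \ref{ordinary multiplicity}, obtaining for each of $\nu_1,\nu_2$ a triple sum
\[
w_U\sum_{a\in F^\times}\sum_{x\in \bb_f^\times/U}r(g)\phi(x)_a\sum_{\gamma\in \mu_U\bs (E^\times - t_1xU_vt_2^{-1})}m_{\bar\nu_i}(t_2^{-1}\gamma^{-1}t_1x)\,1_{U^v}(x^{-v}t_1^{-v}\gamma t_2^v).
\]
Following the change-of-variable manipulation of the proof of Proposition \ref{supersingular height}—substitute $y=t_1^{-1}\gamma t_2\in E^\times$ and split $x=(x_v,x^v)$—the outside $U^v$-indicator collapses the prime-to-$v$ variable into $r(g,(t_1,t_2))\phi^v(y,u)$, while the local variable $x_v$ is absorbed into the definition of $n_{\phi_v}$. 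The exclusion $\gamma\notin t_1 x U_v t_2^{-1}$ translates, after the substitution, to $x_v\notin yU_v$, which is exactly the domain $(N(F_v)U_v - U_v)/U_v$ on which $m_{\bar\nu_1}$ is non-zero (and analogously $(N^t(F_v)U_v - U_v)/U_v$ for $\nu_2$). Averaging via $i_{\bar v}=\tfrac12(i_{\bar\nu_1}+i_{\bar\nu_2})$ gives the factor $1/2$ in the definition of $n_{\phi_v}$ and concludes the identification with $\CN^{(v)}_\phi$.

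The main technical point is bookkeeping the extended definition carefully: one has to check that the ``self-intersection row'' (terms with $\gamma\in E^\times$ making $t_1x\equiv \gamma t_2$ modulo $U$) is precisely what is excised by the condition $\gamma\notin t_1xU_vt_2^{-1}$, and that this exclusion at the level of the pseudo-theta series corresponds exactly to replacing the sum over all $x_v\in \bb_v^\times/U_v$ by the sum over $(N(F_v)U_v-U_v)/U_v$. Once that is set up cleanly the result follows from the same kind of rearrangement as in the nonsplit nonarchimedean case.
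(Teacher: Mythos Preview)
Your proposal is correct and follows essentially the same route as the paper's proof: observe $i_{\bar v}(t_2,t_2)=0$, reduce to $i_{\bar v}(Z_*(g,\phi)t_1,t_2)=\CN^{(v)}_\phi(g,(t_1,t_2))$, expand the generating series against the extended multiplicity formula, collapse the prime-to-$v$ part via the $U^v$-indicator, and use the support of $m_{\bar\nu_i}$ on $N(F_v)U_v/U_v$ (resp.\ $N^t(F_v)U_v/U_v$) to rewrite the local sum. One small notational slip: your substitution ``$y=t_1^{-1}\gamma t_2\in E^\times$'' is not quite right since $t_1,t_2$ are adelic, so $t_1^{-1}\gamma t_2\notin E^\times$; the paper keeps $\gamma\in E^\times$ as the summation variable and absorbs $(t_1,t_2)$ into the Weil-representation action $r(g,(t_1,t_2))\phi^v$ and the twisted evaluation $r(t_1,t_2)n_{r(g)\phi_v}$, which is what you should do as well.
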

\begin{proof}
Note that the extended intersection number $i_{\bar v}(t_2,t_2)=0$ automatically. 
It suffices to check
\begin{eqnarray*}
i_{\bar v}(Z_*(g,\phi)t_1,t_2)
=  \CN^{(v)}_{\phi}(g,(t_1,t_2)).
\end{eqnarray*}
The left-hand side is equal to
\begin{align*}
w_U \sum _{a\in F\cross}  \sum_{x \in \bb_f^\times /U} r(g)\phi(x)_a
\sum_{\gamma \in \mu_U\bs (E\cross-t_1x U_v t_2^{-1})} 
m_{\bar v}(t_2^{-1}\gamma^{-1}t_1x) 
1_{U^v}(x^{-1}t_1^{-1}\gamma t_2).
\end{align*}
By $1_{U^v}(x^{-1}t_1^{-1}\gamma t_2)=1$, we have $x^v \in t_1^{-1}\gamma t_2 U^v$; by
$\gamma \notin t_1x U_v t_2^{-1}$, we have $x_v\notin t_1^{-1}\gamma t_2 U_v$. Thus it becomes
\begin{align*}
 w_U  \sum _{a\in F\cross} \sum_{\gamma \in \mu_U\bs E\cross}  r(g)\phi^v(t_1^{-1}\gamma t_2)_a
\sum_{x_v \in (\bb_v^{\times}-t_1^{-1}\gamma t_2 U_v)/U_v} r(g)\phi_v(x_v)_a \  m_{\bar v}(t_2^{-1}\gamma^{-1}t_1x) .
\end{align*}
It remains to convert the last sum to the desired form, which is reduced to 
similar results for $\nu_1$ and $\nu_2$. 
We have
\begin{align*}
&\sum_{x_v \in (\bb_v^{\times}-t_1^{-1}\gamma t_2 U_v)/U_v} r(g)\phi_v(x_v)_a \  m_{\bar \nu_1}(t_2^{-1}\gamma^{-1}t_1x)\\
=&\sum_{x_v \in (t_1^{-1}\gamma t_2 N(F_v)U_v-t_1^{-1}\gamma t_2 U_v)/U_v} r(g)\phi_v(x_v)_a \  m_{\bar \nu_1}(t_2^{-1}\gamma^{-1}t_1x)\\
=&\sum_{x_v \in (N(F_v)U_v-U_v)/U_v} r(g)\phi_v(t_1^{-1}\gamma t_2 x_v)_a \  m_{\bar \nu_1}(x).
\end{align*}
A similar result holds for $\nu_2$.  
\end{proof}

\subsubsection*{Decomposition of the height series}

Finally, we end up with the following summary. 
\begin{thm} \label{height series}
Assume that Assumption \ref{assumption3} holds. 
Then for any $t_1,t_2\in C_U$,
\begin{align*}
Z(g, (t_1,t_2),\phi))_U
=& -\sum_{v\ \nonsplit} (\log N_v) \barint_{C_U}
\CM^{(v)}_{\phi}(g,(tt_1,tt_2)) dt\\
& -\sum_{v\nmid\infty} \CN^{(v)}_{\phi}(g,(t_1,t_2)) \log N_v-\sum_{v\nmid\infty} j_v(Z_*(g,\phi)t_1,t_2)\log N_v\\
& -\frac{i_0(t_2,t_2)}{[E\cross\cap U:\mu_U]} \Omega_\phi(g,(t_1,t_2)).
\end{align*}
The right-hand side is explained in the following. 
\begin{itemize}

\item[(1)]
The modified arithmetic self-intersection number
$$i_0(t_2,t_2)=i(t_2,t_2)- \sum_{v} i_v(t_2,t_2) \log N_v,$$
where the local term 
$$
i_v(t_2,t_2)=\barint_{C_U}  i_{\bar v}(tt_2,tt_2) dt
$$
uses the extended definition of $i_{\bar v}$.
\item[(2)]
The pseudo-theta series
\begin{eqnarray*}
\Omega_\phi(g,(t_1,t_2))
= \sumu \sum_{y \in
E\cross } r(g,(t_1,t_2))\phi(y,u).
\end{eqnarray*}

\item[(3)]
For any place $v$ non-split in $E$,
\begin{eqnarray*}
\CM^{(v)}_{\phi}(g,(t_1,t_2))
&=&w_U \sum_{a\in F\cross}  \quasilim \sum_{y \in \mu_U \backslash (B_+\cross-E\cross)}
r(g,(t_1,t_2))\phi(y)_a   m_s(y), \quad v|\infty, \\
\CM^{(v)}_\phi(g,(t_1,t_2))
&=&\sum_{u\in \mu_U^2\bs F\cross}\sum _{y\in B-E}  r(g,(t_1,t_2))\phi^v (y, u)\  m_{r(g,(t_1,t_2))\phi _v}(y, u),\quad v\nmid \infty.
\end{eqnarray*}

\item[(4)]
For any non-archimedean $v$,
\begin{eqnarray*}
\CN^{(v)}_\phi(g,(t_1,t_2))
&=&\sum_{u\in \mu_U^2\bs F\cross}\sum _{y \in E\cross}  r(g,(t_1,t_2))\phi^v (y, u)\ r(t_1,t_2)n_{r(g)\phi _v}(y, u),
\end{eqnarray*}
\end{itemize}
\end{thm}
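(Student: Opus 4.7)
The plan is to assemble Theorem \ref{height series} from the local decomposition already developed in \S\ref{local height}, by carefully tracking the self-intersection contributions that previously were killed by stronger assumptions in \cite{YZZ}.

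First I would recall that under Assumption \ref{assumption3}, the vanishing result \cite[Proposition 7.5]{YZZ} still applies to the pairings involving the Hodge class $\xi_t$, so that
\[
Z(g,(t_1,t_2),\phi)_U = \pair{Z_*(g,\phi)_U t_1,\, t_2}_\NT
= -\, i(Z_*(g)t_1, t_2) - j(Z_*(g)t_1, t_2),
\]
where $i$ and $j$ are the horizontal and vertical contributions in the decomposition of the Arakelov intersection number on $\CX_{U,O_H}$ (which is $\BQ$-factorial by Corollary \ref{system2}, so these intersection numbers make sense). The $j$-part immediately decomposes as $\sum_v j_v(Z_*(g)t_1,t_2) \log N_v$ and then, by Galois equidistribution, as the averaged integral over $C_U$, yielding the third line of the claimed formula.

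The heart of the proof is the treatment of the $i$-part. Here I would first extract the self-intersection piece explicitly: by the computation before Proposition \ref{archimedean height}, the multiplicity of $[t_2]_U$ inside $Z_*(g)t_1$ equals $\Omega_\phi(g,(t_1,t_2))/[E^\times\cap U:\mu_U]$, so that
\[
i(Z_*(g)t_1,t_2) = i(Z_*(g)t_1,t_2)_{\rm proper} + \frac{\Omega_\phi(g,(t_1,t_2))}{[E^\times\cap U:\mu_U]}\, i(t_2,t_2).
\]
For the proper part, the local-global decomposition gives $i(Z_*(g)t_1,t_2)_{\rm proper} = \sum_v i_v(Z_*(g)t_1,t_2)_{\rm proper}\log N_v$ with local components obtained by the Galois-averaging $i_v = \barint_{C_U} i_{\bar v}(\cdot)_{\rm proper}\, dt$. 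Now for each local place one applies the appropriate proposition from \S\ref{local height}: Proposition \ref{archimedean height} at archimedean $v$, Proposition \ref{supersingular height} at non-archimedean $v$ nonsplit in $E$, and Proposition \ref{ordinary height} at $v$ split in $E$ (where $i_{\bar v}(t_2,t_2)=0$ under the extended definition, so no $\CM^{(v)}$ and no extra self-intersection correction appear). Each of these propositions expresses the local proper intersection as $\CM^{(v)} + \CN^{(v)} - \frac{i_{\bar v}(t_2,t_2)}{[E^\times\cap U:\mu_U]} \Omega_\phi$ (omitting $\CM$ or $\CN$ when they do not occur).

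Finally I would collect the self-intersection corrections. Summing the local corrections $-\frac{i_{\bar v}(t_2,t_2)}{[E^\times\cap U:\mu_U]}\Omega_\phi$ over all places and weighting by $\log N_v$, together with the global contribution $-\frac{\Omega_\phi}{[E^\times\cap U:\mu_U]}\, i(t_2,t_2)$ coming from the self-intersection of $t_2$, produces exactly
\[
-\frac{\Omega_\phi(g,(t_1,t_2))}{[E^\times\cap U:\mu_U]}\Bigl(i(t_2,t_2) - \sum_v i_v(t_2,t_2)\log N_v\Bigr) = -\frac{i_0(t_2,t_2)}{[E^\times\cap U:\mu_U]}\Omega_\phi(g,(t_1,t_2)),
\]
which is the last line of the theorem. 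Sign conventions from $Z = -i - j$ account for the overall minus signs on the $\CM^{(v)}$, $\CN^{(v)}$, and $j_v$ sums.

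The main subtlety I expect is bookkeeping around the extended definition of $i_{\bar v}(t_2,t_2)$: one must check that the ``formal'' self-intersection implicitly defined by the pseudo-theta expansions in Propositions \ref{supersingular height} and \ref{ordinary height} agrees with the analogous summation that appears when using the standard trick of pulling back to a sufficiently small level $U'$ to realize $i_{\bar v}(t_2,t_2)$ as a proper intersection, so that $i_0(t_2,t_2)$ computed this way really is a well-defined arithmetic invariant independent of auxiliary choices. This is straightforward at the split places (where everything vanishes) but requires care at the nonsplit non-archimedean places and at the archimedean places, where the analytic continuation $\widetilde{\lim}_{s\to 0}$ of Legendre functions enters through $m_s(\gamma)$.
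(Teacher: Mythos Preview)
Your approach is correct and matches the paper's: the theorem is indeed just a bookkeeping summation of Propositions \ref{archimedean height}, \ref{supersingular height}, and \ref{ordinary height}, together with the self-intersection split and the $i$-$j$ decomposition.

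Two small points of divergence. First, the one ingredient the paper singles out as ``the only new information'' is the invariance
\[
\CN^{(v)}_{\phi}(g,(tt_1,tt_2))=\CN^{(v)}_{\phi}(g,(t_1,t_2)),
\]
which follows because the inner sum in $\CN^{(v)}$ runs over $y\in E^\times$ (and $E$ is commutative), so the $C_U$-average collapses; the same applies to $\Omega_\phi$. You did not mention this, and without it the $\CN^{(v)}$ term would appear under an average integral rather than bare as in the statement. Second, your announced ``main subtlety'' about realizing $i_{\bar v}(t_2,t_2)$ via pull-back to small level is not needed here: that argument belongs to Lemma \ref{self-int} in \S\ref{section arithmetic adjunction formula}, where $i_0(P,P)$ is related to $h_{\overline\CL_U}(P)$. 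For the present theorem, $i_0(t_2,t_2)$ is simply a \emph{definition} assembled from the extended local quantities, and no further justification is required.
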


The only new information used above is the identity 
$$
\barint_{C_U}
\CN^{(v)}_{\phi}(g,(tt_1,tt_2)) dt
=\CN^{(v)}_{\phi}(g,(t_1,t_2)).
$$
This follows from the invariance 
$$
\CN^{(v)}_{\phi}(g,(tt_1,tt_2))
=\CN^{(v)}_{\phi}(g,(t_1,t_2)),
$$
which in turn follows from the special situation that the summation only involves $y\in E^\times$  in the definition of $\CN^{(v)}_{\phi}$.

\subsection{Explicit local heights}

Let $(U,\phi, \jv, j_v)$ be as in \S \ref{choices}. 
The goal of this subsection is to compute $m_{\phi_v}(y,u)$ and $n_{\phi_v}(y,u)$, and treat $j_v(Z_*(g,\phi)t_1,t_2)$.
The results are parallel to those in \S \ref{explicit derivative}. 

\subsubsection*{Local intersection numbers}

\begin{lem}    \label{local intersection}
\begin{itemize}
\item[(1)]
Let $v$ be a non-archimedean place nonsplit in $E$.
For any $(y,u)\in (B(v)_v-E_v)\times \fvcross$,
\begin{align*}
& m_{\phi _v}(y, u)=
 \begin{cases}
\displaystyle  \phi_{v}(y_1,u) 1_{\oev j_v}(y_2)
\cdot \frac{1}{2} (v(q(y_2))+1),
  \quad & \BB_v \mathrm{\ split},\ E_v \mathrm{\ inert};\\
  \displaystyle  \phi_{v}(y_1,u) 1_{\oev j_v}(y_2)
\cdot \frac{1}{2} (v(q(y_2))+v(D_v)),
  \quad & \BB_v \mathrm{\ split},\ E_v \mathrm{\ ramified};\\
\displaystyle  \phi_{v}(y_1,u) 1_{\oev j_v}(y_2)
\cdot \frac{1}{2} v(q(y_2)),
  \quad & \BB_v \mathrm{\ nonsplit}.
  \end{cases}
\end{align*}

\item[(2)]
Let $v$ be a non-archimedean place of $F$.
For any $(y,u)\in E_v\cross\times \fvcross$,
\begin{align*}
 n_{\phi _v}(y, u)=  \phi_{v}(y,u)\cdot \frac{1}{2} v(q(y)).
\end{align*}

\end{itemize}
\end{lem}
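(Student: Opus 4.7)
The plan is to compute $m_{\phi_v}$ and $n_{\phi_v}$ by a case analysis on the splitting of $v$ in $E$ and in $\BB$, exploiting the orthogonal decomposition $B(v)_v = E_v + E_v j_v$, the explicit form of $\phi_v$ from \S \ref{choices}, and the known local formula for the multiplicity $m(y, x^{-1})$: Gross's formula on the Lubin--Tate tower in the supersingular case ($\BB_v$ split and $v$ nonsplit in $E$), the $p$-adic upper half plane formula of \v Cerednik--Drinfeld in the superspecial case ($\BB_v$ nonsplit, hence $E_v/F_v$ inert by the standing hypothesis), and the Serre--Tate analysis in the ordinary case (already recorded in Lemma \ref{ordinary multiplicity}).

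For $m_{\phi_v}$ in the supersingular case I would write $x = x_1 + x_2 j_v$ with $x_i \in E_v$ and substitute $\phi_v = 1_{O_\bv} \otimes 1_{\ofv^\times}$. The combined support condition on $\phi_v(x, uq(y)/q(x))$ and the non-vanishing of $m(y, x^{-1})$ force $x_1$ to run over translates of $y_1$ modulo $O_{E_v}$, producing the factor $\phi_v(y_1, u)$, while $x_2 j_v$ runs over a fixed-norm shell in $O_{E_v} j_v$; summing the multiplicity over these shells and grouping by $v(q(x_2))$ yields $\tfrac12(v(q(y_2))+1)$ when $E_v/F_v$ is inert and $\tfrac12(v(q(y_2))+v(D_v))$ when $E_v/F_v$ is ramified, the shift by $v(D_v)$ being forced by Lemma \ref{orders} via the inclusions $D_v O_\bv \subset O_{E_v} + O_{E_v} j_v \subset O_\bv$. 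In the superspecial case the change $\phi_v = 1_{O_\bv^\times} \otimes 1_{\ofv^\times}$ further restricts $x$ to units of $O_\bv$, which combined with $v(q(\jv)) = 1$ forces $v(q(x_2)) \ge 1$ and hence the support condition $y_2 \in p_v O_{E_v} j_v$; the sum then collapses to $\tfrac12 v(q(y_2))$.

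For $n_{\phi_v}$ the strategy is parallel, but the summation range $\bv^\times - y U_v$ excludes the orbit responsible for the self-intersection. When $v$ is nonsplit in $E$, removing this orbit is exactly what cancels the ``$+1$'' or ``$+v(D_v)$'' boundary contribution of the previous computation, leaving the clean formula $\tfrac12 v(q(y)) \phi_v(y,u)$. When $v$ is split in $E$ and $v \notin S_2$, I would apply Proposition \ref{ordinary height} together with the explicit formula $m_{\bar\nu_i}\bigl(\bigl(\begin{smallmatrix} 1 & b \\ 0 & 1 \end{smallmatrix}\bigr)\bigr) = N_v^{-(r-v(b)-1)}(N_v-1)^{-1}$ (and its $\bar\nu_2$-analogue): writing $y = (a_1, a_2) \in F_v \oplus F_v = E_v$, the two sums over $N(F_v)U_v/U_v$ and $N^t(F_v)U_v/U_v$ become truncated geometric series of lengths $v(a_1)$ and $v(a_2)$, which add up to $\tfrac12 v(a_1 a_2) = \tfrac12 v(q(y))$. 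For $v \in S_2$ the same argument applies, with an extra short check that the corrective term $-\frac{1}{1+N_v+N_v^2}\, 1_{\varpi_v^{-1}(O_\bv)_2 \times \ofv^\times}$ in $\phi_v$ contributes zero, parallel to the vanishing of $c_{\phi_v}$ at $v \in S_2$ observed in Lemma \ref{derivative of intertwining}.

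The main obstacle is the careful bookkeeping in the ramified-$E_v$ subcase, where one must simultaneously track the chain $D_v O_\bv \subset O_{E_v} + O_{E_v} j_v \subset O_\bv$, the behavior of $q(y-x)$ at high ramification, and the self-dual measure on $E_v j_v$. In particular, one must verify that the ``ramified tail'' region $D_v^{-1}O_{E_v} - O_{E_v}$ on which the function $\alpha_v$ of Lemmas \ref{derivative of Whittaker function} and \ref{derivative of intertwining} is supported does \emph{not} contribute to either $m_{\phi_v}$ or $n_{\phi_v}$, so that the geometric side is genuinely cleaner than the analytic side. This asymmetry is not a coincidence: it is precisely what the pseudo-theta series machinery of \S \ref{section pseudo} will exploit when $\CD(g,\phi)$ is shown to be automorphic and the resulting cancellation is applied in the proof of Theorem \ref{quaternion main}.
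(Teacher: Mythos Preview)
Your plan matches the paper's proof: the same case split, the same inputs (Gross's canonical lifting formula for the supersingular case, the \v Cerednik--Drinfeld uniformization via Lemma \ref{superspecial multiplicity} for the superspecial case, and Proposition \ref{ordinary height} for the ordinary case), and the same organization of the $n_{\phi_v}$ sum as the $m_{\phi_v}$ sum with the $c=0$ term omitted.

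Two small corrections to your sketch. First, in the superspecial case your description of $n_{\phi_v}$ is not quite right: there is no ``boundary cancellation'' there. By Lemma \ref{superspecial multiplicity} the multiplicity $m(y,x^{-1})$ is supported where $v(q(x)/q(y))=0$, while the excluded set $x\in yU_v$ is exactly this support since $\bb_v^\times/U_v\cong\BZ$; hence $n_{\phi_v}(y,u)=0$ identically, which agrees with the stated formula because $\phi_v=1_{O_\bv^\times\times\ofv^\times}$ forces $v(q(y))=0$ on its support. Second, and more substantively, the paper flags that the multiplicity formula in \cite[Lemma 8.6]{YZZ} (the supersingular case, coming from \cite[Lemma 5.5.2]{Zh}) is wrong when $E_v/F_v$ is ramified: the correct value at $c=0$ is $\tfrac12 v(D_v\lambda(b))$, not what is printed there. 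If you simply quote \cite{YZZ} you will not get the $v(D_v)$ shift in the ramified case; you must either rederive the formula from Gross's canonical liftings or use the corrected version given in the present paper. Your remark that the shift ``is forced by Lemma \ref{orders}'' points toward the right phenomenon but does not substitute for the actual computation of $m(b,\beta)$.
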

\begin{proof}
If $v$ is nonsplit in $E$, by Proposition \ref{supersingular height}, 
\begin{align*}
m_{\phi _v}(y, u) =& \sum_{x\in \bb^{\times}_v/U_v} m(y, x^{-1}) \phi_v(x, uq(y)/q(x)),
\quad (y, u)\in (B_v-E_v) \times F_v ^\times; \\
n_{\phi _v}(y, u) =& \sum_{x\in (\bb^{\times}_v-y U_v)/U_v} m(y, x^{-1}) \phi_v(x, uq(y)/q(x)),
\quad (y, u)\in E^{\times}_v \times F_v ^\times.
\end{align*}

If $v$ is nonsplit in $E$ and split in $\BB$, then (1) is computed in \cite[Proposition 8.7]{YZZ}, except that there is a mistake in the case that $E_v$ is wildly ramified over $F_v$. 
The mistake came from \cite[Lemma 8.6]{YZZ}, which was in turn caused by the wrong formula of \cite[Lemma 5.5.2]{Zh}. 
As a digression, we remark that the mistake did not impact the main result of \cite{YZZ} because the result in this case was not used in the book elsewhere. 

The correct version of \cite[Lemma 8.6]{YZZ} is as follows. 
The multiplicity function $m(b,\beta)\neq 0$ only if $q(b)q(\beta)\in
\ofv\cross$. In this case, assume that
$\beta \in E_v^\times h_c\GL_2(O_{F_v})$. Then
$$m(b, \beta)=\begin{cases}
\frac 12(v(\lambda(b))+1) &\text{if $c=0$, $E_v/F_v$ is unramified;} \\
\frac 12 v(D_v\lambda(b)) &\text{if $c=0$, $E_v/F_v$ is ramified;} \\
N_v^{1-c}(N_v+1)^{-1}&\text{if $c>0$, $E_v/F_v$ is unramified;} \\
\frac 12 N_v^{-c}&\text{if $c>0$, $E_v/F_v$ is ramified.}
\end{cases}
$$
Only the second case is different, and it can be verified
by going back to the canonical lifting of Gross \cite{Gr1}. 
Then it is easy to have the correct formula (1) of the current case. 

If $v$ is nonsplit in $E$ and split in $\BB$, then (2) can be verified by the same method as in \cite[Proposition 8.7]{YZZ}, where the only difference is that 
$$n_{\phi_v}(y, u)=\sum _{c=1}^\infty  m(y^{-1}, h_c)\vol (E_v^\times h_c\GL_2(O_{F_v})\cap M_2(O_{F_v})_n)$$
is a sum omitting $c=0$.

If $v$ is inert in $E$ and nonsplit in $\BB$, by Lemma \ref{superspecial multiplicity}, 
$$m(y, x^{-1})
=\frac 12 v(\lambda(y))\ 1_{E_v^\times  (1+O _{E_v}\varpi_v j_v)}(y)1_0(v(q(x)/q(y))).$$
It follows that
\begin{align*}
m_{\phi _v}(y, u)
=& \frac 12 v(\lambda(y))\ 1_{E_v^\times (1+O _{E_v}\varpi_v j_v)}(y) \sum_{x\in \bb^{\times}_v/U_v} 1_0(v(q(x)/q(y))) \phi_v(x, uq(y)/q(x)).
\end{align*}
Note that $\bb^{\times}_v/U_v\cong \ZZ$. It is easy to get (1). 
For (2), since the conditions $x\notin yU_v$ and $1_0(v(q(x)/q(y)))$ are contradictory, we get $n_{\phi _v}(y, u)=0$.

If $v$ is split in $E$, in the setting of Proposition \ref{ordinary height},
\begin{align*}
n_{\phi _v}(y, u)
=&\ \frac{1}{2} \sum_{x_v \in (N(F_v)-N(O_{F_v}))/N(O_{F_v})} \phi_v(y x_v,u) \  m_{\bar \nu_1}(x) \\
&+ \frac{1}{2} \sum_{x_v \in (N^t(F_v)-N^t(O_{F_v}))/N^t(O_{F_v})} \phi_v(y x_v,u) \  m_{\bar \nu_2}(x).
\end{align*}
We first consider the case $v\notin S_2$. 
Then $\phi_v$ is the standard characteristic function. 
Write $y=\matrixx{a}{}{}{d}$. 
The summations are nonzero only if $a,d\in O_{F_v}$ and $u\in \ofv^\times$, which we assume.
For the first sum, write $x_v=\matrixx{1}{b}{}{1}$. 
Then we need $ab\in \ofv$. Eventually, the first sum becomes
\begin{align*}
 \sum_{b \in (a^{-1}O_{F_v}-O_{F_v})/O_{F_v}} 
\frac{1}{N_v^{-v(b)-1}(N_v-1)}  
= \sum_{i=1}^{v(a)} 
\frac{|(p_v^{-i}-p_v^{-i+1})/O_{F_v}|}{N_v^{i-1}(N_v-1)}  
=v(a). 
\end{align*}
Similarly, the second sum equals $v(d)$. 
Then 
$$
n_{\phi _v}(y, u)
= \frac{1}{2}(v(a)+v(d))=\frac{1}{2}v(q(y)).
$$
This finishes the proof for $v\notin S_2$. 
If $v\in S_2$, the computation is similar, and we will get everywhere 0. 
\end{proof}

\subsubsection*{Multiplicity function: superspecial case}

Let $v$ be non-archimedean place nonsplit in $\BB$ and inert in $E$. 
Recall that the multiplicity function $m$ is defined on
$$\gh_{U_v}= B(v)^{\times}_v\times_{E^\times_v}\bb_v^\times/U_v.$$
Note that we have assumed that $U_v$ is maximal. The following result does not need any restriction on $U^v$.

\begin{lem} \label{superspecial multiplicity}
For any $(\gamma, \beta)\in B(v)^{\times}_v\times_{E^\times_v}\bb_v^\times$, we have $m(\gamma, \beta)\neq 0$ only if $q(\gamma )q(\beta)\in \ofv\cross$ and $\gamma\in E_v^\times \cdot (1+O _{E_v}\varpi_v j_v)$. In this case, 
$$
m(\gamma, \beta)
=\frac 12v(\lambda(\gamma)).
$$
Here $\lambda(\gamma)=q(\gamma_2)/q(\gamma)$, where $\gamma=\gamma_1+\gamma_2$ is the decomposition according to $B_v=E_v+E_vj_v$. 
\end{lem}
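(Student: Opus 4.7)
The plan is to reduce the computation to deformation theory of formal $O_{\BB_v}$-modules via the \v Cerednik--Drinfeld uniformization, which was set up in Theorem \ref{H-integrality} and is the basic input describing $\CX_U$ at the nonsplit place $v$. Since $v$ is nonsplit in $\BB$, the formal completion of $\CX_U\otimes O_{F_v}^{\ur}$ along its special fiber is isomorphic to a quotient of Drinfeld's formal half plane, which is the universal deformation space of a special formal $O_{\BB_v}$-module $\CH_0$ of height $4$ over $\bar k$, with $\End^\circ(\CH_0)=B(v)_v$. The whole special fiber is superspecial, so every closed point contributes to the multiplicity function. A pair $(\gamma,\beta)$ represents the relative position of two CM divisors whose local reductions agree at some superspecial point; the multiplicity $m(\gamma,\beta)$ equals the largest $n$ such that $\gamma$ lifts, as a quasi-endomorphism translated by $\beta$, to the universal deformation over a length-$n$ artinian quotient. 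This translation is parallel to \cite[Ch.~8]{YZZ}, with Gross's canonical lifting replaced by its Drinfeld analogue.

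First I would verify the support condition. The class $\beta\in\bb_v^\times/U_v$ selects a lattice in the rational Tate module of $\CH_0$, and for $\gamma$ to preserve such a lattice we need $q(\gamma)q(\beta)\in\ofv^\times$, which gives the first half of the support assertion. Next, use $O_{\BB_v}=O_{E_v}+O_{E_v}\jv$ with $\jv x\jv^{-1}=\bar x$ and $v(q(\jv))=1$ to decompose $\gamma=\gamma_1+\gamma_2$ with $\gamma_1\in E_v$, $\gamma_2\in E_v\jv$. The embedded $O_{E_v}\hookrightarrow O_{\BB_v}$ picks out the Serre--Tate canonical lifting, so the $E_v$-part $\gamma_1$ lifts automatically, and the entire obstruction to lifting $\gamma$ comes from $\gamma_2$. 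A standard computation in Drinfeld coordinates shows that $\gamma_2$ lifts to order $n$ precisely when $\gamma_2$ acts trivially modulo the appropriate power of the maximal ideal, which forces $\gamma\in E_v^\times(1+O_{E_v}\varpi_v\jv)$ and expresses $n$ as a linear function of $v(\gamma_2)$.

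Then I would pin down the exact constant. Modulo $E_v^\times$, the obstruction depends only on $\gamma_2/\gamma_1$; in terms of the invariant $\lambda(\gamma)=q(\gamma_2)/q(\gamma)$, which is $E_v^\times$-invariant by construction, the relation $v(q(\jv))=1$ gives $v(q(\gamma_2))=2v(\gamma_2/\gamma_1)+v(q(\gamma_1))+1$ and $v(q(\gamma))=v(q(\gamma_1))$ when $v(q(\gamma_2))>v(q(\gamma_1))$, so that $\tfrac12 v(\lambda(\gamma))$ is exactly the integer measuring the depth to which $\gamma_2$ lifts. Matching this depth with the intersection multiplicity at a superspecial point finishes the formula.

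The main obstacle is the precise deformation-theoretic calculation on the Drinfeld formal module: one must verify that the correct lifting obstruction for a quasi-endomorphism of the form $\gamma_2\jv$ is $\gamma_2$-adic (rather than involving more subtle invariants), and that the normalization matches the intersection multiplicity as used in the statement. This is where the parity contrast with the split-quaternion case of \cite[Prop.~8.7]{YZZ}---where $v(q(\jv))=0$ and the shift by $v(D_v)$ occurs instead---becomes important, and it is the place where one has to be most careful to avoid errors of the type noted after Lemma \ref{local intersection}.
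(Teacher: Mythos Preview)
Your deformation-theoretic strategy differs from the paper's proof, which explicitly avoids deformation theory and computes directly on the Drinfeld formal scheme $\wh\Omega$. The paper uses the \v Cerednik uniformization to identify the CM points with $B(v)_v^\times$-translates of the unique $E_v^\times$-fixed point $z_0\in\Omega(E_v)$; since the irreducible components of the special fiber of $\wh\Omega$ are indexed by $\GL_2(F_v)/F_v^\times\GL_2(O_{F_v})$, one has $m(\gamma)\neq 0$ only when $\gamma\in F_v^\times\GL_2(O_{F_v})$, and after stripping the other components the problem becomes an elementary intersection of two sections on $\BP^1_{O_{E_v}}$. Concretely, one seeks the largest $n$ such that the image of $\gamma$ in $\GL_2(O_{F_v}/p_v^n)$ lies in $(O_{E_v}/p_v^n)^\times$; writing $\gamma=a+bj_v$ with $q(j_v)\in O_{F_v}^\times$, this is exactly $v(b)$. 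No formal-module lifting theory enters, and the obstacle you anticipate in the final paragraph is bypassed entirely.

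Your proposal also contains a concrete confusion that would derail the constant. You decompose $\gamma$ inside $O_{\BB_v}=O_{E_v}+O_{E_v}\jv$ with $v(q(\jv))=1$, but $\gamma$ lives in $B(v)_v^\times$, not in $\BB_v^\times$. Here $\BB_v$ is the division algebra (giving the $O_{\BB_v}$-module structure on $\CH_0$), while $B(v)_v\cong M_2(F_v)$ is its quasi-endomorphism algebra, and the decomposition $\gamma=\gamma_1+\gamma_2$ in the statement is taken in $B(v)_v=E_v+E_vj_v$ with $v(q(j_v))=0$. With your value $v(q(\jv))=1$ the invariant $v(\lambda(\gamma))$ would be odd and $\tfrac12 v(\lambda(\gamma))$ a half-integer, contradicting the formula. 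A corrected Gross-style deformation argument could in principle succeed, but you would first have to justify that the intersection multiplicity on $\wh\Omega$ coincides with the lifting depth at a single closed point (the two CM sections need not a priori reduce to the same closed point, only to the same component); the paper's $\BP^1$ computation is shorter and handles this automatically.
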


Instead of deformation theory, our proof uses directly the theorem of $p$-adic uniformization of \v Cerednik \cite{Ce}. See also \cite{BC}.

Write $B=B(v)$ for simplicity. 
Denote by $F_v^\ur$ the completion of the maximal unramified extension of $F_v$, and $\BC_v$   the completion of the algebraic closure of $F_v$.
The $p$-adic uniformization in terms of rigid-analytic space is
$$X_{U}^\an\times_{F_v} F_v^\ur =B^{\times}\bs (\Omega \times_{F_v} F_v^\ur)\times \BB_f\cross/U.$$
Here $\Omega$ is the Drinfe'ld (rigid-analytic) upper half plane over $F_v$, which gives 
$\Omega (\BC _v) =\BC _v-F_v$. 
The group $B_v\cross\cong \GL
_2(F_v)$ acts on $\Omega$ by the fractional linear transformation, and on 
$\bb_v^\times/U_v\cong \ZZ$ via translation by $v\circ q=v\circ\det$.

To study the intersection multiplicity, we need the integral version of the uniformization.  The uniformization theory also gives a canonical integral model $\wh \Omega$ of $\Omega$. It is a formal scheme over $O_{F_v}$ obtained from successive blowing-up of rational points on the special fiber of $\BP _{O_{F_v}}$ constructed by Deligne. 
The uniformization takes the form:
$$\wh\CX_{U}\times_{\Spf\, O_{F_v}} \Spf\, O_{F_v^\ur} =B^{\times}\bs 
(\wh\Omega \times_{\Spf\, O_{F_v}} \Spf\, O_{F_v^\ur})\times \BB_f\cross/U.$$
Here $\CX_U$ is the canonical integral model over $O_F$, which is semistable at $v$, and $\wh\CX_U$ denotes the formal completion along the special fiber above $v$. 

The special fiber of $\wh \Omega$, or equivalently the underlying topological space of $\wh \Omega$, is a union of $\BP^1$'s indexed by scalar equivalence
classes of $O_{F_v}$-lattices of $F_v^2$.  
Then its irreducible
components are indexed by
$$\GL _2(F_v)/F_v^\times \GL _2(O_{F_v}).$$
It follows that the irreducible components of the special fiber of $\CX_{U}$ above $v$ are indexed by
$$B^{\times}\bs (\GL _2(F_v)/F_v^\times \GL_2(O_{F_v})) 
\times \BB_f^{\times}/U.$$

Consider the set 
$$\mathrm{CM}_U=E^\times \bs B\cross(\BA_f)/U=B^{\times}\bs (B^{\times}\times _{E^\times }\bb_v\cross/U_v)
\times B^{\times}(\BA_f^v)/U^v.$$ 
The natural embedding $\mathrm{CM}_U\to X_U(\BC_v)$ is given by the embedding
$$B^{\times}\times _{E^\times }\bb_v\cross/U_v\lra \Omega \times \BZ, \quad (\gamma, \beta)\longmapsto (\gamma z_0, v(q(\gamma )q(\beta))),$$
where $z_0\in \Omega (E_v)$ is the unique point in $\Omega (\BC_v)$
fixed by $E_v^\times$. Thus the CM-points on $\Omega$ are given by
$$\gh_{U_v}^\circ=\left\{(\gamma , \beta)\in B_v\cross\times _{E_v\cross}\bb_v\cross/U_v: v(q(\gamma )q(\beta))=0\right\}.$$
As $U_v$ is maximal, the class of $(\gamma , \beta)$ in $\gh _{U_v}^\circ$ it determined by $\gamma $. Thus $\gh_{U_v}^\circ$ can be identified with
$$B_v\cross/E_v^\times =B_v\cross z_0.$$ 
Then we have a multiplicity function $m$ on $B_v\cross/E_v^\times$ such that
$$m(\gamma , \beta)=m(\gamma )1_0(v(q(\gamma )q(\beta))), \qquad \gamma \in B_v^{\times},\ \beta\in \bv\cross.$$
The problem is reduced to compute $m(\gamma)$, which is the intersection number of $z_0$ with $\gamma z_0$ on the special fiber. 

The intersection number is on 
$\wh\Omega \times_{\Spf\, O_{F_v}} \Spf\, O_{F_v^\ur}$. 
Since the irreducible components of its special fiber are indexed by
$\GL_2(F_v)/F_v^\times \GL_2(O_{F_v})$,
we see that $m(\gamma)$ is nonzero only if $\gamma$ lies in $\GL_2(F_v)/F_v^\times \GL_2(O_{F_v})$. 
Then we can assume that $\gamma \in \GL_2(O_{F_v})$, since the center acts trivially on $z_0$. 

By the assumption, $z_0$ and $\gamma z_0$ reduce to the same irreducible component on the special fiber of $\wh\Omega\times_{\Spf\, O_{F_v}} \Spf\, O_{F_v^\ur}$. 
Remove the other irreducible components of $\wh\Omega\times_{\Spf\, O_{F_v}} \Spf\, O_{F_v^\ur}$. We obtain a formal scheme, which is just the formal completion of  
$\BP^1_{O_{F_v^\ur}}-\BP^1(k_v)$ along the special fiber. 
Here $k_v$ denotes the residue field of $O_{F_v}$, and the $k_v$-points on the special fiber are removed.
Now the problem is elementary: $z_0$ and $\gamma z_0$ are points of 
$\BP^1_{O_{F_v^\ur}}$, and the goal is to find their intersection number on the special fiber. 
We further replace $\BP^1_{O_{F_v^\ur}}$ by $\BP^1_{O_{E_v}}$, which does not change the intersection number. 

The point $z_0\in \BP^1(O_{E_v})$ corresponds to an $O_{F_v}$-linear isomorphism $\ell_0: O_{F_v}^2\to O _{E_v}$, which is determined by $z_0$ up to 
$O_{E_v}^\times$-action. 
Then $\gamma z_0$ corresponds to the isomorphism 
$\ell_0\circ \gamma: O_{F_v}^2\to O _{E_v}$.
We need to find the maximal integer $n$ such that $\ell_0$ and $\ell_0\circ \gamma$ reduce to the same point in $\BP^1(O_{E_v}/p_v^n)$.
Identify $E_v$ with $F_v^2$ by $\ell_0$, so that $M_2(F_v)$ acts on $E_v$.
The action is compatible with the embedding $E\hookrightarrow B(v)$ we specify at the very beginning because $z_0$ is the fixed point of $E_v^\times$. 
Then the problem becomes finding the maximal integer $m$ such that the image of $\gamma$ in $\gl(O_{F_v}/p_v^n)$ actually lies in $(O_{E_v}/p_v^n)^\times$. 

Write $\gamma=a+bj_v$ according to the orthogonal decomposition $M_2(F_v)=E_v+E_vj_v$. Here $q(j_v)\in O_{F_v}^\times$ by assumption.
Some $O_{E_v}^\times$-multiple of $j_v$ acts on $E_v$ by the nontrivial element of $\Gal(E_v/F_v)$. 
Hence, $m(\gamma)\neq 0$ only if $a\in O_{E_v}^\times$ and $b\in p_vO_{E_v}$. In that case, $m(\gamma)=v(b)$. 

Go back to an arbitrary $\gamma\in \gl(F_v)$. We have 
$m(\gamma)\neq 0$ only if 
$\gamma\in E_v^\times \cdot (1+O _{E_v}\varpi_v j_v)$.
In that case, 
$m (\gamma)=v(\lambda(\gamma))/2.$

\subsubsection*{The $j$-part}

If $v$ is a non-archimedean place of $F$ split in $\BB$, then the $j$-part $j_{v}(Z_*(g,\phi)t_1,t_2)=0$ automatically. 
This is a trivial consequence of the fact that the special fiber of $X_U$ at $v$ is a disjoint union of irreducible curves. 
For the fact, in the construction of $\CX_U$ before Corollary \ref{system2}, we can take the prime $p$ to be coprime to $v$, then $\CX_{U'}$ is smooth at $v$. 
The special fiber of $\CX_{U'}$ at $v$ is a disjoint union of irreducible curves, and
the quotient $\CX_U$ has the same property since it is also a quotient of the underlying topological space. 

In the following, assume that $v$ is a non-archimedean place nonsplit in $\BB$ and inert in $E$.  
Note that $U_v$ is maximal and $\phi_v=1_{O_\bv^\times\times\ofv\cross}$. 
It is proved that the $j$-part $j_{v}(Z_*(g)t_1,t_2)$ is a non-singular pseudo-theta series in \cite{YZZ} under \cite[Assumption 5.3]{YZZ}. 
The result is also true in the current situation. 
Recall that
$$j_{v}(Z_*(g)t_1,t_2)=\barint_{C_U}  j_{\bar v}(Z_*(g) tt_1,  tt_2)dt.$$
The integration is a finite sum, so it suffices to prove the same result for 
$j_{\bar v}(Z_*(g)t_1,t_2)$.

\begin{lem} \label{superspecial j}
Let $v$ be a non-archimedean place nonsplit in $\BB$ and inert in $E$.  
The $j$-part $j_{\bar v}(Z_*(g,\phi)_Ut_1,t_2)$ is a non-singular pseudo-theta series of the form 
$$
\sum_{u\in \mu_U^2\backslash F\cross} \sum_{y\in B(v)-\{0\}}
  r(g) \phi^v(y,u)\ l_{r(g)\phi_v}(y,u).
$$
\end{lem}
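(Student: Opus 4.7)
The plan is to adapt the approach of \cite[\S 8.5]{YZZ} to the present setting, with extra care since we no longer impose the degeneracy Assumption 5.3 of \cite{YZZ}. By the \v Cerednik--Drinfel'd uniformization recalled before Lemma \ref{superspecial multiplicity}, the special fiber of $\CX_U$ at $v$ becomes, after base change to $O_{F_v^{\ur}}$, a union of $\BP^1$'s indexed by
$$\mathcal{C}_U \ :=\ B(v)^\times \bs \bigl(\GL_2(F_v)/F_v^\times \GL_2(O_{F_v})\bigr) \times \BB_f^{v,\times}/U^v.$$
Each CM point $[\beta]_U\in \mathrm{CM}_U$ reduces to a single component since $E_v^\times$ fixes the point $z_0\in \Omega(E_v)$ and thus the corresponding component. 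Write $C(\beta)\in \mathcal{C}_U$ for that component.

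First, I would use the definition of the admissible extension to write
$$j_v(Z_*(g,\phi)_Ut_1,t_2) \ =\ \sum_{C\in \mathcal{C}_U}\, m_C\bigl(Z_*(g,\phi)_U\, t_1^\circ\bigr)\cdot 1_{C(t_2)}(C),$$
where $m_C(D)$ is determined by the degrees of $D$ on the geometrically connected components of the special fiber via the admissibility equations (cf.\ \cite[\S 7.1.6]{YZZ}). Then I would expand
$$Z_*(g,\phi)_Ut_1 \ =\ w_U\sum_{a\in F^\times}\sum_{x\in U\bs \BB_f^\times/U} r(g)\phi(x,a/q(x))\, [t_1 x]_U,$$
substitute its contribution to each $m_C$, and change variables by setting $y = t_1 x t_2^{-1}$, which one recognizes (as in Proposition \ref{supersingular height}) as running over an orbit in $B(v)^\times$ under the embedding $B(v)^\times \hookrightarrow \BB^\times$. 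After this substitution, the sum factorizes into a product of the $v$-component and the outside-$v$ component. Following the manipulations of \cite[Prop.\ 8.4]{YZZ} one identifies the outside-$v$ part with $r(g)\phi^v(y,u)$ and collects the remaining $v$-part into a local factor
$$\ell_{\phi_v}(g,y,u) \ =\ \sum_{x_v\in \BB_v^\times/U_v} r(g)\phi_v(x_v,uq(y)/q(x_v))\cdot \lambda_v(y,x_v),$$
where $\lambda_v(y,x_v)$ is the contribution to $m_C$ of an element with given local data; its explicit computation will use the admissibility equations on the relative Mumford model, the valuation structure of $q$ on $B(v)_v$, and the formula of Lemma \ref{superspecial multiplicity}.

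Second, I would verify the pseudo-theta axioms of \S \ref{section pseudo} for the resulting series, with $V=V_1=B(v)$ and $V_0=\{0\}$. Smoothness and $U\times U$--invariance are immediate from the construction; $\mu_U$--invariance follows from that of $\phi$; and bounded support in $(y_v,u_v)$ for fixed $g_v$ follows since $\phi_v=1_{O_\BB^\times\times O_{F_v}^\times}$ and the admissibility correction only redistributes weight among finitely many neighboring components of any given CM point.

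The main obstacle is to check non-singularity, i.e.\ that the local factor $\ell_{\phi_v}(g,\cdot,\cdot)$ extends from $(B(v)_v-\{0\})\times F_v^\times$ to a Schwartz function on $B(v)_v\times F_v^\times$. In \cite{YZZ} this was essentially automatic because Assumption 5.3 forced $y$ to stay away from small strata; here we must show directly that as $y\to 0$, the contributions from vertical components attached to the self-intersection stratum cancel against the admissibility correction. The expected mechanism is that the map $y\mapsto C(t_1 y t_2^{-1})$ is locally constant on an open neighborhood of $0$ (once $\phi_v$ is fixed), so that $\ell_{\phi_v}$ is in fact locally constant near $y=0$ and hence trivially extends; combined with bounded support in the $u$-variable, this gives the required Schwartz extension.
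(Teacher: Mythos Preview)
Your overall strategy---use the \v Cerednik--Drinfeld uniformization to unfold the intersection and rewrite it as a sum over $B(v)^\times$---matches the paper's. But two points diverge, and the second is a genuine gap.

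First, a simplification you are missing. The paper does not attempt to solve the admissibility equations for $V_{t_2}$; it simply observes that $V_{t_2}$ is a \emph{finite} linear combination of irreducible components, and proves the pseudo-theta and non-singularity statements for $\overline{Z_*(g)t}\cdot V$ with $V$ a single component (the integral over $C_U$ is likewise a finite sum). Your detour through the full $m_C$-coefficients is unnecessary, and your substitution ``$y=t_1xt_2^{-1}$'' does not type-check: $x\in\BB_f^\times$ while the variable that ends up running over $B(v)^\times$ is the $\gamma$ coming from the projection formula on $\wh\Omega$, not the Hecke variable $x$. Concretely, for a single lift $\tilde V$ of $V$ one has
\[
\overline{\beta}\cdot V=\sum_{\gamma\in\mu_U\bs B(v)^\times}(\gamma^{-1}z_0\cdot\tilde V)\,1_{O_{F_v}^\times}\!\bigl(q(\beta)/q(\gamma)\bigr)\,1_{U^v}(\gamma^{-1}\beta),
\]
and then the manipulation of $\overline{Z_*(g)t}\cdot V$ proceeds exactly as in Proposition~\ref{supersingular height}.

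Second, and more seriously, your non-singularity argument invokes the wrong mechanism. You claim that $\ell_{\phi_v}$ is locally constant near $y=0$ because the reduction-component map $y\mapsto C(t_1yt_2^{-1})$ is. But this map does not even make sense for $y$ near $0$: CM points are indexed by elements of $\BB_f^\times$, not by elements of $B(v)_v$ approaching $0$. What the paper actually shows is that the local factor is \emph{supported away from $0$}. The key observation is that the intersection number on $\wh\Omega$ is simply a characteristic function,
\[
(\gamma^{-1}z_0\cdot\tilde V)=1_{a^{-1}F_v^\times\GL_2(O_{F_v})}(\gamma),
\]
where $aF_v^\times\GL_2(O_{F_v})$ is the coset indexing $\tilde V$. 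Combined with the constraint $q(\gamma)\in q(t)O_{F_v}^\times$ (forced by $\phi_v=1_{O_{\BB_v}^\times\times O_{F_v}^\times}$ together with the parity condition in the uniformization), this pins $\gamma$ to a \emph{single} coset of $\GL_2(O_{F_v})$, which is compact and bounded away from $0$. Hence $l_{\phi_v}(1,\gamma,u)$ extends by zero to a Schwartz function on $B(v)_v\times F_v^\times$. No cancellation or admissibility correction is involved.
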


\begin{proof}
Resume the notations of Lemma \ref{superspecial multiplicity}.
As above, denote by $F_v^\ur$ the completion of the maximal unramified extension of $F_v$. As all CM points of $\CMU$ are defined over $F_v^\ur$, the intersection number $j_{\overline v}(Z_*(g)t_1 , t_2)$ can be computed on the integral model $\CX_{U,O_{F_v^\ur}}$.
By the definition in \cite[\S7.1.7]{YZZ},
$$j_{\overline v}(Z_*(g)t_1, t_2)= \overline{Z_*(g)t_1}\cdot
V_{t_2}.$$
Here $\overline{Z_*(g)t_1}$ is the Zariski closure in $\CX_{U,O_{F_v^\ur}}$, and $V_{t_2}$ is a vertical divisor on $\CX_{U,O_{F_v^\ur}}$, i.e., a  linear combination of irreducible components in the special fibers of $\CX_{U,O_{F_v^\ur}}$ which gives the $\hat \xi$-admissible arithmetic extension of $t_2$.

We still use the $p$-adic uniformization
$$\wh\CX_{U}\times_{\Spf\, O_{F_v}} \Spf\, O_{F_v^\ur} =B^{\times}\bs 
(\wh\Omega \times_{\Spf\, O_{F_v}} \Spf\, O_{F_v^\ur})\times \BB_f\cross/U.$$
Here $B=B(v)$ as before. 
The map from $\wh\CX_{U}\times_{\Spf\, O_{F_v}} \Spf\, O_{F_v^\ur}$ 
to its set of connected components is exactly the natural composition
$$
B^{\times}\bs 
(\wh\Omega \times_{\Spf\, O_{F_v}} \Spf\, O_{F_v^\ur})\times \BB_f\cross/U 
\lra B^{\times}\bs \BB_f\cross/U
\stackrel{q}{\lra} F_+^\times\bs \afcross/ q(U).
$$

For the case $t_2=1$, write $V_{1}=\sum_i a_i W_i$, where $\{W_i\}_i$ is the set of irreducible components of the special fiber of $\CX_{U,O_{F_v^\ur}}$ lying in the same connected component as $1$. 
Let $\wt W_i$ be an irreducible component of the special fiber of $\wh\Omega \times_{\Spf\, O_{F_v}} \Spf\, O_{F_v^\ur}$ lifting $W_i$.
Note that the choice of $\wt W_i$ is not unique, but we fix such choice. 
Write $\wt V=\sum_i a_i \wt W_i$, viewed as a vertical divisor of 
$\wh\Omega \times_{\Spf\, O_{F_v}} \Spf\, O_{F_v^\ur}$. 
The vertical divisor $(\wt V,1)=\sum_i a_i (\wt W_i, 1)$ of $(\wh\Omega \times_{\Spf\, O_{F_v}} \Spf\, O_{F_v^\ur})\times \BB_f\cross/U$
is a lifting of the vertical divisor $V_{1}=\sum_i a_i W_i$. 

For general $t_2\in \afcross$, the vertical divisor $(\wt V, t_2)=\sum_i a_i (\wt W_i, t_2)$ of $(\wh\Omega \times_{\Spf\, O_{F_v}} \Spf\, O_{F_v^\ur})\times \BB_f\cross$ is a lifting of the vertical divisor $V_{t_2}$. In fact, by the projection formula, it suffices to verify the intersection number of $(\wt V, t_2)$ with any
$B^\times$-invariant vertical divisors of 
$(\wh\Omega \times_{\Spf\, O_{F_v}} \Spf\, O_{F_v^\ur})\times \BB_f\cross/U$
are the expected ones. 
But these intersection numbers are given by the corresponding ones from the case $t_2=1$. 

For any point $\beta\in \CMU$, the projection formula gives
$$\overline{\beta}\cdot V_{t_2}
=\sum_{\gamma \in \mu_U\bs B^\times}
(\gamma^{-1} z_0\cdot \wt V)1_{O_{F_v}^\times}(q(\gamma)q(t_2)/q(\beta)) 1_{U^v}(t_2^{-1}\gamma^{-1} \beta).$$
Here $z_0\in \wh\Omega(O_{F_v^\ur})$ is the unique fixed section of $E_v^\times$, and 
the intersection $(\gamma^{-1} z_0\cdot \wt V)$  is taken on 
$\wh\Omega \times_{\Spf\, O_{F_v}} \Spf\, O_{F_v^\ur}$.

Hence, as in all the previous cases of local heights, we have
\begin{align*}
\overline{Z_*(g)t_1}\cdot V_{t_2}
=& w_U \sum_{a\in F\cross}\sum_{x \in\bb_f^\times /U} r(g)\phi(x)_a  \sum_{\gamma \in \mu_U\bs B^\times}
(\gamma^{-1} z_0\cdot \wt V)1_{O_{F_v}^\times}(q(\gamma)q(t_2)/q(t_1x)) 1_{U^v}(t_2^{-1}\gamma^{-1} t_1x)\\
=& w_U \sum_{a\in F\cross} \sum_{\gamma \in \mu_U\bs B^\times}
 r(g)\phi^v(t_1^{-1}\gamma t_2)_a  
\sum_{x \in\bb_v^\times /U_v} r(g)\phi_v(x)_a
(\gamma^{-1} z_0\cdot \wt V)
1_{O_{F_v}^\times}(q(t_1^{-1}\gamma t_2)/q(x)) 
\\
=& \sumu \sum_{\gamma \in B^\times}
 r(g,(t_1,t_2))\phi^v(\gamma, u)\ r(t_1,t_2) l_{r(g)\phi_v}(\gamma,u),
\end{align*}
where
$$l_{\phi_v}(\gamma,u)=
\sum_{x \in\bb_v^\times /U_v} \phi_v(x, uq(\gamma)/q(x))
1_{O_{F_v}^\times}(q(x)/q(\gamma))\ (\gamma^{-1} z_0\cdot \wt V).
$$
Here we have used $(t_2^{-1}\gamma^{-1} t_1 z_0\cdot \wt V)=(\gamma^{-1} z_0\cdot \wt V)$, which is explained as follows.
In fact, $t_1z_0=z_0$ by definition. For $t_2$, since the intersection number is invariant under the action of $B_v^\times$, we have
$(t_2^{-1}\gamma^{-1} z_0\cdot \wt V)=(\gamma^{-1} z_0\cdot t_2\wt V)$.
But then $t_2\wt V=\wt V$ since $t_2\in F_v^\times \gl(\ofv)$ fixes every irreducible component of the special fiber of 
$\wh\Omega \times_{\Spf\, O_{F_v}} \Spf\, O_{F_v^\ur}$.

Hence, the intersection number $j_{\overline v}(Z_*(g)t_1, t_2)$ is a pseudo-theta series. 
It remains to prove that the function 
$$l_{\phi_v}(\gamma,u)=
\sum_{x \in\bb_v^\times /U_v} \phi_v(x, uq(\gamma)/q(x))
1_{O_{F_v}^\times}(q(x)/q(\gamma))\ (\gamma^{-1} z_0\cdot \wt V), \quad (\gamma,u)\in B_v^\times\times F_v^\times
$$
extends to a Schwartz function of $B_v\times F_v^\times$.
The function is locally constant on $B_v^\times\times F_v^\times$, and we need to prove that its support is actually compactly supported in $B_v^\times\times F_v^\times$. 
In order for the contribution of $x \in\bb_v^\times /U_v$ to the summation to be nonzero, we need 
$$x\in O_{\bb_v}^\times, \quad
uq(\gamma)/q(x)\in O_{F_v}^\times, \quad
q(x)/q(\gamma) \in O_{F_v}^\times.
$$
It follows that 
$$l_{\phi_v}(\gamma,u)= 
(\gamma^{-1} z_0\cdot \wt V)\cdot 1_{O_{F_v}^\times}(q(\gamma)) \cdot 1_{O_{F_v}^\times}(u).$$
In particular, it is already compactly supported in $u$. 

To get extra information on $\gamma$, go back to the uniformization. 
Note that the irreducible components of the special fiber of  
$\wh\Omega \times_{\Spf\, O_{F_v}} \Spf\, O_{F_v^\ur}$ are indexed by 
$$
\gl(F_v)/F_v^\times \gl(\ofv).
$$
Denote by $\alpha_i F_v^\times \gl(\ofv)$ the coset representing the component $W_i$ of $\wt V=\sum_i a_iW_i$.
Then we simply have  
$$
\gamma^{-1} z_0\cdot \wt V
=\sum_{i} a_i1_{\alpha_iF_v^\times \gl(\ofv)}(\gamma^{-1}).
$$
Combining with 
$q(\gamma)\in O_{F_v}^\times$, 
we conclude that the support of $\gamma$ in $l_{\phi_v}(1,\gamma,u)$ is the union of finitely many cosets of $\gl(\ofv)$. 
This finishes the proof. 
\end{proof}

\begin{remark}
As we can see from the proof, the result holds under the more general condition that $\phi_v(0,u)=0$. This condition is weaker than \cite[Assumption 5.3]{YZZ}. 
\end{remark}

\section{Quaternionic height}

In this section, we will combine results in the last two sections to prove Theorem \ref{quaternion main}.
We will prove a formula for the modified self-intersection $i_0(1,1)$ by applying  Lemma \ref{pseudo} (2) to the difference
$$\mathcal D(g,\phi)=\pr I'(0, g, \phi)_U- 2 Z(g,(1,1))_U.  $$
Then we will connect $i_0(1,1)$ to the height of CM points defined by arithmetic Hodge bundles 
by proving an adjunction formula.

\subsection{Derivative series vs. height series}

Let $(F,E,\BB, U,\phi)$ be as in \S \ref{choices}. 
By comparing the height series and the derivative series, 
 we will show a formula of the modified self-intersection
$$i_0(P,P)=i_0(1,1)=i(1,1)- \sum_{v} i_v(1,1) \log N_v.$$
Here $i(1,1)$ represents the horizontal arithmetic intersection of the CM point $[1]_U\in C_U$ with itself, while the local term 
$$
i_v(1,1)=\barint_{C_U}  i_{\bar v}(t,t) dt
$$
uses the extended definition of $i_{\bar v}(t,t)$
introduced in \S \ref{local height} case by case. 

The following is the main theorem of this section. 

\begin{thm}\label{result of comparison}
$$\frac{1}{[O_E\cross:O_F\cross]}i_0(P,P)
=\frac{L_f'(0,\eta)}{L_f(0,\eta)} +\frac 12 \log(\frac{d_{E/F}}{d_\BB}).
$$
\end{thm}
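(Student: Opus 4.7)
The plan is to form the automorphic difference
$$\mathcal{D}(g,\phi) := \Pr I'(0,g,\phi)_U - 2Z(g,(1,1))_U$$
and, using Theorems \ref{derivative series} and \ref{height series}, express $\mathcal{D}$ as a finite sum of pseudo-theta series in the sense of \S \ref{section pseudo}. Since $\mathcal{D}$ is automorphic as a difference of automorphic forms, Lemma \ref{pseudo}(2) applied at degeneracy level $k=2$ (pseudo-theta series with $V_1 = E$ inside $V = \BB$ or $V = B(v)$) will yield the desired identity for $i_0(P,P)$.

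Substituting the explicit formulas, the analytic side of $\mathcal{D}$ contributes the main degenerate term $-c_1\,\Omega_\phi(g,(1,1))$ with $c_1 = 2L_f'(0,\eta)/L_f(0,\eta) + \log|d_E/d_F|$, the non-degenerate kernels $I'(v)$ sitting on $V_1 = B(v)$, the local corrections $c_{\phi_v}$, and a logarithmic term $(2\log\delta_f + \log|uq(y)|_f)\,r(g)\phi$; the geometric side contributes $\frac{2\,i_0(P,P)}{[O_E^\times : O_F^\times]}\Omega_\phi(g,(1,1))$, the Hecke kernels $\CM^{(v)}$, the ordinary corrections $\CN^{(v)}$, and the $j$-contributions of Lemma \ref{superspecial j}. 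The Gross--Zagier-style comparison between $I'(v)$ and $2(\log N_v)\CM^{(v)}$ matches their leading non-degenerate structure on $V_1 = B(v)$ (via Lemmas \ref{derivative of Whittaker function} and \ref{local intersection}(1)), so the only $k = 2$ survivors are pseudo-theta series with $V_1 = E$, each collapsing to a local constant multiple of the outer theta series
$$\theta_E(g) := \sum_{u \in \mu_U^2 \backslash F^\times} \sum_{y \in E^\times} r_E(g)\phi(y,u).$$

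Lemma \ref{pseudo}(2) forces the total coefficient of $\theta_E$ to vanish. Assembling the contributions at $y \in E^\times$ from Lemmas \ref{derivative of intertwining}, \ref{local intersection}(2), \ref{superspecial j}, and the Schwartz-function restrictions of Lemmas \ref{derivative of Whittaker function} and \ref{local intersection}(1), the $v(q(y))$-valuation terms cancel between the analytic side (via the log term and $c_{\phi_v}$) and the geometric side (via $\CN^{(v)}$ and $j_v$) by the product formula on $F^\times$, leaving only the local logarithms $\log|d_v q(\jv)|_v$. These telescope via $\prod_v|d_v|_v = d_F^{-1}$ and $\prod_{v\nmid\infty}|q(\jv)|_v = d_\BB^{-1}$ to give the global contribution $\log(d_F\, d_\BB)$; combined with $|d_E/d_F| = d_F\,d_{E/F}$ (from $d_E = d_F^2\,d_{E/F}$), the vanishing of the total coefficient is precisely the theorem. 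The principal obstacle is the intricate place-by-place bookkeeping required for these cancellations; the most subtle case is $v$ ramified in $E/F$, where Lemmas \ref{derivative of Whittaker function}(2) and \ref{derivative of intertwining} each produce the identical complicated function $\tfrac12\alpha_v(y,u)$ (supported on $D_v^{-1}O_{E_v}-O_{E_v}$), and these must cancel between the two sides of the identity so as not to survive in the final formula.
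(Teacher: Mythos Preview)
Your overall strategy is the same as the paper's, and the final arithmetic with $c_1$, $\log(d_F d_\BB)$, and $d_E/d_F = d_F d_{E/F}$ is correct. However, two points in your accounting are off and would derail the argument if carried through literally.

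First, the $j$-part does \emph{not} contribute to the $k=2$ identity. By Lemma~\ref{superspecial j}, $j_v(Z_*(g,\phi)1,1)$ is a pseudo-theta series sitting on $\{0\}\subset B(v)\subset B(v)$; hence its outer theta series lies in $L_{0,1}$ and its inner theta series in $L_{4,0}$, neither of which is the weight-one level $k=2$. You should remove $j_v$ from your list of contributors; the valuation cancellation you need comes entirely from the local identity $2n_{\phi_v}(y,u)\log N_v + \log|uq(y)|_v\,\phi_v(y,u)=0$ (using that $\phi_v$ forces $u\in O_{F_v}^\times$), which is purely local, not a product-formula argument. Relatedly, the two $\alpha_v$ terms in the ramified case both live on the analytic side: the factor $2$ in front of $k_{\phi_v}$ turns $\tfrac12\alpha_v$ into $\alpha_v$, which then cancels against the $\alpha_v$ inside $c_{\phi_v}$ in the combination $2k_{\phi_v}-2m_{\phi_v}\log N_v+d_{\phi_v}$; they do not cancel ``between the two sides''.

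Second, you omit a necessary step: once Lemma~\ref{pseudo}(2) yields
\[
\Bigl(\log(d_F d_\BB) + \tfrac{2}{e}\,i_0(1,1) - c_1\Bigr)\,\theta_{\Omega,1}(g)=0,
\]
you must check that the weight-one theta series $\theta_{\Omega,1}(g)=\sum_{u}\sum_{y\in E} r_E(g)\phi(y,u)$ (note the sum is over $E$, not $E^\times$) is not identically zero before concluding that the scalar vanishes. The paper does this by evaluating the constant term at a specific $g$ (taking $g_v=w$ for $v\in\Sigma_f\cup S_2$ and $g_v=1$ elsewhere) and checking positivity.
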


The theorem is already very close to Theorem \ref{quaternion main}. 
The bridge between these two theorems is the arithmetic adjunction formula in 
Theorem \ref{adjunction all places}.

\subsubsection*{The comparison}

Let $(\BB, U,\phi)$ be as in \S \ref{choices}. 
Go back to 
$$\mathcal D(g,\phi)=\pr I'(0, g, \phi)_U- 2 Z(g,(1,1))_U.  $$
By Theorem \ref{derivative series}, 
\begin{align*}
\pr I'(0,g,\phi)_U
=& -\sum_{v|\infty}2\barint_{C_U}
\overline\CK^{(v)}_{\phi}(g,(t,t)) dt-\sum_{v\nmid\infty \
\nonsplit}2\barint_{C_U}
\CK^{(v)}_{\phi}(g,(t,t)) dt\\
&+ \sumu \sum_{y\in E^\times} (2\log \delta_f(g_f)+ \log|uq(y)|_f)\ r(g)\phi(y,u)\\
& -\sum_{v\nmid\infty} \sumu\sum_{y\in E^\times}
c_{\phi_v}(g,y,u)\, r(g)\phi^v(y,u)\\
& -c_1 \Omega_\phi(g).
\end{align*}
Here
$$ c_1=2\frac{L_f'(0,\eta)}{L_f(0,\eta)} +\log\frac{d_E}{d_F}$$
and
\begin{eqnarray*}
\Omega_\phi(g)
= \sumu \sum_{y \in
E\cross } r(g)\phi(y,u).
\end{eqnarray*}

By Theorem \ref{height series},
\begin{align*}
Z(g, (1,1),\phi))_U
=& -\sum_{v\ \nonsplit} (\log N_v) \barint_{C_U}
\CM^{(v)}_{\phi}(g,(t,t)) dt\\
& -\sum_{v\nmid\infty}  
\CN^{(v)}_{\phi}(g,(1,1)) \log N_v -\sum_{v\nmid\infty} j_v(Z_*(g,\phi)_U1,1)\log N_v\\
& -\frac{1}{e} i_0(1,1)\, \Omega_\phi(g).
\end{align*}
Here we write $e=[O_E\cross:O_F\cross]$ for simplicity.
We already know that $j_v(Z_*(g,\phi)_U1,1)\neq 0$ only if $v$ is nonsplit in 
$\BB$. 

Group the terms in the difference as follows:
\begin{align*}
\mathcal D(g,\phi)
=& -2\sum_{v|\infty} \barint_{C_U}
(\overline\CK^{(v)}_{\phi}(g,(t,t))-\CM^{(v)}_{\phi}(g,(t,t))) dt\\
& -2 \sum_{v\nmid\infty \ \nonsplit} \barint_{C_U}
(\CK^{(v)}_{\phi}(g,(t,t))-\CM^{(v)}_{\phi}(g,(t,t))\log N_v) dt\\
&+2\sum_{v\in \Sigma_f}  j_{v}(Z_*(g,\phi)_U 1,  1)\log N_v \\
&+ \sum_{v\nmid\infty} \sumu \sum_{y\in E^\times} 
d_{\phi_v}(g,y,u)\ r(g)\phi^v(y,u)\\
& +(\frac{2}{e} i_0(1,1)-c_1) \Omega_\phi(g).
\end{align*}
Here
\begin{multline*}
d_{\phi_v}(g,y,u)=2n_{\phi_v}(g,y,u)\log N_v-c_{\phi_v}(g,y,u)+(2\log \delta(g)+ \log|uq(y)|_v)r(g)\phi_v(y,u), \\ 
\forall\ g\in\gl(F_v),\ (y,u)\in E_v^\times \times F_v^\times, \quad
v\nmid\infty.
\end{multline*}
The key term for us is the coefficient of 
$\Omega_\phi(g)$.

Every term in the expression of $\mathcal D(g,\phi)$ is a pseudo-theta series, and each summation over $v$ is just a finite sum. 
In fact, we have the following itemized result:
\begin{itemize}
\item[(1)] If $v|\infty$, then
$$\overline\CK^{(v)}_{\phi}(g,(t,t))-\CM^{(v)}_{\phi}(g,(t,t))=0.$$
This follows from \cite[Proposition 8.1]{YZZ}.
In the following cases, we assume that $v$ is non-archimedean. 
\item[(2)] If $v$ is nonsplit in $E$, then
$$k_{\phi_v}(1,y,u)-m_{\phi_v}(y,u)\log N_v$$
extends to a Schwartz function on $B(v)_v\times \fvcross$.
Furthermore, for all but finitely many such $v$,
$$
k_{\phi_v}(g,y,u)-m_{r(g)\phi_v}(y,u)\log N_v=0
$$
identically and thus
$$
\overline\CK^{(v)}_{\phi}(g,(t,t))-\CM^{(v)}_{\phi}(g,(t,t))=0.
$$
The second statement is just \cite[Proposition 8.8]{YZZ}.
The first statement is a consequence of Lemma 
\ref{derivative of Whittaker function} and Lemma \ref{local intersection}. 

\item[(3)] For any $v\nmid \infty$, the function
$$d_{\phi_v}(1,y,u)=2n_{\phi_v}(1,y,u)\log N_v-c_{\phi_v}(1,y,u)+ \log|uq(y)|_v\ \phi_v(y,u)$$
extends to a Schwartz function on $E_v\times \fvcross$.
Furthermore, for all but finitely many $v$,
$$
d_{\phi_v}(g,y,u)=0
$$
identically.
The first statement is a consequence of Lemma \ref{derivative of intertwining} and Lemma \ref{local intersection}.
From them, we see that $d_{\phi_v}(1,y,u)=0$ for all but finitely many $v$. The vanishing result extends to $d_{\phi_v}(g,y,u)$ by considering Iwasawa decompositions as in \cite[Proposition 8.8]{YZZ}.

\item[(4)] For any $v$ nonsplit in $\BB$, the $j$-part $j_{v}(Z_*(g,\phi)_U1,1)$ is a non-singular pseudo-theta series of the form 
$$
\sum_{u\in \mu_U^2\backslash F\cross} \sum_{y\in B(v)-\{0\}}
l_{\phi_v}(g,y,u)  r(g) \phi^v(y,u).
$$
This is Lemma \ref{superspecial j}. 
\end{itemize}

With these results, every term on the right-hand side of $\mathcal D(g,\phi)
$ is a non-singular pseudo-theta series. 
Therefore, we are finally ready to apply Lemma \ref{pseudo} (2).

The outer theta series associated to the pseudo-theta series
$$\Omega_\phi(g)=\sumu\sum_{y\in E\cross} r(g)\phi(y,u)$$
is exactly the weight-one theta series
$$\theta_{\Omega,1}(g)=\sumu\sum_{y\in E} r_E(g)\phi(y,u).$$
By Lemma \ref{pseudo} (2), there is a unique identity including this theta series, and we are going to write down this identity explicitly. 
This identity will be a sum of theta series of weight one. 
We look at the contribution of every term in the expression. 

The contribution of 
$$\CK^{(v)}_{\phi}(g,(t,t))-\CM^{(v)}_{\phi}(g,(t,t))\log N_v$$
to the weight-one identity comes from its inner theta series
$$\sumu\sum_{y\in E} r_E(g)\phi_v(y,u)\ r_E(g)(k_{\phi_v}(1,y,u)-m_{\phi_v}(y,u)\log N_v).$$
This sum does not change after averaging on $C_U$. 
The term $j_v(Z_*(g,\phi)_U1,1)$ does not contribute to the identity we want. 
The term 
$$
\sumu \sum_{y\in E^\times} 
d_{\phi_v}(g,y,u)\ r(g)\phi^v(y,u)
$$
contributes by its outer theta series
$$
 \sumu \sum_{y\in E} 
r_E(g)\phi^v(y,u)\ r_E(g)d_{\phi_v}(1,y,u).$$
Hence, we obtain the following identity
\begin{align*}
0=&\ 2 \sum_{v\nmid\infty \ \nonsplit} \sumu\sum_{y\in E} r_E(g)\phi_v(y,u)\ r_E(g)(k_{\phi_v}(1,y,u)-m_{\phi_v}(y,u)\log N_v)\\
&+ \sum_{v\nmid\infty} \sumu \sum_{y\in E} 
r_E(g)\phi^v(y,u)\ r_E(g)d_{\phi_v}(1,y,u) \\
& +(\frac{2}{e} i_0(1,1)-c_1) \sumu\sum_{y\in E} r_E(g)\phi(y,u).
\end{align*}

Now we need the following explicit local results. 
\begin{pro}\label{combination}
Let $v$ be a non-archimedean place and $(y,u)\in E_v\times \fvcross$. 
\begin{itemize}
\item[(1)] If $v$ is nonsplit in $E$, then
$$
2k_{\phi_v}(1,y,u)-2m_{\phi_v}(y,u)\log N_v+d_{\phi_v}(1,y,u)=-\log |d_vq(\jv)|_v\phi_v(y,u).
$$
\item[(2)] If $v$ is split in $E$, then
$$
d_{\phi_v}(1,y,u)
=-\log |d_vq(\jv)|_v \phi_v(y,u).
$$
\end{itemize}
\end{pro}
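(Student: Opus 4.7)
The strategy is to reduce the proposition to a case-by-case comparison of the explicit local formulas already obtained in Lemma \ref{derivative of Whittaker function}, Lemma \ref{derivative of intertwining}, and Lemma \ref{local intersection}. The key simplification is the following. Since $\phi_v(y,u)$ is supported on $u\in\ofv^\times$, on its support we have $\log|u|_v=0$, and hence, using the uniform formula $n_{\phi_v}(y,u)=\frac{1}{2}v(q(y))\phi_v(y,u)$ from Lemma \ref{local intersection}(2), one obtains
$$2n_{\phi_v}(1,y,u)\log N_v+\log|uq(y)|_v\ \phi_v(y,u)=0,\qquad y\in E_v^\times.$$
Thus $d_{\phi_v}(1,y,u)=-c_{\phi_v}(1,y,u)$, which reduces both claims to identities involving only $c_{\phi_v}$ (together with $k_{\phi_v}$ and $m_{\phi_v}$ in case (1)).

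For case (2), $v$ split in $E$, Lemma \ref{derivative of intertwining} asserts $c_{\phi_v}(1,y,u)=\phi_v(y,u)\log|d_vq(\jv)|_v$ with no further terms (note that $|q(\jv)|_v=1$ in this case), so the identity follows immediately from the above reduction.

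For case (1), $v$ nonsplit in $E$, the plan is to restrict $k_{\phi_v}(1,y,u)-m_{\phi_v}(y,u)\log N_v$ from $B(v)_v-E_v$ to $E_v$. The ``singular'' piece of $k_{\phi_v}$ highlighted in Lemma \ref{derivative of Whittaker function}, a multiple of $v(q(y_2))$ times a characteristic function in $y_2$, matches the corresponding piece of $m_{\phi_v}$ from Lemma \ref{local intersection}(1) up to a bounded correction. After cancelling, the combination extends to a Schwartz function whose restriction to $E_v\times\fvcross$ is given directly by those two lemmas. Multiplying by $2$, subtracting $c_{\phi_v}(1,y,u)$, and collecting terms, the rational $\log N_v$ prefactors and — in the ramified case — the function $\alpha_v(y,u)$ appear identically on both sides of the comparison and so cancel, leaving the clean residual $-\phi_v(y,u)\log|d_vq(\jv)|_v$.

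The main obstacle is simply careful bookkeeping: one must separate the subcases $E_v/F_v$ inert versus ramified and $\BB_v$ split versus nonsplit, track the value of $v(q(\jv))$ in each (which is $1$ exactly when $E_v$ is inert and $\BB_v$ is nonsplit, and $0$ otherwise), and in the ramified case verify that the two independent occurrences of $\alpha_v$ — one inside the restriction of $k_{\phi_v}-m_{\phi_v}\log N_v$ and the other inside $c_{\phi_v}$ — arise with compatible normalizations so that they cancel exactly. Once these normalizations are fixed, every other piece is a routine algebraic identity.
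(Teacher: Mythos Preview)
Your proposal is correct and follows exactly the approach the paper intends: the paper's own proof consists of the single sentence ``The proposition is just a combination of Lemma \ref{derivative of Whittaker function}, Lemma \ref{derivative of intertwining} and Lemma \ref{local intersection},'' and you have simply made that combination explicit. Your observation that $2n_{\phi_v}(1,y,u)\log N_v+\log|uq(y)|_v\,\phi_v(y,u)=0$ on $E_v^\times\times\fvcross$ (hence $d_{\phi_v}=-c_{\phi_v}$) is the right organizing principle, and your case-by-case bookkeeping---including the cancellation of the $\alpha_v$ terms in the ramified case and the handling of $v(q(\jv))$ when $\BB_v$ is nonsplit---is accurate.
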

\begin{proof}
Recall that 
$$d_{\phi_v}(1,y,u)=2n_{\phi_v}(1,y,u)\log N_v-c_{\phi_v}(1,y,u)+ \log|uq(y)|_v\ \phi_v(y,u).$$
The proposition is just a combination of 
Lemma \ref{derivative of Whittaker function}, Lemma \ref{derivative of intertwining} and Lemma \ref{local intersection}. 
\end{proof}

Therefore, the identity gives exactly
\begin{align*}
0= \left(\sum_{v\nmid \infty }  -\log |d_vq(\jv)|_v  +\frac{2}{e}i_0(1,1)-c_1\right) \sumu\sum_{y\in E} r_E(g)\phi(y,u),
\end{align*}
which is just 
\begin{align*}
0= \left(\log |d_Fd_\BB |  +\frac{2}{e}i_0(1,1)-c_1\right) \theta_{\Omega, 1}(g).
\end{align*}
We claim that $\theta_{\Omega, 1}(g)$ is not identically zero. 
Then we get  
\begin{align*}
\log |d_Fd_\BB | +\frac{2}{e}i_0(1,1)-c_1=0,
\end{align*}
which proves Theorem \ref{result of comparison}.

It remains to check that the theta series
$$\theta_{\Omega,1}(g)=\sumu\sum_{y\in E} r_E(g)\phi(y,u)$$
is not identically zero. 
It suffices to check that the constant term 
$$\sumu r_E(g)\phi(0,u)$$
is not identically zero. 
For that, assume that for $v\in \Sigma_f$ or $v\in S_2$
$$g_v=\matrixx{}{1}{-1}{},$$
and $g_v=1$ at any other place $v$. 
By local computation, $r_E(g)\phi(0,1)> 0$ and $r_E(g)\phi(0,u)\geq 0$ for all $u\in F^\times$. Then the (finite) sum over $u$ is strictly positive. 
This shows that the theta series is nonzero.

\subsection{Arithmetic Adjunction Formula}   \label{section arithmetic adjunction formula}
Now we are going to relate
$$i_0(P,P)=i_0(1,1)=i(1,1)- \sum_{v} i_v(1,1) \log N_v$$
to the Faltings height. Here $i_v(1,1)=0$ if $v$ is split in $E$. 
It is essentially an arithmetic adjunction formula.
The main result of this subsection is:

\begin{thm}[Arithmetic adjunction formula]
\label{adjunction all places}
$$\frac{1}{[O_E\cross: O_F\cross]}i_0(P,P)=-h_{\overline\CL_U}(P).$$
\end{thm}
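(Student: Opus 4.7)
The plan is to reduce the identity to the classical Arakelov arithmetic adjunction on a sufficiently small-level integral model, then descend to level $U$ while tracking the ramification at the CM point $P$. Choose an auxiliary open compact subgroup $U'\subset U$ contained in $U(N)$ for some $N\ge 3$ coprime to the relevant primes. By Corollary \ref{system2} and Theorem \ref{hodge bundles}, $\CX_{U'}$ is a projective arithmetic surface, smooth at primes away from $\gd_\BB$, semistable at primes dividing $\gd_\BB$, and $\ol\CL_{U'}=\ol\omega_{\CX_{U'}/O_F}$ with archimedean metric $|dz|=2y$. For any lift $P'\in\CMU'$ of $P$, the classical arithmetic adjunction on such a regular semistable arithmetic surface gives
$$(\bar P',\bar P')_{\CX_{U'},\,\mathrm{adm}}=-\wh\deg\bigl(\ol\omega_{\CX_{U'}/O_F}|_{\bar P'}\bigr)=-[F(P'):F]\,h_{\ol\CL_{U'}}(P').$$

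Next I analyze the quotient $\pi:\CX_{U'}\to\CX_U$ at $P$. Under $U\supset\wh O_E^\times$ and for $\bar{U'}$ acting freely on $X$, the stabilizer of $P=[z_0,1]_U$ in $\bar U/\bar{U'}$ has order exactly $e:=[O_E^\times:O_F^\times]$, so $P$ has $\deg(\pi)/e$ Galois-conjugate geometric preimages $\{P'_i\}$, each with $\pi$-ramification index $e$. A Riemann--Hurwitz computation, combined with the Hodge bundle formula
$L_U=\omega_{X_U/F}+\sum_Q(1-e_Q^{-1})\CO(Q)$
of Theorem \ref{hodge bundles}, shows that the ramification divisor of $\pi$ cancels exactly the modification term, so $\pi^*\ol\CL_U=\ol\CL_{U'}$; taking degrees along $\bar P'$ yields the height compatibility $h_{\ol\CL_{U'}}(P')=h_{\ol\CL_U}(P)$.

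The third step is to descend the adjunction identity from $\CX_{U'}$ to $\CX_U$. Using the projection formula $(\pi^*D,\pi^*E)_{\CX_{U'}}=\deg(\pi)\cdot(D,E)_{\CX_U}$ and the pullback relation $\pi^*\bar P=e\sum_i\bar P'_i$, one derives
$$e\sum_i(\bar P',\bar P'_i)_{\CX_{U'}}=(\bar P,\bar P)_{\CX_U}$$
on the coarse model, interpreted via the $\BQ$-factoriality of Corollary \ref{system2}. Splitting the left-hand side as the self-term $e\,(\bar P',\bar P')_{\CX_{U'}}=-e^2[F(P):F]\,h_{\ol\CL_U}(P)$ plus proper intersections with Galois conjugates, the task reduces to identifying $e\sum_{i\ne 1}(\bar P',\bar P'_i)_{\CX_{U'}}$ with the local corrections $\sum_v i_v(P,P)\log N_v$ subtracted off in $i_0(P,P)=i(P,P)-\sum_v i_v(P,P)\log N_v$. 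This matching is purely local and place by place: at archimedean $v$, via the $\quasilim$ regularization of $g_s(\beta_1,\beta_2)$ as $\beta_1\to\beta_2$; at non-archimedean $v$ nonsplit in $E$, via the multiplicity function computations of Lemma \ref{superspecial multiplicity} and Lemma \ref{local intersection} (using the $p$-adic uniformization at $v\mid\gd_\BB$); and at split non-archimedean $v$, via the trivial identity $i_{\bar v}(t,t)=0$. Assembling all these gives $\tfrac{1}{e}i_0(P,P)=-h_{\ol\CL_U}(P)$.

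The main obstacle is the careful bookkeeping of the local corrections: $\CX_U$ is only $\BQ$-factorial (after base change) rather than regular, so admissible intersection theory on it must be defined by descent from $\CX_{U'}$, and the extended self-intersection numbers $i_{\bar v}(t,t)$ of \S\ref{local height}---introduced ad hoc to regulate self-intersection---must be matched, through detailed local computations, with the excess proper intersections produced by the Galois conjugates of $P'$ on the regular cover. Alternatively, as noted in Remark \ref{stack}, one may formulate everything on the quotient Deligne--Mumford stack $[\CX_{U'}/(\bar U/\bar{U'})]$, on which $\CL_U$ is the relative dualizing sheaf and the arithmetic adjunction formula holds verbatim; the prefactor $1/e$ then emerges directly from the stacky automorphism group of $P$ of order $e$, bypassing most of the technical subtleties at the cost of invoking arithmetic intersection theory on DM stacks.
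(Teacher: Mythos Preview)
Your approach is essentially the same route as the paper's: both reduce the identity to (i) the arithmetic adjunction formula on a regular small-level model $\CX_{U'}$, and (ii) an identification of the excess intersection $\langle\pi^{-1}\CP-e\CP',\,\CP'\rangle$ with the local correction terms $i_w(P,P)$.  The paper packages this via a residue-map formalism on $\CX_{U,O_H}$ and checks the two ingredients place by place (the archimedean limit computation and Lemmas~\ref{self-int}/\ref{self-int2}); you package it as ``global adjunction on $\CX_{U'}$, then descend via the projection formula.''  The organization differs, but the technical content is the same.

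One point deserves care.  Your Step~1 invokes ``classical arithmetic adjunction'' $(\bar P',\bar P')_{\mathrm{adm}}=-\wh\deg(\ol\omega_{\CX_{U'}/O_F}|_{\bar P'})$ as a black box.  But this identity is sensitive to the specific metrics: on $\omega$ you use the Petersson metric $\|dz\|=2\Im(z)$, and on $\CO(P')$ you use the \cite{YZZ}-admissible Green function.  Whether these are adjunction-compatible \emph{with no additive constant} is exactly what the paper's archimedean computation establishes, namely
\[
\lim_{z_1\to z_0}\Bigl(m_0(z_0,z_1)-\log\tfrac{2\,\Im(z_1)}{|z_1-z_0|}\Bigr)=0,
\]
and it is the normalization constant $2$ in $\|dz\|=2\Im(z)$ (rather than $\Im(z)$ or $4\pi\Im(z)$) that makes this limit vanish.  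So this is a computation, not a citation, and it belongs in your Step~1 rather than being deferred to the ``local matching'' in Step~3.  Similarly, at non-archimedean places the key input is not Lemma~\ref{superspecial multiplicity} or Lemma~\ref{local intersection} (which compute $m_{\phi_v}$ and $n_{\phi_v}$), but rather the vanishing $i_{\bar v}([1]_{U'},[1]_{U'})=0$ for $U'^v$ small enough (Lemma~\ref{self-int2}), which is what makes the excess-intersection identification (Lemma~\ref{self-int}) go through.
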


The theorem and Theorem \ref{result of comparison} implies  Theorem \ref{quaternion main}. The goal of this subsection is to prove the theorem. 

Denote by $H$ the Hilbert class field of $E$. 
Then $P=[1]_U$ is defined over $H$, and we view it as a rational point of $X_{U,H}$. 
By assumption, $E$ is unramified at any $v\in \Sigma_f$. 
By Corollary \ref{system2}, $\CX_{U,O_H}$ is $\BQ$-factorial. 
We will consider arithmetic intersections over $\CX_{U,O_H}$. 
We will suppress the symbol $U$ from the subscripts. For example, $\CX_{U,O_H}$ is written as $\CX_{O_H}$. 

Denote by $\CP$ the Zariski closure of $P$ in $\CX_{O_H}$. 
Then we have an arithmetic divisor 
$$
\bar\CP=(\CP, g_P),
$$
where the Green function $g_P=\{g_{P,w}\}_{w:H\to \BC}$ is the admissible Green function as in \cite[\S 7.1.5]{YZZ}. 
Denote by $\CO(\bar\CP)$ the corresponding hermitian line bundle. 
By definition, 
$$i(1,1)=\frac{1}{[H:F]}\pair{\bar\CP,\CP}
=\frac{1}{[H:F]}\wh\deg (\CO(\bar\CP)|_\CP).$$

Denote by $\bar\CL_{O_H}$ the base change of the arithmetic Hodge class $\bar\CL_U=\bar\CL$ from $\CX$ to $\CX_{O_H}$. 
It follows that 
$$
h_{\hat\omega}(1)= \frac{1}{[H:F]}\wh\deg (\bar\CL_{O_H}|_\CP).
$$
So the goal is to prove 
$$
\frac{1}{e}\wh\deg (\CO(\bar\CP)|_\CP)
+\wh\deg (\bar\CL_{O_H}|_\CP)
=[H:F]\frac{1}{e}\sum_{v} i_v(1,1) \log N_v.
$$
Here we denote $e=[O_E\cross: O_F\cross]$ for simplicity, which is also the ramification index $e_P$ of $P$. 
Rewriting the right-hand side according to places $w$ of $H$, the equality becomes
$$
\wh\deg \left(\bar\CM|_\CP\right)
=\frac{1}{e}\sum_{w} i_w(1,1) \log N_w.
$$
Here 
$$\bar\CM= \bar\CL_{O_H}\otimes \CO(e^{-1}\bar\CP)$$
is a hermitian $\QQ$-line bundle on $\CX_{O_H}$. 

Denote by $\CM$ and $M$ the finite part and the generic fiber of $\bar\CM$. 
We first claim that there is canonical isomorphism
$$
\Res_P: M|_P\lra H.
$$
In fact, by definition, 
$$
L= \omega_{X/F} \otimes \bigotimes_{Q\in X(\overline F)}  \CO_X((1-e_Q^{-1})Q).
$$
Then 
$$
M=L_{H}\otimes \CO(e^{-1}P)
= \omega_{\CX_{O_H}/O_H} \otimes \CO(P) 
\otimes \bigotimes_{Q\in X(\overline F), \ Q\neq P}  \CO_X((1-e_Q^{-1})Q).
$$
It follows that we have canonical isomorphisms
$$
M|_P \lra  (\omega_{\CX_{O_H}/O_H} \otimes \CO(P))|_P
\lra H.
$$
Here the second map is just the residue map 
$$
\frac{d u}{u}\otimes 1_P  \longmapsto 1,
$$
where $u$ is any local coordinate of $P$ in $X_H$, and $1_P$ denotes the section $1$ of $\CO(P)$.
The map does not depend on the choice of $u$. 

By the residue map
$\Res_P: M|_P\to H,$
we have an induced hermitian line bundle $\bar\CN=(\CN, \|\cdot\|)$ on $\Spec(O_H)$. 
Here $\CN$ denotes the image of $\CM|_\CP$ in $H$, which is a fractional ideal of $H$, and the metric on $\CN$ is determined by
$$
\|1\|_w= \left\|\frac{d u}{u}\otimes 1_P\right\|_w(P), \ \quad w:H\to \BC.
$$
Then we have
$$
\wh\deg \left(\bar\CM|_\CP\right)
=\wh\deg(\bar\CN)
=-\sum_{w:H\to \BC} \log \|1\|_w
+\sum_{w\nmid\infty} \dim_{k_w}(\CN_w/O_{H_w}) \log N_w.
$$
Here the second summation is over all non-archimedean places $w$ of $H$, $k_w$ denotes the residue field of $w$, and
$\dim_{k_w}(\CN_w/O_{H_w})$ means $-\dim_{k_w}(O_{H_w}/\CN_w)$ if 
$\CN_w$ is contained in $O_{H_w}$.
However, we will see that $\CN_w$ always contains $O_{H_w}$.

The theorem is reduced to the local identities
$$
-\log \|1\|_w=\frac{1}{e}  i_w(P,P), \quad w:H\to \BC,
$$
and 
$$
\dim_{k_w}(\CN_w/O_{H_w})=\frac{1}{e}  i_w(P,P), \quad w\nmid\infty.
$$
We will see that the ideas in different case are very similar even though the reductions are completely different.

\subsubsection*{Archimedean case}
We first check the local identity for archimedean case, so $w$ is an embedding $H\to \BC$. It restricts to an embedding $v: F\to \BC$.
We have a uniformization
$$X_v(\BC)= B_+\cross\bs \gh\times B\cross(\af)/U.$$
Here $B=B(v)$ is the nearby quaternion algebra.
Under the uniformization, the point $P$ is represented by $(z_0,t)$ for some 
$t\in E^\times(\af)$.
The metric $\|\cdot\|_w$ of $O(\bar\CP)$ is given by
$$-\log\|1_P\|_{w}([z,\beta])=i_{\bar v}([z,\beta],[z_0, t])$$
for any other point $[z,\beta]\in X_v(\BC)$ not equal to $[z_0,t]$. 
Here we recall from \cite[\S8.1]{YZZ} that 
$$i_{\bar v}([z, \beta], [z_0,t])=\quasilim
\sum_{\gamma\in \mu_U \backslash B_+\cross} m_s(z_0,\gamma z)  1_{U}(t^{-1}\gamma\beta),$$
where
$$m_s(z_0,z)=Q_s\left( 1+\frac{|z-z_0|^2}{2\Im(z_0)\Im(z)} \right).$$

Consider the covering map
$$\pi: \gh\times B\cross(\af)/U \longrightarrow X_v(\BC).$$
Here the left-hand side is just a countable disjoint union of 
$\gh$.
Denote by $\tilde P$ the point $(z_0,t)$ in this space, which is a lifting of $P=[z_0,t]$. 
By the construction of the Hodge bundle,  
$\pi^*L$ is canonically isomorphic to the sheaf $\Omega^1$ of holomorphic 1-forms on 
$\gh\times B\cross(\af)/U$. 
As a consequence, we have canonical isomorphisms
$$
(M|_P)\otimes_w\BC \lra (\pi^*M)|_{\tilde P}
= (\pi^*L_{H}\otimes \pi^*\CO(e^{-1}P))|_{\tilde P}
\lra (\Omega^1\otimes \CO(\tilde P))|_{\tilde P}
\lra \BC.
$$
Here the last map is a residue map again, and the whole composition is exactly the base change to $\BC$ of the original residue map $\Res_P:M|_P\to H$. 

Let $\tilde Q=(z_1,t)$ be a point of $\gh\times B\cross(\af)/U$, and $Q=[z_1,t]$ be its image in the quotient $X_v(\BC)$.
Consider the behavior as $z_1$ approaches $z_0$, which also means $\tilde Q\to \tilde P$ or $Q\to P$ in the complex topology. 
Let $z$ be the usual coordinate of $\gh\subset \BC$, so that $z-z_0$ gives a local coordinate at $\tilde P$ in $\gh\times B\cross(\af)/U$.
Then the second residue map gives
\begin{align*}
 \|1\|_w
= \lim_{\tilde Q\to \tilde P}   \left( \left\|\frac{d z}{z-z_0}\right\|_{\Pet}(\tilde Q)
\cdot \|1_P(Q)\|^{\frac1{e}} \right).
\end{align*}
Recall that the Petersson metric gives
$$
\left\|\frac{dz}{z-z_0}\right\|_{\Pet}(\tilde Q)=  \frac{2\,\Im(z_1)}{|z_1-z_0|}.
$$
On the other hand, the Green function
\begin{align*}
& -\log\|1_P\|_{w}(Q)\\
=&i_{\bar v}([z_1,t],[z_0, t])\\
=&\quasilim \sum_{\gamma\in \mu_U \backslash B_+\cross} m_s(z_0,\gamma z_1)  1_{U}(t^{-1}\gamma t)\\
=& e\cdot m_0(z_0,z_1)+\quasilim
\sum_{\gamma\in \mu_U \backslash (B_+\cross-E\cross)} m_s(z_0,\gamma z_1)  1_{U}(t^{-1}\gamma t).
\end{align*}
The definition has been extended to self-intersection as
$$i_{\bar v}([z_0, t], [z_0,t])=
\quasilim
\sum_{\gamma\in \mu_U \backslash (B_+\cross-E\cross)} m_s(z_0,\gamma z_0)  1_{U}(t^{-1}\gamma t).$$
Hence,
\begin{align*}
-\log \|1\|_w
=& \lim_{z_1\to z_0}\left(m_0(z_0,z_1)-\log \frac{2\,\Im(z_1)}{|z_1-z_0|}\right)
+\frac1e i_{\bar v}(P, P).
\end{align*}
It remains to check that the limit on the right-hand side is exactly zero. 

Note that 
$$m_0(z_0,z_1)=Q_0\left( 1+\frac{|z_1-z_0|^2}{2\Im(z_0)\Im(z_1)} \right).$$
By \cite[II, (2.6)]{GZ},
$$Q_0(t)= \frac{1}{t+1}F(1,1,2,  \frac{2}{t+1})= \frac12 \log\frac{t+1}{t-1}.$$
It follows that
$$
m_0(z_0,z_1)-\log \frac{2\,\Im(z_1)}{|z_1-z_0|}
=
\frac 12 \log \left(1+\frac{|z_1-z_0|^2}{4\Im(z_0)\Im(z_1)} \right)
- \frac 12\log \frac{\Im(z_1)}{\Im(z_0)},
$$
which converges to $0$ as $z_1\rightarrow z_0$.
This finishes the archimedean case.

\subsubsection*{Non-archimedean case}

Let $w$ be a non-archimedean place of $H$. 
Let $v$ be the restriction of $w$ to $F$. 
To prove the arithmetic adjunction formula, 
the key is the following geometric interpretation of the extended intersection 
$i_w(P,P)=i_{\bar v}(P,P)$. 
For convenience, denote by $R=O_{H_w^\ur}$ the integer ring of the completion $H_w^\ur$ of the maximal unramified extension of $H_w$. 

\begin{lem} \label{self-int}
Let $U'=U_vU'^v$ be an open compact subgroup of $\bbf$ with 
$U'^v\subset U^v$ normal. 
Consider the projection $\pi: \CX_{U', R}\to \CX_{U, R}$. 
Denote by $\CP'$ an irreducible component of the divisor $\pi^{-1}\CP_R$ 
on $\CX_{U', R}$. 
If $U'^v$ is small enough, then
$$
i_w(P,P)=\langle \pi^{-1}\CP_R-e\CP',\ \CP'\rangle.
$$
Here the pairing denotes the intersection multiplicity on the special fiber of $\CX_{U', R}$. 
\end{lem}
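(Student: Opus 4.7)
The plan is to realize the extended self-intersection $i_w(P,P)$ as an honest proper intersection of horizontal divisors on the regular model $\CX_{U',R}$, after pulling back through a cover that resolves the ramification at $P$.

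First, I choose $U'^v$ small enough that $E^\times\cap U' = \mu_{U'}$, that $\CX_{U',O_H}$ is regular (by Corollary \ref{system2}), and that the preimages of $P$ in $X_{U'}(\overline H)$ are pairwise distinct. Under these conditions the cover $\pi: X_{U'}\to X_U$ is Galois with group $U/(U'\mu_U)$ (as in the proof of Lemma \ref{ordinary multiplicity}), and its ramification index at every preimage of $P$ equals $e=[O_E^\times:O_F^\times]$, since the larger cover $X\to X_U$ has ramification $e$ at $P$ while $X\to X_{U'}$ is étale over such preimages by the choice of $U'^v$. Enumerating the preimages as $\CP'_1,\dots,\CP'_n$ with $n=[U:U'\mu_U]/e$, I obtain the identity of Cartier divisors
$$\pi^{-1}\CP_R = e\sum_{i=1}^n \CP'_i,$$
so that with $\CP'=\CP'_1$ the divisor $\pi^{-1}\CP_R-e\CP' = e\sum_{i\ge 2}\CP'_i$ is horizontal and disjoint from $\CP'$ on the generic fiber. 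Thus $\langle \pi^{-1}\CP_R-e\CP',\ \CP'\rangle$ is a well-defined proper intersection on the special fiber above $w$, equal to $e\sum_{i\ge 2}\langle \CP'_i,\CP'\rangle_w$.

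Second, each proper intersection $\langle \CP'_i,\CP'\rangle_w$ is computed by the local moduli of $X_{U'}$ at $w$: by the complex uniformization, by the $p$-adic uniformization of \v Cerednik-Drinfeld in the superspecial case, or by the ordinary reduction description of \cite{Zh} in the split case. In each case a preimage $\CP'_i$ corresponds to a class $[z_0,u_i]_{U'}$ for some $u_i\in U$, and the computations leading to \cite[Lemma 8.2]{YZZ}, Lemma \ref{superspecial multiplicity}, and Lemma \ref{ordinary multiplicity} give
$$\langle \CP'_i,\CP'\rangle_w = \sum_{\gamma\in \mu_{U'}\backslash B(v)^\times} m(\gamma,u_{i,v}^{-1})\, 1_{U'^v}(u_i^{-v}\gamma)$$
(with the obvious archimedean $\quasilim$ replacement when $v\mid\infty$, and with the multiplicity function $m$ taken to be the appropriate one of \S \ref{local height}).

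Third, I sum over $i\ge 2$ and reorganize. The index set $\{\CP'_i\}_{i\ge 2}$ is $(E^\times\cap U)\backslash U/(U'\mu_U)$ with the identity class removed; since $E^\times$ commutes with itself, the $\gamma$-sum splits into two pieces according to whether $\gamma\in B(v)^\times-E^\times$ or $\gamma\in E^\times$, and after switching orders of summation the double sum over $(i,\gamma)$ collapses to a single sum over $\gamma$ in $\mu_U\backslash(B(v)^\times-E^\times)$ (where only $i$ such that $u_i\in \gamma U'$ contribute, giving a single term) and $\mu_U\backslash(E^\times-U_v)$ (where the condition $\gamma\in U^v$ but $u_i^{-1}\gamma\in U'^v$ and $u_i\notin U'\mu_U$ at $v$ yields the excision of $U_v$). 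The prefactor $e$ exactly accounts for changing the summation parameter from $\mu_{U'}$-orbits to $\mu_U$-orbits. Term-by-term this reproduces the two sums in the extended definitions of $i_{\bar v}(1,1)$ given in \S \ref{local height}, and averaging over the Galois-conjugate copies $[t]\in C_U$ (which is trivial here since they give the same contribution by the formula) yields $e\sum_{i\ge 2}\langle \CP'_i,\CP'\rangle_w = i_w(P,P)$, as desired.

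The main obstacle is the combinatorial bookkeeping in the third step: correctly matching the geometric indexing by preimages $\CP'_i$ (i.e.\ cosets in $U/(U'\mu_U)$ quotiented by $E^\times\cap U$) with the indexing by $\gamma\in \mu_U\backslash(E^\times - U_v)$ in the extended formula, and tracking the factor $e$ that arises from the shift between $\mu_U$- and $\mu_{U'}$-orbits. This has to be verified uniformly across the three local cases (archimedean, superspecial nonsplit in $\BB$, ordinary split in $E$), but the structural identity $\pi^{-1}\CP_R=e\sum \CP'_i$ from step one and the Galois transitivity of the $\CP'_i$ under $U/(U'\mu_U)$ make each case a routine reindexing.
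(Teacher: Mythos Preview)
Your overall architecture matches the paper's, but step 3 carries a genuine gap and a bookkeeping error, and the paper resolves both by inserting an auxiliary vanishing statement you do not prove.

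\medskip

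\textbf{The gap.} In step 3 you assert that for each $\gamma\in B(v)^\times-E^\times$ with $\gamma^v\in U^v$, exactly one index $i\ge 2$ satisfies $u_i^{-1}\gamma\in U'$. This fails precisely when $\gamma^v\in U'^v$: then the only candidate is $u_1=1$, i.e.\ $i=1$, which you have excluded. So such $\gamma$ contribute zero to your left side but would contribute $m(\gamma,1)$ to the right side $i_{\bar v}(P,P)$. The same issue arises for $\gamma\in E^\times-U_v$ with $\gamma^v\in U'^v$. The paper closes this by proving Lemma~\ref{self-int2}: for $U'^v$ small enough, the extended self-intersection $i_{\bar v}([1]_{U'},[1]_{U'})$ vanishes, which exactly says there are no such $\gamma$. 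The argument uses that $B(v)$ is totally definite so $B(v)^\times\cap W_vU'^v$ is finite modulo $\mu_W$; your hypothesis $E^\times\cap U'=\mu_{U'}$ alone does not give this.

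\medskip

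\textbf{The bookkeeping error.} The prefactor $e$ does \emph{not} convert $\mu_{U'}$-orbits to $\mu_U$-orbits. Rather, $e$ is the order of the stabilizer of $\CP'$ in the Galois group $U/(U'\mu_U)$, so $e\sum_{i}f(P'_i)=\sum_{\beta\in U/(U'\mu_U)}f([\beta]_{U'})$. The passage from $\mu_{U'}$ to $\mu_U$ in the $\gamma$-sum is a separate factor $[\mu_U:\mu_{U'}]$, which cancels against the change from $U/(U'\mu_U)$ to $U/U'$. The paper's computation makes this transparent: once Lemma~\ref{self-int2} lets you restore the $i=1$ term with zero contribution, the sum becomes
$\frac{1}{[\mu_U:\mu_{U'}]}\sum_{\beta\in U^v/U'^v}\sum_{\gamma\in\mu_{U'}\backslash(B^\times-E^\times\cap U_v)}m(\gamma,1)\,1_{U'^v}(\beta^{-1}\gamma)$,
and summing $\beta$ first collapses $1_{U'^v}(\beta^{-1}\gamma)$ to $1_{U^v}(\gamma)$ in one line.

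\medskip

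\textbf{Minor.} The lemma concerns intersection numbers on the special fiber of $\CX_{U',R}$ at a non-archimedean $w$; your archimedean remarks in step 2 do not belong here.
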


In the lemma, the morphism $\pi$ is \'etale, so $\CP'$ must be a section of 
$\CX_{U', R}$ over $R$. The ramification index of $P$ is $e$. 
Then the multiplicity of $\CP'$ in $\pi^{-1}\CP$ is $e$ if $U'^v$ is small enough, 
so the intersection in the lemma is a proper intersection. 
The lemma can be viewed as a modified projection formula. 
We will prove it later, but let us first use it to finish the proof of the arithmetic adjunction formula. 

Recall that it is reduced to the local identity
$$
\dim_{k_w}(\CN_w/O_{H_w})=\frac{1}{e}  i_w(P,P).
$$
Here $\CN$ denotes the image of $\CM|_{\CP}$ under the residue map 
$$
\Res_P: M|_P \lra H
$$

As in the archimedean case, we will use have a different interpretation of the residue map. Let
$\pi: \CX_{U', R}\to \CX_{U, R}$ and $\CP'$ be as in the lemma. 
Denote by $P'$ the generic fiber of $\CP'$.
By the definition of the Hodge bundle, we have canonical isomorphisms
$$
\pi^*L_{U,H_w^\ur} \lra \omega_{X_{U',H_w^\ur}/H_w^\ur}, \quad
\pi^*\CL_{U,R} \lra \omega_{\CX_{U',R}/R}.
$$
Thus we have canonical isomorphisms
$$
(M|_P)\otimes_H H_w^\ur
\lra (\pi^*L_{U, H_w^\ur}\otimes \pi^*\CO(e^{-1}P))|_{P'}
\lra (\omega_{X_{U',H_w^\ur}/H_w^\ur} \otimes \CO(P'))|_{P'}
\lra H_w^\ur.
$$
Here the last map is a residue map again, and the whole composition is exactly the base change to $H_w^\ur$ of the original residue map $\Res_P:M|_P\to H$.

The computation is to track the change of integral structures of the composition. 
The composition has the integral version
$$
(\CM|_\CP)\otimes_{O_H} R
\lra (\pi^*\CL_{U, R}\otimes \pi^*\CO(e^{-1}\CP))|_{\CP'}
\dashedrightarrow (\omega_{\CX_{U',R}/R} \otimes \CO(\CP'))|_{\CP'}
\lra R.
$$
The first arrow is an isomorphism by definition, and the last arrow is an isomorphism by the adjunction formula on $\CX_{U',R}$. 
The dashed arrow in the middle may only be well-defined map after base change to $H_w^\ur$, but we write it this way to track the change of the integral structure.  
Thus $\dim_{k_w}(\CN_w/O_{H_w})$ is equal to the dimension of the quotient of two sides of the dashed arrow. 
Tensoring with $(\pi^*\CL_{U, R}|_{\CP'})^{\otimes(-1)}$, 
the dashed arrow becomes
$$
\pi^*\CO(e^{-1}\CP)|_{\CP'}
\dashedrightarrow  \CO(\CP')|_{\CP'}.
$$
Tensoring with $\pi^*\CO(-e^{-1}\CP)|_{\CP'}$, it further becomes
$$
\CO_{\CP'} \dashedrightarrow  \CO(\CP'-e^{-1}\pi^*\CP)|_{\CP'} .
$$
Note that $e^{-1}\pi^*\CP-\CP'$ is an effective divisor. The real map should be the inverse direction
$$
\CO(\CP'-e^{-1}\pi^*\CP)|_{\CP'} \lra \CO_{\CP'}.
$$
The image of the last map is the restriction of the ideal sheaf of $e^{-1}\pi^*\CP-\CP'$ to $\CP'$, so the cokernel of the map has dimension exactly equal to the intersection number
$$
\langle\CO(e^{-1}\pi^*\CP-\CP', \ \CP'\rangle.
$$
Hence,
$$
\dim_{k_w}(\CN_w/O_{H_w})
=\langle\CO(e^{-1}\pi^*\CP-\CP', \ \CP'\rangle.$$
By Lemma \ref{self-int}, it further equals
$$
\frac{1}{e}  i_w(P,P).
$$
This finishes the proof of the adjunction formula.

\subsubsection*{Proof of the lemma}

Here we prove Lemma \ref{self-int}. 
Let $U'=U_vU'^v$ be as in the lemma.
Recall that if $v$ is nonsplit in $E$,
\begin{eqnarray*}
i_{\bar v}([1]_{U'}, [1]_{U'})
= \sum _{\gamma \in \mu_{U'}\bs (B^{\times}-E^{\times}\cap  U')}
m(\gamma,1) 1_{U'^v}(\gamma).
\end{eqnarray*}
Here $B=B(v)$, and the multiplicity function $m:B^{\times}_v\times_{E^\times_v}\bb_v^\times/U_v\to \BQ$
takes the same form for $U$ and $U'$. 
The key is the following result. 

\begin{lem} \label{self-int2}
If $v$ is nonsplit in $E$, then $i_{\bar v}([1]_{U'}, [1]_{U'})=0$ if $U'^v$ is small enough. 
\end{lem}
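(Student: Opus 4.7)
The plan is to show that, when $U'^v$ is sufficiently small, every term in the defining sum
$$
\sum_{\gamma \in \mu_{U'}\bs(B^\times - E^\times\cap U')} m(\gamma,1)\,1_{U'^v}(\gamma)
$$
vanishes. I will organize this in three steps, depending on whether $\gamma$ lies in $E^\times$ or in $B^\times\setminus E^\times$.

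First I would dispose of the potential contribution from $\gamma\in E^\times\setminus (E^\times\cap U')$. The key observation is that for such $\gamma$ one has the equivalence $(\gamma_v,1)\sim (1,\gamma_v)$ in $B_v^\times\times_{E_v^\times}\bb_v^\times/U_v$, so $m(\gamma,1)=m(1,\gamma_v)$, whose non-vanishing forces $q(\gamma_v)\in O_{F_v}^\times$ (by Lemma~\ref{superspecial multiplicity} in the superspecial case, or by the corrected version of \cite[Lemma~8.6]{YZZ} used in the proof of Lemma~\ref{local intersection} in the supersingular case). Since $\gamma_v\in E_v^\times$ this already implies $\gamma_v\in O_{E_v}^\times\subset O_{\bb_v}^\times = U_v$, using $O_{E_v}\subset O_{\bb_v}$ from the choices in \S\ref{choices}. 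Combined with $\gamma^v\in U'^v$ coming from $1_{U'^v}(\gamma)=1$, this forces $\gamma\in U_v U'^v = U'$, contradicting $\gamma\notin E^\times\cap U'$; no such $\gamma$ actually contributes.

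Next I would attack the remaining case $\gamma \in B^\times\setminus E^\times$. The support condition $m(\gamma_v,1)\ne 0$ confines $\gamma_v$ to a compact subset $\Omega_v \subset B_v^\times$, and a direct check gives $q(\gamma)\in O_F^\times$ (a unit at $v$ from the support of $m$, and a unit at every non-$v$ place because $\gamma^v\in U^v$). Hence the $F^\times$-action on the set $\mathcal S:=\{\gamma\in B^\times : m(\gamma_v,1)\ne 0,\ \gamma^v\in U^v\}$ factors through $O_F^\times$ (since $q(\mu\gamma)=\mu^2 q(\gamma)\in O_F^\times$ forces $\mu\in O_F^\times$), and by the compactness of $B^\times(\BR)/F^\times(\BR)\cong (\BH^\times/\BR^\times)^g$ together with the compactness of $\Omega_v\cdot F_v^\times/F_v^\times \subset B_v^\times/F_v^\times$, the quotient $F^\times\bs\mathcal S$ is finite; choose representatives $\gamma_1,\ldots,\gamma_k\in B^\times\setminus E^\times$.

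Finally, I would exclude each orbit. For each $\gamma_i\notin E^\times$ one has $\gamma_i\notin F$, and since $B=B(v)$ is a division algebra when $|\Sigma(\BB)|>1$, the map $B\hookrightarrow B(\BA_f^v)$ is injective; applying this to $\gamma_i-\tr(\gamma_i)/2$ shows there must be a non-archimedean place $w\ne v$ with $\gamma_{i,w}\notin F_w$. Its image in $B_w^\times/F_w^\times$ is non-trivial, so shrinking $U'_w$ enough will exclude the whole coset $F_w^\times\cdot\gamma_{i,w}$ from $U'_w$; in particular no $F^\times$-translate $\mu\gamma_i$ can satisfy $(\mu\gamma_i)^v\in U'^v$. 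Intersecting the finitely many resulting conditions over $i=1,\ldots,k$ yields a single $U'^v$ which kills every off-diagonal contribution, and combined with the first step this proves the lemma. The main obstacle will be the bookkeeping around $\mu_{U'}$-quotients and $F^\times$-orbits: it is important to verify that $F^\times\bs\mathcal S$ is genuinely finite despite $\mu_{U'}$ itself being infinite (by Dirichlet's unit theorem for totally real $F$), and to make the exclusion at the place $w$ uniform over all $\mu\in F^\times$.
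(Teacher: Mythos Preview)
Your proof is correct and follows essentially the same strategy as the paper: exploit the total definiteness of $B=B(v)$ to reduce to finitely many $\gamma$ modulo the center, then shrink $U'^v$ to exclude each one. The paper's argument is more economical, however. Rather than splitting into the cases $\gamma\in E^\times$ and $\gamma\notin E^\times$, the paper observes at the outset that the support of $m(\cdot,1)$ lies inside an open compact \emph{subgroup} $W_v$ of $B_v^\times$ (not merely a compact set $\Omega_v$); then $W=W_vU'^v$ is a subgroup, $B^\times\cap W$ is an arithmetic group in which $\mu_W=F^\times\cap W$ has finite index, and one shrinks $U'^v$ so that $B^\times\cap W=\mu_W\subset E^\times\cap U'$, killing every term at once. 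This sidesteps your Step~1 entirely and replaces your $F^\times$-orbit bookkeeping in Steps~2--3 (which you correctly flagged as the main obstacle) with the single standard finiteness statement $[B^\times\cap W:\mu_W]<\infty$. Your detour through $B\hookrightarrow B(\BA_f^v)$ to locate a place $w\neq v$ is also unnecessary: once one has finitely many nontrivial cosets, excluding their $U'^v$-components directly suffices.
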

\begin{proof}
Note that $m(\gamma,1)$, as a function in $\gamma$, is supported on an open compact subgroup $W_v$ of $B_v^\times$. 
In fact, by $q(\gamma)\in O_{F_v}^\times$, we can take $W_v=O_{B_v}^\times$ if $v$ is nonsplit in $B$, and $W_v$ still exists if $v$ is split in $B$ by Lemma \ref{superspecial multiplicity}. 
Then $\gamma$ contributes to the summation only if $\gamma\in B^{\times} \cap W$. 
Here we write $W=W_vU'^v$ as a open compact subgroup of $B^\times(\af)$. 
Since $B$ is totally definite, $\mu_{W}$ has finite index in $B^{\times} \cap W$. 
Let $S$ be set of representatives of the nontrivial cosets of $B^{\times} \cap W/\mu_W$.
Shrinking $U'^v$ if necessary, we can keep $\mu_{W}$ invariant, but make $S\cap U'^v$ empty. 
Hence, we end up with $B^{\times} \cap W=\mu_W$.  
It follows that $B^{\times} \cap W \subset E^{\times}\cap  U'$. 
Then the sum for $i_{\bar v}([1]_{U'}, [1]_{U'})$ has no nonzero terms. 
\end{proof}

Now we prove Lemma \ref{self-int}. 
By the right multiplication of $U$ on $X_{U'}$, it is easy to see that the Galois group of $X_{U'}\to X_U$ is isomorphic to $U/(U'\mu_U)$. 
It follows that 
$$
\pi^{-1}(P)=\pi^{-1}([1]_U)=\sum_{\beta\in U/(U'\mu_U)}[\beta]_{U'}
=\frac{1}{[\mu_U:\mu_{U'}]}\sum_{\beta\in U/U'}[\beta]_{U'}.
$$
Denote $P'=[1]_{U'}$, and we can assume that $\CP'$ is the Zariski closure of $P'$ since the intersection multiplicity in the lemma does not depend on the choice of $\CP'$ by the action of the Galois group of $X_{U'}\to X_U$.
Assume that $U'$ satisfies Lemma \ref{self-int2}; i.e.,  $i_{\bar v}(P',P')=0$.
Then 
$$
\langle \pi^{-1}\CP-e\CP',\ \CP'\rangle
=i_{\bar v}(\pi^{-1}P-eP',P')=i_{\bar v}(\pi^{-1}P,P')
$$
It is reduced to check 
$$
i_{\bar v}(\pi^{-1}P,P')=i_{\bar v}(P,P).
$$
Here both sides use our extended definitions. 
It is straightforward by the expression of $\pi^{-1}(P)$ above.

We first assume that $v$ is nonsplit in $E$.  
Recall that for any $\beta\in\bfcross$,
\begin{eqnarray*}
i_{\bar v}([\beta]_{U'}, [1]_{U'})
= \sum _{\gamma \in \mu_{U'}\bs (B^{\times}-E^{\times}\cap \beta_v U_v)}
m(\gamma, \beta_{v}^{-1}) 1_{U'^v}((\beta^v)^{-1}\gamma).
\end{eqnarray*}
Then 
\begin{eqnarray*}
i_{\bar v}(\pi^{-1}P,P')
&=&\frac{1}{[\mu_U:\mu_{U'}]} \sum_{\beta\in U/U'} i_{\bar v}([\beta]_{U'}, [1]_{U'}) \\
&=& \frac{1}{[\mu_U:\mu_{U'}]} \sum_{\beta\in U^v/U'^v}  
\sum _{\gamma \in \mu_{U'}\bs (B^{\times}-E^{\times}\cap  U_v)}
m(\gamma, 1) 1_{U'^v}(\beta^{-1}\gamma) \\
&=& \frac{1}{[\mu_U:\mu_{U'}]} 
\sum _{\gamma \in \mu_{U'}\bs (B^{\times}-E^{\times}\cap  U_v)}
m(\gamma, 1) 1_{U^v}(\gamma) \\
&=& i_{\bar v}(P,P).
\end{eqnarray*}
This finishes the nonsplit case. 

It remains to treat the case that $v$ is split in $E$. 
In this case, Lemma \ref{self-int2} is automatic, since  $i_{\bar v}(P',P')=0$ is actually true for any $U'$. 
The proof is similar to the nonsplit case by the formula
\begin{eqnarray*}
i_{\bar v}([\beta]_{U'}, [1]_{U'})
= \sum_{\gamma \in \mu_{U'}\bs (E\cross-\beta_v U_v)}
m_{\bar v}(\gamma^{-1}\beta) 1_{U^v}(\beta^{-1}\gamma). 
\end{eqnarray*}
It is also similar to the second half of the proof of Lemma \ref{ordinary multiplicity}.
An interesting consequence is that both sides of Lemma \ref{self-int} are 0.

\end{document}